\newtheorem{theorem}{Theorem}[section]
\newtheorem{lemma}[theorem]{Lemma}
\newtheorem{proposition}[theorem]{Proposition}
\theoremstyle{definition}
\newtheorem{remark}[theorem]{Remark}
\numberwithin{equation}{section}
\newcommand\RR{\mathbb{R}}
\newcommand\NN{\mathbb{N}}
\begin{document}
	\parindent=0pt
	\title{On melting for the 3D radial Stefan problem}
 \author[C.Zhang]{Chencheng Zhang}
\address{School of Mathematical Sciences,
University of Science and Technology of China, Hefei 230026, Anhui, China}
\email{zccmaths@mail.ustc.edu.cn}
		 \begin{abstract}
     We consider the three-dimensional radial  Stefan problem which describes the evolution of a radial symmetric ice ball with free boundary
     \begin{equation*}
	\left\{\begin{aligned}
		&\partial_{t}u-\partial_{rr}u-\frac{2}{r}\partial_{r}u=0 \quad in\ r\geq\lambda(t),\\
		&\partial_{r}u(t,\lambda(t))=-\dot{\lambda}(t),\\
		&u(t,\lambda(t))=0,\\
		&u(0,\cdot)=u_{0},\quad \lambda(0)=\lambda_{0}.
	\end{aligned}\right.
\end{equation*}
We prove the existence in the radial class of finite time melting with rates
\begin{equation*}
	\lambda(t)=\left\{\begin{aligned}
		&4\sqrt{\pi}\frac{\sqrt{T-t}}{|\log (T-t)|}(1+o_{t\rightarrow T}(1)),\\
		&c(u_{0},k)(1+o_{t\rightarrow T}(1))(T-t)^{\frac{k+1}{2}},\quad k\in\NN^{*},
	\end{aligned}\right.
\end{equation*}
which respectively correspond to the fundamental stable melting rate and a sequence of codimension $k$ unstable rates. Our analysis mainly depend on the methods developed in \cite{MP} which deals with the similar problems in two dimensions and also the construction of both stable and unstable finite time blow-up solutions for the harmonic heat flow in \cite{RS}, \cite{RS1}.
 \end{abstract}
	\maketitle

	\section{Introduction}
	 Stefan problem aims to describe the evolution of the boundary between two phases of a material undergoing a phase change, for example the melting of a solid, such as ice to water. There has been a long history since the problems were posed. Readers can refer to the monographs \cite{M, Ru} for furhter physical backgrounds and histories of the early mathematical studies.  In this paper we consider the classical three dimensional one-phase Stefan problem on an external domain which describes the melting of a ball of ice surrounded by water. The exterior region is denoted by $ \Omega(t) \subset \RR^{3}$ and $ u(t,x) $ denotes the temperture of the water, which is assumed to be zero at the ice phase $ \partial\Omega(t) $. The temperature of the ice is supposed to be maintained at zero.  Thus the temperature function $ u:\Omega(t)\rightarrow \RR $ evolves as
	\begin{equation}\label{eq:1}
		\left\{\begin{aligned}
      &\partial_{t}u-\Delta{u}=0\quad in \ \Omega(t),\\
      &\frac{\partial{u}}{\partial\vec{n}}=V_{\partial\Omega(t)} \quad on \ \partial\Omega(t),\\
      &u=0 \quad on \ \partial\Omega(t),
		\end{aligned}\right.
	\end{equation}
where $ V_{\partial\Omega(t)} $ stands for the normal velocity of the moving boundary $ \partial\Omega(t) $. 


In the extensive body of literature addressing the Cauchy theory for the classical one-phase Stefan problem, the groundwork was laid by defining weak solutions, credited to Kamenomostskaja \cite{K}, Friedman \cite{F}, and Lady\v{z}enskaja, Solonnikov, and Uralceva \cite{LSU}. Subsequent research delved into establishing the existence and regularity of the free boundary, with notable contributions found in \cite{CL1}, \cite{CL2}, \cite{CL3}, \cite{CL4}, \cite{FK}, \cite{KN}, \cite{KN1}, and related references. Given the Stefan problem's adherence to the maximum principle, an alternative avenue of analysis emerges, focusing on viscosity solutions. Kim \cite{Kim} established the existence of viscosity solutions for the one-phase problem. For a comprehensive understanding and an introduction to the regularity theory of viscosity solutions, readers are directed to Caffarelli and Salsa \cite{CS}, along with additional insights in \cite{ACS1}, \cite{ACS2}, \cite{CK}, \cite{KH}, and related discussions. Concerning classical solutions, Meirmanov \cite{M} and Hanzawa \cite{H} were pioneers in establishing local existence results for the classical Stefan problem. However, their outcomes suffered from derivative loss. Had\v{z}i{c} and Shkoller later achieved local-in-time existence, uniqueness, and regularity for the classical Stefan problem within $L^{2}$-based Sobolev space, without the aforementioned derivative loss. Their work also extended to studying the global stability of steady states, as detailed in \cite{HS1} and \cite{HS2}. For additional perspectives on well-posedness in $L^{p}$-type spaces, refer to \cite{SF}, \cite{PSS}.

From now on, we restrict our discussion to the radially symmetric case since in this case the Cauchy theory is simpler. The domain $ \Omega(t) $ is then given by 
\begin{equation*}
	\Omega(t)=\{x\in \RR^{3}|\ |x|\geq \lambda(t)\},
\end{equation*}
and equation (\ref{eq:1}) becomes
\begin{equation}\label{eq:2}
	\left\{\begin{aligned}
		&\partial_{t}u-\partial_{rr}u-\frac{2}{r}\partial_{r}u=0 \quad in\ \Omega(t),\\
		&\partial_{r}u(t,\lambda(t))=-\dot{\lambda}(t),\\
		&u(t,\lambda(t))=0,\\
		&u(0,\cdot)=u_{0},\quad \lambda(0)=\lambda_{0}.
	\end{aligned}\right.
\end{equation}
As was established in \cite{MP} for 2D case and Appendix B for 3D case, the Cauchy problem \eqref{eq:2} is wellposed in $H^{2}$: for any $  (u_{0},\lambda_{0})\in\ H^{2}\times \RR_{+}^{*} $ with $ u_{0} $ radially symmetric, there exists a unique solution $ (u(t),\lambda(t))\in C([0,T);H^{2}(\Omega(t)))\times C^{1}([0,T),\RR_{+}^{*}) $ to (\ref{eq:2}), and
\begin{equation*}
	T<+\infty\quad \Leftrightarrow\quad \lim_{t\rightarrow T}\lambda(t)=0\ {\rm{or}}\ \lim_{t\rightarrow T}\lVert u(t)\rVert_{H^{2}(\Omega(t))}=+\infty.
\end{equation*}
Moreover, the following basic properties hold:
\begin{itemize}
    \item Scaling invariance: If $(u,\lambda)$ solves (\ref{eq:2}), then so does
    \begin{equation*}
    u_{\mu}(t,r):=u(\mu^{2}t,\mu r),\quad \lambda_{\mu}(t):=\frac{\lambda(t)}{\mu}.
    \end{equation*}
    \item Energy identities:
    \begin{equation*}
    \frac{1}{2}\frac{d}{dt}\lVert u\rVert^{2}_{L^{2}(|x|\geq\lambda(t))}+\lVert\nabla u\rVert^{2}_{L^{2}(|x|\geq\lambda(t))}=0,
    \end{equation*}
    \begin{equation*}
        \frac{1}{2}\frac{d}{dt}\lVert \nabla u\rVert^{2}_{L^{2}(|x|\geq \lambda(t))}+\lVert\Delta u\rVert^{2}_{(L^{2}(|x|\geq\lambda(t)))}=2\pi(\lambda(t))^{2}(\dot{\lambda}(t))^{3},
    \end{equation*}
\end{itemize}
In particular, in the melting regime $\dot{\lambda}<0$, from the above energy identities, we can also arrive at the dissipation of $\lVert u\rVert_{L^{2}(|x|\geq \lambda(t))}$ and $\lVert \nabla u\rVert_{L^{2}(|x|\geq \lambda(t))}$. \\

As for the long time behavior for the solutions to the Stefan problem \eqref{eq:2}, the melting is one of the typical senarios. The first description of melting was given in the pioneering work by Herrero and Vel\'{a}zquez \cite{HV} which predicts the existence of a discrete sequence of melting rates in 3D:
\begin{equation*}
	\lambda(t)\sim\left\{\begin{aligned}
		&\frac{\sqrt{T-t}}{|\log (T-t)|},\\
		&(T-t)^{\frac{k+1}{2}},\quad k\in\NN^{*}.
	\end{aligned}\right.
\end{equation*}
Their methods mainly based on the delicate matching asymptotics procedure for the construction of non-selfsimilar type II blow up bubbles. However, as mentioned by the authous themselves (see \cite{HHV}, \cite{V}), this approach cannot address the stability of such melting and can only be applied in the radial case. Later, Rapha\"{e}l and Had\v{z}i\'{c} \cite{MP} avoided the formal matched asymptotics approach. They replaced it by an elementary derivation of an explicit and universal system of ODE which arrives at the melting rates and then they used a robust energy method to control the tail terms. As a result, they got the similar results in 2D and also studied their stability in the radial regime.\\
For the past twenty years, there have been many results in the field of constructing type II blow-up solutions, in both parabolic and dispersive problems. In the dispersive fields, the first such result was obtained by Bourgain and Wang \cite{BW}  for the mass critical Schr\"{o}dinger equations, and then for the same equation, Perelman \cite{P} constructed the  log-log blow-up solutions in one dimension for an even initial data. Later, Merle and Rapha\"{e}l (see \cite{MR1}, \cite{MR2}, \cite{MR3}, \cite{MR4}, \cite{MR5}) extended  Perelman's result in higher dimensions and the non-radial case using the modulation methods, energy monotonicity and local virial identities . Moreover, they also classified the blow-up solutions near the ground state and studied the stablity of such blow-up solutions (see \cite{R}).
For some other construction of type II blow-up solutions in the dispersive field, we refer the readers to see \cite{HR,IJ,J,KST2,MRP,FPIJ,MRR1,KST,PR} and references therein. To summarize, the approaches developed by Merle, Rapha\"{e}l, Rodnianski and the co-authors in various works (see for instance \cite{MR3,MR1,MRR1,MRP,PR,IJ})  mainly split the energy concentration problem into a finite-dimensional part which contains the leading order dynamics, and an infinite-dimensional part which is controlled using the energy methods. This method can also been applied to  parabolic equations and will also play an important role in our argument. First such result in parabolic field was obtained by Schweyer \cite{RS3} for the 4D radial energy critical heat equation. Then for the energy supercritical heat equation, Collot \cite{C1} constructed a non-radial type II blow-up solution in large dimensions using robust nonlinear analysis based on the modulation methods and energy methods, see also  \cite{CFP} for another type II blow-up solutions in this case.
For the corotational energy-critical harmonic heat flow, Rapha\"{e}l and Schweyer \cite{RS}, \cite{RS1} constructed both stable and codimension $k$ stable type II blow-up solutions following mainly the spirits developed in \cite{MRR1} and \cite{PR}.   For the Keller-Segel model which describes the chemotactic aggregation, Rapha\"{e}l and Schweyer \cite{RS2} also constructed a stable type II blow-up solution in the radial case and then Collot, Ghoul, Masmoudi and Nguyen \cite{CTNV} constructed both stable and codimension $ k $ stable blow up solutions in the non-radial case. Their methods mainly based on a change of variables related to some partial mass and a detailed spectral analysis for the linearised dynamics which improves the earlier results and can also be extended to the non-radial case.

In this paper, we are devoted to the description of finite time melting for the classical Stefan problem in the radial case in 3D. We prove the existence of stable and unstable type II finite time melting solutions and then study the singularity formation process. 

\begin{theorem}[Melting dynamics]\label{th1}
	There exists a set of initial data $ (u_{0},\lambda_{0}) $ in $ H^{2}\times \RR^{*}_{+} $, such that the corresponding solution $(u,\lambda)\in C(\left[0,T\right);H^{2} )\times C^{1}([0,T);\RR_{+})$ to the exterior Stefan problem (\ref{eq:2}) melts in finite time $ 0<T(u_{0},\lambda_{0})<+\infty $ with the following dynamics:
	\begin{itemize}
		\item Stable regime: the fundamental melting rate is given by
		\begin{equation}\label{th1.1}
			\lambda(t)=4\sqrt{\pi}\frac{\sqrt{T-t}}{|\log (T-t)|}(1+o_{t\rightarrow T}(1)),
		\end{equation}
	and is stable by small radial perturbations in $ H^{2} $.
	\item Excited regimes: the excited melting rates are given by
	\begin{equation}\label{th1.2}
		\lambda(t)=c(u_{0},k)(1+o_{t\rightarrow T}(1))(T-t)^{\frac{k+1}{2}},\quad k\in\NN^{*}
	\end{equation}
for some fixed constant $ c(u_{0},k)>0 $ and it corresponds to the initial data lying on a locally Lipschitz $ H^{2} $ manifold of codimension $ k $.
\item Non-concentration of energy: In all cases, there exists $ u^{*}\in\dot{H}^{1} $ such that
\begin{equation}\label{th1.3}
	\lim_{t\rightarrow T}\lVert \nabla u\chi_{\{|x|\geq\lambda(t)\}}-\nabla u^{*}\rVert_{{L^{2}}(\RR^{3})}=0.
\end{equation}
	\end{itemize}
\end{theorem}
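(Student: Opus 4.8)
The plan is to implement the modulation-plus-energy scheme of \cite{MP} in the 3D radial setting, using the harmonic-heat-flow constructions of \cite{RS}, \cite{RS1} as a guide for the codimension-$k$ case. First I would reduce the free-boundary problem to a fixed domain: setting $y = r/\lambda(t)$ and rescaling time via $\frac{ds}{dt} = \lambda^{-2}$, the temperature $u$ is renormalized to a profile $v(s,y)$ on $\{y\ge 1\}$ solving a perturbation of the self-similar heat equation, with the Stefan condition $\partial_r u(t,\lambda)=-\dot\lambda$ becoming a boundary condition coupling $\partial_y v(s,1)$ to the modulation parameter $b := -\lambda\dot\lambda$ (equivalently $-\frac{\lambda_s}{\lambda}$). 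The stationary states of the linearized operator $\mathcal{L} = -\partial_{yy} - \frac{2}{y}\partial_y - \frac{y}{2}\partial_y + (\cdot)$ on the exterior domain, together with the harmonic profile $1 - 1/y$ that encodes the ice-ball geometry, furnish the approximate profile. The next step is to build, order by order in $b$, a slowly modulated approximate solution $Q_{b(s)}(y)$ (and for the excited regimes, $Q_{b_1,\dots,b_k}$ with a tail of modulation parameters), solving the construction equation up to an error that is small in the adapted norms; this is where the explicit ODE system $\dot b_1 \approx -\frac{b_1}{|\log b_1|}$ (stable case) or the chain $\dot b_i \approx -(i-\text{const})b_1 b_i + b_{i+1}$ (excited case) emerges, and integrating it yields precisely the rates \eqref{th1.1}--\eqref{th1.2} after undoing the time change $t \mapsto s$.

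The core of the argument is then a bootstrap on the remainder $w = v - Q_b$. I would set up a hierarchy of energy norms — an $L^2$-type norm, an $H^1$-type norm adapted to $\mathcal{L}$, and an $H^2$-type norm adapted to $\mathcal{L}^2$ — with carefully chosen $s$-dependent weights, and derive monotonicity (Lyapunov) estimates for each, exploiting the dissipation already visible in the energy identities $\frac{1}{2}\frac{d}{dt}\|u\|_{L^2}^2 + \|\nabla u\|_{L^2}^2 = 0$ and $\frac{1}{2}\frac{d}{dt}\|\nabla u\|_{L^2}^2 + \|\Delta u\|_{L^2}^2 = 2\pi\lambda^2\dot\lambda^3$ (the sign $\dot\lambda<0$ makes the last term favorable). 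The modulation equations for $(\lambda, b)$ (or $(\lambda, b_1,\dots,b_k)$) are obtained by projecting the equation for $w$ onto the generalized kernel of $\mathcal{L}$, after which one checks that the finite-dimensional dynamics close and that the remainder stays in a shrinking ball, all consistent with the a priori assumptions. In the excited case, exactly $k$ of the directions are unstable and must be controlled by a Brouwer-type topological argument, producing the codimension-$k$ Lipschitz manifold of initial data. Finally, the non-concentration \eqref{th1.3} follows because in these regimes the blow-up is of type II with vanishing rescaled energy flux through the moving boundary: one shows $\|\nabla u\|_{L^2(\lambda(t)\le |x|\le \sqrt{T-t})}\to 0$ using the energy dissipation, and the outer part of $\nabla u$ converges strongly in $L^2(\RR^3)$ to some $\nabla u^*\in\dot H^1$ by a standard Cauchy-in-time argument exploiting that $\|\Delta u\|_{L^2}$ is integrable in time up to $T$.

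The main obstacle I anticipate is twofold. First, the exterior geometry: the operator $\mathcal{L}$ lives on $\{y\ge 1\}$ with a dynamic boundary condition at $y=1$, so the spectral theory, the construction of the profile tail, and especially the coercivity of the adapted energy norms under $\mathcal{L}$ and $\mathcal{L}^2$ must all be re-proved with boundary terms — these boundary contributions are exactly where the logarithmic gain $1/|\log b|$ is generated and must be tracked with precision, since it drives the leading-order rate. Second, the interplay between the boundary modulation law $b = -\lambda\dot\lambda$ and the interior dynamics is more delicate than in a Cauchy problem: one must simultaneously control the free-boundary position $\lambda(t)\to 0$ and ensure the remainder estimates are uniform as the domain degenerates. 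Carefully choosing the bootstrap weights so that the degeneration of the domain is compatible with the Lyapunov monotonicity — and verifying that the repulsive modes in the excited case do not spoil this — will be the technical heart of the proof.
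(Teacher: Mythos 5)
Your overall architecture — renormalization to a fixed boundary with $b=-\lambda_s/\lambda$, a slowly modulated profile, modulation equations from the boundary/orthogonality conditions, a weighted energy bootstrap, and a Brouwer argument for the $k$ unstable directions — is exactly the scheme the paper implements, and your treatment of the non-concentration statement (inner region carries vanishing energy, outer part Cauchy in time via $\int\|\Delta u\|_{L^2}^2\,dt<\infty$) also matches.

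However, there is a genuine error at the quantitative heart of the stable regime: the leading-order modulation law you write, $\dot b_1\approx -b_1/|\log b_1|$, together with your claim that the boundary generates a ``logarithmic gain $1/|\log b|$'', is the \emph{two-dimensional} mechanism of \cite{MP}, and it does not produce the rate \eqref{th1.1}. In 3D the fundamental solution is $1/|x|$ rather than $\log|x|$, so after rescaling $z=\sqrt b\,y$ the Dirichlet condition at $z=\sqrt b$ shifts the bottom eigenvalue of the harmonic oscillator by a \emph{power}, $\lambda_{b,0}=\sqrt{2/\pi}\,\sqrt b+O(b)$ (see \eqref{pr2.3:2}), and the correct modulation equation is $b_s+\sqrt{2/\pi}\,b^{5/2}=O(b^3)$ (see \eqref{3.42}). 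This gives $b\sim s^{-2/3}$, $-\log\lambda\sim c\,s^{1/3}$, and the logarithm in \eqref{th1.1} then appears only when converting renormalized time $s$ back to $t$ via $T-t=\int_s^\infty\lambda^2$; it is not present in the ODE for $b$ itself. With your proposed law the computation would reproduce the 2D rate $\lambda\sim\sqrt{T-t}\,e^{-c\sqrt{|\log(T-t)|}}$ instead of $4\sqrt\pi\,\sqrt{T-t}/|\log(T-t)|$, so the explicit constant and the form of the stable rate would both come out wrong. Relatedly, your plan to build the profile ``order by order in $b$'' around a self-similar drift $-\tfrac{y}{2}\partial_y$ misses the step that actually extracts this $\sqrt b$ correction: the paper diagonalizes $\mathcal H_b=-\Delta+b\Lambda$ exactly (via Lyapunov--Schmidt around the harmonic oscillator, with eigenfunctions whose leading part is $P_k(\sqrt b y)(\tfrac1y-1)/\sqrt b$) and reads the rate off the eigenvalue expansion; without that spectral computation, or an equivalent matched-asymptotics derivation of the $\sqrt b$ shift, the finite-dimensional dynamics cannot be closed with the claimed rates.
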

\begin{remark}   
We here consider the  case  $d=3$. In fact all $d\geq 3$ are similar since in this case they have the fundamental solution of the  form $\frac{1}{|x|^{d-2}}$ which plays an essential role in the construction of the approximate solutions as we will see below. At the same time, compared with the similar results in two dimension \cite{MP}, since $\frac{1}{|x|^{d-2}}$ generates more singularity than $\log|x|$ at the origin, the corresponding approximate solutions in $d\geq 3$ decay slower  which  requires our computation to be more precise and it's also the reason why the melting rates in $d\geq 3$ is slower than the case $d=2$.
\end{remark}
\begin{remark} For the stable regime, the constant $4 \sqrt{\pi}$ is sharp and depends only on the dimension for $d\geq 3$ as predicted by Herrero and Vel\'{a}zquez \cite{HV}. In dimension two, as computed in \cite{MP}, the authors obtained the stable melting rate $\lambda(t)=(T-t)^{\frac{1}{2}}e^{-\frac{\sqrt{2}}{2}\sqrt{|\log (T-t)|}+O(1)}$, but the constant $e^{O(1)}$ is not explicit. While in the codimension $k$ stable case, from our computation\footnote{see for example (\ref{3.6:7}) below.}, the melting rates will depend on the initial data and $k$.
\end{remark}
\begin{remark} We restrict our initial data in the radial regime, however, we don't know whether the stability holds for the non-radial initial perturbation. This may be the first main open problem following this work. Formal asymptotics for finite time melting is presented in \cite{AHV} in addition to a wealth of other possible singularity formation scenarios. There the problem is formulated by the Baiocchi transform and the authors identify ellipsoids in three dimensions as the asymptotic attractors for the melting dynamics in suitably rescaled variables.
\end{remark}
 \begin{remark} As in \cite{MP}, our methods can also be applied  to the construction of freezing solutions (i.e. the case $\dot{\lambda}>0$) that asymptotically converge to a steady state ($u=0,\lambda=\lambda_{\infty}>0$) with freezing rates both stable and codimension $k$ stable. The proof bases on a deep underlying duality between the derivation of the melting and the freezing rates which is analogous to the computation for the melting regime. So we omit here.
\end{remark}
Our approach mainly consists of two steps: construction of a high order approximate solution based on the expansion of the eigenfunctions with respect to a well chosen small parameter, and then using the energy method to control the remaining infinite dimensional part. This method has already been used to deal with the similar questions in two dimensions in \cite{MP} and our argument heavily relies on the technique developed by them.\\
We now briefly describe the main ideas behind the proof of Theorem \ref{th1}. Since the equation (\ref{eq:2}) is a free boundary problem, so we renormalise the equation by setting:
\begin{equation*}
    u(t,r)=v(s,y),\quad \frac{ds}{dt}=\frac{1}{\lambda^{2}(t)},\quad y=\frac{x}{\lambda(t)}
\end{equation*}
and obtain the renormalised equation with a fixed boundary:
\begin{equation}\label{in1}
    \left\{\begin{aligned}
        &\partial_{s}v-\frac{\lambda_{s}}{\lambda}\Lambda v-\Delta v=0,\quad |y|>1,\\
        &v(s,1)=0,\\
        &(\partial_{y}v)(s,1)=-\frac{\lambda_{s}}{\lambda}.
    \end{aligned}\right.
\end{equation}
Then it's equivalent to study the equation (\ref{in1}) in the following argument. The proof is divided into three parts.\\
(i). $Spectral\ analysis.$ We firstly analysis the spectra of the linear operator, the proof mainly based on the properties of the standard harmonic oscillator and the Lyapunov-Schmidt argument which has already been used in \cite{MP} to deal with the similar problem in two dimensions. Compared with the case in two dimensions, the remainder terms in our cases have slower decay mainly due to the fact that the fundamental solutions of Laplace's equation in three dimensions has more singularity at the origin than that in two dimensions (see for instance (\ref{in2}) which is small in two dimensions).\\
(ii). $Stable\ regime.$ In this case, following by the the formal computations and the inspirations from \cite{MP}, \cite{MRR1}, \cite{PR}, \cite{RS1}, we firstly construct the approximate of the form (\ref{3.5}). Then we derive the modulation equation which gives the melting rate (\ref{th1.1}). This follows mainly from the orthogonality condition and the boundary conditions (see Lemma \ref{le4.1}).  It remains to control the remainder terms of the solution, this can be done by the weighted energy estimates. Two points are crucial in the derivation of the energy bounds. The first is the boundary equations (see Lemma \ref{le4.1}) and the modulation equations which gives the estimate of the boundary terms. The second is the coercivity estimate of the linear operator $\mathcal{H}_{b}$ (see Lemma \ref{leCH{b}}) which gives the sharp constant $2$ on the left side of (\ref{3.7:0}).\\
(iii). $Exicited\ regimes.$ In this case, due to the new degrees of freedom, we need to introduce $k+1$ parameters, one corresponds to the stable direction,  another $k$ parameters correspond to $k$ unstable directions and the set of the initial $k$ parameters in fact form a codimension $k$ manifold. Then we consider the approximate solution of the form (\ref{in3}), similar computations as before, we can derive the leading order equations and obtain the main terms of the parameters (see (\ref{3.9})). It remains to control the tail terms of the parameters. It can be divided into two steps. Firstly projecting onto the modulation equations and then by the orthogonality condition, we obtain the relations between the parameters (see (\ref{3.43})). Secondly, for the stable direction, the bound follows by direct integration of the estimate (\ref{3.5:9}) and for the $k$ unstable directions, the bounds can infer from a standard Brouwer type arguments which has already been used in many different models (see for instance \cite{C}, \cite{MP}, \cite{MRP}, \cite{RS}).\\
\\
\textbf{Notations.}
		Let us denote the ball of radius $ R $ by
		\begin{equation*}
			B_{d}(R):=\{x\in \RR^{d}|\ |x|\leq{R}\}.
		\end{equation*}
	and given any $ \alpha\geq 0 $, we denote the external domain by
	\begin{equation*}
		\Omega(\alpha) :=\{x\in{\RR^{3}}|\ |x|\geq{\alpha}\}.
	\end{equation*}
We introduce the differential operator 
\begin{equation*}
	\Lambda{f}=y\partial_{y}f.
\end{equation*}
Let $ \chi $ be a positive nondecreasing smooth cut-off function with
	\begin{equation*}
		\chi(x)=
	\left\{\begin{aligned}
&0 \qquad {\rm{for}}\ x\leq 1,\\
&1 \qquad {\rm{for}}\ x\geq 2.
	\end{aligned}\right.
\end{equation*}
Given a parameter $ B>0 $, we denote
\begin{equation*}
	\chi_{B}(x)=\chi\Big(\frac{x}{B}\Big)
\end{equation*}
For any vector $ \vec{x}=(x_{i})_{1\leq{i}\leq{n}}, \vec{y}=(y_{i})_{1\leq{i}\leq{n}} $, we define the inner product
\begin{equation*}
	\langle \vec{x},\vec{y} \rangle=\sum_{k=1}^{n}x_{k}y_{k}
\end{equation*}
with the norm
\begin{equation*}
	|\vec{x}|^{2}=\sum_{k=1}^{n}|x_{k}|^{2},
\end{equation*}
which is equivalent to the norm
\begin{equation*}
	|||\vec{x}|||=max\{|x_{1}|,|x_{2}|,...,|x_{n}|\}.
\end{equation*}
We introduce the weight
\begin{equation*}
	\rho(z):= e^{-|z|^{2}/2}, \quad \rho_{b}(y):=e^{-b|y|^{2}/2}, \quad w(x)=x^{\mu}e^{-x}(\mu>-1).
\end{equation*}
In the following argument, for a radial function $f$ :$\RR^{3}\rightarrow \RR$ and $z\geq 0$ we denote $f(z)$ the value of $f(x)$ for $|x|=z$.\\
Then we define the inner product on $ \Omega(\sqrt{b}) $ by
\begin{equation*}
	\langle{f},{g}\rangle_{b}=\int_{\sqrt{b}}^{+\infty}fg\rho z^{2}dz,
\end{equation*}
with the associated norms
\begin{equation*}
\begin{aligned}
\Arrowvert{u}\Arrowvert_{L^{2}_{\rho,b}}=\Big(\int_{\sqrt{b}}^{+\infty}&u^{2}\rho z^{2}dz\Big)^\frac{1}{2},\quad \Arrowvert{u}\Arrowvert^{2}_{H^{1}_{\rho,b}}=\Arrowvert{\partial_{z} u}\Arrowvert^{2}_{L^{2}_{\rho,b}}+\Arrowvert{u}\Arrowvert^{2}_{L^{2}_{\rho,b}},\\ &\Arrowvert{u}\Arrowvert^{2}_{H^{2}_{\rho,b}}=\Arrowvert\Delta{u}\Arrowvert^{2}_{L^{2}_{\rho,b}}+\Arrowvert{u}\Arrowvert^{2}_{H^{1}_{\rho,b}}.
\end{aligned}
\end{equation*}

We then define for $ b>0 $ the Hilbert space
\begin{equation}\label{eq:3}
    L^{2}_{\rho,b}=\{u:\Omega(\sqrt{b})\rightarrow\RR\ |\ u\ {\rm{radial}}\ {\rm{with}}\ \Arrowvert{u}\Arrowvert_{L^{2}_{\rho,b}}<+\infty\ {\rm{and}}\  u(\sqrt{b})=0\},
\end{equation}
\begin{equation*}
	H^{1}_{\rho,b}=\{u:\Omega(\sqrt{b})\rightarrow\RR\ |\ u\ {\rm{radial}}\ {\rm{with}}\ \Arrowvert{u}\Arrowvert_{H^{1}_{\rho,b}}<+\infty\ {\rm{and}}\  u(\sqrt{b})=0\},
\end{equation*}
and for $ b=0 $,
\begin{equation*}
	H^{1}_{\rho,0}=\{u:\RR^{3}\rightarrow\RR\ |\ u\ {\rm{radial}}\ {\rm{with}}\ \Arrowvert{u}\Arrowvert_{H^{1}_{\rho,0}}<+\infty\}.
\end{equation*}
Next, we introduce the renormalized inner product
\begin{equation*}
	(f,g)_{b}=\int_{1}^{+\infty}{fg\rho_{b}y^{2}dy},
\end{equation*}
and the associated norms are
\begin{equation*}
	\Arrowvert{v}\Arrowvert_{b}=\Big(\int_{1}^{+\infty}{v^{2}\rho_{b}y^2dy}\Big)^{\frac{1}{2}}, \quad \Arrowvert{v}\Arrowvert^{2}_{H^{1}_{b}}=\Arrowvert{v}\Arrowvert^{2}_{b}+\Arrowvert\partial_{y}{v}\Arrowvert^{2}_{b},\quad \Arrowvert{v}\Arrowvert^{2}_{H^{2}_{b}}=\Arrowvert{v}\Arrowvert^{2}_{H^{1}_{b}}+\Arrowvert\Delta{v}\Arrowvert^{2}_{b}.
\end{equation*}
Then we  define  the Hilbert space 
\begin{equation}\label{eq:4}
 	H^{1}_{b}=\{v:\Omega(1)\rightarrow\RR\ |\ v\ {\rm{radial}}\ {\rm{with}}\ \Arrowvert{v}\Arrowvert_{H^{1}_{b}}<+\infty\ {\rm{and}}\ v(1)=0\}.
\end{equation}
Similarly, we  define the inner product
\begin{equation*}
	(f,g)_{\omega}=\int_{0}^{+\infty}fg\omega dx,
	\end{equation*}
with the associated norm
\begin{equation*}
	\Arrowvert f\Arrowvert_{\omega}^{2}=\int_{0}^{+\infty}f^{2}(x)\omega(x)dx,
\end{equation*}
and the Hilbert space
\begin{equation*}
	L_{\omega}^{2}=\{f:(0,+\infty)\rightarrow\RR\ |\ \Arrowvert{f}\Arrowvert_{\omega}<+\infty \}.
\end{equation*}
Finally, for simplity, we denote some constants which will always appear in the following sections
\begin{equation}
	B_{k}=\frac{\Gamma(\frac{1}{2}+k+1)}{k!\Gamma(\frac{1}{2}+1)},\quad A_{k}=2^{\frac{1}{4}}\sqrt{\frac{\Gamma(\frac{1}{2}+k+1)}{k!}}, \quad C_{k}=\frac{B_{k}}{A_{k}}=\frac{\sqrt{\Gamma(\frac{1}{2}+k+1)}}{2^{\frac{1}{4}}\Gamma(\frac{1}{2}+1)\sqrt{k!}}.
\end{equation}
The remainder of this paper is structured as follows. Section 2 is dedicated to compute the eigenvalues and eigenfunctions of $\mathcal{H}_{b}$ and the associated properties. Section 3 contains the geometrical decomposition and the bootstrap  assumptions on the corresponding solutions. Then, in section 4, we study the equation for the remainder term and derive the modulation equations for the parameters in the geometrical decomposition. The crucial energy estimates are then performed in Section 5. Eventually, in section 6, we close the bootstrap bounds using modulation equations, energy bounds and a topology argument, then we finish the proof of the main theorem.
	\section{Spectral analysis in weighted spaces}
In this section we compute the first $ k $ eigenvalues of the linear operator
\begin{equation*}
	\mathcal{H}_{b}=-\Delta+b\Lambda \quad {\rm{with}}\ {\rm{boundary}}\ {\rm{condition}}\ u(1)=0
\end{equation*}
and then we study the properties of the eigenfunctions and establish the associated spectral gap estimate in the perturbative regime $0<b<b^{*}(k)$ with $b^{*}(k)$ small enough. Note firstly that $\mathcal{H}_{b}$ is self-adjoint with respect to the measure $e^{-\frac{b|y|^{2}}{2}}dy$ and has (up to a shift) compact resolvent in the Hilbert space $H^{1}_{b}$ (see (\ref{eq:4})) and hence discrete spectrum. However, the limit as $b\rightarrow 0$ is the Laplacian with resonant eigenmodes and continuous spectrum which generates some singularity at $b=0$. Hence we change variables and set $u(\sqrt{b}y)=v(y)$, then it's equivalent to study the operator
\begin{equation*}
    H_{b}=-\Delta+\Lambda,\quad u(\sqrt{b})=0,
\end{equation*}
which is formally a perturbation of  the standard harmonic oscillator with the Dirichlet boundary condition.
\subsection{Properties of the harmonic oscillator}
In this subsection, we recall some basic properties of harmonic oscillator without proof.\\
Consider firstly the following Laguerre equation 
\begin{equation*}
	xy^{\prime\prime}+(\mu+1-x)y^{\prime }+\lambda{y}=0\ (\mu>-1)
\end{equation*}
or equivalently  in Strum-Liouville form 
\begin{equation*}
	(x^{\mu+1}e^{-x}y^{\prime})^{\prime}+\lambda x^{\mu}e^{-x}y=0\ (\mu>-1).
\end{equation*}
As was shown in \cite{MF}\cite{L}\cite{SG} that 
$ \lambda=n\ (n\in\NN) $ are the only eigenvalues of the above equation under the constriant that the solution $ y $ is analytic at zero and $ y\in L^{2}_{\omega} $. Moreover, when $ \lambda=n $, the corresponding eigenfunction is
\begin{equation*}
	L_{n}^{\mu}(x)=\sum_{k=0}^{n}(-1)^{k}\frac{\Gamma(\mu+n+1)}{k!(n-k)!\Gamma(\mu+k+1)}x^{k}.
\end{equation*}
And for any $ m,n\in\NN $, we have
\begin{equation*}
	\int_{0}^{+\infty}L_{n}^{\mu}(x)L_{m}^{\mu}x^{\mu}e^{-x}dx=\frac{\Gamma(\mu+n+1)}{n!}\delta_{mn}.
\end{equation*}
Then we consider the harmonic oscillator $ -\Delta+\Lambda $ on $ (H^{1}_{\rho,0},\langle\cdot,\cdot\rangle_{0}) $. The operator is self-adjoint for the $ \langle\cdot,\cdot\rangle_{0} $ inner product, since
\begin{equation}\label{eq:7}
-\Delta+\Lambda=-\frac{1}{\rho z^{2}}\partial_{z}(\rho{z}^{2}\partial_{z}).
\end{equation}
If we take 
\begin{equation*}
	\mu=\frac{1}{2},
\end{equation*}
 and set 
\begin{equation}\label{eq:8}
	P_{k}(x)=\frac{L_{k}^{\mu}\Big(\frac{x^{2}}{2}\Big)}{A_{k}}.
\end{equation}
Then $ P_{k}(x) $ diagonalises the harmonic oscillator:
\begin{equation}\label{eq:9}
	(-\Delta+\Lambda)P_{k}=2kP_{k},
\end{equation}
and we have
\begin{equation}\label{eq:10a}
	\langle P_{k},P_{j}\rangle_{0}=\delta_{jk}.
\end{equation}
Moreover, direct computation gives
\begin{equation}\label{eq:10}
	P_{k}(0)=\frac{B_{k}}{A_{k}}=C_{k}=\frac{\sqrt{\Gamma(\frac{1}{2}+k+1)}}{2^{\frac{1}{4}}\Gamma(\frac{1}{2}+1)\sqrt{k!}},\quad \forall k\in\NN,
\end{equation}
\begin{equation}\label{eq:11}
	\Lambda{P_{k}}(x)=2k\bigg(P_{k}(x)-\frac{2k+1}{2k}\frac{A_{k-1}}{A_{k}}P_{k-1}(x)\bigg),\quad \forall k\in{\NN^{*}}.
\end{equation}
Finally there holds the spectral gap estimate for the harmonic oscillator: $ \forall u\in{H}^{1}_{\rho,0} $ with
\begin{equation*}
	\langle u,P_{j}\rangle_{0}=0,\quad 0\leq{j}\leq{k},
\end{equation*}
we have
\begin{equation}\label{eq:12}
	\Arrowvert\partial_{z}u\Arrowvert^{2}_{L^{2}_{\rho,0}}\geq(2k+2)\Arrowvert{u}\Arrowvert^{2}_{L^{2}_{\rho,0}}. 
\end{equation}
\subsection{Almost invertibility of the renormalised operator}
We consider in this section the renormalised operator
\begin{equation*}
	H_{b}=-\Delta+\Lambda\quad {\mathrm{with\ the\ boundary\ condition\ }} u(\sqrt{b})=0
\end{equation*}
in the radial case where $ 0<b<b^{*} $ small enough. Thanks to the boundary condition $ u(\sqrt{b})=0 $ and (\ref{eq:7}), $ H_{b} $ is self-adjoint  on the Hilbert space $ L^{2}_{\rho,b} $ given by (\ref{eq:3}). We will show the invertiliblity property of $ H_{b} $ simliar  to that has been done in \cite{MP} which serves as the starting point of the Lyapunov-Schmidt argument.\\
Fix a frequency size $ K\in\NN $ and a sufficiently small parameter $ b^{*}(K) $ such that
\begin{equation*}
	0<b<b^{*}(K)\ll{1}.
\end{equation*}
Define the Gram matrix
\begin{equation}\label{eq:13}
	M_{b,k}=(\langle P_{i},P_{j}\rangle_{b})_{0\leq{i,j}\leq{k}},\quad {0}\leq k\leq{K}.
\end{equation}
Since (\ref{eq:10a}) implies
\begin{equation*}
	\langle P_{i},P_{j}\rangle_{b}=\langle P_{i},P_{j}\rangle_{0}-\int_{0}^{\sqrt{b}}P_{i}P_{j}y^{2}e^{-y^{2}/2}dy=\delta_{ij}+O(b^{\frac{3}{2}}),
\end{equation*}
one has
\begin{equation}\label{eq:15}
	M_{b,k}=Id+O(b^{\frac{3}{2}}),
	\end{equation}
which is invertible for $ 0\leq{b}<b^{*} $ small enough. Then for the vector $ \vec{P_{k}}(z)=(P_{j}(z))_{0\leq{j}\leq{k}} $, we introduce the function 
\begin{equation}\label{eq:16}
	m_{k}(b,z)=\langle M_{b,k}^{-1}\vec{P_{k}}(\sqrt{b}),\vec{P_{k}}(z)\rangle.
\end{equation}
As a consequence of  (\ref{eq:8}), (\ref{eq:10}), (\ref{eq:15}), 
\begin{equation*}
	m_{k}(b,z)=\sum_{j=0}^{k}(C_{j}+O(b))P_{j}(z).
	\end{equation*}
 
\begin{lemma}[Near inversion of $H_{b}-2k$]\label{le:1}
	Let $ k\in\NN $ and $ 0<b<b^{*}(k) $ small enough. Then for all $f\in{L^{2}_{\rho,b}} $ with
	\begin{equation*}
		\langle f,P_{j}\rangle_{b}=0, \quad 0\leq{j}\leq{k},
	\end{equation*}
there exists a unique solution $ u\in{H^{1}_{\rho,b}} $ to the equation
	\begin{equation}\label{le1.2}
	\left\{\begin{aligned}
&\tilde{H}_{b,k}u=f, \\
&\langle u,P_{j} \rangle_{b}=0,\quad \forall\ 0\leq{j}\leq{k},
	\end{aligned}\right.
\end{equation}
where $$\quad \tilde{H}_{b,k}u=(H_{b}-2k)u-be^{-\frac{b}{2}}m_{k}(b,z)\partial_{z}u(\sqrt{b}).$$ Moreover, the following estimates hold
\begin{equation}\label{le1.3}
	\begin{aligned}
\Arrowvert{u}\Arrowvert_{L^{2}_{\rho,b}}&+\Arrowvert\partial_{z}u\Arrowvert_{L^{2}_{\rho,b}}+\Arrowvert\Delta{u}\Arrowvert_{L^{2}_{\rho,b}}+\Arrowvert\Lambda{u}\Arrowvert_{L^{2}_{\rho,b}}\\
&+	\Arrowvert zu \Arrowvert_{L^{2}_{\rho,b}}+\Arrowvert z^{2}u\Arrowvert_{L^{2}_{\rho,b}}+\lvert\sqrt{b}\partial_{z}u(\sqrt{b})\rvert\lesssim\Arrowvert{f}\Arrowvert_{L^{2}_{\rho,b}}.
 \end{aligned}
\end{equation}
\end{lemma}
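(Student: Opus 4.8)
The plan is to set up a Lyapunov–Schmidt/variational argument on the closed subspace
\[
\HH_{b,k}:=\{u\in H^1_{\rho,b}:\langle u,P_j\rangle_b=0,\ 0\le j\le k\},
\]
and to exploit the explicit subtracted boundary term $be^{-b/2}m_k(b,z)\partial_z u(\sqrt b)$, whose only role is precisely to make $\tilde H_{b,k}$ map $\HH_{b,k}$ into the orthogonal complement of $\spn\{P_0,\dots,P_k\}$ (this is why $m_k$ is built from $M_{b,k}^{-1}\vec P_k(\sqrt b)$: testing $(H_b-2k)u$ against $P_j$ produces, after integration by parts and using \eqref{eq:7}, a boundary contribution proportional to $\partial_z u(\sqrt b)P_j(\sqrt b)\sqrt b^{\,2}e^{-b/2}$, which is exactly cancelled). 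First I would record the bilinear form $B(u,\varphi)=\langle\partial_z u,\partial_z\varphi\rangle_{\rho,b}+\langle\Lambda u-2ku,\varphi\rangle_{\rho,b}$ associated to $(H_b-2k)$ on $H^1_{\rho,b}$ (no boundary term survives in $B$ since both $u$ and $\varphi$ vanish at $\sqrt b$), and check that on $\HH_{b,k}$ it is coercive: by the harmonic-oscillator spectral gap \eqref{eq:12} applied in the limiting space, together with the estimate $M_{b,k}=\Id+O(b^{3/2})$ from \eqref{eq:15} to control the discrepancy between $\{P_j\}$-orthogonality and genuine eigenfunction orthogonality on $\Omega(\sqrt b)$, one gets $B(u,u)\ge \tfrac12\|\partial_z u\|^2_{L^2_{\rho,b}}+\|u\|^2_{L^2_{\rho,b}}$ for $0<b<b^*(k)$. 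Lax–Milgram then yields a unique $u\in\HH_{b,k}$ with $B(u,\varphi)=\langle f,\varphi\rangle_b$ for all $\varphi\in\HH_{b,k}$, and the orthogonality hypothesis $\langle f,P_j\rangle_b=0$ plus the cancellation property of $m_k$ upgrades this weak identity to the full equation $\tilde H_{b,k}u=f$ with the stated orthogonality.

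Next I would bootstrap regularity and derive the quantitative bound \eqref{le1.3}. The basic $L^2_{\rho,b}$ estimate $\|\partial_z u\|_{L^2_{\rho,b}}+\|u\|_{L^2_{\rho,b}}\lesssim\|f\|_{L^2_{\rho,b}}$ is immediate from coercivity by taking $\varphi=u$. For the weighted norms $\|zu\|_{L^2_{\rho,b}}$, $\|z^2u\|_{L^2_{\rho,b}}$, $\|\Lambda u\|_{L^2_{\rho,b}}$, $\|\Delta u\|_{L^2_{\rho,b}}$ I would use the elliptic equation itself: $\Delta u=\Lambda u-2ku-f+be^{-b/2}m_k\partial_z u(\sqrt b)$, pair with suitable weighted multipliers ($z^2 u\rho$, etc.) and integrate by parts, absorbing lower-order weighted terms via the harmonic-oscillator identities $\|zu\|^2_{L^2_{\rho,0}}\sim\|\partial_z u\|^2_{L^2_{\rho,0}}+\|u\|^2_{L^2_{\rho,0}}$ type inequalities (these follow from \eqref{eq:7} and integration by parts). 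The boundary quantity $|\sqrt b\,\partial_z u(\sqrt b)|$ must be controlled along the way: I would obtain it by a trace/Hardy estimate near $z=\sqrt b$ — multiply the equation by a cutoff localized to $z\in[\sqrt b, 2\sqrt b]$ and integrate, using $u(\sqrt b)=0$ so that $u(z)=\int_{\sqrt b}^z\partial_z u$, which gives $|\partial_z u(\sqrt b)|\lesssim b^{-1/2}(\|\partial_z u\|_{L^2_{\rho,b}}+\|f\|_{L^2_{\rho,b}}+\dots)$ hence $|\sqrt b\,\partial_z u(\sqrt b)|\lesssim\|f\|_{L^2_{\rho,b}}$; circularity with the $be^{-b/2}m_k\partial_z u(\sqrt b)$ term is harmless because that term carries a full power of $b$ and $\|m_k(b,\cdot)\|_{L^2_{\rho,b}}=O(1)$, so it is absorbed for $b$ small.

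The main obstacle is the interplay between the fixed-sign harmonic-oscillator spectral gap \eqref{eq:12} — which lives on $\RR^3$ with orthogonality to the \emph{true} eigenfunctions — and the perturbed setting on $\Omega(\sqrt b)$ with orthogonality only to $P_j$ restricted to $\Omega(\sqrt b)$, plus the extra non-self-adjoint-looking boundary term. The delicate point is to show the perturbation does not destroy coercivity: one has to bound the error between $\langle u,P_j\rangle_b=0$ and $\langle \bar u,P_j\rangle_0=0$ for the extension $\bar u$ (set $\bar u=0$ on $B_3(\sqrt b)$), showing this costs only $O(b^{3/2})$ relative to $\|u\|^2_{H^1_{\rho,b}}$ thanks to \eqref{eq:15} and the vanishing $u(\sqrt b)=0$, so the gap $2k+2$ degrades to, say, $2k+3/2>2k$ — enough to keep $H_b-2k$ invertible on the orthogonal complement. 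Everything else is integration by parts with the explicit weights; once coercivity is secured, uniqueness, existence and the estimate \eqref{le1.3} follow by the scheme above.
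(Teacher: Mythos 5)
Your proposal follows essentially the same route as the paper: Lax--Milgram/minimisation on the constrained subspace, coercivity from the harmonic-oscillator spectral gap \eqref{eq:12} applied to the extension by zero (legitimate since $u(\sqrt b)=0$), identification of the Lagrange multiplier with the $m_k$ boundary term via the boundary contribution $be^{-b/2}\partial_z u(\sqrt b)P_j(\sqrt b)$, and then weighted multiplier identities plus an absorbable trace estimate for $|\sqrt b\,\partial_z u(\sqrt b)|$ (the paper gets the trace by pairing $H_bu$ with $1/z$ rather than by a local cutoff, but both work). Two small slips to fix: the weak form of $H_b-2k$ is $B(u,\varphi)=\langle\partial_z u,\partial_z\varphi\rangle_b-2k\langle u,\varphi\rangle_b$, since by \eqref{eq:7} the Gaussian weight already encodes the drift $\Lambda$ — your formula adds $\langle\Lambda u,\varphi\rangle_b$ a second time; and the coercivity constant in $B(u,u)\gtrsim\|\partial_z u\|^2_{L^2_{\rho,b}}+\|u\|^2_{L^2_{\rho,b}}$ must degrade with $k$ (roughly $\tfrac{1}{k+1}$ in front of $\|\partial_z u\|^2_{L^2_{\rho,b}}$), as the claimed uniform $\tfrac12$ already fails at $k=1$ against the gap $2k+2$.
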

Before giving the proof of lemma \ref{le:1}, we firstly recall a weighted Poincar\'{e}-type  inequality which will be used frequently in the following proof.
\begin{lemma}\label{le:2}
	Suppose $ u $ be a radial function on $ \RR^{3} $ and $ u\in{H^{1}_{\rho,0}} $, then we have
	\begin{equation*}
		\int_{0}^{+\infty}(zu)^{2}\rho{z}^{2}dz\lesssim\int_{0}^{+\infty}u^{2}\rho{z}^{2}dz+\int_{0}^{+\infty}(\partial_{z}u)^{2}\rho{z}^{2}dz.
	\end{equation*}
	\end{lemma}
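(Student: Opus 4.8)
The plan is to read the inequality as the statement that, in the Gaussian-weighted space with measure $\rho z^2\,dz$, multiplication by $z$ is controlled by the identity together with $\partial_z$; the proof is a single integration by parts exploiting the Gaussian identity $z\rho(z)=-\rho'(z)$. First I would reduce to the case of a smooth $u$ supported in a fixed ball $\{z\le R\}$: the general case $u\in H^1_{\rho,0}$ follows by replacing $u$ with $u\chi_R$ (with $\chi_R$ a cutoff localizing to $\{z\le R\}$, $|\chi_R'|\lesssim R^{-1}$), running the argument below, and letting $R\to\infty$, using monotone convergence on the left-hand side and the fact that the error terms generated by $\chi_R'$ are $\int_R^{2R}(\cdots)\rho\,dz$, which tend to $0$ by Gaussian decay of $\rho$.

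For such $u$, writing the measure out and using $z\rho=-\rho'$,
\begin{equation*}
\int_0^{+\infty}(zu)^2\rho z^2\,dz=\int_0^{+\infty}z^3u^2(z\rho)\,dz=-\int_0^{+\infty}z^3u^2\rho'\,dz,
\end{equation*}
and I integrate by parts. The boundary term at $z=0$ vanishes because of the factor $z^3$, and the one at $z=+\infty$ vanishes by the support (or decay) of $u$; this gives
\begin{equation*}
\int_0^{+\infty}(zu)^2\rho z^2\,dz=\int_0^{+\infty}\big(3z^2u^2+2z^3u\,\partial_z u\big)\rho\,dz=3\int_0^{+\infty}u^2\rho z^2\,dz+2\int_0^{+\infty}(z^2u)(z\,\partial_z u)\rho\,dz.
\end{equation*}

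It then remains to absorb the cross term. Applying Young's inequality to the integrand written as $(z^2u\sqrt\rho)(z\,\partial_z u\sqrt\rho)$,
\begin{equation*}
2\left|\int_0^{+\infty}(z^2u)(z\,\partial_z u)\rho\,dz\right|\le\frac12\int_0^{+\infty}(zu)^2\rho z^2\,dz+2\int_0^{+\infty}(\partial_z u)^2\rho z^2\,dz,
\end{equation*}
and moving the first term on the right into the left-hand side of the previous identity yields
\begin{equation*}
\int_0^{+\infty}(zu)^2\rho z^2\,dz\le6\int_0^{+\infty}u^2\rho z^2\,dz+4\int_0^{+\infty}(\partial_z u)^2\rho z^2\,dz,
\end{equation*}
which is the asserted bound with explicit constants. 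The only genuinely non-routine point is the truncation/approximation step that legitimizes the integration by parts for a general element of $H^1_{\rho,0}$; the rest is an elementary one-variable computation, and the same scheme in fact iterates to control $\|z^ju\|_{L^2_{\rho,0}}$ by higher Sobolev norms if needed later.
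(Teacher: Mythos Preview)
Your argument is correct and follows essentially the same route as the paper: integrate by parts using $z\rho=-\rho'$, then absorb the cross term via Young's inequality with the same splitting, arriving at the identical constants $6$ and $4$. The only difference is that you spell out the truncation/approximation step to justify the integration by parts for general $u\in H^1_{\rho,0}$, which the paper leaves implicit.
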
 
\begin{proof}
	Integrating by parts and note that $ \partial_{z}\rho=-z\rho $, we have
\begin{align*}
\int_{0}^{+\infty}(zu)^{2}\rho z^{2}dz&=\int_{0}^{+\infty}z^{3}u^{2}d(-\rho)=3\int_{0}^{+\infty}u^{2}\rho{z}^{2}dz+2\int_{0}^{+\infty}z^{3}\partial_{z}u{u}\rho dz\\
&\leq 3\int_{0}^{+\infty}u^{2}\rho{z}^{2}dz+\frac{1}{2}\int_{0}^{+\infty}(zu)^{2}\rho{z}^{2}dz+2\int_{0}^{+\infty}(\partial_{z}u)^{2}\rho z^{2}dz.
\end{align*}
Hence,
\begin{equation*}
	\int_{0}^{+\infty}(zu)^{2}\rho z^{2}dz\leq6\int_{0}^{+\infty}u^{2}\rho{z}^{2}dz+4\int_{0}^{+\infty}(\partial_{z}u)^{2}\rho z^{2}dz.
\end{equation*}
	\end{proof}
Now we come back to the proof of Lemma \ref{le:1}.
\begin{proof}[Proof of Lemma \ref{le:1}]  We divide the proof into two steps. \\ 
 {\it Step 1. The existence and uniqueness. }	We will show the existence and uniqueness of \eqref{le1.2} in $ H^{1}_{\rho,b} $ via a standard Lax-Milgram argument. For $ k\in\NN $, we consider the energy functional $ \mathcal{F}:\ H^{1}_{\rho,b}\rightarrow\RR $ where
 \begin{equation*}
 	\mathcal{F}(u)=\int_{\sqrt{b}}^{+\infty}|\partial_{z}u|^{2}\rho{z}^{2}dz-2k\int_{\sqrt{b}}^{+\infty}u^{2}\rho{z}^{2}dz-\langle f,u \rangle_{b},
 \end{equation*}
 over the constraint set 
 \begin{equation*}
     \mathcal{C}=\{u\in H^{1}_{\rho,b}|\ \langle u,P_{j}\rangle_{b}=0,\quad \forall\ 0\leq{j}\leq{k} \}.
 \end{equation*}
Let
 \begin{equation*}
 	I_{b}=\inf\limits_{u\in{\mathcal{C}}} \mathcal{F}(u).
 \end{equation*}
Note that the spectral gap estimate (\ref{eq:12}) and Lemma \ref{le:2} imply that for all ${v}\in{H}^{1}_{\rho,0}  $ with $ \langle v,P_{j}\rangle={0},\ \forall\ 0\leq{j}\leq{k} $, we have
\begin{equation}\label{le1.4}
	\int_{0}^{+\infty}|\partial_{z}v|^{2}\rho{z}^{2}dz-2k\int_{0}^{+\infty}{v}^{2}\rho{z}^{2}dz\gtrsim\int_{0}^{+\infty}(1+|z|^{2})|v|^{2}\rho{z}^{2}dz.
\end{equation}
Applying this to $ v(z)=u(z)\textbf{1}_{z\geq\sqrt{b}}\in H^{1}_{\rho,0} $, we conclude that $ I_b>-\infty $ by Cauchy inequality, and  any minimizing sequence $ u_{n} $ is uniformly bounded in $ H^{1}_{\rho,b} $. Therefore, up to a subsequence and due to the compactness of the Sobolev embedding  $ H^{1}_{\rho,b}\hookrightarrow\hookrightarrow\ L^{2}_{\rho,b} $,  there exists $ u\in{H}^{1}_{\rho,b} $, such that
\begin{equation*}
	u_{n}\rightharpoonup{u}\quad  in\ H^{1}_{\rho,b},\ \ \mathrm{and}\ 
		 u_{n}\rightarrow{u}\quad in\ L^{2}_{\rho,b}.
\end{equation*}
In particular, since $ H^{1}(\RR)\subset{L^{\infty}(\RR)} $, we obtain
\begin{equation}\label{le1.6}
	u(\sqrt{b})=0,\quad \langle u,P_{j}\rangle_{b}=0\quad \forall\ 0\leq{j}\leq{k}.
\end{equation}
Thus $ u $ is a minimizer of $ \mathcal{F} $ over $\mathcal{C}$. By Lagrange multiplier theorem, there exists $ \lambda_{j}\in{\RR} $ such that
\begin{equation}\label{le1.7}
	(H_{b}-2k)u=f-\sum_{j=0}^{k}\lambda_{j}P_{j}.
	\end{equation}
Standard regularity theory of elliptic equation implies that $ u\in H_{loc}^{2}(z\geq\sqrt{b}) $. Then taking inner product with $ P_{j} $ on both sides of (\ref{le1.7}), we have
\begin{equation}\label{n1}
    \langle H_{b}, P_{j}\rangle_{b}=\langle f,P_{j}\rangle_{b}-\sum_{i=0}^{k}\lambda_{i}\langle P_{i},P_{j}\rangle_{b}.
\end{equation}
Direct computation gives
\begin{equation*}
\begin{aligned}
\langle H_{b}u,P_{j} \rangle_{b}=&\int_{\sqrt{b}}^{+\infty}\left(-\frac{1}{\rho{z}^{2}}\partial_{z}(\rho{z}^{2}\partial_{z}{u})P_{j}\rho{z}^{2}\right)dz\\
=&-\rho{z}^{2}\partial_{z}uP_{j}\bigg|^{+\infty}_{\sqrt{b}}+\int^{+\infty}_{\sqrt{b}}\rho{z}^{2}\partial_{z}u\partial_{z}P_{j}dz\\
=&\rho(\sqrt{b})b\partial_{z}u(\sqrt{b})P_{j}(\sqrt{b})+\langle u,H_{b}P_{j}\rangle_{b}\\
=&e^{-\frac{b}{2}}b\partial_{z}u(\sqrt{b})P_{j}(\sqrt{b})+2j\langle u,P_{j}\rangle_{b}\\
=&e^{-\frac{b}{2}}b\partial_{z}u(\sqrt{b})P_{j}(\sqrt{b}),
\end{aligned}
\end{equation*}
which together with (\ref{n1}), gives $$e^{-\frac{b}{2}}b\partial_{z}u(\sqrt{b})P_{j}(\sqrt{b})=-\sum_{i=0}^{k}\lambda_{i}\langle P_{i},P_{j}\rangle_{b}.$$ Combining with  (\ref{eq:13}), we obtain
$$(\lambda_{i})_{0\leq{i}\leq{k}}=-(M^{-1}_{b,k}\vec{{P}_{k}}(\sqrt{b}))e^{-\frac{b}{2}}b\partial_{z}u(\sqrt{b}).$$

Hence
\begin{equation}\label{le1.9}
	(H_{b}-2k)u=f+m_{k}(b,z)be^{-\frac{b}{2}}\partial_{z}u(\sqrt{b}),
	\end{equation}
which is exactly the equation (\ref{le1.2}).
Now observe that by the definition of $ m_{k}(b,z) $  and (\ref{le1.6}), we have
\begin{equation}\label{le1.10}
	\langle m_{k}(b,z),u\rangle_{b}=0.
\end{equation}
Then taking inner product with $ u $ on both sides of (\ref{le1.9}), combining with (\ref{le1.10}) and the identity $ \langle H_{b}u,u \rangle_{b}=\Arrowvert\partial_{z}u\Arrowvert^{2}_{L^{2}_{\rho,b}} $, we get 
\begin{equation}\label{le1.11}
	\Arrowvert\partial_{z}u\Arrowvert^{2}_{L^{2}_{\rho,b}}-2k\Arrowvert{u}\Arrowvert^{2}_{L^{2}_{\rho,b}}=\langle f,u \rangle_{b}.
\end{equation}
Applying (\ref{le1.4}) to $ v(z)=u(z)\textbf{1}_{z\geq\sqrt{b}}\in H^{1}_{\rho,0} $ and together with (\ref{le1.11}) yields
\begin{equation*}
	\Arrowvert{u}\Arrowvert^{2}_{L^{2}_{\rho,b}}\lesssim 	\Arrowvert\partial_{z}u\Arrowvert^{2}_{L^{2}_{\rho,b}}-2k\Arrowvert{u}\Arrowvert^{2}_{L^{2}_{\rho,b}}=\langle f,u \rangle_{b}\leq\Arrowvert{f}\Arrowvert_{L^{2}_{\rho,b}}\Arrowvert{u}\Arrowvert_{L^{2}_{\rho,b}}.
\end{equation*}
Hence
\begin{equation}\label{le1.12}
\Arrowvert{u}\Arrowvert_{L^{2}_{\rho,b}}\lesssim\Arrowvert{f}\Arrowvert_{L^{2}_{\rho,b}},
\end{equation}
which  implies the uniqueness of the solution to the equation (\ref{le1.2}).

{\it Step 2. Proof of (\ref{le1.3}).} Firstly, from (\ref{le1.11})  and (\ref{le1.12}), 
\begin{equation}\label{le1.13}
	\Arrowvert\partial_{z}u\Arrowvert_{L^{2}_{\rho,b}}\lesssim \Arrowvert{f}\Arrowvert_{L^{2}_{\rho,b}}.
\end{equation}
Then by Lemma \ref{le:1}, we obtain
\begin{equation}\label{n1}
	\Arrowvert zu \Arrowvert_{L^{2}_{\rho,b}}\lesssim \Arrowvert f\Arrowvert_{L^{2}_{\rho,b}}.
\end{equation}

We now show the estimate of $ |\sqrt{b}\partial_{z}u(\sqrt{b})| $.  Integration by parts gives
\begin{equation}\label{le1.14}
    \begin{aligned}
        \bigg\langle H_{b}u,\frac{1}{z}\bigg\rangle_{b}&=-\int_{\sqrt{b}}^{+\infty}\frac{1}{z}d(\rho{z}^{2}\partial_{z}u)\\
        &=e^{-\frac{b}{2}}\sqrt{b}\partial_{z}u(\sqrt{b})-\int_{\sqrt{b}}^{+\infty}\rho\partial_{z}udz\\
        &=e^{-\frac{b}{2}}\sqrt{b}\partial_{z}u(\sqrt{b})-\int_{\sqrt{b}}^{+\infty}\rho{z}udz.
    \end{aligned}
\end{equation}
Hence 
\begin{equation*}
    \begin{aligned}
        |\sqrt{b}\partial_{z}u(\sqrt{b})|&\lesssim\bigg|	\bigg\langle H_{b}u,\frac{1}{z}\bigg\rangle_{b}\bigg|+\bigg| \int_{\sqrt{b}}^{+\infty}\rho{z}udz\bigg|\\
        &\lesssim\Arrowvert{H_{b}}u\Arrowvert_{L^{2}_{\rho,b}}+\Arrowvert{u}\Arrowvert_{L^{2}_{\rho,b}}\\
        &\lesssim\Arrowvert{f}+2ku+be^{-\frac{b}{2}}m_{k}(b,z)\partial_{z}u(\sqrt{b})\Arrowvert_{L^{2}_{\rho,b}}+\Arrowvert{f}\Arrowvert_{L^{2}_{\rho,b}}\\
        &\lesssim b|\partial_{z}u(\sqrt{b})|+\Arrowvert{f}\Arrowvert_{L^{2}_{\rho,b}}.\\
            \end{aligned}
\end{equation*}
which implies
\begin{equation}\label{le1.15}
	|\sqrt{b}\partial_{z}u(\sqrt{b})|\lesssim \Arrowvert{f}\Arrowvert_{L^{2}_{\rho,b}}
\end{equation}
for $ 0<b<b^{*} $ small enough and thus
\begin{equation}\label{le1.16}
	\Arrowvert{H_{b}u}\Arrowvert_{L^{2}_{\rho,b}}\lesssim\Arrowvert{f}\Arrowvert_{L^{2}_{\rho,b}}.
\end{equation}
We then estimate $ \Arrowvert\Lambda{u}\Arrowvert_{L^{2}_{\rho,b}} $. 
Firstly, integrating by parts again yields
\begin{equation*}
	\begin{aligned}
		\langle z\partial_{z}u,H_{b}u\rangle_{b}&=\int_{\sqrt{b}}^{+\infty}z\partial_{z}u\big(-\frac{1}{\rho{z}^{2}}\partial_{z}(\rho{z}^{2}\partial_{z}u)\big)\rho{z}^{2}dz\\
		&=-z\partial_{z}u\rho{z}^{2}\partial_{z}u\bigg|^{+\infty}_{\sqrt{b}}+\int_{\sqrt{b}}^{+\infty}(z\partial_{zz}u+\partial_{z}u)\rho{z}^{2}\partial_{z}udz\\
		&=b^{\frac{3}{2}}e^{-\frac{b}{2}}(\partial_{z}u)^{2}(\sqrt{b})+\int_{\sqrt{b}}^{+\infty}(\partial_{z}u)^{2}\rho{z}^{2}dz+\int_{\sqrt{b}}^{+\infty}\rho{z}^{3}\partial_{z}u\partial_{zz}udz\\
		&=\frac{1}{2}b^{\frac{3}{2}}e^{-\frac{b}{2}}(\partial_{z}u)^{2}(\sqrt{b})-\frac{1}{2}\int_{\sqrt{b}}^{+\infty}(\partial_{z}u)^{2}\rho{z}^{2}dz+\frac{1}{2}\int_{\sqrt{b}}^{+\infty}(z\partial_{z}u)^{2}\rho{z}^{2}dz
	\end{aligned}
\end{equation*}
Hence by (\ref{le1.13}), (\ref{le1.15}), (\ref{le1.16}) and Cauchy-Schwarz inequality, 
\begin{equation*}	\Arrowvert\Lambda{u}\Arrowvert^{2}_{L^{2}_{\rho,b}}\lesssim\Arrowvert{f}\Arrowvert^{2}_{L^{2}_{\rho,b}}+\Arrowvert{\Lambda{u}}\Arrowvert_{L^{2}_{\rho,b}}\Arrowvert{H_{b}u}\Arrowvert_{L^{2}_{\rho,b}}\lesssim\Arrowvert{f}\Arrowvert^{2}_{L^{2}_{\rho,b}}+\Arrowvert{f}\Arrowvert_{L^{2}_{\rho,b}}\Arrowvert{\Lambda{u}}\Arrowvert_{L^{2}_{\rho,b}},
\end{equation*}
from which we  deduce that
\begin{equation}\label{n2}
	\Arrowvert\Lambda{u}\Arrowvert_{L^{2}_{\rho,b}}\lesssim\Arrowvert{f}\Arrowvert_{L^{2}_{\rho,b}}.
\end{equation}
Then applying Lemma \ref{le:2} to $zu$ and combining with (\ref{le1.12}), (\ref{n1}), (\ref{n2}), we get
\begin{equation*}
    \lVert z^{2}u\rVert_{L^{2}_{{\rho},b}}\lesssim \lVert f\rVert_{L^{2}_{\rho,b}}.
\end{equation*}
Finally, from (\ref{le1.16}) and (\ref{n2}), we also have
\begin{equation*}
\Arrowvert\Delta{u}\Arrowvert_{L^{2}_{\rho,b}}\lesssim\Arrowvert{f}\Arrowvert_{L^{2}_{\rho,b}}.
\end{equation*}
Then we finished the proof of (\ref{le1.3}).
 \end{proof} 
\subsection{Diagonalisation of $H_{b}$} 
We are now in a position to diagonalise $ H_{b} $ for frequencies $ 0\leq{k}\leq{K} $ under the assumption that $ 0<b<b^{*}(K) $ small enough.   The proof relies on a Lyapunov-Schmidt type argument which has already been used   to deal with the similar problem in two dimension in \cite{MP}.
\begin{proposition}[Diagonalisation of $ H_{b} $]
	Let $ K\in\NN $, then for any $ 0<b<b^*{(K)}$ small enough, $H_{b}$ admits a sequence of eigenvalues and eigenfunctions which satisfy
	\begin{equation}\label{pr2.3:1}
		H_{b}\psi_{b,k}=\lambda_{b,k}\psi_{b,k},\quad \psi_{b,k}\in{H^{1}_{\rho,b}}, \quad \forall\ 0\leq{k}\leq{K}.
	\end{equation}
\begin{itemize}
	\item Expansion of eigenvalues:\\
	\begin{equation}\label{pr2.3:2}
		\lambda_{b,k}=2k+C^{2}_{k}\sqrt{b}+O(b), \quad |\partial_{b}\lambda_{b,k}|\lesssim\frac{1}{\sqrt{b}}.
	\end{equation}
\item Expansion of eigenfunctions:\\
\begin{equation}\label{pr2.3:3}
		\left\{\begin{aligned}
&\psi_{b,k}(z)=T_{b,k}(z)+\tilde{\psi}_{b,k}(z),\\
&T_{b,k}(z)=P_{k}(z)\bigg(\frac{1}{z}-\frac{1}{\sqrt{b}}\bigg)+\sum_{j=0}^{k-1}\mu_{b,jk}P_{j}(z)\bigg(\frac{1}{z}-\frac{1}{\sqrt{b}}\bigg).
	\end{aligned}\right.
\end{equation}
with
\begin{equation}\label{pr2.3:4}
	\mu_{b,jk}=\frac{\langle Q_{k},P_{j}\rangle_{0}}{2(j-k)}\sqrt{b}+O(b),\quad |\partial_{b}\mu_{b,jk}|\lesssim\frac{1}{\sqrt{b}},
\end{equation}
here $ Q_{k}=-\frac{1}{z}P_{k}+\frac{2}{z^{2}}\partial_{z}P_{k} $.\\
Moreover, the following estimates hold:
\begin{align}\label{pr2.3:5}	\Arrowvert\tilde{\psi}_{b,k}\Arrowvert_{H^{1}_{\rho,b}}+\Arrowvert\Delta\tilde{\psi}_{b,k}\Arrowvert_{L^{2}_{\rho,b}}&+\Arrowvert\Lambda\tilde{\psi}_{b,k}\Arrowvert_{L^{2}_{\rho,b}}+\Arrowvert z\tilde{\psi}_{b,k}\Arrowvert_{L^{2}_{\rho,b}}\nonumber\\
    &+\Arrowvert z^{2}\tilde{\psi}_{b,k}\Arrowvert_{L^{2}_{\rho,b}}+\sqrt{b}|\partial_{z}\tilde{\psi}_{b,k}(\sqrt{b})|\lesssim 1,
	\end{align}
    and
\begin{equation}\label{pr2.3.6}
	\Arrowvert \partial_{z}\partial_{b}\tilde{\psi}_{b,k}\Arrowvert_{L^{2}_{\rho,b}}+\Arrowvert\partial_{b}\tilde{\psi}_{b,k}\Arrowvert_{L^{2}_{\rho,b}}+\sqrt{b}|\partial_{z}\partial_{b}\tilde{\psi}_{b,k}(\sqrt{b})|\lesssim \frac{1}{b}.
\end{equation}
\item Spectral gap estimate: if $ u\in H^{1}_{\rho,b} $ with
\begin{equation*}
	\langle u,\psi_{b,j}\rangle_{b}=0,\quad \forall\ 0\leq{j}\leq{k},
\end{equation*}
then 
\begin{equation}\label{pr2.3.7}
	\Arrowvert \partial_{z}u\Arrowvert^{2}_{L^{2}_{\rho,b}}\geq(2k+2+O(\sqrt{b}))\Arrowvert u\Arrowvert^{2}_{L^{2}_{\rho,b}}.
\end{equation}
\item Some further estimates: for any $ 0\leq{j}\leq{k} $,
\begin{equation}\label{pr2.3.8}
	\langle \psi_{b,k},\psi_{b,i}\rangle_{b}=\frac{1}{b}(\delta_{ik}+O(\sqrt{b})),
\end{equation}
and
\begin{equation}\label{pr2.3.9}
	\langle b\partial_{b}\psi_{b,k},\psi_{b,i}\rangle_{b}=-\frac{1}{2b}(\delta_{ik}+O(\sqrt{b})).
\end{equation}
Moreover,
\begin{equation}\label{pr2.3.10}
	\Arrowvert 2b\partial_{b}\psi_{b,k}+\psi_{b,k}\Arrowvert_{L^{2}_{\rho,b}}+\Arrowvert H_{b}(2b\partial_{b}\psi_{b,k}+\psi_{b,k})\Arrowvert_{L^{2}_{\rho,b}}\lesssim{1},
	\end{equation}
\begin{equation}\label{pr2.3.11}
	\Arrowvert\Lambda\psi_{b,0}\Arrowvert_{L^{2}_{\rho,b}}\lesssim{1},\quad \Arrowvert\Lambda\psi_{b,k}\Arrowvert_{L^{2}_{\rho,b}}\lesssim\frac{1}{\sqrt{b}},\quad  \forall\ k\geq{1},
\end{equation}
\begin{equation}\label{pr2.3.12}
	\Arrowvert z\psi_{b,k}\Arrowvert_{L^{2}_{\rho,b}}\lesssim\frac{1}{\sqrt{b}},\quad \Arrowvert z^{2}\psi_{b,k}\Arrowvert_{L^{2}_{\rho,b}}\lesssim\frac{1}{\sqrt{b}},
	\end{equation}
\begin{align}\label{pr2.3.13}
	\bigg\lVert \Lambda\psi_{b,k}&-2k\bigg(\psi_{b,k}-\frac{2k+1}{2k}\frac{A_{k-1}}{A_{k}}\psi_{b,k-1}\bigg)\bigg\rVert_{L^{2}_{\rho,b}}\nonumber\\
    &+\bigg\lVert H_{b}\bigg(\Lambda\psi_{b,k}-2k\bigg(\psi_{b,k}-\frac{2k+1}{2k}\frac{A_{k-1}}{A_{k}}\psi_{b,k-1}\
    \bigg)\bigg)\bigg\rVert_{L^{2}_{\rho,b}}\lesssim{1}.
\end{align}
\end{itemize}
\end{proposition}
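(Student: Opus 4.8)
The plan is to run a Lyapunov--Schmidt argument anchored at the harmonic oscillator $-\Delta+\Lambda$, with Lemma~\ref{le:1} playing the role of the near-inverse of $H_b-2k$, exactly in the spirit of the two-dimensional construction. The structural fact that makes the ansatz \eqref{pr2.3:3} natural is that in dimension three $1/z$ is the radial harmonic profile, $\Delta(1/z)=0$ on $\{z>0\}$, so that the Leibniz rules $\Delta(fg)=f\Delta g+g\Delta f+2\nabla f\cdot\nabla g$ and $\Lambda(fg)=(\Lambda f)g+f\Lambda g$ together with \eqref{eq:9} give $H_b(P_j/\sqrt b)=\tfrac{2j}{\sqrt b}P_j$ and $H_b(P_j/z)=\tfrac{2j}{z}P_j+Q_j$, hence
\[
H_b\Big(P_j\big(\tfrac1z-\tfrac1{\sqrt b}\big)\Big)=2jP_j\big(\tfrac1z-\tfrac1{\sqrt b}\big)+Q_j .
\]
Writing $\psi_{b,k}=T_{b,k}+\tilde\psi_{b,k}$ with $T_{b,k}$ as in \eqref{pr2.3:3} (so $T_{b,k}(\sqrt b)=0$ and the Dirichlet condition reduces to $\tilde\psi_{b,k}\in H^1_{\rho,b}$), the eigenvalue equation $H_b\psi_{b,k}=\lambda_{b,k}\psi_{b,k}$ becomes
\[
\tilde H_{b,k}\tilde\psi_{b,k}=F_{b,k}+(\lambda_{b,k}-2k)\tilde\psi_{b,k}-be^{-b/2}m_k(b,z)\partial_z\tilde\psi_{b,k}(\sqrt b),
\]
where $\tilde H_{b,k}$ is the operator of Lemma~\ref{le:1} and, with the convention $\mu_{b,kk}=1$,
\[
F_{b,k}=-(H_b-\lambda_{b,k})T_{b,k}=\sum_{j\le k}(\lambda_{b,k}-2j)\mu_{b,jk}P_j\big(\tfrac1z-\tfrac1{\sqrt b}\big)-\sum_{j\le k}\mu_{b,jk}Q_j .
\]
I would close this as a fixed point in $(\lambda_{b,k},(\mu_{b,jk})_{0\le j<k},\tilde\psi_{b,k})$: for admissible finite-dimensional data, Lemma~\ref{le:1} produces the unique $\tilde\psi_{b,k}\perp P_0,\dots,P_k$, and the $k+1$ solvability conditions $\langle\,\cdot\,,P_i\rangle_b=0$ on the right-hand side pin down the $k+1$ parameters; a contraction estimate closes the loop for $0<b<b^*(k)$.

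\textbf{Leading-order identification.} From \eqref{eq:10a} and \eqref{eq:15} one has $\langle P_j(\tfrac1z-\tfrac1{\sqrt b}),P_i\rangle_b=-\tfrac{\delta_{ij}}{\sqrt b}+O(1)$, while $\langle m_k,P_i\rangle_b=P_i(\sqrt b)$ by the definition \eqref{eq:16}; thus $\tilde\psi_{b,k}$ enters the solvability conditions only through the $O(\sqrt b)$ quantity $be^{-b/2}\partial_z\tilde\psi_{b,k}(\sqrt b)P_i(\sqrt b)$ (using $\sqrt b|\partial_z\tilde\psi_{b,k}(\sqrt b)|\lesssim1$ from \eqref{le1.3}). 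The condition with $i=k$ reads $-\tfrac{\lambda_{b,k}-2k}{\sqrt b}-\langle Q_k,P_k\rangle_0=O(\sqrt b)$; an integration by parts (using $P_k(0)=C_k$ and $\partial_z\rho=-z\rho$) gives $\langle Q_k,P_k\rangle_0=-C_k^2$, whence $\lambda_{b,k}=2k+C_k^2\sqrt b+O(b)$. The conditions with $i<k$ read $-\tfrac{2(k-i)}{\sqrt b}\mu_{b,ik}-\langle Q_k,P_i\rangle_0=O(\sqrt b)$, i.e. \eqref{pr2.3:4}. With these choices one checks $\|F_{b,k}\|_{L^2_{\rho,b}}\lesssim1$: the only $O(1)$ contribution is $Q_k$, while every other term carries a factor $|\lambda_{b,k}-2j|$ or $\mu_{b,jk}$ of size $\lesssim\sqrt b$ against $\|P_j(\tfrac1z-\tfrac1{\sqrt b})\|_{L^2_{\rho,b}}\lesssim b^{-1/2}$; the remaining right-hand side terms are $O(\sqrt b)$. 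Then \eqref{le1.3} applied to the equation above yields all of \eqref{pr2.3:5}.

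\textbf{The main obstacle} will be the $b$-derivative bounds, i.e. the second inequalities in \eqref{pr2.3:2} and \eqref{pr2.3:4} and the whole of \eqref{pr2.3.6}. One differentiates the fixed-point system in $b$, keeping track of two effects. First, the weighted measure $\rho z^2\,dz$ lives on the moving domain $\Omega(\sqrt b)$ and the Dirichlet point $z=\sqrt b$ itself moves, so $\partial_b\langle\tilde\psi_{b,k},P_i\rangle_b=0$ produces a small non-orthogonality of $\partial_b\tilde\psi_{b,k}$ which must be reabsorbed into a finite-dimensional correction before Lemma~\ref{le:1} can be reapplied. Second, $\partial_b$ acting on the renormalisation $-1/\sqrt b$ costs a power $b^{-1}$. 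Differentiating the solvability relations of the previous step gives $|\partial_b\lambda_{b,k}|,|\partial_b\mu_{b,jk}|\lesssim b^{-1/2}$; feeding this into $\partial_b F_{b,k}$, whose worst term is $(\partial_b\lambda_{b,k})\,P_k(\tfrac1z-\tfrac1{\sqrt b})$ of size $b^{-1/2}\cdot b^{-1/2}=b^{-1}$, and applying \eqref{le1.3} to the differentiated equation gives $\|\partial_b\tilde\psi_{b,k}\|_{H^1_{\rho,b}}+\|\Delta\partial_b\tilde\psi_{b,k}\|_{L^2_{\rho,b}}+\sqrt b\,|\partial_z\partial_b\tilde\psi_{b,k}(\sqrt b)|\lesssim b^{-1}$, which contains \eqref{pr2.3.6}. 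Making all the error terms genuinely uniform down to the moving boundary is the delicate bookkeeping here.

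\textbf{The remaining conclusions} follow from the expansion \eqref{pr2.3:3}--\eqref{pr2.3.6}. For the spectral gap \eqref{pr2.3.7}: $H_b$ is self-adjoint on $L^2_{\rho,b}$ and $\langle H_bu,u\rangle_b=\|\partial_zu\|_{L^2_{\rho,b}}^2$ whenever $u(\sqrt b)=0$ (the boundary term vanishes), while a perturbation and eigenvalue-counting argument — comparison with the harmonic oscillator on $\RR^3$ together with $\lambda_{b,j}\to 2j$ — shows $\lambda_{b,0},\dots,\lambda_{b,K}$ are precisely the lowest $K+1$ eigenvalues; hence by the min-max principle $u\perp\psi_{b,0},\dots,\psi_{b,k}$ forces $\|\partial_zu\|_{L^2_{\rho,b}}^2\ge\lambda_{b,k+1}\|u\|_{L^2_{\rho,b}}^2=(2k+2+O(\sqrt b))\|u\|_{L^2_{\rho,b}}^2$. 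The identities \eqref{pr2.3.8}--\eqref{pr2.3.9} come from inserting $\psi_{b,k}=P_k(\tfrac1z-\tfrac1{\sqrt b})+(\text{lower order})$ and expanding $(\tfrac1z-\tfrac1{\sqrt b})^2=\tfrac1{z^2}-\tfrac2{z\sqrt b}+\tfrac1b$ against $\rho z^2\,dz$, the dominant term being $\tfrac1b\langle P_k,P_i\rangle_b=\tfrac1b(\delta_{ik}+O(b^{3/2}))$; \eqref{pr2.3.9} uses in addition $b\partial_b(-1/\sqrt b)=\tfrac12 b^{-1/2}$. For \eqref{pr2.3.10} one uses the cancellation $2b\partial_b\big(P_k(\tfrac1z-\tfrac1{\sqrt b})\big)+P_k(\tfrac1z-\tfrac1{\sqrt b})=\tfrac{P_k}{z}$, which is bounded in $L^2_{\rho,b}$ uniformly in $b$, together with $H_b(\tfrac{P_k}{z})=\tfrac{2k}{z}P_k+Q_k$ and the bounds \eqref{pr2.3:5}--\eqref{pr2.3.6} for the $\tilde\psi_{b,k}$-contribution. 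Finally \eqref{pr2.3.11}--\eqref{pr2.3.13} use $\Lambda\big(P_k(\tfrac1z-\tfrac1{\sqrt b})\big)=(\Lambda P_k)(\tfrac1z-\tfrac1{\sqrt b})-\tfrac{P_k}{z}$ combined with the Laguerre recursion \eqref{eq:11}: in the specific combination of \eqref{pr2.3.13} the $b^{-1/2}$-singular polynomial part cancels and only the bounded term $-\tfrac{P_k}{z}$ survives (for $k=0$ the polynomial $P_0$ is constant, so $\Lambda\psi_{b,0}\approx-C_0/z$ is already bounded), while $z^\ell P_k(\tfrac1z-\tfrac1{\sqrt b})$ has $L^2_{\rho,b}$-norm $\lesssim b^{-1/2}$ for $\ell=1,2$, giving \eqref{pr2.3.12}.
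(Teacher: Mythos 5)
Your construction of $\psi_{b,k}$ is essentially the paper's: the same ansatz $T_{b,k}+\tilde\psi_{b,k}$, the same use of $\Delta(1/z)=0$ to compute $(H_b-2k)T_{b,k}$, the same near-inverse $\tilde H_{b,k}$ from Lemma~\ref{le:1}, the same solvability conditions pinning down $\lambda_{b,k}$ and $\mu_{b,jk}$ via $\langle Q_k,P_k\rangle_0=-C_k^2$, and the same expansions for \eqref{pr2.3.8}--\eqref{pr2.3.13} (your cancellation $2b\partial_b(P_k\mathcal T_b)+P_k\mathcal T_b=P_k/z$ is exactly what underlies \eqref{pr2.3:47}--\eqref{pr2.3:48}). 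Two points, however, deserve comment.

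First, the $b$-derivative bounds. Your plan ``differentiating the solvability relations gives $|\partial_b\lambda_{b,k}|,|\partial_b\mu_{b,jk}|\lesssim b^{-1/2}$, then feed this into $\partial_bF_{b,k}$ and apply \eqref{le1.3}'' hides a circularity: the solvability relations involve $\partial_z\tilde\psi_{b,k}(\sqrt b)$ and inner products of $\tilde\psi_{b,k}$, so their $b$-derivatives already contain $\partial_b\tilde\psi_{b,k}$, which is the unknown you are trying to bound. The paper breaks this loop in two moves you do not supply: (i) an exact boundary formula for the eigenvalue derivative, $\partial_b\lambda_{b,k}=\tfrac{\sqrt b}{2}e^{-b/2}|\partial_z\psi_{b,k}(\sqrt b)|^2/\|\psi_{b,k}\|^2_{L^2_{\rho,b}}$ (obtained from \eqref{pr2.3:25}--\eqref{pr2.3:28}), which gives $|\partial_b\lambda_{b,k}|\lesssim b^{-1/2}$ using only the already-established bounds \eqref{pr2.3:5}; and (ii) a coupled, self-improving estimate in which $\|\partial_z\partial_b\tilde\psi_{b,k}\|_{L^2_{\rho,b}}$ is controlled up to $\sum_j|\partial_b\mu_{b,jk}|$ (via the spectral gap applied to an extension of $\partial_b\tilde\psi_{b,k}$ and the equation \eqref{pr2.3:34}) while, conversely, projecting \eqref{pr2.3:25} onto $P_j$ controls $|\partial_b\mu_{b,jk}|$ up to the former (see \eqref{pr2.3:45}--\eqref{pr2.3:46}); only the combination closes. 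Without some version of these two devices the derivative estimates in \eqref{pr2.3:2}, \eqref{pr2.3:4}, \eqref{pr2.3.6} are not yet proved.

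Second, the spectral gap \eqref{pr2.3.7}. You invoke min-max together with the claim that $\lambda_{b,0},\dots,\lambda_{b,K}$ are \emph{precisely} the lowest $K+1$ eigenvalues of $H_b$. That exhaustion statement is not free: one must rule out spurious low eigenvalues of the Dirichlet problem on $\Omega(\sqrt b)$ as $b\to0$, which requires its own comparison argument. The paper avoids this entirely by a direct perturbative route: for $u\perp_b\psi_{b,j}$ the expansion of $\psi_{b,j}$ forces $|\langle u,P_j\rangle_b|\lesssim\sqrt b\,\|u\|_{L^2_{\rho,b}}$ (see \eqref{pr2.3:23}), so the extension $v=u\mathbf 1_{z\ge\sqrt b}-\sum_j\langle u,P_j\rangle_b P_j/\langle P_j,P_j\rangle_0$ is admissible for the harmonic-oscillator gap \eqref{eq:12} and differs from $u$ by $O(\sqrt b)$ in all relevant norms. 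Your route is viable but would need the eigenvalue-counting lemma made precise; the paper's is shorter and self-contained, and I would recommend adopting it.
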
 
       \begin{proof} We divide the proof into five steps. \\
       {\it\bf Step 1. Lyapunov-Schmidt argument.} For simplity, we firstly introduce the unverisal profile
       		\begin{equation*}
       			\mathcal{T}_{b}(z):=\frac{1}{z}-\frac{1}{\sqrt{b}}
       		\end{equation*}
       	and consider $ T_{b,k}(z) $ which is given by (\ref{pr2.3:3}). Direct computation gives
       	\begin{equation*}
       		\begin{aligned}
       			H_{b}T_{b,k}&=H_{b}(P_{k}\mathcal{T}_{b})+\sum_{j=0}^{k-1}\mu_{b,jk}H_{b}(P_{j}\mathcal{T}_{b})\\
       			&=(H_{b}P_{k})\mathcal{T}_{b}+2\frac{\partial_{z}P_{k}}{z^{2}}-\frac{P_{k}}{z}+\sum_{j=0}^{k-1}\mu_{b,jk}\bigg((H_{b}P_{j})\mathcal{T}_{b}+2\frac{\partial_{z}P_{j}}{z^{2}}-\frac{P_{j}}{z}\bigg).
       		\end{aligned}
       	\end{equation*}
Set
       \begin{equation*}
       	Q_{k}(z)=2\frac{\partial_{z}P_{k}}{z^{2}}-\frac{P_{k}}{z},
       \end{equation*}
   which combines with (\ref{eq:9}) yields
   \begin{equation}\label{pr2.3:6}
   	(H_{b}-2k)T_{b,k}(z)=Q_{k}(z)+\sum_{j=0}^{k-1}\mu_{b,jk}[2(j-k)\mathcal{T}_{b}(z)P_{j}(z)+Q_{j}(z)].
   \end{equation}
Now we study the eigenvalue problem
\begin{equation}\label{pr2.3:7}
	(H_{b}-2k)\psi_{b,k}=\mu_{k}\psi_{b,k}.
\end{equation}
If the solution is of the form (\ref{pr2.3:3}), then in view of (\ref{pr2.3:7}) and (\ref{pr2.3:6}),  $ \tilde{\psi}_{b,k} $ satisfies
\begin{equation}\label{pr2.3:8}
	\begin{aligned}
		(H_{b}-2k)\tilde{\psi}_{b,k}&=\mu_{k}(T_{b,k}+\tilde{\psi}_{b,k})-(H_{b}-2k)T_{b,k}\\
		&=-Q_{k}(z)-\sum_{j=0}^{k-1}\mu_{b,jk}[2(j-k)\mathcal{T}_{b}(z)P_{j}(z)+Q_{j}(z)]+\mu_{k}(T_{b,k}+\tilde{\psi}_{b,k})\\
		&:=f(\tilde{\psi}_{b,k}),		
	\end{aligned}
	\end{equation}
 where the coefficients $ \mu_{k} $, $ \mu_{b,jk} $  $ (0\leq{j}\leq{k-1}) $ can be determined by the relation
 \begin{equation*}
 	(M^{-1}_{b,k}(\langle f(\tilde{\psi}_{b,k}),\vec{P_{k}}\rangle_{b}))_{i}=be^{-\frac{b}{2}}\partial_{z}\tilde{\psi}_{b,k}(\sqrt{b})(M_{b,k}^{-1}\vec{P_{k}}(\sqrt{b}))_{i}\quad\ \forall\ 0\leq{i}\leq{k},
 \end{equation*}
 which is equivalent to
 \begin{equation}\label{pr2.3:9}
 \langle f(\tilde{\psi}_{b,k}),P_{j}\rangle_{b}=be^{-\frac{b}{2}}\partial_{z}\tilde{\psi}_{b,k}(\sqrt{b})P_{j}(\sqrt{b}),\quad \forall\ 0\leq{j}\leq{k}
 \end{equation}
Next, (\ref{pr2.3:8}) can be rewritten as
\begin{align*}
	\tilde{H}_{b,k}\tilde{\psi}_{b,k}=&(H_{b}-2k)\tilde{\psi}_{b,k}-m_{k}(b,z)be^{-\frac{b}{2}}\partial_{z}\tilde{\psi}_{b,k}(\sqrt{b})\nonumber\\
    =&f-\sum_{j=0}^{k}(M^{-1}_{b,k}(\langle f(\tilde{\psi}_{b,k}),\vec{P_{k}}\rangle_{b}))_{j}P_{j}:=F(\tilde{\psi}_{b,k}),
\end{align*}
where $m_{k}(b,z)$ is defined as in (\ref{eq:16}).\\
Thus in order to find $ \tilde{\psi}_{b,k} $, it suffices to solve the fixed point equation
\begin{equation*}
	\tilde{\psi}=\tilde{H}^{-1}_{b,k}\circ{F}(\tilde{\psi}).
\end{equation*}
We then consider the closed ball 
\begin{equation*}
	\begin{aligned}
		B_{\alpha}=\Big\{\tilde{\psi}\in {H}^{1}_{\rho,b}|&\ \Arrowvert\Delta\tilde{\psi}\Arrowvert_{L^{2}_{\rho,b}}+\sqrt{b}|\partial_{z}\tilde{\psi}(\sqrt{b})|+\Arrowvert\tilde{\psi}\Arrowvert_{H^{1}_{\rho,b}}+\Arrowvert\Lambda\tilde{\psi}\Arrowvert_{L^{2}_{\rho,b}}+\Arrowvert z\tilde{\psi}\Arrowvert_{L^{2}_{\rho,b}}\\
        &+\Arrowvert z^{2}\tilde{\psi}\Arrowvert_{L^{2}_{\rho,b}}\leq{\alpha},\ 
		\tilde{\psi}(\sqrt{b})=0,\ \langle \tilde{\psi},P_{j}\rangle_{b}=0,\ \forall\ 0\leq{j}\leq{k}\Big\},
	\end{aligned}
\end{equation*}
where $ \alpha $ is a universal large number to be determined later.\\
In what follows, we claim that $ \tilde{H}^{-1}_{b,k}\circ{F} $ is a contraction from $ B_{\alpha} $ to itself.\\

In order to prove the above claim, we firstly need to determine $ \mu_{k}$ and $\ \mu_{jk}$ for  $0\leq{j}\leq{k-1}$.
For any $ \tilde{\psi}\in{B_{\alpha}} $ and  $0\leq{i}\leq{k-1}$,
\begin{equation*}
	{f}(\tilde{\psi})=-Q_{k}(z)-\sum_{j=0}^{k-1}\mu_{b,jk}(2(j-k)\mathcal{T}_{b}(z)P_{j}(z)+Q_{j}(z))+\mu_{k}(T_{b,k}+\tilde{\psi}).
\end{equation*}
Therefore, for any $0\leq{i}\leq{k-1}$, 
\begin{equation*}
	\begin{aligned}
		\langle f(\tilde{\psi}),P_{i}\rangle_{b}=&-\langle Q_{k},P_{i}\rangle_{b}-\sum_{j=0}^{k-1}\mu_{b,jk}(\langle 2(j-k)\mathcal{T}_{b}P_{j},P_{i}\rangle_{b}\\
  &+\langle Q_{j},P_{i}\rangle_{b})+\mu_{k}\langle T_{b,k}+\tilde{\psi},P_{i}\rangle_{b}\\
		=&\frac{2(i-k)}{\sqrt{b}}\langle P_{i},P_{i}\rangle_{b}\mu_{b,ik}-\langle Q_{k},P_{i}\rangle_{b}-\sum_{\substack{0\leq{j}\leq{k-1}\\{j}\neq{i}}}\mu_{b,jk}\langle 2(j-k)\mathcal{T}_{b}P_{j},P_{i}\rangle_{b}	\\	
		&-\sum_{j=0}^{k-1}\mu_{b,jk}\langle Q_{j},P_{i}\rangle_{b}+\mu_{k}\langle T_{b,k}+\tilde{\psi},P_{i}\rangle_{b}-2(i-k)\mu_{b,ik}\bigg\langle \frac{1}{z}P_{i},P_{i}\bigg\rangle_{b},
	\end{aligned}
	\end{equation*}
and for $ i=k $, 
\begin{equation*}
	\begin{aligned}
			\langle f(\tilde{\psi}),P_{k}\rangle_{b}=&-\langle Q_{k},P_{k}\rangle_{b}-\sum_{j=0}^{k-1}\mu_{b,jk}(\langle 2(j-k)\mathcal{T}_{b}P_{j},P_{k}\rangle_{b}\\
   &+\langle Q_{j},P_{k}\rangle_{b})+\mu_{k}\langle T_{b,k}+\tilde{\psi},P_{k}\rangle_{b}\\
			=&-\frac{1}{\sqrt{b}}\langle P_{k},P_{k}\rangle_{b}\mu_{k}-\langle Q_{k},P_{k}\rangle_{b}\\
   &-\sum_{j=0}^{k-1}\mu_{b,jk}(2(j-k)\langle \mathcal{T}_{b}P_{j},P_{k}\rangle_{b}+\langle Q_{j},P_{k}\rangle_{b})\\
			&+\mu_{k}\bigg\langle \sum_{j=0}^{k-1}\mu_{b,jk}P_{j}\mathcal{T}_{b}+\tilde{\psi},P_{k}\bigg\rangle_{b}+\mu_{k}\bigg\langle P_{k},\frac{P_{k}}{z}\bigg\rangle_{b}.
		\end{aligned}
\end{equation*}
Then combining with the relation (\ref{pr2.3:9}),  for $  0\leq{i}\leq{k-1} $,  set 
 \begin{equation*}
 	\begin{aligned}
 	g_{i}(\vec{\mu})=&\frac{\langle Q_{k},P_{i}\rangle_{b}}{2(i-k)\langle P_{i},P_{i}\rangle_{b}}\sqrt{b}+\frac{\sqrt{b}}{2(i-k)\langle P_{i},P_{i}\rangle_{b}}\bigg(be^{-\frac{b}{2}}\partial_{z}\tilde{\psi}(\sqrt{b})P_{j}(\sqrt{b})\\
     &+\sum_{\substack{0\leq{j}\leq{k-1}\\{j}\neq{i}}}\mu_{b,jk}\langle 2(j-k)\mathcal{T}_{b}P_{j},P_{i}\rangle_{b}+\sum_{j=0}^{k-1}\mu_{b,jk}\langle Q_{j},P_{i}\rangle_{b}\\
     &-\mu_{k}\langle T_{b,k},P_{i}\rangle_{b}+2(i-k)\mu_{b,ik}\bigg\langle \frac{1}{z}P_{i},P_{i}\bigg\rangle_{b}\bigg),
 \end{aligned}
 \end{equation*}
and for $ i=k $, set
\begin{equation*}
	\begin{aligned}
		g_{k}(\vec{\mu})=&-\frac{\langle Q_{k},P_{k}\rangle_{b}}{\langle P_{k},P_{k}\rangle_{b}}\sqrt{b}+\frac{\sqrt{b}}{\langle P_{k},P_{k}\rangle_{b}}\bigg(-be^{-\frac{b}{2}}\partial_{z}\tilde{\psi}(\sqrt{b})P_{k}(\sqrt{b})\\
		&-\sum_{j=0}^{k-1}\mu_{b,jk}(2(j-k)\langle \mathcal{T}_{b}P_{j},P_{k}\rangle_{b}+\langle Q_{j},P_{k}\rangle_{b})\\
        &+\mu_{k}\bigg\langle \sum_{j=0}^{k-1}\mu_{b,jk}P_{j}\mathcal{T}_{b},P_{k}\bigg\rangle_{b}+\mu_{k}\bigg\langle P_{k},\frac{P_{k}}{z}\bigg\rangle_{b}
		\bigg).
	\end{aligned}
\end{equation*}
Thus the relation (\ref{pr2.3:9}) is equivalent to $ \vec{g}(\vec{\mu})=\vec{\mu} $, for $ \vec{\mu}=(\mu_{b,0k},\mu_{b,1k},...,\mu_{b,k-1k},\mu_{k}) $ and $ \vec{g}(\vec{\mu})=(g_{0}(\vec{\mu}),g_{1}(\vec{\mu}),...,g_{k}(\vec{\mu})) $. As a result, in order to show the existence of  $ \vec{\mu} $ satisfying (\ref{pr2.3:9}), it suffices  to   show that
\begin{equation*}
	\vec{g}:\ E_b\rightarrow E_b
\end{equation*}
is a strict contraction for some universal constant $ C>0 $ and $ b>0 $ small enough to be determined later, where $E_b=\{|||\vec{\mu}|||\leq{C\sqrt{b}}\}$. \\
Firstly, note that  $ \langle \frac{1}{z}P_{i},P_{j}\rangle_{b} $, $ \langle Q_{i},P_{j}\rangle_{b} $, $ \langle P_{i},P_{i}\rangle_{b} $ are all uniformly bounded and that $ \tilde{\psi}\in{B_{\alpha}} $, so we have
\begin{equation*}
	|||\vec{g}(\vec{\mu})|||\leq{M}\sqrt{b}+\alpha{M}b+M\sqrt{b}|||\vec{\mu}|||,
\end{equation*}
where $ M $ depends only on $ P_{i}\ (0\leq{i}\leq{k}) $ and $ k\in\NN $.  Taking $ C=2M $ and $ b>0 $ small enough such that $ \alpha\sqrt{b}+2{M}\sqrt{b}\leq\frac{1}{2} $,  we get
\begin{equation*}
	|||\vec{g}(\vec{\mu})|||\leq{C\sqrt{b}}.
\end{equation*}
Then for any $\vec{\mu},\vec{\nu}\in E_b$,
\begin{equation*}
  \begin{aligned}
  	|||\vec{g}(\vec{\mu})-\vec{g}(\vec{\nu})|||&\leq{M}\alpha\sqrt{b}|||\vec{\mu}-\vec{\nu}|||+M\sup_{0\leq{j}\leq{k}}|\mu_{k}\mu_{jk}-\nu_{k}\nu_{jk}|\\
  	&\leq{M}\alpha\sqrt{b}|||\vec{\mu}-\vec{\nu}|||+M|||\vec{\mu}-\vec{\nu}|||(|||\vec{\mu}|||+|||\vec{\nu}|||)\\
  	&\leq(M\alpha\sqrt{b}+4M^{2}\sqrt{b})|||\vec{\mu}-\vec{\nu}|||\\
  	&\leq\frac{1}{2}|||\vec{\mu}-\vec{\nu}|||
  \end{aligned}
\end{equation*}
provided $ b>0 $ is small enough. Hence $ \vec{g} $ is a strict contraction and there exists a unique $ \vec{\mu}\in E_b $ satisfying (\ref{pr2.3:9}). Moreover, it holds that
\begin{equation*}
	\mu_{ik}=\frac{\langle Q_{k},P_{i}\rangle_{0}}{2(i-k)}\sqrt{b}+O(b+b|\sqrt{b}\partial_{z}\tilde{\psi}(\sqrt{b})|)
\end{equation*}
and
\begin{equation*}
	\mu_{k}=-\langle Q_{k},P_{k}\rangle_{0}\sqrt{b}+O(b+b|\sqrt{b}\partial_{z}\tilde{\psi}(\sqrt{b})|).
\end{equation*}
In particular, we obtain
\begin{equation}\label{pr2.3:16}
	|||\vec{\mu}|||\lesssim\sqrt{b}+b|\sqrt{b}\partial_{z}\tilde{\psi}(\sqrt{b})|.
\end{equation}
Additionally,  integration by parts yields 
\begin{equation*}
	\langle Q_{k},P_{k}\rangle_{0}=\int_{0}^{+\infty}\bigg(-\frac{1}{z}P_{k}+\frac{2}{z^{2}}P_{k}'\bigg)P_{k}\rho{z}^{2}dz=-P_{k}^{2}(0)=-C_{k}^{2}.
\end{equation*}
 
 Now we prove that $\tilde{H}^{-1}_{b,k}\circ{F}$
	is a strict contraction from $ B_{\alpha} $  to $ B_{\alpha} $ for some $ \alpha>0 $ universally large enough. We prove $\tilde{H}^{-1}_{b,k}\circ{F}$ maps $B_\alpha$ onto itself first. Define the norm 
\begin{equation*}
	\Arrowvert\tilde{\psi}\Arrowvert_{X}=\Arrowvert\Delta\tilde{\psi}\Arrowvert_{L^{2}_{\rho,b}}+\sqrt{b}|\partial_{z}\tilde{\psi}(\sqrt{b})|+\Arrowvert\tilde{\psi}\Arrowvert_{H^{1}_{\rho,b}}+\Arrowvert\Lambda\tilde{\psi}\Arrowvert_{L^{2}_{\rho,b}}+\Arrowvert z\tilde{\psi}\Arrowvert_{L^{2}_{\rho,b}}+\Arrowvert z^{2}\tilde{\psi}\Arrowvert_{L^{2}_{\rho,b}}.
\end{equation*}
Then by the definition of $ f(\tilde{\psi}) $ (see (\ref{pr2.3:8})) and (\ref{pr2.3:16}), for any $\tilde{\psi}\in{B_{\alpha}} $,
\begin{equation}\label{in2}
	\begin{aligned}
		\Arrowvert{F}(\tilde{\psi})\Arrowvert_{L^{2}_{\rho,b}}=&\Arrowvert{f(\tilde{\psi})}-\sum_{j=0}^{k}(M^{-1}_{b,k}(\langle f(\tilde{\psi}),\vec{P_{k}}\rangle_{b}))_{j}P_{j}\Arrowvert_{L^{2}_{\rho,b}}\lesssim\Arrowvert{f(\tilde{\psi})}\Arrowvert_{L^{2}_{\rho,b}}\\
		\leq& \Arrowvert Q_{k}\Arrowvert_{L^{2}_{\rho,b}}+\sum_{j=0}^{k-1}|\mu_{b,jk}|(\Arrowvert\mathcal{T}_{b}P_{j}\Arrowvert_{L^{2}_{\rho,b}}+\Arrowvert Q_{j}\Arrowvert_{L^{2}_{\rho,b}})\\
  &+|\mu_{k}|(\Arrowvert T_{b,k}\Arrowvert_{L^{2}_{\rho,b}}+\Arrowvert\tilde{\psi}\Arrowvert_{L^{2}_{\rho,b}})\\
		\leq& M+C(\sqrt{b}+b|\sqrt{b}\partial_{z}\tilde{\psi}(\sqrt{b})|)\cdot\frac{M}{\sqrt{b}}\\
  &+C(\sqrt{b}+b|\sqrt{b}\partial_{z}\tilde{\psi}(\sqrt{b})|)\bigg(\frac{M}{\sqrt{b}}+\Arrowvert\tilde{\psi}\Arrowvert_{L^{2}_{\rho,b}}\bigg)\\
		\leq& CM+C\sqrt{b}\alpha{M}+C\sqrt{b}\alpha+Cb\alpha^{2},
	\end{aligned}
\end{equation}
here $ C$ and $M $ depend only on  $ P_{i}\ (0\leq{i}\leq{k})$ and $ k\in\NN $. As a consequence of the near inversion property of $ H_{b}-2k $ (see (\ref{le1.3})), we have
\begin{equation*}
	\Arrowvert \tilde{H}^{-1}_{b,k}\circ F(\tilde{\psi})\Arrowvert_{X}\lesssim\Arrowvert{F}(\tilde{\psi})\Arrowvert_{L^{2}_{\rho,b}}\leq CM+C\sqrt{b}\alpha{M}+C\sqrt{b}\alpha+Cb\alpha^{2}.
\end{equation*}
Therefore, for $ b>0 $ small enough, there exists a universal constant $ \alpha>0 $, such that
\begin{equation*}
		\Arrowvert \tilde{H}^{-1}_{b,k}\circ F(\tilde{\psi})\Arrowvert_{X}\leq\alpha.
\end{equation*}
Next we prove $\tilde{H}^{-1}_{b,k}\circ{F}$ is a strict contradiction on $B_\alpha$. Indeed, $ \forall \phi,\psi\in {B_{\alpha}} $,
\begin{equation*}
	\begin{aligned}
		\Arrowvert F(\phi)-F(\psi)\Arrowvert_{L^{2}_{\rho,b}}=&\bigg\lVert f(\phi)-\sum_{j=0}^{k}(M^{-1}_{b,k}(\langle f(\phi),\vec{P_{k}}\rangle_{b}))_{j}P_{j}\\
  &-f(\psi)+\sum_{j=0}^{k}(M^{-1}_{b,k}(\langle f(\psi),\vec{P_{k}}\rangle_{b}))_{j}P_{j}\bigg\rVert_{L^{2}_{\rho,b}}\\
		\lesssim& \Arrowvert f(\phi)-f(\psi)\Arrowvert_{L^{2}_{\rho,b}}.
	\end{aligned}
\end{equation*}
By \eqref{pr2.3:8}, 
\begin{equation*}
	\begin{aligned}
		f(\phi)-f(\psi)=&-\sum_{j=0}^{k-1}(\mu_{b,jk}(\phi)-\mu_{b,jk}(\psi))[2(j-k)\mathcal{T}_{b}(z)P_{j}(z)
  +Q_{j}(z)]\\
  &+\mu_{k}(\phi)\phi-\mu_{k}(\psi)\psi
		+(\mu_{k}(\phi)-\mu_{k}(\psi))P_{k}\mathcal{T}_{b}\\
  &+\mu_{k}(\phi)\bigg(\sum_{j=0}^{k-1}\mu_{b,jk}(\phi)P_{j}\mathcal{T}_{b}\bigg)-\mu_{k}(\psi)\bigg(\sum_{j=0}^{k-1}\mu_{b,jk}(\psi)P_{j}\mathcal{T}_{b}\bigg).
	\end{aligned}
\end{equation*}
Hence, 
\begin{equation*}
	\begin{aligned}
	\langle f(\phi)-f(\psi),P_{k}\rangle_{b}=&-\frac{1}{\sqrt{b}}\langle P_{k},P_{k}\rangle_{b}(\mu_{k}(\phi)-\mu_{k}(\psi))\\
 &-\bigg\langle \sum_{j=0}^{k-1}(\mu_{b,jk}(\phi)-\mu_{b,jk}(\psi))[2(j-k)\mathcal{T}_{b}P_{j}(z)
 +Q_{j}(z)],P_{k}\bigg\rangle_{b}\\
	&+\langle \mu_{k}(\phi)\phi-\mu_{k}(\psi)\psi,P_{k}\rangle_{b}+(\mu_{k}(\phi)-\mu_{k}(\psi))\bigg\langle \frac{P_{k}}{z},P_{k}\bigg\rangle_{b}\\
	&+\sum_{j=0}^{k-1}(\mu_{b,jk}(\phi)\mu_{k}(\phi)-\mu_{b,jk}(\psi)\mu_{k}(\psi))\langle P_{j}\mathcal{T}_{b},P_{k}\rangle_{b},
\end{aligned}
\end{equation*}
which combined with  (\ref{pr2.3:9}) that
\begin{equation*}
	\langle f(\phi)-f(\psi),P_{k}\rangle_{b}=be^{-\frac{b}{2}}\partial_{z}(\phi-\psi)(\sqrt{b})P_{k}(\sqrt{b}),
\end{equation*}
yields 
	\begin{align}
	\mu_{k}(\phi)-\mu_{k}(\psi)=&\frac{-b^{\frac{3}{2}}e^{-\frac{b}{2}}\partial_{z}(\phi-\psi)(\sqrt{b})}{\langle P_{k},P_{k}\rangle_{b}}P_{k}(\sqrt{b})\notag\\
	&-\sqrt{b}\sum_{j=0}^{k-1}(\mu_{b,jk}(\phi)-\mu_{b,jk}(\psi))\frac{\langle 2(j-k)\mathcal{T}_{b}P_{j}+Q_{j},P_{k} \rangle}{\langle P_{k},P_{k}\rangle_{b}}\notag\\
	&+\sqrt{b}\frac{\langle\mu_{k}(\phi)\phi-\mu_{k}(\psi)\psi,P_{k}\rangle_{b}}{\langle P_{k},P_{k}\rangle_{b}}+\sqrt{b}(\mu_{k}(\phi)-\mu_{k}(\psi))\frac{\langle \frac{P_{k}}{z},P_{k}\rangle_{b}}{\langle P_{k},P_{k}\rangle_{b}}\notag\\
	&+\sqrt{b}\sum_{j=0}^{k-1}(\mu_{b,jk}(\phi)\mu_{k}(\phi)-\mu_{b,jk}(\psi)\mu_{k}(\psi))\frac{\langle P_{j}\mathcal{T}_{b},P_{k}\rangle_{b}}{\langle P_{k},P_{k}\rangle_{b}}.\label{pr2.3:19}
\end{align}

Since $ \phi,\psi\in{B_{\alpha}} $,  the third term of (\ref{pr2.3:19}) is cancelled and
\begin{equation*}
	\begin{aligned}
		|\mu_{k}(\phi)-\mu_{k}(\psi)|\lesssim & b|\sqrt{b}\partial_{z}(\phi-\psi)(\sqrt{b})|+\sqrt{b}\sum_{j=0}^{k-1}|\mu_{b,jk}(\phi)-\mu_{b,jk}(\psi)|\\
		&+\sqrt{b}|\mu_{k}(\phi)-\mu_{k}(\psi)|+\sqrt{b}\sum_{j=0}^{k-1}|\mu_{b,jk}(\phi)\mu_{k}(\phi)-\mu_{b,jk}(\psi)\mu_{k}(\psi)|.
	\end{aligned}
\end{equation*}
 By (\ref{pr2.3:16}), the last term is bounded by
\begin{equation*}
	\begin{aligned}
		&\sqrt{b}\sum_{j=0}^{k-1}|\mu_{b,jk}(\phi)\mu_{k}(\phi)-\mu_{b,jk}(\psi)\mu_{k}(\psi)|\\
        \leq&\sqrt{b}\sum_{j=0}^{k-1}|\mu_{b,jk}(\phi)-\mu_{b,jk}(\psi)||\mu_{k}(\phi)|+|\mu_{k}(\phi)-\mu_{k}(\psi)||\mu_{b,jk}(\psi)|\\
		\lesssim & \sqrt{b}(\sqrt{b}+b|\sqrt{b}\partial_{z}\psi(\sqrt{b})|+b|\sqrt{b}\partial_{z}\phi(\sqrt{b})|)\\
  &\cdot\left(\sum_{j=0}^{k-1}|\mu_{b,jk}(\phi)-\mu_{b,jk}(\psi)|+|\mu_{k}(\phi)-\mu_{k}(\psi)|\right)\\
		\lesssim &(b+\alpha b^{\frac{3}{2}})\left(\sum_{j=0}^{k-1}|\mu_{b,jk}(\phi)-\mu_{b,jk}(\psi)|+|\mu_{k}(\phi)-\mu_{k}(\psi)|\right).
	\end{aligned}
\end{equation*}
Thus, 
\begin{equation}\label{pr2.3:20}
\begin{aligned}
	|\mu_{k}(\phi)-\mu_{k}(\psi)|\lesssim &(\sqrt{b}+\alpha b^{\frac{3}{2}})\left(\sum_{j=0}^{k-1}|\mu_{b,jk}(\phi)-\mu_{b,jk}(\psi)|+|\mu_{k}(\phi)-\mu_{k}(\psi)|\right)\\
    &+b|\sqrt{b}\partial_{z}(\phi-\psi)(\sqrt{b})|.
\end{aligned}
\end{equation}
The same computation as before yields that  for any $0\leq{i}\leq{k-1} $,
\begin{equation}\label{pr2.3:21}
\begin{aligned}
	|\mu_{b,ik}(\phi)-\mu_{b,ik}(\psi)|\lesssim & (\sqrt{b}+\alpha b^{\frac{3}{2}})\left(\sum_{j=0}^{k-1}|\mu_{b,jk}(\phi)-\mu_{b,jk}(\psi)|+|\mu_{k}(\phi)-\mu_{k}(\psi)|\right)\\
    &+b|\sqrt{b}\partial_{z}(\phi-\psi)(\sqrt{b})|.
    \end{aligned}
\end{equation}
Combining (\ref{pr2.3:20}) and (\ref{pr2.3:21}), we get
\begin{equation*}
	\begin{aligned}
	&\sum_{j=0}^{k-1}|\mu_{b,ik}(\phi)-\mu_{b,ik}(\psi)|+|\mu_{k}(\phi)-\mu_{k}(\psi)|\\
    \lesssim& (\sqrt{b}+\alpha b^{\frac{3}{2}})\left(\sum_{j=0}^{k-1}|\mu_{b,jk}(\phi)-\mu_{b,jk}(\psi)|+|\mu_{k}(\phi)-\mu_{k}(\psi)|\right)\\
	&+b|\sqrt{b}\partial_{z}(\phi-\psi)(\sqrt{b})|.
\end{aligned}
\end{equation*}
Taking $ b>0 $ small enough yields 
\begin{equation}\label{pr2.3:22}
	\sum_{j=0}^{k-1}|\mu_{b,ik}(\phi)-\mu_{b,ik}(\psi)|+|\mu_{k}(\phi)-\mu_{k}(\psi)|\lesssim b|\sqrt{b}\partial_{z}(\phi-\psi)(\sqrt{b})|.
\end{equation}
Then by (\ref{pr2.3:16}) and (\ref{pr2.3:22}), we have
	\begin{align*}
		&\Arrowvert f(\phi)-f(\psi)\Arrowvert_{L^{2}_{\rho,b}}\\
        \leq&\sum_{j=0}^{k-1}|\mu_{b,jk}(\phi)-\mu_{b,jk}(\psi)|\Arrowvert 2(j-k)\mathcal{T}_{b}P_{j}+Q_{j}\Arrowvert_{L^{2}_{\rho,b}}+\Arrowvert\mu_{k}(\phi)\phi-\mu_{k}(\psi)\psi\Arrowvert_{L^{2}_{\rho,b}}\\
		&+|\mu_{k}(\phi)-\mu_{k}(\psi)|\Arrowvert P_{k}\mathcal{T}_{b}\Arrowvert_{L^{2}_{\rho,b}}+\sum_{j=0}^{k-1}|\mu_{k}(\phi)\mu_{b,jk}(\phi)-\mu_{k}(\psi)\mu_{b,jk}(\psi)|\Arrowvert P_{j}\mathcal{T}_{b}\Arrowvert_{L^{2}_{\rho,b}}\\
		\lesssim &\frac{1}{\sqrt{b}}\cdot {b}|\sqrt{b}\partial_{z}(\phi-\psi)(\sqrt{b})|+|\mu_{k}(\phi)-\mu_{k}(\psi)|\Arrowvert\phi\Arrowvert_{L^{2}_{\rho,b}}+|\mu_{k}(\psi)|\Arrowvert\phi-\psi\Arrowvert_{L^{2}_{\rho,b}}\\
		&+\frac{1}{\sqrt{b}}(\sqrt{b}+\alpha{b})\left(\sum_{j=0}^{k-1}|\mu_{b,jk}(\phi)-\mu_{b,jk}(\psi)|+|\mu_{k}(\phi)-\mu_{k}(\psi)|\right)\\
		\lesssim &\sqrt{b}|\sqrt{b}\partial_{z}(\phi-\psi)(\sqrt{b})|+\alpha b|\sqrt{b}\partial_{z}(\phi-\psi)(\sqrt{b})|+(\sqrt{b}+b|\sqrt{b}\partial_{z}\psi(\sqrt{b})|)\Arrowvert\phi-\psi\Arrowvert_{L^{2}_{\rho,b}}\\
		&+(1+\alpha\sqrt{b})b|\sqrt{b}\partial_{z}(\phi-\psi)(\sqrt{b})|\\
		\lesssim &(\sqrt{b}+\alpha{b})\Arrowvert \phi-\psi\Arrowvert_{X}.
	\end{align*}
Hence, the near inversion of $ H_{b}-2k $ gives
\begin{equation*}
	\begin{aligned}
		\Arrowvert H^{-1}_{b,k}\circ{F}(\phi)-H^{-1}_{b,k}\circ{F}(\psi)\Arrowvert_{X}&\lesssim \Arrowvert F(\phi)-F(\psi)\Arrowvert_{L^{2}_{\rho,b}}\\
		&\lesssim\Arrowvert f(\phi)-f(\psi)\Arrowvert_{L^{2}_{\rho,b}}\\
		&\lesssim(\sqrt{b}+\alpha{b})\Arrowvert \phi-\psi\Arrowvert_{X}.
	\end{aligned}
\end{equation*}
Thus for $ b>0 $ small enough, $ H_{b,k}^{-1}\circ{F} $ is a strict contraction. As a result, there exists a unique solution satisfying $ H_{b,k}^{-1}\circ{F}(u)=u $ in $ B_{\alpha} $. The Fr\"{e}chet differentiability of $ \psi_{b,k} $ with respect to $ b>0 $ then follows in the same manner.

\textbf{ Step 2. Spectral gap estimate.} We aim to prove
\begin{equation}\label{pr2.3:24}
\Arrowvert\partial_{z}{u}\Arrowvert^{2}_{L^{2}_{\rho,b}}\geq(2k+2)\Arrowvert{u}\Arrowvert^{2}
_{L^{2}_{\rho,b}}+O(\sqrt{b}\Arrowvert{u}\Arrowvert^{2}_{L^{2}_{\rho,b}}).
\end{equation}
for all $u\in{H}^{1}_{\rho,b} $ satisfying $ \langle u,\psi_{b,j}\rangle_{b}=0$,  $\forall\ 0\leq{j}\leq{k} $. Indeed, let
\begin{equation*}
	v=u\textbf{1}_{z\geq\sqrt{b}}-\sum_{j=0}^{k}\frac{\langle u,P_{j}\rangle_{b}}{\langle P_{j},P_{j}\rangle_{0}}P_{j}\in H^{1}_{\rho,0},
\end{equation*}
by spectral estimate on $ [0,+\infty) $ and the definition of $ v $, we deduce that
\begin{equation}\label{spe1}
	\Arrowvert\partial_{z}v\Arrowvert^{2}_{L^{2}_{\rho,0}}\geq(2k+2)\Arrowvert{v}\Arrowvert^{2}_{L^{2}_{\rho,0}}.
\end{equation}
Note also that
\begin{equation*}
	\begin{aligned}
		0=\langle u,\psi_{b,j}\rangle_{b}&=\bigg\langle u,P_{j}\mathcal{T}_{b}+\sum_{i=0}^{j-1}\mu_{ij}P_{i}\mathcal{T}_{b}+\tilde{\psi}_{b,j}\bigg\rangle_{b}\\
		&=-\frac{1}{\sqrt{b}}\langle u,P_{j}\rangle_{b}+\bigg\langle u,\frac{P_{j}}{z}\bigg\rangle_{b}+\bigg\langle \sum_{i=0}^{j-1}\mu_{b,ij}P_{i}\mathcal{T}_{b}+\tilde{\psi}_{b,j},u\bigg\rangle_{b},
	\end{aligned}
\end{equation*}
which yields
\begin{equation}\label{pr2.3:23}
\begin{aligned}
	|\langle u,P_{j}\rangle_{b}|\lesssim&\sqrt{b}[\Arrowvert u\Arrowvert_{L^{2}_{\rho,b}}+(|\mu_{b,ij}|\Arrowvert P_{i}\mathcal{T}_{b}\Arrowvert_{L^{2}_{\rho,b}}\Arrowvert{u}\Arrowvert_{L^{2}_{\rho,b}}\\
 &+\Arrowvert\tilde{\psi}_{b,j}\Arrowvert_{L^{2}_{\rho,b}}\Arrowvert{u}\Arrowvert_{L^{2}_{\rho,b}})]\lesssim\sqrt{b}\Arrowvert{u}\Arrowvert_{L^{2}_{\rho,b}}.
 \end{aligned}
\end{equation}
And since
\begin{equation*}
\begin{aligned}
&\Arrowvert\partial_{z}v\Arrowvert^{2}_{L^{2}_{\rho,0}}\\
=&\int_{0}^{+\infty}\bigg|\partial_{z}u\textbf{1}_{z\geq\sqrt{b}}-\sum_{j=0}^{k}\frac{\langle u,P_{j}\rangle_{b}}{\langle P_{j},P_{j}\rangle_{0}}\partial_{z}P_{j}\bigg|^{2}\rho{z}^{2}dz\\
=&\int_{\sqrt{b}}^{+\infty}|\partial_{z}u|^{2}\rho{z}^{2}+\sum_{j=0}^{k}\int_{0}^{+\infty}\bigg| \frac{\langle u,P_{j}\rangle_{b}}{\langle P_{j},P_{j}\rangle_{0}}\bigg|^{2}|\partial_{z}P_{j}|^{2}\rho{z}^{2}dz\\
&+\sum_{j\neq{i}}\frac{\langle u,P_{i}\rangle_{b}\langle u,P_{j}\rangle_{b}}{\langle P_{i},P_{i}\rangle_{0}\langle P_{j},P_{j}\rangle_{0}}\int_{0}^{+\infty}\partial_{z}P_{i}\partial_{z}P_{j}\rho{z}^{2}dz\\
&-\sum_{j=0}^{k}\frac{\langle u,P_{j}\rangle_{b}}{\langle P_{j},P_{j}\rangle_{0}}\int_{\sqrt{b}}^{+\infty}\partial_{z}u\partial_{z}P_{j}\rho{z}^{2}dz.
\end{aligned}
\end{equation*}

From (\ref{pr2.3:23}), we get that except the first term, all the other terms can be bounded by $ \sqrt{b}\Arrowvert{u}\Arrowvert^{2}_{L^{2}_{\rho,b}} $.
Hence,
\begin{equation*}
\Arrowvert\partial_{z}v\Arrowvert^{2}_{L^{2}_{\rho,0}}=\Arrowvert\partial_{z}u\Arrowvert^{2}_{L^{2}_{\rho,b}}+O(\sqrt{b}\Arrowvert{u}\Arrowvert^{2}_{L^{2}_{\rho,b}}).
\end{equation*}
Similarly, 
\begin{equation*}
\Arrowvert{v}\Arrowvert^{2}_{L^{2}_{\rho,0}}=\Arrowvert{u}\Arrowvert^{2}_{L^{2}_{\rho,b}}+O(\sqrt{b}\Arrowvert{u}\Arrowvert^{2}_{L^{2}_{\rho,b}}).
\end{equation*}

Injecting into \eqref{spe1} yields \eqref{pr2.3:24}.

{\it\bf
    Step 3. Estimate of $ \partial_{b}\lambda_{b,k} $.}  Differentiating the  both sides of (\ref{pr2.3:1}) with respect to $ b $ yields
\begin{equation}\label{pr2.3:25}
	H_{b}\partial_{b}\psi_{b,k}=\partial_{b}\lambda_{b,k}\psi_{b,k}+\lambda_{b,k}\partial_{b}\psi_{b,k}.
\end{equation}
Next, taking inner product with $ \psi_{b,k} $ on both sides of (\ref{pr2.3:25}), we obtain
\begin{equation}\label{pr2.3:26}
\partial_{b}\lambda_{b,k}\Arrowvert\psi_{b,k}\Arrowvert^{2}_{L^{2}_{\rho,b}}+\lambda_{b,k}\langle\partial_{b}\psi_{b,k},\psi_{b,k}\rangle_{b}=\langle H_{b}\partial_{b}\psi_{b,k},\psi_{b,k}\rangle_{b}.
\end{equation}
For the right hand-side term,  integration by parts and direct computation give
\begin{equation*}
	\begin{aligned}
		\langle H_{b}\partial_{b}\psi_{b,k},\psi_{b,k}\rangle_{b}&=-\int_{\sqrt{b}}^{+\infty}\frac{1}{\rho{z}^{2}}\partial_{z}(\rho{z}^{2}\partial_{z}(\partial_{b}\psi_{b,k}))\psi_{b,k}\rho{z}^{2}dz\\
		&=-\rho{z}^{2}\partial_{z}(\partial_{b}\psi_{b,k})\psi_{b,k}\bigg|^{+\infty}_{\sqrt{b}}+\int_{\sqrt{b}}^{+\infty}\rho{z}^{2}\partial_{z}\partial_{b}\psi_{b,k}\partial_{z}\psi_{b,k}dz\\
		&=\int_{\sqrt{b}}^{+\infty}\rho{z}^{2}\partial_{z}\psi_{b,k}d(\partial_{b}\psi_{b,k})\\
		&=\rho{z}^{2}\partial_{z}\psi_{b,k}\partial_{b}\psi_{b,k}\bigg|_{\sqrt{b}}^{+\infty}-\int_{\sqrt{b}}^{+\infty}\frac{1}{\rho{z}^{2}}\partial_{z}(\rho{z}^{2}\partial_{z}\psi_{b,k})\partial_{b}\psi_{b,k}	\rho{z}^{2}dz\\
		&=-e^{-\frac{b}{2}}b\partial_{z}\psi_{b,k}(\sqrt{b})\partial_{b}\psi_{b,k}(\sqrt{b})+\langle H_{b}\psi_{b,k},\partial_{b}\psi_{b,k}\rangle_{b}\\
		&=-e^{-\frac{b}{2}}b\partial_{z}\psi_{b,k}(\sqrt{b})\partial_{b}\psi_{b,k}(\sqrt{b})+\lambda_{b,k}\langle \psi_{b,k},\partial_{b}\psi_{b,k}\rangle_{b}.
	\end{aligned}
\end{equation*}
Then inserting the above formula into (\ref{pr2.3:26}) yields
\begin{equation}\label{pr2.3:27}
	\partial_{b}\lambda_{b,k}=-\frac{e^{-\frac{b}{2}}b\partial_{b}\psi_{b,k}(\sqrt{b})\partial_{z}\psi_{b,k}(\sqrt{b})}{\Arrowvert\psi_{b,k}\Arrowvert^{2}_{L^{2}_{\rho,b}}}.
\end{equation}
Differentiating the boundary condition $ \psi_{b,k}(\sqrt{b})=0 $ with respect to $ b $ on both sides, we have
\begin{equation}\label{pr2.3:28}
	\partial_{b}\psi_{b,k}(\sqrt{b})=-\frac{1}{2\sqrt{b}}\partial_{z}\psi_{b,k}(\sqrt{b}),
\end{equation}
 which combines with (\ref{pr2.3:27}) gives
\begin{equation}\label{pr2.3:29}
	\partial_{b}\lambda_{b,k}=\frac{\sqrt{b}e^{-{\frac{b}{2}}}}{2}\frac{|\partial_{z}\psi_{b,k}(\sqrt{b})|^{2}}{\Arrowvert\psi_{b,k}\Arrowvert^{2}_{L^{2}_{\rho,b}}}.
\end{equation}
By the expansion of $ \psi_{b,k} $ (see (\ref{pr2.3:3})) and estimates (\ref{pr2.3:5}), one sees that
\begin{equation*}
	|\partial_{z}\psi_{b,k}(\sqrt{b})|\leq|\partial_{z}\tilde{\psi}_{b,k}(\sqrt{b})|+|P_{k}(\sqrt{ b})|\cdot\frac{1}{b}+\sum_{j=0}^{k-1}|\mu_{b,jk}P_{j}(\sqrt{b})|\cdot\frac{1}{b}\lesssim\frac{1}{b},
\end{equation*}
and
\begin{equation*}
	\Arrowvert\psi_{b,k}\Arrowvert^{2}_{L^{2}_{\rho,b}}=\Arrowvert T_{b,k}(z)+\tilde{\psi}_{b,k}(z)
\Arrowvert^{2}_{L^{2}_{\rho,b}}\gtrsim \frac{1}{b}.
\end{equation*}
Thus by (\ref{pr2.3:29}), we obtain
\begin{equation*}
	|\partial_{b}\lambda_{b,k}|\lesssim \sqrt{b}\frac{\frac{1}{b^{2}}}{\frac{1}{b}}\lesssim\frac{1}{\sqrt{b}}.
\end{equation*}

\textbf{ Step 4. Estimate for $ |\partial_{b}\mu_{jk}|,\ \Arrowvert\partial_{b}\tilde{\psi}_{b,k}\Arrowvert_{L^{2}_{\rho,b}},\ \Arrowvert\partial_{z}\partial_{b}\tilde{\psi}_{b,k}\Arrowvert_{L^{2}_{\rho,b}},\ \sqrt{b}|\partial_{z}\partial_{b}\tilde{\psi}(\sqrt{b})| $.}
\hspace*{\fill}\\
Since $ \tilde{\psi}_{b,k}(\sqrt{b})=0 $ and $ \langle \tilde{\psi}_{b,k},P_{j}\rangle_{b}=0 $, for any $0\leq{j}\leq{k} $, we know
\begin{equation}\label{pr2.3:30}
	\langle \partial_{b}\tilde{\psi}_{b,k},P_{j}\rangle_{b}=0,\quad \forall\ 0\leq{j}\leq{k}.
\end{equation}
Consider the function $ u=\partial_{b}\tilde{\psi}_{b,k}\textbf{1}_{z\geq\sqrt{b}}+\partial_{b}\tilde{\psi}_{b,k}(\sqrt{b})\textbf{1}_{0\leq{z}<\sqrt{b}} $, and  set
\begin{equation}\label{pr2.3:31}
	v=u-\sum_{j=0}^{k}\frac{\langle u,P_{j}\rangle_{0}}{\langle P_{j},P_{j}\rangle_{0}}P_{j}.
\end{equation}
By spectral gap for harmonic oscillator (\ref{eq:12}), 
\begin{equation*}
		\Arrowvert\partial_{z}v\Arrowvert^{2}_{L^{2}_{\rho,0}}\geq(2k+2)\Arrowvert{v}\Arrowvert^{2}_{L^{2}_{\rho,0}},
	\end{equation*}
then inserting (\ref{pr2.3:31}) into the above estimate yields
\begin{equation}\label{pr2.3:32}
	\begin{aligned}
		\bigg\lVert& \partial_{z}\partial_{b}\tilde{\psi}_{b,k}\textbf{1}_{z\geq{\sqrt{b}}}-\sum_{j=0}^{k}\frac{\langle u,P_{j}\rangle_{0}}{\langle P_{j},P_{j}\rangle_{0}}\partial_{z}P_{j}\bigg\rVert^{2}_{L^{2}_{\rho,0}}\\
		\geq&(2k+2) \bigg\lVert \partial_{b}\tilde{\psi}_{b,k}\textbf{1}_{z\geq{\sqrt{b}}}+\partial_{b}\tilde{\psi}_{b,k}(\sqrt{b})\textbf{1}_{0\leq{z}<\sqrt{b}}-\sum_{j=0}^{k}\frac{\langle u,P_{j}\rangle_{0}}{\langle P_{j},P_{j}\rangle_{0}}P_{j}\bigg\rVert^{2}_{L^{2}_{\rho,0}}.
	\end{aligned}
\end{equation}
Note that by (\ref{pr2.3:30}),
\begin{equation*}
	\begin{aligned}
		\bigg|\frac{\langle u,P_{j}\rangle_{0}}{\langle P_{j},P_{j}\rangle_{0}}\bigg|&=|\langle \partial_{b}\tilde{\psi}_{b,k}\textbf{1}_{z\geq\sqrt{b}}+\partial_{b}\tilde{\psi}_{b,k}(\sqrt{b})\textbf{1}_{0\leq{z}\leq\sqrt{b}},P_{j}\rangle_{0}|\\
		&=|\langle \partial_{b}\tilde{\psi}_{b,k},P_{j}\rangle_{b}+\langle \partial_{b}\tilde{\psi}_{b,k}(\sqrt{b})\textbf{1}_{0\leq{z}<\sqrt{b}},P_{j}\rangle_{0}|\\
		&\lesssim b^{\frac{3}{2}}|\partial_{b}\tilde{\psi}_{b,k}(\sqrt{b})|.
	\end{aligned}
\end{equation*}
Thus we expand (\ref{pr2.3:32}) to get
\begin{equation}\label{pr2.3:33}
	(1+O(b^{\frac{3}{2}}))\Arrowvert\partial_{z}\partial_{b}\tilde{\psi}_{b,k}\Arrowvert^{2}_{L^{2}_{\rho,b}}\geq(2k+2)\Arrowvert\partial_{b}\tilde{\psi}_{b,k}\Arrowvert^{2}_{L^{2}_{\rho,b}}-O(b^{\frac{3}{2}})|\partial_{b}\tilde{\psi}_{b,k}(\sqrt{b})|^{2}.
\end{equation}
From (\ref{pr2.3:25}) and that $ \psi_{b,k}=T_{b,k}+\tilde{\psi}_{b,k} $,  $ \partial_{b}\tilde{\psi}_{b,k} $ satisfies
\begin{equation}\label{pr2.3:34}
	H_{b}\partial_{b}\tilde{\psi}_{b,k}=\lambda_{b,k}\partial_{b}\tilde{\psi}_{b,k}+F_{k},
\end{equation}
where
\begin{equation*}
	F_{k}=\partial_{b}\lambda_{b,k}(T_{b,k}+\tilde{\psi}_{b,k})+\lambda_{b,k}\partial_{b}T_{b,k}-H_{b}(\partial_{b}T_{b,k}).
\end{equation*}
Next, taking inner product with $ \partial_{b}\tilde{\psi}_{b,k} $ on both sides of (\ref{pr2.3:34}), we obtain
\begin{equation}\label{pr2.3:35}
	\langle H_{b}\partial_{b}\tilde{\psi}_{b,k},\partial_{b}\tilde{\psi}_{b,k}\rangle_{b}=\langle F_{k},\partial_{b}\tilde{\psi}_{b,k}\rangle_{b}+\lambda_{b,k}\Arrowvert\partial_{b}\tilde{\psi}_{b,k}\Arrowvert^{2}_{L^{2}_{\rho,b}}.
\end{equation}
Integration by parts for the left side yields
\begin{equation}\label{pr2.3:36}
	\begin{aligned}
		&\quad\langle  H_{b}\partial_{b}\tilde{\psi}_{b,k},\partial_{b}\tilde{\psi}_{b,k}\rangle_{b}\\
        &=\int_{\sqrt{b}}^{+\infty}\left(-\frac{1}{\rho{z}^{2}}\partial_{z}(\rho{z}^{2}\partial_{z}\partial_{b}\tilde{\psi}_{b,k})\right)\partial_{b}\tilde{\psi}_{b,k}\rho{z}^{2}dz\\
		&=-\rho{z}^{2}\partial_{z}\partial_{b}\tilde{\psi}_{b,k}\partial_{b}\tilde{\psi}_{b,k}\bigg|^{+\infty}_{\sqrt{b}}+\int_{\sqrt{b}}^{+\infty}(\partial_{z}\partial_{b}\tilde{\psi}_{b,k})^{2}\rho{z}^{2}dz\\
		&=e^{-\frac{b}{2}}b\partial_{z}\partial_{b}\tilde{\psi}_{b,k}(\sqrt{b})\partial_{b}\tilde{\psi}_{b,k}(\sqrt{b})+\Arrowvert\partial_{b}\partial_{z}\tilde{\psi}_{b,k}\Arrowvert^{2}_{L^{2}_{\rho,b}}.
	\end{aligned}
\end{equation}
Then injecting (\ref{pr2.3:36}) into (\ref{pr2.3:35}) and by Cauchy-Schwarz inequality, we have
\begin{equation}\label{pr2.3:37}
\begin{aligned}
	&e^{-\frac{b}{2}}b\partial_{z}\partial_{b}\tilde{\psi}_{b,k}(\sqrt{b})\partial_{b}\tilde{\psi}_{b,k}(\sqrt{b})+\Arrowvert\partial_{b}\partial_{z}\tilde{\psi}_{b,k}\Arrowvert^{2}_{L^{2}_{\rho,b}}\\
    \leq &\ C(\varepsilon)\Arrowvert F_{k}\Arrowvert^{2}_{L^{2}_{\rho,b}}+\varepsilon\Arrowvert\partial_{b}\tilde{\psi}_{b,k}\Arrowvert^{2}_{L^{2}_{\rho,b}}+\lambda_{b,k}\Arrowvert \partial_{b}\tilde{\psi}_{b,k}\Arrowvert^{2}_{L^{2}_{\rho,b}},
    \end{aligned}
\end{equation}
for some $ \varepsilon>0 $ small enough to be determined later. Note also that by (\ref{pr2.3:33})
\begin{equation*}
	\lambda_{b,k}\Arrowvert\partial_{b}\tilde{\psi}_{b,k}\Arrowvert^{2}_{L^{2}_{\rho,b}}\leq\frac{\lambda_{b,k}(1+O(b^{\frac{3}{2}}))}{2k+2}\Arrowvert\partial_{z}\partial_{b}\tilde{\psi}_{b,k}\Arrowvert^{2}_{L^{2}_{\rho,b}}+O(b^{\frac{3}{2}})|\partial_{b}\tilde{\psi}_{b,k}(\sqrt{b})|^{2},
\end{equation*}
where $ \lambda_{b,k}=2k+O(\sqrt{b}) $,  hence inserting the above inequality into (\ref{pr2.3:37}) gives 
\begin{equation}\label{pr2.3:38}
	\Arrowvert\partial_{z}\partial_{b}\tilde{\psi}_{b,k}\Arrowvert^{2}_{L^{2}_{\rho,b}}\lesssim\Arrowvert{F_{k}}\Arrowvert^{2}_{L^{2}_{\rho,b}}+b|\partial_{z}\partial_{b}\tilde{\psi}_{b,k}(\sqrt{b})|\cdot|\partial_{b}\tilde{\psi}_{b,k}(\sqrt{b})|+b^{\frac{3}{2}}|\partial_{b}\tilde{\psi}_{b,k}(\sqrt{b})|^{2}.
\end{equation}
From (\ref{pr2.3:5}) and (\ref{pr2.3:28}), we know
\begin{equation}\label{pr2.3:39}
	|\partial_{b}\tilde{\psi}_{b,k}(\sqrt{b})|\lesssim\frac{1}{\sqrt{b}}|\partial_{z}\tilde{\psi}_{b,k}(\sqrt{b})|\lesssim\frac{1}{b}.
\end{equation}
Note that by direct computation,
\begin{equation*}
	H_{b}\bigg(\frac{1}{z}\bigg)=-\frac{1}{z}.
\end{equation*}
Hence
\begin{equation*}
	\begin{aligned}
	\bigg\langle \partial_{b}\tilde{\psi}_{b,k},-\frac{1}{z}\bigg\rangle_{b}=&\bigg\langle \partial_{b}\tilde{\psi}_{b,k},H_{b}\bigg(\frac{1}{z}\bigg)\bigg\rangle_{b}=-\int_{\sqrt{b}}^{+\infty}\partial_{b}\tilde{\psi}_{b,k}\frac{1}{\rho{z}^{2}}\partial_{z}\bigg(\rho{z}^{2}\partial_{z}\bigg(\frac{1}{z}\bigg)\bigg)\rho{z}^{2}dz\\
	=&-\bigg(\partial_{b}\tilde{\psi}_{b,k}\rho{z}^{2}\partial_{z}\bigg(\frac{1}{z}\bigg)\bigg|^{+\infty}_{\sqrt{b}}-\int_{\sqrt{b}}^{+\infty}\partial_{z}\partial_{b}\tilde{\psi}_{b,k}\rho{z}^{2}\partial_{z}\bigg(\frac{1}{z}\bigg)dz\bigg)\\
	=&-e^{-\frac{b}{2}}\partial_{b}\tilde{\psi}_{b,k}(\sqrt{b})+\bigg(\frac{1}{z}(\partial_{z}\partial_{b}\tilde{\psi}_{b,k}\rho{z}^{2})\bigg|^{+\infty}_{\sqrt{b}}\\
 &-\int_{\sqrt{b}}^{+\infty}\partial_{z}(\partial_{b}\partial_{z}\tilde{\psi}_{b,k}\rho{z}^{2})\frac{1}{z}dz\bigg)\\
	=&-e^{-\frac{b}{2}}\partial_{b}\tilde{\psi}_{b,k}(\sqrt{b})-\sqrt{b}e^{-\frac{b}{2}}\partial_{z}\partial_{b}\tilde{\psi}_{b,k}+\bigg\langle H_{b}(\partial_{b}\tilde{\psi}_{b.k}),\frac{1}{z}\bigg\rangle_{b}.
		\end{aligned}
\end{equation*}
As a result, by (\ref{pr2.3:34}) and (\ref{pr2.3:39}), we have
\begin{equation}\label{pr2.3:40}
	\begin{aligned}
		\sqrt{b}|\partial_{z}\partial_{b}\tilde{\psi}_{b,k}(\sqrt{b})|&\lesssim|\partial_{b}\tilde{\psi}_{b,k}(\sqrt{b})|+\bigg|\bigg\langle \partial_{b}\tilde{\psi}_{b,k},\frac{1}{z}\bigg\rangle_{b}\bigg|+\bigg|\bigg\langle H_{b}(\partial_{b}\tilde{\psi}_{b,k}),\frac{1}{z}\bigg\rangle_{b}\bigg|\\
		&\lesssim\frac{1}{b}+\Arrowvert\partial_{b}\tilde{\psi}_{b,k}\Arrowvert_{L^{2}_{\rho,b}}+\bigg|\bigg\langle \lambda_{b,k}\partial_{b}\tilde{\psi}_{b,k}+F_{k},\frac{1}{z}\bigg\rangle_{b}\bigg|\\
		&\lesssim \frac{1}{b}+\Arrowvert\partial_{b}\tilde{\psi}_{b,k}\Arrowvert_{L^{2}_{\rho,b}}+\Arrowvert{F_{k}}\Arrowvert_{L^{2}_{\rho,b}}.
	\end{aligned}
\end{equation}
Injecting  (\ref{pr2.3:40}) into (\ref{pr2.3:38}) yields
\begin{equation*}
	\begin{aligned}
		\Arrowvert \partial_{z}\partial_{b}\tilde{\psi}_{b,k}\Arrowvert^{2}_{L^{2}_{\rho,b}}&\lesssim\Arrowvert{F}_{k}\Arrowvert^{2}_{L^{2}_{\rho,b}}+b|\partial_{z}\partial_{b}\tilde{\psi}_{b,k}(\sqrt{b})|\cdot|\partial_{b}\tilde{\psi}_{b,k}(\sqrt{b})|+b^{\frac{3}{2}}|\partial_{b}\tilde{\psi}_{b,k}(\sqrt{b})|^{2}\\
		&\lesssim\Arrowvert F_{k}\Arrowvert^{2}_{L^{2}_{\rho,b}}+b^{2}|\partial_{z}\partial_{b}\tilde{\psi}_{b,k}(\sqrt{b})|^{2}+|\partial_{b}\tilde{\psi}_{b,k}(\sqrt{b})|^{2}+b^{\frac{3}{2}}|\partial_{b}\tilde{\psi}_{b,k}(\sqrt{b})|^{2}\\
		&\lesssim\Arrowvert F_{k}\Arrowvert^{2}_{L^{2}_{\rho,b}}+b\cdot\bigg(\frac{1}{b}+\Arrowvert\partial_{b}\tilde{\psi}_{b,k}\Arrowvert_{L^{2}_{\rho,b}}+\Arrowvert{F_{k}}\Arrowvert_{L^{2}_{\rho,b}}\bigg)^{2}+\frac{1}{b^{2}}\\
		&\lesssim\Arrowvert{F}_{k}\Arrowvert^{2}_{L^{2}_{\rho,b}}+\frac{1}{b^{2}}+b\Arrowvert\partial_{z}\partial_{b}\tilde{\psi}_{b,k}\Arrowvert^{2}_{L^{2}_{\rho,b}},
	\end{aligned}
\end{equation*}
where in the last line we have used (\ref{pr2.3:33}).\\
Hence
\begin{equation}\label{pr2.3:41}
	\Arrowvert\partial_{z}\partial_{b}\tilde{\psi}_{b,k}\Arrowvert_{L^{2}_{\rho,b}}\lesssim\Arrowvert{F}_{k}\Arrowvert_{L^{2}_{\rho,b}}+\frac{1}{b}.
	\end{equation}
Applying $ \partial_{b} $ to (\ref{pr2.3:6}) gives
\begin{equation*}
	(H_{b}-2k)\partial_{b}T_{b,k}=\sum_{j=0}^{k-1}\partial_{b}\mu_{b,jk}[2(j-k)\mathcal{T}_{b}(z)P_{j}(z)+Q_{j}(z)]+\sum_{j=0}^{k-1}\mu_{b,jk}(j-k)\frac{1}{b^{\frac{3}{2}}}P_{j}(z),
\end{equation*}
moreover, since $\lambda_{b,k}=2k+O(\sqrt{b})  $, we have
\begin{equation*}
\begin{aligned}
	(H_{b}-\lambda_{b,k})(\partial_{b}T_{b,k})=&O(\sqrt{b})\partial_{b}T_{b,k}+\sum_{j=0}^{k-1}\partial_{b}\mu_{b,jk}[2(j-k)\mathcal{T}_{b}(z)P_{j}(z)\\
 &+Q_{j}(z)]+\sum_{j=0}^{k-1}\mu_{b,jk}(j-k)\frac{1}{b^{\frac{3}{2}}}P_{j}(z).
 \end{aligned}
\end{equation*}
By the definition of $ T_{b,k} $, we  directly compute that
\begin{equation*}
	\partial_{b}T_{b,k}=\frac{1}{2}b^{-\frac{3}{2}}P_{k}+\frac{1}{2}b^{-\frac{3}{2}}\sum_{j=0}^{k-1}\mu_{b,jk}P_{j}+\sum_{j=0}^{k-1}\partial_{b}\mu_{b,jk}P_{j}\mathcal{T}_{b}.
\end{equation*}
As a result, 
\begin{equation*}
	\begin{aligned}
		\Arrowvert H_{b}(\partial_{b}T_{b,k})-\lambda_{b,k}\partial_{b}T_{b,k}\Arrowvert_{L^{2}_{\rho,b}}\lesssim&\sqrt{b}\Arrowvert\partial_{b}T_{b,k}\Arrowvert_{L^{2}_{\rho,b}}+\frac{1}{\sqrt{b}}\sum_{j=0}^{k-1}|\partial_{b}\mu_{b,jk}|+\frac{1}{b^{\frac{3}{2}}}\sqrt{b}\\
		\lesssim&\sqrt{b}\bigg(b^{-\frac{3}{2}}+b^{-1}+\frac{1}{\sqrt{b}}\bigg(\sum_{j=0}^{k-1}|\partial_{b}\mu_{b,jk}|\bigg)\bigg)\\
  &+\frac{1}{\sqrt{b}}\sum_{j=0}^{k-1}|\partial_{b}\mu_{b,jk}|+\frac{1}{\sqrt{b}}\\
		\lesssim&\frac{1}{b}+\frac{1}{\sqrt{b}}\bigg(\sum_{j=0}^{k-1}|\partial_{b}\mu_{b,jk}|\bigg),
	\end{aligned}
\end{equation*}
and then
\begin{equation}\label{pr2.3:42}
	\begin{aligned}
		\Arrowvert F_{k}\Arrowvert_{L^{2}_{\rho,b}}&=\Arrowvert\partial_{b}\lambda_{b,k}(T_{b,k}+\tilde{\psi}_{b,k})+\lambda_{b,k}\partial_{b}T_{b,k}-H_{b}(\partial_{b}T_{b,k})\Arrowvert_{L^{2}_{\rho,b}}\\
		&\leq |\partial_{b}\lambda_{b,k}|(\Arrowvert T_{b,k}\Arrowvert_{L^{2}_{\rho,b}}+\Arrowvert\tilde{\psi}_{b,k}\Arrowvert_{L^{2}_{\rho,b}})+\Arrowvert\lambda_{b,k}\partial_{b}T_{b,k}-H_{b}(\partial_{b}T_{b,k})\Arrowvert_{L^{2}_{\rho,b}}\\
	&\lesssim\frac{1}{b}+\frac{1}{\sqrt{b}}\bigg(\sum_{j=0}^{k-1}|\partial_{b}\mu_{b,jk}|\bigg).
	\end{aligned}
\end{equation}
Injecting (\ref{pr2.3:42}) into (\ref{pr2.3:33}), (\ref{pr2.3:40}) and (\ref{pr2.3:41}) yields
\begin{equation}\label{pr2.3:43}
	\begin{aligned}
		\Arrowvert \partial_{z}\partial_{b}\tilde{\psi}_{b,k}\Arrowvert_{L^{2}_{\rho,b}}+\Arrowvert\partial_{b}\tilde{\psi}_{b,k}\Arrowvert_{L^{2}_{\rho,b}}+\sqrt{b}|\partial_{z}\partial_{b}\tilde{\psi}_{b,k}(\sqrt{b})|&\lesssim\frac{1}{b}+\Arrowvert{F}_{k}\Arrowvert_{L^{2}_{\rho,b}}\\
		&\lesssim\frac{1}{b}+\frac{1}{\sqrt{b}}\sum_{j=0}^{k-1}|\partial_{b}\mu_{b,jk}|.
	\end{aligned}
\end{equation}
It remains to estimate $|\partial_{b}\mu_{b,jk}| $. 
Taking inner product again on both sides of (\ref{pr2.3:25}) with $ P_{j} $ $ (0\leq{j}\leq{k-1}) $ and combining with (\ref{pr2.3:30}), we have
\begin{equation}\label{pr2.3:44}
	\langle H_{b}\partial_{b}\psi_{b,k},P_{j}\rangle_{b}=\partial_{b}{\lambda}_{b,k}\langle \psi_{b,k},P_{j}\rangle_{b}+\lambda_{b,k}\langle \partial_{b}T_{b,k},P_{j}\rangle_{b}.
\end{equation}
For the left term, integration by parts gives
	\begin{align*}
	\langle H_{b}\partial_{b}\psi_{b,k},P_{j}\rangle_{b}=&\int_{\sqrt{b}}^{+\infty}-\frac{1}{\rho{z}^{2}}\partial_{z}(\rho{z}^{2}\partial_{z}\partial_{b}\psi_{b,k})P_{j}\rho{z}^{2}dz\\
	=&-\bigg(\rho{z}^{2}\partial_{z}\partial_{b}\psi_{b,k}P_{j}\bigg|^{+\infty}_{\sqrt{b}}-\int_{\sqrt{b}}^{+\infty}\partial_{z}\partial_{b}\psi_{b,k}\partial_{z}P_{j}\rho{z}^{2}dz\bigg)\\
	=&be^{-\frac{b}{2}}\partial_{z}\partial_{b}\psi_{b,k}(\sqrt{b})P_{j}(\sqrt{b})+\int_{\sqrt{b}}^{+\infty}\rho{z}^{2}\partial_{z}\partial_{b}\psi_{b,k}\partial_{z}P_{j}dz\\
	=&be^{-\frac{b}{2}}\partial_{z}\partial_{b}\psi_{b,k}(\sqrt{b})P_{j}(\sqrt{b})+\rho{z}^{2}\partial_{b}\psi_{b,k}\partial_{z}P_{j}\bigg|^{+\infty}_{\sqrt{b}}\\
 &-\int_{\sqrt{b}}^{+\infty}\partial_{b}\psi_{b,k}\partial_{z}(\rho{z}^{2}\partial_{z}P_{j})dz\\
	=&be^{-\frac{b}{2}}\partial_{z}\partial_{b}\psi_{b,k}(\sqrt{b})P_{j}(\sqrt{b})-be^{-\frac{b}{2}}\partial_{b}\psi_{b,k}(\sqrt{b})\partial_{z}P_{j}(\sqrt{b})\\
 &+\langle H_{b}P_{j},\partial_{b}\psi_{b,k}\rangle\\
	=&be^{-\frac{b}{2}}\partial_{z}\partial_{b}\psi_{b,k}(\sqrt{b})P_{j}(\sqrt{b})-be^{-\frac{b}{2}}\partial_{b}\psi_{b,k}(\sqrt{b})\partial_{z}P_{j}(\sqrt{b})\\
 &+2j\langle P_{j},\partial_{b}T_{b,k}\rangle.
		\end{align*}
Then  injecting into (\ref{pr2.3:44}), we get for any ${0}\leq j\leq k-1 $,
\begin{equation}\label{pr2.3:45}
\begin{aligned}
	(2j-\lambda_{b,k})\langle\partial_{b}T_{b,k},P_{j}\rangle_{b}=&be^{-\frac{b}{2}}\partial_{b}\psi_{b,k}(\sqrt{b})\partial_{z}P_{j}(\sqrt{b})\\
 &-be^{-\frac{b}{2}}\partial_{z}\partial_{b}\psi_{b,k}(\sqrt{b})P_{j}(\sqrt{b})+\partial_{b}\lambda_{b,k}\langle T_{b,k},P_{j}\rangle_{b}.
 \end{aligned}
\end{equation}
For the first term of the right side, using the expression of $ \partial_{b}T_{b,k} $ and (\ref{pr2.3:5}), we get
\begin{equation*}
	\begin{aligned}
		|be^{-\frac{b}{2}}\partial_{b}\psi_{b,k}(\sqrt{b})\partial_{z}P_{j}(\sqrt{b})|&\lesssim b^{\frac{3}{2}}(|\partial_{b}\tilde{\psi}_{b,k}(\sqrt{b})|+|\partial_{b}T_{b,k}(\sqrt{b})|)\\
		&\lesssim b^{\frac{3}{2}}(b^{-\frac{1}{2}}+b^{-\frac{3}{2}})\\
		&\lesssim{1},
	\end{aligned}
\end{equation*}
for the second term of the right side, using the expression of $ \partial_{b}T_{b,k} $ again and (\ref{pr2.3:43}),
\begin{equation*}
	\begin{aligned}
	|be^{-\frac{b}{2}}\partial_{z}\partial_{b}\psi_{b,k}(\sqrt{b})P_{j}(\sqrt{b})|&\lesssim b|\partial_{z}\partial_{b}T_{b,k}(\sqrt{b})|+b|\partial_{z}\partial_{b}\tilde{\psi}_{b,k}(\sqrt{b})|\\
	&\lesssim1+\sum_{j=0}^{k-1}|\partial_{b}\mu_{b,jk}|+\frac{1}{\sqrt{b}}+\sum_{j=0}^{k-1}|\partial_{b}\mu_{b,jk}|\\
	&\lesssim\frac{1}{\sqrt{b}}+\sum_{j=0}^{k-1}|\partial_{b}\mu_{b,jk}|,
\end{aligned}
\end{equation*}
for the last term of the right side, using the expression of $ T_{b,k} $ and (\ref{pr2.3:4}),
\begin{equation*}
	|\partial_{b}\lambda_{b,k}\langle T_{b,k},P_{j}\rangle_{b}|\lesssim|\partial_{b}\lambda_{b,k}|\Arrowvert T_{b,k}\Arrowvert_{L^{2}_{\rho,b}}\lesssim \frac{1}{\sqrt{b}}\cdot\frac{1}{\sqrt{b}}=\frac{1}{b}.
\end{equation*}
For the left side of (\ref{pr2.3:45}), using the expression of $ \partial_{b}T_{b,k} $, (\ref{eq:13}) and (\ref{eq:15}), we have
\begin{equation*}
	\begin{aligned}
		\langle \partial_{b}T_{b,k},P_{j}\rangle_{b}=&\bigg\langle \frac{1}{2}b^{-\frac{3}{2}}P_{k}+\frac{1}{2}b^{-\frac{3}{2}}\sum_{i=0}^{k-1}\mu_{b,ik}P_{i}+\sum_{i=0}^{k-1}\partial_{b}\mu_{b,ik}P_{i}\mathcal{T}_{b},P_{j}\bigg\rangle_{b}\\
		=&\frac{1}{2}b^{-\frac{3}{2}}(M_{b,k})_{kj}+\frac{1}{2}b^{-\frac{3}{2}}\sum_{i=0}^{k-1}\mu_{b,ik}(M_{b,k})_{ij}+\sum_{i=0}^{k-1}\partial_{b}\mu_{b,ik}\bigg(\bigg\langle \frac{P_{i}}{z},P_{j}\bigg\rangle_{b}-\frac{1}{\sqrt{b}}(M_{b,k})_{ij}\bigg)\\
		=&\frac{1}{2}(b^{-\frac{3}{2}}\delta_{kj}+O(1))+\frac{1}{2}(b^{-\frac{3}{2}}\delta_{ij}+O(1))\mu_{b,ik}\\
  &-\partial_{b}\mu_{b,ik}\frac{1}{\sqrt{b}}(\delta_{ij}+O(b^{\frac{3}{2}}))+\sum_{i=0}^{k-1}\partial_{b}\mu_{b,ik}\bigg\langle \frac{P_{i}}{z},P_{j}\bigg\rangle_{b}.
	\end{aligned}
\end{equation*}
Injecting the above four estimates into (\ref{pr2.3:45}) yields 
\begin{equation*}
	\frac{1}{\sqrt{b}}|\partial_{b}\mu_{b,jk}|\lesssim\frac{1}{b}+\sum_{j=0}^{k-1}|\partial_{b}\mu_{b,jk}|,\quad  \forall\ 0\leq{j}\leq{k-1}.
\end{equation*}
Suming the above estimates for $ j=0,1,2,\cdot\cdot\cdot,k-1 $, and note that $ b>0 $ is small enough, we arrive at
\begin{equation}\label{pr2.3:46}
	\sum_{j=0}^{k-1}|\partial_{b}\mu_{b,jk}|\lesssim\frac{1}{\sqrt{b}}.
\end{equation}
Finally  inserting (\ref{pr2.3:46}) into (\ref{pr2.3:43}), we conclude that
\begin{equation*}
	\Arrowvert \partial_{z}\partial_{b}\tilde{\psi}_{b,k}\Arrowvert_{L^{2}_{\rho,b}}+\Arrowvert\partial_{b}\tilde{\psi}_{b,k}\Arrowvert_{L^{2}_{\rho,b}}+\sqrt{b}|\partial_{z}\partial_{b}\tilde{\psi}_{b,k}(\sqrt{b})|\lesssim \frac{1}{b}.
\end{equation*}
\vspace{0.3cm}
{\it\bf Step 5. Some further estimates:}
\hspace*{\fill}\\
\textbf{ Proof of (\ref{pr2.3.8}).} From (\ref{pr2.3:3}), (\ref{pr2.3:4}) and (\ref{pr2.3:5}) we estimate
\begin{equation*}
	\begin{aligned}
		\langle \psi_{b,k},\psi_{b,i}\rangle_{b}&=\bigg\langle P_{k}\bigg(\frac{1}{z}-\frac{1}{\sqrt{b}}\bigg)+\sum_{j=0}^{k-1}\mu_{b,jk}P_{j}\bigg(\frac{1}{z}-\frac{1}{\sqrt{b}}\bigg)+\tilde{\psi}_{b,k},\\
  &P_{i}\bigg(\frac{1}{z}-\frac{1}{\sqrt{b}}\bigg)+\sum_{j=0}^{i-1}\mu_{b,ji}P_{j}\bigg(\frac{1}{z}-\frac{1}{\sqrt{b}}\bigg)+\tilde{\psi}_{b,i}\bigg\rangle_{b}\\
		&=\frac{1}{b}(\langle P_{k},P_{i}\rangle_{0}+O(\sqrt{b}))=\frac{1}{b}(\delta_{ik}+O(\sqrt{b})),
	\end{aligned}
\end{equation*}
and  (\ref{pr2.3.8}) follows from (\ref{eq:10}).\\
\textbf{Proof of (\ref{pr2.3.9}).}   From (\ref{pr2.3:3}), (\ref{pr2.3:4}) , (\ref{pr2.3:5}) and (\ref{pr2.3.6}), we have
\begin{equation*}
	\begin{aligned}
		\langle b\partial_{b}\psi_{b,k},\psi_{b,i}\rangle_{b}=&\bigg\langle \frac{1}{2}b^{-\frac{1}{2}}P_{k}+\frac{1}{2}b^{-\frac{1}{2}}\sum_{j=0}^{k-1}\mu_{b,jk}P_{j}+b\sum_{j=0}^{k-1}\partial_{b}\mu_{b,jk}P_{j}\bigg(\frac{1}{z}-\frac{1}{\sqrt{b}}\bigg)\\
  &+b\partial_{b}\tilde{\psi}_{b,k},
		P_{i}\bigg(\frac{1}{z}-\frac{1}{\sqrt{b}}\bigg)+\sum_{j=0}^{i-1}\mu_{b,ji}P_{j}\bigg(\frac{1}{z}-\frac{1}{\sqrt{b}}\bigg)+\tilde{\psi}_{b,i}\bigg\rangle_{b}\\
		&=-\frac{1}{2b}(\langle P_{i},P_{k}\rangle_{0}+O(\sqrt{b}))=-\frac{1}{2b}(\delta_{ik}+O(\sqrt{b})),
	\end{aligned}
\end{equation*}
which is (\ref{pr2.3.9}).\\
\textbf{Proof of (\ref{pr2.3.10}).}  Since 
\begin{equation*}
	\partial_{b}\psi_{b,k}=\frac{1}{2}b^{-\frac{3}{2}}P_{k}+\frac{1}{2}b^{-\frac{3}{2}}\sum_{j=0}^{k-1}\mu_{b,jk}P_{j}+\sum_{j=0}^{k-1}\partial_{b}\mu_{b,jk}P_{j}\mathcal{T}_{b}+\partial_{b}\tilde{\psi}_{b,k}.
\end{equation*}
Hence 
\begin{equation}\label{pr2.3:47}
	\begin{aligned}
		\bigg\lVert b^{\frac{3}{2}}\partial_{b}\psi_{b,k}-\frac{1}{2}P_{k}\bigg\rVert_{L^{2}_{\rho,b}}&=\bigg\lVert \frac{1}{2}\sum_{j=0}^{k-1}\mu_{b,jk}P_{j}+b^{\frac{3}{2}}\sum_{j=0}^{k-1}\partial_{b}\mu_{b,jk}P_{j}\mathcal{T}_{b}+b^{\frac{3}{2}}\partial_{b}\tilde{\psi}_{b,k}\bigg\rVert_{L^{2}_{\rho,b}}\\
		&\lesssim \sqrt{b}+b^{\frac{3}{2}}\cdot\frac{1}{\sqrt{b}}\cdot\frac{1}{\sqrt{b}}+b^{\frac{3}{2}}\cdot\frac{1}{b}\lesssim\sqrt{b},
	\end{aligned}
\end{equation}
and also by the expression of $ \psi_{b,k} $, we obtain
\begin{equation}\label{pr2.3:48}
	\Arrowvert P_{k}+\sqrt{b}\psi_{b,k}\Arrowvert_{L^{2}_{\rho,b}}=\sqrt{b}\bigg\lVert\frac{P_{k}}{z}+\sum_{j=0}^{k-1}\mu_{b,jk}P_{j}\bigg(\frac{1}{z}-\frac{1}{\sqrt{b}}\bigg)+\tilde{\psi}_{b,k}\bigg\rVert_{L^{2}_{\rho,b}}\lesssim \sqrt{b}.
\end{equation}
Combining (\ref{pr2.3:47}) and (\ref{pr2.3:48}), we arrive at
\begin{equation}\label{pr2.3:49}
	\Arrowvert 2b\partial_{b}\psi_{b,k}+\psi_{b,k}\Arrowvert_{L^{2}_{\rho,b}}\lesssim{1}.
\end{equation}
Then from (\ref{pr2.3:1}), (\ref{pr2.3:2}), (\ref{pr2.3:25}) and (\ref{pr2.3:49}), direct computation yields
\begin{equation*}
	\begin{aligned}
		\Arrowvert H_{b}(2b\partial_{b}\psi_{b,k}+\psi_{b,k})\Arrowvert_{L^{2}_{\rho,b}}&=\Arrowvert 2b\partial_{b}\lambda_{b,k}\psi_{b,k}+2b\lambda_{b,k}\partial_{b}\psi_{b,k}+\lambda_{b,k}\psi_{b,k}\Arrowvert_{L^{2}_{\rho,b}}\\
	&\lesssim b|\partial_{b}\lambda_{b,k}|\Arrowvert\psi_{b,k}\Arrowvert_{L^{2}_{\rho,b}}+	\Arrowvert 2b\partial_{b}\psi_{b,k}+\psi_{b,k}\Arrowvert_{L^{2}_{\rho,b}}\\
	&\lesssim b\cdot\frac{1}{\sqrt{b}}\cdot\frac{1}{\sqrt{b}}+1\lesssim{1},	
	\end{aligned}
\end{equation*}
which is (\ref{pr2.3.10}).\\
\textbf{Proof of (\ref{pr2.3.11}).}   Note firstly that
\begin{equation}\label{pr2.3:50}
	\Lambda\psi_{b,k}=\Lambda{P_{k}}\bigg(\frac{1}{z}-\frac{1}{\sqrt{b}}\bigg)-\frac{P_{k}}{z}+\sum_{j=0}^{k-1}\mu_{b,jk}\Lambda P_{j}\bigg(\frac{1}{z}-\frac{1}{\sqrt{b}}\bigg)-\sum_{j=0}^{k-1}\mu_{b,jk}\frac{P_{j}}{z}+\Lambda\tilde{\psi}_{b,k},
\end{equation}
then by the definition of $ P_{k} $ and combining with (\ref{pr2.3:4}) and (\ref{pr2.3:5}), we have
\begin{equation*}
		\Arrowvert\Lambda\psi_{b,0}\Arrowvert_{L^{2}_{\rho,b}}\lesssim{1},\quad \Arrowvert\Lambda\psi_{b,k}\Arrowvert_{L^{2}_{\rho,b}}\lesssim\frac{1}{\sqrt{b}},\quad \forall\ k\geq{1},
	\end{equation*}
which is (\ref{pr2.3.11}).\\
\textbf{Proof of (\ref{pr2.3.12}).} It follows from (\ref{pr2.3.8}) and the Cauchy-Schwarz inequality.\\
\textbf {Proof of (\ref{pr2.3.13}).} Firstly recall (\ref{eq:11}) and combine with (\ref{pr2.3:50}),
\begin{equation*}
	\begin{aligned}
		\Lambda \psi_{b,k}=&\Lambda P_{k}\mathcal{T}_{b}-\frac{P_{k}}{z}+\sum_{j=0}^{k-1}\mu_{b,jk}\Lambda P_{j}\mathcal{T}_{b}-\sum_{j=0}^{k-1}\mu_{b,jk}\frac{P_{j}}{z}+\Lambda\tilde{\psi}_{b,k}\\
		=&2k\bigg(P_{k}-\frac{2k+1}{2k}\frac{A_{k-1}}{A_{k}}P_{k-1}\bigg)\mathcal{T}_{b}-\frac{P_{k}}{z}\\
  &+\sum_{j=0}^{k-1}\mu_{b,jk}\Lambda P_{j}\mathcal{T}_{b}-\sum_{j=0}^{k-1}\mu_{b,jk}\frac{P_{j}}{z}+\Lambda\tilde{\psi}_{b,k}\\
		=&2k\bigg(\psi_{b,k}-\frac{2k+1}{2k}\frac{A_{k-1}}{A_{k}}\psi_{b,k-1}\bigg)\\
  &-2k\bigg(\sum_{j=0}^{k-1}\mu_{b,jk}P_{j}\mathcal{T}_{b}+\tilde{\psi}_{b,k}\bigg)
  +(2k+1)\frac{A_{k-1}}{A_{k}}\\&\cdot\bigg(\sum_{j=0}^{k-2}\mu_{b,jk-1}P_{j}\mathcal{T}_{b}
  +\tilde{\psi}_{b,k-1}\bigg)
		-\frac{P_{k}}{z}\\
  &+\sum_{j=0}^{k-1}\mu_{b,jk}\Lambda P_{j}\mathcal{T}_{b}-\sum_{j=0}^{k-1}\mu_{b,jk}\frac{P_{j}}{z}+\Lambda\tilde{\psi}_{b,k}.
	\end{aligned}
\end{equation*}
Hence from (\ref{pr2.3:4}) and (\ref{pr2.3:5}) again, we obtain
\begin{equation*}
	\begin{aligned}
		\bigg\lVert\Lambda\psi_{b,k}&-2k\bigg(\psi_{b,k}-\frac{2k+1}{2k}\frac{A_{k-1}}{A_{k}}\psi_{b,k-1}\bigg)\bigg\rVert_{L^{2}_{\rho,b}}=\bigg\lVert-2k\bigg(\sum_{j=0}^{k-1}\mu_{b,jk}P_{j}\mathcal{T}_{b}+\tilde{\psi}_{b,k}\bigg)
		\\
  &+(2k+1)\frac{A_{k-1}}{A_{k}}\bigg(\sum_{j=0}^{k-2}\mu_{b,jk-1}P_{j}\mathcal{T}_{b}
		+\tilde{\psi}_{b,k-1}\bigg)
		-\frac{P_{k}}{z}+\sum_{j=0}^{k-1}\mu_{b,jk}\Lambda P_{j}\mathcal{T}_{b}-
  \\ &\sum_{j=0}^{k-1}\mu_{b,jk}\frac{P_{j}}{z}+\Lambda\tilde{\psi}_{b,k}\bigg\rVert_{L^{2}_{\rho,b}}
		\lesssim 1.
		\end{aligned}
\end{equation*}
For the second part, by direct computation,
\begin{equation}\label{pr2.3:53}
	[\Delta,\Lambda]=2\Delta,
\end{equation}
where $ [A,B]=AB-BA $ is the commutator.\\
As a result,
\begin{equation*}
	H_{b}(\Lambda\tilde{\psi}_{b,k})=(-\Delta+\Lambda)\Lambda\tilde{\psi}_{b,k}=(-\Lambda\Delta-[\Delta,\Lambda]+\Lambda\Lambda)\tilde{\psi}_{b,k}=\Lambda H_{b}(\tilde{\psi}_{b,k})-2\Delta\tilde{\psi}_{b,k}.
\end{equation*}
It follows that
\begin{equation*}
	\begin{aligned}
		H_{b}&\bigg(\Lambda\psi_{b,k}-2k\bigg(\psi_{b,k}-\frac{2k+1}{2k}\frac{A_{k-1}}{A_{k}}\psi_{b,k-1}\bigg)\bigg)=-2k\bigg(\sum_{j=0}^{k-1}\mu_{b,jk}H_{b}(P_{j}\mathcal{T}_{b})+H_{b}(\tilde{\psi}_{b,k})\bigg)\\
		&+(2k+1)\frac{A_{k-1}}{A_{k}}\bigg(\sum_{j=0}^{k-2}\mu_{b,jk-1}H_{b}(P_{j}\mathcal{T}_{b})+H_{b}(\tilde{\psi}_{b,k-1})\bigg)+H_{b}\bigg(-\frac{P_{k}}{z}\bigg)\\
		&+\sum_{j=0}^{k-1}\mu_{b,jk}H_{b}(\Lambda P_{j}\mathcal{T}_{b})+\sum_{j=0}^{k-1}\mu_{b,jk}H_{b}\bigg(-\frac{P_{j}}{z}\bigg)+\Lambda H_{b}(\tilde{\psi}_{b,k})-2\Delta\tilde{\psi}_{b,k}.
	\end{aligned}
\end{equation*}
Then  similar computations  as before and combining with (\ref{pr2.3:3}), (\ref{pr2.3:4}), (\ref{pr2.3:5}), (\ref{pr2.3:6}) and (\ref{pr2.3:8}) yields
\begin{equation*}
	\bigg\lVert 	H_{b}\bigg(\Lambda\psi_{b,k}-2k\bigg(\psi_{b,k}-\frac{2k+1}{2k}\frac{A_{k-1}}{A_{k}}\psi_{b,k-1}\bigg)\bigg)\bigg\rVert_{L^{2}_{\rho,b}}\lesssim{1},
\end{equation*}
which is (\ref{pr2.3.13}).

       	\end{proof}
       \subsection{Diagonalisation of $ \mathcal{H}_{b} $}
       We are now in a position to derive the eigenvalues of $ \mathcal{H}_{b} $ and the related estimates.
    \begin{proposition}[Diagonalisation of $ \mathcal{H}_{b} $]
    	Let $ K\in\NN $. Then for all $ 0<b<b^{*}(K) $ small enough, the renormalised operator
    	\begin{equation*}
    		\mathcal{H}_{b}=-\Delta+b\Lambda\quad with\ the\ boundary\ condition\quad  u(1)=0 
    	\end{equation*}
       	have a family of eigenstates $ \eta_{b,k} $ satisfying
       	\begin{equation}\label{pr2.4:1}
       		\mathcal{H}_{b}\eta_{b,k}=b\lambda_{b,k}\eta_{b,k},\quad \forall\ 0\leq{k}\leq{K},
       	\end{equation}
       with the following properties:
       \begin{itemize}
       	\item Expansion of the eigenfunctions:\\
       	\begin{equation}\label{pr2.4:2}
       		\left\{\begin{aligned}
&\eta_{b,k}=S_{b,k}(y)+\tilde{\eta}_{b,k}(y)\\
&S_{b,k }=\frac{1}{\sqrt{b}}\bigg(P_{k}(\sqrt{b}y)\bigg(\frac{1}{y}-1\bigg)+\sum_{j=0}^{k-1}\mu_{b,jk}P_{j}(\sqrt{b}y)\bigg(\frac{1}{y}-1\bigg)\bigg)\\
&\tilde{\eta}_{b,k}(y)=\tilde{\psi}_{b,k}(\sqrt{b}y),
       		\end{aligned}\right.
       	\end{equation}
with the following estimates: 
\begin{equation}\label{pr2.4:3}
\begin{aligned}
	b^{\frac{3}{4}}&\Arrowvert\tilde{\eta}_{b,k}\Arrowvert_{b}+\frac{1}{b^{\frac{1}{4}}}\Arrowvert \Delta\tilde{\eta}_{b,k}\Arrowvert_{b}+b^{\frac{1}{4}}\Arrowvert \partial_{y}\tilde{\eta}_{b,k}\Arrowvert_{b}+b^{\frac{3}{4}}\Arrowvert\Lambda\tilde{\eta}_{b,k}\Arrowvert_{b}\\
 &+b^{\frac{5}{4}}\Arrowvert y\tilde{\eta}_{b,k}\Arrowvert_{b}+b^{\frac{7}{4}}\Arrowvert y^{2}\tilde{\eta}_{b,k}\Arrowvert_{b}+|\partial_{y}\tilde{\eta}_{b,k}(1)|\lesssim {1}.
 \end{aligned}
	\end{equation}
\item Some further estimates:
\begin{equation}\label{pr2.4:4}
	(\eta_{b,k},\eta_{b,i})_{b}=\frac{1}{b^{\frac{5}{2}}}(\delta_{ik}+O(\sqrt{b})),
\end{equation}
\begin{equation}\label{pr2.4:5}
	(2b\partial_{b}\eta_{b,k}-\Lambda\eta_{b,k},\eta_{b,i})_{b}=-\frac{1}{b^{\frac{5}{2}}}(\delta_{ik}+O(\sqrt{b})),
	\end{equation}
\begin{equation}\label{pr2.4:6}
	\Arrowvert\Lambda\eta_{b,0}\Arrowvert_{b}\lesssim\frac{1}{b^{\frac{3}{4}}},\quad \Arrowvert\Lambda\eta_{b,k}\Arrowvert_{b}\lesssim\frac{1}{b^{\frac{5}{4}}}, \quad \forall k\geq{1},
\end{equation}
\begin{equation}\label{pr2.4:7}
	\Arrowvert y\eta_{b,k}\Arrowvert_{b}\lesssim\frac{1}{b^{\frac{7}{4}}},\quad \Arrowvert y^{2}\eta_{b,k}\Arrowvert_{b}\lesssim\frac{1}{b^{\frac{9}{4}}},
\end{equation}
\begin{equation}\label{pr2.4:8}
	\Arrowvert 2b\partial_{b}\eta_{b,k}-\Lambda\eta_{b,k}+\eta_{b,k}\Arrowvert_{b}+\frac{1}{b}\Arrowvert\mathcal{H}_{b}(2b\partial_{b}\eta_{b,k}-\Lambda\eta_{b,k}+\eta_{b,k})\Arrowvert_{b}\lesssim \frac{1}{b^{\frac{3}{4}}},
\end{equation}
\begin{equation}\label{pr2.4:9}
\begin{aligned}
	\bigg\lVert& \Lambda\eta_{b,k}-2k\bigg(\eta_{b,k}-\frac{2k+1}{2k}\frac{A_{k-1}}{A_{k}}\eta_{b,k-1}\bigg)\bigg\rVert_{b}\\
 &+\frac{1}{b}\bigg\lVert\mathcal{H}_{b}\bigg(\Lambda\eta_{b,k}-2k\bigg(\eta_{b,k}-\frac{2k+1}{2k}\frac{A_{k-1}}{A_{k}}\eta_{b,k-1}\bigg)\bigg)\bigg\rVert_{b}\lesssim \frac{1}{b^{\frac{3}{4}}}.
 \end{aligned}
\end{equation}
       \end{itemize}
       \end{proposition}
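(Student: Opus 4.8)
The plan is to obtain this proposition as a mechanical transcription of the previous one (diagonalisation of $H_b$) under the change of variables $z=\sqrt b\,y$, keeping careful track of the powers of $b$ generated by the rescaling. For a radial function $h$ of $z$ write $h^\flat(y):=h(\sqrt b\,y)$. Since $\Lambda$ is scale invariant and $\Delta$ scales by $b$ under this substitution, one has the dictionary $\|h^\flat\|_b=b^{-3/4}\|h\|_{L^2_{\rho,b}}$, $\partial_y(h^\flat)=\sqrt b\,(\partial_z h)^\flat$, $\Lambda(h^\flat)=(\Lambda h)^\flat$, $y\,h^\flat=b^{-1/2}(zh)^\flat$, $\Delta(h^\flat)=b\,(\Delta h)^\flat$, $\mathcal H_b(h^\flat)=b\,(H_b h)^\flat$ (the last one because $\mathcal H_b=-\Delta+b\Lambda$). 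Setting $\eta_{b,k}:=\psi_{b,k}^\flat$ and $\tilde\eta_{b,k}:=\tilde\psi_{b,k}^\flat$, the last identity together with $H_b\psi_{b,k}=\lambda_{b,k}\psi_{b,k}$ and $\psi_{b,k}(\sqrt b)=0$ gives at once $\mathcal H_b\eta_{b,k}=b\lambda_{b,k}\eta_{b,k}$ with $\eta_{b,k}(1)=0$, which is \eqref{pr2.4:1}; substituting $z=\sqrt b\,y$ into the expansion \eqref{pr2.3:3} of $\psi_{b,k}$ and using $P_j(\sqrt b\,y)\big(\tfrac{1}{\sqrt b\,y}-\tfrac{1}{\sqrt b}\big)=\tfrac{1}{\sqrt b}P_j(\sqrt b\,y)\big(\tfrac1y-1\big)$ gives \eqref{pr2.4:2}; and every bound in \eqref{pr2.4:3} follows from the corresponding one in \eqref{pr2.3:5} by multiplying through with the appropriate power of $b$ read off from the dictionary, e.g. $b^{-1/4}\|\Delta\tilde\eta_{b,k}\|_b=\|\Delta\tilde\psi_{b,k}\|_{L^2_{\rho,b}}\lesssim1$ and $|\partial_y\tilde\eta_{b,k}(1)|=\sqrt b\,|\partial_z\tilde\psi_{b,k}(\sqrt b)|\lesssim1$.

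For the further estimates \eqref{pr2.4:4}--\eqref{pr2.4:9} the one point requiring care is that $\partial_b$ does not commute with the rescaling: from $\eta_{b,k}(y)=\psi_{b,k}(\sqrt b\,y)$ one computes $\partial_b\eta_{b,k}=(\partial_b\psi_{b,k})^\flat+\tfrac{1}{2b}\Lambda\eta_{b,k}$, so that $2b\partial_b\eta_{b,k}-\Lambda\eta_{b,k}=(2b\partial_b\psi_{b,k})^\flat$ and $2b\partial_b\eta_{b,k}-\Lambda\eta_{b,k}+\eta_{b,k}=(2b\partial_b\psi_{b,k}+\psi_{b,k})^\flat$. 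This is precisely why the good combinations in the proposition are written with $2b\partial_b\eta_{b,k}-\Lambda\eta_{b,k}$ rather than $\partial_b\eta_{b,k}$ alone. With this in hand: \eqref{pr2.4:4} and \eqref{pr2.4:5} are \eqref{pr2.3.8} and \eqref{pr2.3.9} (the latter used with the factor $2b$) multiplied by $b^{-3/2}$; \eqref{pr2.4:6} and \eqref{pr2.4:7} are \eqref{pr2.3.11} and \eqref{pr2.3.12} multiplied by $b^{-3/4}$, with the extra $b^{-1/2}$ and $b^{-1}$ for the $y$ and $y^2$ weights; and \eqref{pr2.4:8}, \eqref{pr2.4:9} follow from \eqref{pr2.3.10}, \eqref{pr2.3.13} using in addition $\|\mathcal H_b(h^\flat)\|_b=b\,\|(H_b h)^\flat\|_b=b^{1/4}\|H_b h\|_{L^2_{\rho,b}}$, so that after dividing by $b$ the $O(b^{1/4})$ control becomes the stated $O(b^{-3/4})$.

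The only genuinely delicate step is the $\partial_b$/rescaling identity above; everything else is bookkeeping of powers of $b$. I would therefore present the proof compactly: first record the scaling dictionary as a short list of identities, then deduce \eqref{pr2.4:1}--\eqref{pr2.4:3}, and finally run through \eqref{pr2.4:4}--\eqref{pr2.4:9} one line each, in every case citing the parent estimate from the $H_b$ proposition and the power of $b$ it is multiplied by.
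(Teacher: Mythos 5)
Your proposal is correct and follows exactly the paper's own route: the same scaling dictionary (including the identity $\mathcal H_b(h^\flat)=b(H_bh)^\flat$ and the norm conversions), the same definition $\eta_{b,k}=\psi_{b,k}(\sqrt b\,\cdot)$, and the same key observation that $2b\partial_b\eta_{b,k}-\Lambda\eta_{b,k}=(2b\partial_b\psi_{b,k})^\flat$, which is precisely the formula $b\partial_b\eta_{b,k}=\big(\tfrac{\Lambda\psi_{b,k}}{2}+b\partial_b\psi_{b,k}\big)(\sqrt b\,y)$ recorded in the paper. All the power-of-$b$ bookkeeping checks out, so nothing needs to be added.
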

   \begin{proof}
   	Given $ v: \Omega(1)\rightarrow \RR $, set $ v(y)=u(\sqrt{b}y) $ and $ z=\sqrt{b}y $. It is straightforward to check that
   	\begin{equation}\label{pr2.4:10}
   		\begin{aligned}
   			&\mathcal{H}_{b}v=b(H_{b}u)(\sqrt{b}y),\quad \lVert v\rVert_{b}=\frac{1}{b^{\frac{3}{4}}}\lVert u\rVert_{L^{2}_{\rho,b}},\quad \lVert \partial_{y}v\rVert_{b}=\frac{1}{b^{\frac{1}{4}}}\lVert \partial_{z}u\rVert_{L^{2}_{\rho,b}},\\
      &\partial_{y}v(1)=\sqrt{b}\partial_{z}u(\sqrt{b}),\quad
   			\lVert \Delta v\rVert_{b}=b^{\frac{1}{4}}\lVert \Delta u\rVert_{L^{2}_{\rho,b}},\quad \lVert\Lambda v\rVert_{b}=\frac{1}{b^{\frac{3}{4}}}\lVert\Lambda u\rVert_{L^{2}_{\rho,b}},\\
      &\lVert yv\rVert_{b}=\frac{1}{b^{\frac{5}{4}}}\lVert zu\rVert_{L^{2}_{\rho,b}},\quad
   			\lVert y^{2}v\rVert_{b}=\frac{1}{b^{\frac{7}{4}}}\lVert z^{2}u\rVert_{L^{2}_{\rho,b}}.
   		\end{aligned}
   	\end{equation}
   Therefore let 
   \begin{equation}\label{pr2.4:11}
   	\eta_{b,k}(y)=\psi_{b,k}(z),\quad z=\sqrt{b}y,
   \end{equation}
and (\ref{pr2.4:1}) follows. Now the decomposition (\ref{pr2.4:2}) follows from (\ref{pr2.3:3}), and (\ref{pr2.3:5}) implies (\ref{pr2.4:3}). The estimates (\ref{pr2.4:4}),  (\ref{pr2.4:6}), (\ref{pr2.4:7}) and (\ref{pr2.4:9}) follows by rescaling (\ref{pr2.3.8}),  (\ref{pr2.3.11}), (\ref{pr2.3.12}) and (\ref{pr2.3.13}) respectively.\\
Next, directly from the definition (\ref{pr2.4:11}), we compute
\begin{equation*}
	b\partial_{b}\eta_{b,k}=\bigg[\bigg(\frac{\Lambda \psi_{b,k}}{2}+b\partial_{b}\psi_{b,k}\bigg)(\sqrt{b}y)\bigg], \quad \Lambda\psi_{b,k}(\sqrt{b}y)=\Lambda\eta_{b,k}(y),
\end{equation*}
which together with (\ref{pr2.3.10}), (\ref{pr2.3.9}) and by rescaling yields (\ref{pr2.4:5}) and (\ref{pr2.4:8}) .
   	\end{proof}
   \section{Renormaliesd equations and initialisation}
   We start with the classical modulated nonlinear decomposition of the flow. Firstly,  set
   \begin{equation}\label{3.1}
   	u(t,x)=v(s,y),\quad y=\frac{r}{\lambda(t)}.
   	\end{equation}
   Without loss of generality, we may assume that $ \lambda(0)=1 $ which thanks to the scaling symmetry of the equation. Consider the renormalised time
   \begin{equation*}
   	s(t)=s_{0}+\int_{0}^{t}\frac{d\tau}{\lambda^{2}(\tau)},
   	\end{equation*}
   here $ s_{0}\gg{1} $ is to be chosen later, and then injecting (\ref{3.1}) into (\ref{eq:2}) yields the renormalised equation
   \begin{equation}\label{3.3}
   	\left\{\begin{aligned}
&\partial_{s}v+\mathcal{H}_{a}v=0,\quad a=-\frac{\lambda_{s}}{\lambda},\\
&v(s,1)=0,\quad \partial_{y}v(s,1)=a.
   	\end{aligned}\right.
   \end{equation}
We now choose our initial data in the following way:\\
	 \textbf{Case {\boldmath{$ k=0 $}}}: We pick an initial data of the form
	\begin{equation*}
		v_{0}=-\frac{1}{C_{0}}{b^{*}(0)}^{\frac{3}{2}}\eta_{b^{*}(0),0}+\varepsilon^{*}_{0},\quad \lVert \varepsilon\rVert_{b^{*}(0)}\ll 1\ {\rm{and}}\ 0<b^{*}(0)\ll 1.
	\end{equation*}
By the standard  modulation argument (see for example \cite{CFP}, \cite{MP}, \cite{MR1}), there exists  a unique $ b(0)>0 $ with $ |b(0)-b^{*}(0)|\ll 1 $ and $ \lVert \varepsilon_{0}\rVert_{b(0)}\ll 1 $ satisfying
\begin{equation*}
	v_{0}=-\frac{1}{C_{0}}b^{\frac{3}{2}}(0)\eta_{b(0),0}+\varepsilon_{0}\quad {\rm{with}} \quad (\varepsilon_{0},\eta_{b(0),0})_{b(0)}=0
	\end{equation*}
and the corresponding solution of (\ref{3.3}) admits a unique time dependent decomposition 
\begin{equation}\label{3.5}
	v(s,y)=-\frac{1}{C_{0}}b^{\frac{3}{2}}(s)\eta_{b(s),0}+\varepsilon(s,y)\quad {\rm{with}} \quad (\varepsilon,\eta_{b,0})_{b}=0,
	\end{equation}
which makes sense as long as $ \varepsilon(s,y) $ is small enough in the weighted $ L^{2} $ space. Set
\begin{equation}\label{3.6}
	\varepsilon_{2}=\mathcal{H}_{b}\varepsilon
\end{equation}
and define the energy
\begin{equation*}
	E:=\lVert \mathcal{H}_{b}\varepsilon\rVert^{2}_{b},
\end{equation*}
which is  coercive  under the  orthogonality condition  (\ref{3.5}) (see Appendix A).
We assume the initial smallness of energy
\begin{equation}\label{3.8}
	 E(0)\leq b^{4}(0),
\end{equation}
here $ b(0)\ll 1 $ is small enough.
\begin{remark}
The choice of  coefficients $-\frac{1}{C_{0}}b^{\frac{3}{2}}(s)$ in the decomposition (\ref{3.5}) ensures that $-\frac{\lambda_{s}}{\lambda}\sim b $ as  expected which comes from the boundary condition (see (\ref{3.27})).
\end{remark}
\begin{remark}
	Note that the set of initial data satisfies (\ref{3.5}) and (\ref{3.8}) is non-empty and contains compactly supported initial data which has already been constructed in \cite{MP} in 2D. The construction in 3D is similar to the case in 2D, so we omit the details.
\end{remark}
 \textbf{Case {\boldmath{$ k\geq 1 $}}}: We firstly freeze the explicit value
   \begin{equation}\label{3.9}
	\left\{\begin{aligned}
	&b(s)=\frac{1}{2ks},\\
	&a^{e}(s)=\frac{k+1}{2ks},\\
	&b^{e}_{k}(s)=\left(-\frac{1}{C_{k}}\right)\frac{k+1}{(2ks)^{\frac{3}{2}}},\\
	& b^{e}_{j}(s)=0\quad  \forall\ 0\leq{j}\leq{k-1}.
	\end{aligned}\right.
\end{equation}
Next one can  check that $ b(s),a^{e}(s),b^{e}_{k}(s),b^{e}_{j}(s) $ satisfy the following equation
\begin{equation}\label{3.10}
	\left\{\begin{aligned}
&(b^{e}_{k})_{s}+bb_{k}^{e}\lambda_{b,k}+(a^{e}-b)b^{e}_{k}=O(s^{-3}),\\
&b_{s}+2b(a^{e}-b)=0,\\
&a^{e}=-C_{k}\frac{b^{e}_{k}}{\sqrt{b}}.
\end{aligned}\right.
\end{equation}
\begin{remark}
Note that the first and second equations above come from  the leading order term of the modulation equations (see (\ref{3.37})) and the third equation comes from the main term of $a$ which follows by the boundary condition (see (\ref{3.27})).
\end{remark}
Consider now the approximate solution of the form
\begin{equation}\label{in3}
	Q_{\beta}(y)=\sum_{j=0}^{k}b_{j}\eta_{b,j}(y)
\end{equation}
and then  the corresponding solution amdits the decomposition
\begin{equation}\label{3.12}
	v(s,y)=Q_{\beta(s)}(y)+\varepsilon(s,y), \quad (\eta_{b(s),j},\varepsilon)_{b}=0,\quad \forall\ 0\leq{j}\leq{k}.
\end{equation}
Set again
\begin{equation*}
	\varepsilon_{2}=\mathcal{H}_{b}\varepsilon,\quad E:=\lVert\mathcal{H}_{b}\varepsilon\rVert^{2}_{b},
\end{equation*}
which is also a coercive norm under the orthogonality conditions (\ref{3.12}) (see Appendix A).
We assume the initial smallness of energy
\begin{equation}\label{3.13}
	E(0)\leq\left\{\begin{aligned}
&b^{3}(0)\quad 1\leq{k}\leq{3},\\
&b^{3}(0)|\log b(0)|\quad k=4,\\
&b^{\frac{5}{2}+\frac{2}{k}}(0)\quad k\geq 5,
	\end{aligned}\right.
\end{equation}
where $ b(0)>0 $ is small enough to be determined later. For $ k\geq{1} $, the set of initial data will be built as a codimension $ k $ manifold. In order to prepare the initial data, we consider the decomposition
\begin{equation}\label{3.14}
	b_{j}(s)=b^{e}_{j}(s)+\tilde{b}_{j}(s),\quad \tilde{b}_{j}(s)=\frac{U_{j}(s)}{s^{\frac{3}{2}+\alpha_{k}}},\quad \forall\ 0\leq{j}\leq{k},
\end{equation}
here
\begin{equation}\label{3.15}
	\alpha_{k}=\left\{\begin{aligned}
		& \frac{1}{8}\quad 1\leq{k}\leq 4,\\
		&\frac{1}{2k}\quad k\geq 5.
	\end{aligned}\right.
\end{equation}
We introduce the $ (2\times 2) $ matrix $ B_{k} $\footnote{$B_{k}$ is the matrix of the equations for $U_{k-1}$ and $U_{k}$ (see (\ref{3.5:8}) below) .}
\begin{equation}\label{3.16}
	B_{k}=\begin{pmatrix}
		-a_{k} & -b_{k}\\
		c_{k} &  d_{k}\\
	\end{pmatrix}=
	\begin{pmatrix}
	-(k+1-\alpha_{k}) &-(k+1)\frac{C_{k-1}}{C_{k}}\\
	\frac{(k+1)(2k+1)}{2k}\frac{A_{k-1}}{A_{k}} &\frac{1}{k}+\alpha_{k}+\frac{(2k+1)(k+1)B_{k-1}}{2kB_{k}}
	\end{pmatrix}.
\end{equation}
The matrix $ B_{k} $ admits one positive eigenvalue $ \mu^{k}_{1}>0 $ and one negative eigenvalue $ \mu^{k}_{2}<0 $. In fact, the characteristic polynomial of $ B_{k} $ is 
\begin{equation*}
	f_{k}(\lambda)=\lambda^{2}+(a_{k}-d_{k})\lambda+c_{k}b_{k}-a_{k}d_{k},
\end{equation*}
and note that
\begin{equation*}
	\begin{aligned}
		a_{k}-d_{k}&=k+1-\alpha_{k}-\left(\frac{1}{k}+\alpha_{k}+\frac{(2k+1)(k+1)B_{k-1}}{2kB_{k}}\right)\\
  &=k+1-\alpha_{k}-\left(\frac{1}{k}+\alpha_{k}+\frac{(2k+1)(k+1)}{2k}\frac{\frac{\Gamma\left(\frac{1}{2}+k-1+1\right)}{(k-1)!\Gamma\left(\frac{1}{2}+1\right)}}{\frac{\Gamma\left(\frac{1}{2}+k+1\right)}{k!\Gamma\left(\frac{1}{2}+1\right)}}\right)\\
  &=k+1-\alpha_{k}-\left(\frac{1}{k}+\alpha_{k}+\frac{(2k+1)(k+1)}{2k}\cdot \frac{2k}{2k+1}\right)\\
  &=-2\alpha_{k}-\frac{1}{k}<0,
	\end{aligned}
\end{equation*}
and
\begin{equation*}
	\begin{aligned}
		c_{k}b_{k}-a_{k}d_{k}=&\frac{(k+1)(2k+1)}{2k}\frac{A_{k-1}}{A_{k}}(k+1)\frac{C_{k-1}}{C_{k}}\\
  &-(k+1-\alpha_{k})\left(\frac{1}{k}+\alpha_{k}+\frac{(2k+1)(k+1)B_{k-1}}{2kB_{k}}\right)\\
  =&\frac{(k+1)(2k+1)}{2k}(k+1)\frac{2k}{2k+1}\\
  &-(k+1-\alpha_{k})\left(\frac{1}{k}+\alpha_{k}+\frac{(2k+1)(k+1)}{2k}\frac{2k}{2k+1}\right)\\
  =&-1-\frac{1}{k}+\alpha_{k}\left(\frac{1}{k}+\alpha_{k}\right)<0,
	\end{aligned}
\end{equation*}
hence, direct computation yields
\begin{equation*}
	\mu^{k}_{1}=\alpha_{k}+\frac{1}{k}+1>0,\quad \mu^{k}_{2}=-1+\alpha_{k}<0.
\end{equation*}
As a result, $ B_{k} $ is diagonaliable and there exists a $ P_{k}\in GL_{2}(\RR) $, such that
\begin{equation*}
	P_{k}B_{k}P_{k}^{-1}=\begin{pmatrix}
		\mu^{k}_{2} & 0\\
		0 & \mu^{k}_{1}
	\end{pmatrix}.
\end{equation*}
Define 
\begin{equation}\label{3.18}
	\begin{pmatrix}
		W_{k}\\
		W_{k-1}
	\end{pmatrix}=P_{k}\begin{pmatrix}
	U_{k}\\
	U_{k-1},
\end{pmatrix}
	\end{equation}
and assume the initial bound
\begin{equation}\label{3.19}
	|W_{k}(0)|\leq 1,
\end{equation}
\begin{equation}\label{3.20}
	|W_{k-1}(0)|^{2}+\sum_{j=0}^{k-2}\frac{|U_{j}(0)|^{2}}{\delta^{2}}\leq 4,
\end{equation}
for some $ 0<\delta\ll 1 $ to be chosen later.\\
We now consider the bootstrap bounds:
\begin{itemize}
	\item $ k=0 $:
	\begin{equation}\label{3.21}
		E(s)\leq Kb(s)^{4},
	\end{equation}
for some $ K>0 $ large enough to be chosen later, and 
\begin{equation}\label{3.22}
	0<b(s)<b^{*},
\end{equation}
here $ b^{*}>0 $ is a universal small constant to be chosen later.\\
\item $ k\geq 1 $:
\begin{equation}\label{3.23}
	E(s)\leq\left\{\begin{aligned}
& Kb(s)^{3}\quad 1\leq{k}\leq 3,\\
& Kb(s)^{3}|\log b(s)|\quad k=4,\\
& Kb(s)^{\frac{5}{2}+\frac{2}{k}}\quad k\geq 5,
\end{aligned}\right.
\end{equation}
for some $ K>0 $ universally large to be chosen later, and
\begin{equation}\label{3.24}
	|W_{k}(s)|\leq 2,
	\end{equation}
\begin{equation}\label{3.25}
	|W_{k-1}(s)|^{2}+\sum_{j=0}^{k-2}\bigg|\frac{U_{j}(s)}{\delta}\bigg|^{2}\leq 4.
\end{equation}
\end{itemize}
Define
\begin{equation*}
	s^{*}=\left\{\begin{aligned}
&{\rm sup}\{s\geq s_{0}|(\ref{3.21}),(\ref{3.22})\ {\rm hold}\ {\rm on}\ [s_{0},s]\}\quad {\rm for}\ k=0,\\
&{\rm sup}\{s\geq s_{0}|(\ref{3.23}),(\ref{3.24}),(\ref{3.25})\ {\rm hold}\ {\rm on}\ [s_{0},s]\}\quad {\rm for}\ k\geq 1.
	\end{aligned}\right.
\end{equation*}
\begin{remark}
	Note in particular that under the above assumption, we have the rough bounds
	\begin{equation}\label{3.26}
		E(s)\lesssim\left\{\begin{aligned}
			&b(s)^{\frac{7}{2}}\quad k=0,\\
			&b(s)^{\frac{47}{16}}\quad 1\leq k \leq 4,\\
			&b(s)^{\frac{5}{2}+\frac{7}{4k}}\quad k\geq 5,
		\end{aligned}\right. \quad |b_{j}(s)|\lesssim b^{\frac{3}{2}}\quad \forall\ 0\leq{j}\leq{k}.
	\end{equation}
for any	 $s\in\left[ s_{0},s^{*}\right)  $ for some $ s_{0}>0 $ large enough.
\end{remark}
The heart of our analysis lines in the following bootstrap proposition.
\begin{proposition}[Bootstrap estimates on $ b $ and $ \varepsilon $]\label{pr3.3}
	Let $ s^{*} $ be defined above, then the following statements hold:
	\begin{itemize}
		\item Stable regime: for $ k=0 $, $ s^{*}=+\infty $.
		\item Unstable regime: for $ k\geq 1 $, there exist $ 0<\delta\ll 1 $ and $( U_{0}(s_{0}),U_{1}(s_{0}),...,\\U_{k-2}(s_{0}),W_{k-1}(s_{0}) )$ depending on $ \varepsilon(0) $ and satisfying $ (\ref{3.13}) $, $ (\ref{3.19}) $, $ (\ref{3.20}) $, such that $ s^{*}=+\infty $.
	\end{itemize}
\end{proposition}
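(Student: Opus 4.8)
The plan is the standard bootstrap plus topological argument in the spirit of \cite{MRR1}, \cite{PR}, \cite{RS}, \cite{MP}: we work on the maximal interval $[s_{0},s^{*})$ on which the bootstrap bounds hold and prove that, for an appropriate choice of the remaining free initial parameters, no bound can be saturated, whence $s^{*}=+\infty$. The constants are fixed in the hierarchy $b^{*}\ll 1$, then $\delta\ll 1$, then $K\gg 1$, then $s_{0}\gg 1$ (equivalently $b(s_{0})\ll 1$), each depending only on the previous ones and on $k$.

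\textbf{Step 1: improving the energy bound.} The weighted energy estimates of Section~5 furnish a differential inequality for $E=\lVert\mathcal H_{b}\varepsilon\rVert_{b}^{2}$ of the form
\begin{equation*}
\frac{d}{ds}\Big\{\frac{E}{b^{\gamma_{k}}}\Big\}\le \frac{C}{b^{\gamma_{k}}}\big(\text{modulation errors}+\text{forcing terms from }Q_{\beta}\big),
\end{equation*}
with $\gamma_{k}$ read off from (\ref{3.21})/(\ref{3.23}), where the linear energy flux has been absorbed using the sharp coercivity constant $2$ for $\mathcal H_{b}$ (Lemma~\ref{leCH{b}}), which alone produces enough dissipation to dominate the scaling drift $-b_{s}/b$. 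Inserting the modulation equations of Section~4 (which bound $b_{s}$, $a$ and the error terms by $\sqrt{E}$, $b^{3/2}$ and, for $k\ge1$, by the $U_{j}$'s via (\ref{3.43})) together with the rough bounds (\ref{3.26}), integrating from $s_{0}$ to $s$, and using the initial smallness (\ref{3.8})/(\ref{3.13}), we obtain $E(s)\le \tfrac12 K b(s)^{\gamma_{k}}$ (and the analogous bound with the logarithm when $k=4$), which strictly improves (\ref{3.21})/(\ref{3.23}) provided $K$ was taken large enough.

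\textbf{Step 2: improving $b$ and the stable direction.} For $k=0$, projecting (\ref{3.3}) onto $\eta_{b,0}$ and using the boundary relation of Lemma~\ref{le4.1} and the orthogonality (\ref{3.5}) yields $b_{s}=-c_{0}b^{5/2}\big(1+o_{s\to\infty}(1)\big)$ with $c_{0}>0$; hence $b$ is strictly decreasing and, since $b(s_{0})<b^{*}$, one has $b(s)<b^{*}$ on $[s_{0},s^{*})$, strictly improving (\ref{3.22}). Since neither (\ref{3.21}) nor (\ref{3.22}) can then be saturated, $s^{*}=+\infty$; integrating the ODE for $b$ and unwinding $\lambda_{s}/\lambda=-a=-b(1+o(1))$, $ds/dt=\lambda^{-2}$ produces the melting rate (\ref{th1.1}). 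For $k\ge1$, the same projection combined with (\ref{3.9}) and (\ref{3.14})--(\ref{3.18}) shows that the stable coordinate obeys $\dot W_{k}=\tfrac{\mu_{2}^{k}}{s}W_{k}+(\text{l.o.t.})$ with $\mu_{2}^{k}=\alpha_{k}-1<0$; Gronwall together with $|W_{k}(s_{0})|\le1$ gives $|W_{k}(s)|\le\tfrac32$ on $[s_{0},s^{*})$, strictly improving (\ref{3.24}).

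\textbf{Step 3: topological argument for the unstable directions ($k\ge1$).} It remains to choose $\delta$ and the initial data $(U_{0}(s_{0}),\dots,U_{k-2}(s_{0}),W_{k-1}(s_{0}))$ so that (\ref{3.25}) is never saturated. Write $\mathcal N(s)=|W_{k-1}(s)|^{2}+\sum_{j=0}^{k-2}|U_{j}(s)/\delta|^{2}$. The modulation equations give, for $0\le j\le k-2$, $\dot U_{j}=\tfrac{\nu_{j}}{s}U_{j}+(\text{coupling})+(\text{forcing})$ with $\nu_{j}=(\tfrac32+\alpha_{k})-\tfrac jk>\tfrac12$, and $\dot W_{k-1}=\tfrac{\mu_{1}^{k}}{s}W_{k-1}+(\text{coupling})+(\text{forcing})$ with $\mu_{1}^{k}=\alpha_{k}+\tfrac1k+1>1$; the couplings are $O(\delta/s)$ times the unstable coordinates and the forcing is controlled by $\sqrt{E}$ and $b^{3/2}$ through (\ref{3.26}), (\ref{3.43}), hence negligible against the leading growth once $\delta$ is small and $s_{0}$ large. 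Consequently, whenever $\mathcal N(s)=4$ one has $\tfrac{d}{ds}\mathcal N(s)\ge \tfrac{c}{s}>0$: every trajectory reaching the boundary of $\{\mathcal N\le4\}$ leaves it transversally. By Steps~1--2, if $s^{*}<+\infty$ for a given choice of initial data then necessarily $\mathcal N(s^{*})=4$; transversality makes $s^{*}$ depend continuously on $(U_{0}(s_{0}),\dots,U_{k-2}(s_{0}),W_{k-1}(s_{0}))$, and then the map sending these initial data to the unstable coordinates at time $s=s^{*}$ would be a continuous retraction of the closed $k$-dimensional ball $\{\mathcal N(s_{0})\le4\}$ onto its boundary sphere (data with $\mathcal N(s_{0})=4$ exit at $s^{*}=s_{0}$ and are fixed), contradicting Brouwer's no-retraction theorem. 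Hence there exist $0<\delta\ll1$ and $(U_{0}(s_{0}),\dots,U_{k-2}(s_{0}),W_{k-1}(s_{0}))$ depending on $\varepsilon(0)$ and satisfying (\ref{3.13}), (\ref{3.19}), (\ref{3.20}) for which (\ref{3.25}) holds on $[s_{0},+\infty)$; combined with Steps~1--2 this gives $s^{*}=+\infty$.

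\textbf{Main obstacle.} The subtle point is the strict transversality in Step~3: one must check that, after subtracting the energy-controlled remainder (Step~1) and the $O(\delta)$ inter-coupling of the $U_{j}$'s, the leading \emph{positive}-rate linear term genuinely dominates all error contributions on $\{\mathcal N=4\}$. This rests on the precision of the modulation equations of Section~4 and on the rough bounds (\ref{3.26}), and it is precisely here that the slower spatial decay of the three-dimensional approximate profiles (versus the two-dimensional analysis of \cite{MP}) forces the sharper spectral expansions of Section~2. A secondary technical ingredient is the continuous dependence of the decomposition $(b,b_{0},\dots,b_{k},\lambda,\varepsilon)$ on the initial data, which is the standard consequence of the implicit function theorem applied to the orthogonality conditions (\ref{3.5})/(\ref{3.12}).
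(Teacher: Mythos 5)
Your proposal follows essentially the same route as the paper: integrate the energy inequality of Section~5 with the integrating factor coming from the sharp coercivity constant to turn $E\le Kb^{\gamma_k}$ into $E\le Cb^{\gamma_k}$ with $C$ universal; re-inject into the modulation equations to get the ODEs for $U_j$, $W_{k-1}$, $W_k$; integrate the stable direction directly; and close the unstable directions by the outgoing/transversality condition plus Brouwer's no-retraction theorem. That part of the argument is correctly organized (modulo a small exponent slip: from $(\tilde b_j)_s+(\tfrac{j}{ks}+\tfrac{1}{2s})\tilde b_j=O(\cdot)$ and $\tilde b_j=U_js^{-3/2-\alpha_k}$ the linear rate for $U_j$ is $1+\alpha_k-\tfrac jk$, not $\tfrac32+\alpha_k-\tfrac jk$; it is still positive for $0\le j\le k-2$, which is all the transversality needs).

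There is, however, a genuine gap: you conclude $s^*=+\infty$ as soon as no bootstrap bound can be saturated, but $s^*$ can also be finite because the solution ceases to exist. The bootstrap quantities $E=\lVert\mathcal H_b\varepsilon\rVert_b^2$ and the parameters live in exponentially weighted spaces; they do \emph{not} control the unweighted $H^2(\Omega(t))$ norm, and the blow-up criterion of the Cauchy theory (Appendix~B) is $\lVert u\rVert_{H^2(\Omega(t))}\to+\infty$ or $\lambda\to0$. So the step ``if $s^*<+\infty$ then $\mathcal N(s^*)=4$'' in your topological argument, and the corresponding conclusion in the stable case, both require an additional argument showing that on $[s_0,s^*)$ one has $\lambda(s)>0$ and $\lVert u\rVert_{H^2(\Omega(t))}<+\infty$. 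The paper devotes a separate step to this: positivity of $\lambda$ from integrating $a=-\lambda_s/\lambda$; global $L^2$ and $\dot H^1$ bounds from the dissipation identities (using $\dot\lambda<0$ for the $\dot H^1$ one); and a global $\dot H^2$ bound obtained by combining the weighted control of $\Delta v$ near the boundary with a localized parabolic energy estimate $\frac{d}{ds}\int\chi|\Delta u|^2\lesssim\lambda^2\lVert\Delta u\rVert^2_{L^2(\lambda\le|x|\le2)}$ away from it. Without this your proof of the proposition is incomplete.
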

In what follows, we study the flow in the bootstrap regime $ s\in[s_{0},s^{*}) $ and finally close the bootstrap bounds. 
\section{Modulation  equations}
In this section we derive the  modulation equations for the parameters $ (a,\ b,\  (\tilde{b}_{j})_{0\leq{j}\leq{k}}) $. We start with the constraint induced by the boundary conditions.
\begin{lemma}[Boundary conditions]\label{le4.1} Let $ v $ be a solution of $(\ref{3.3})$ with the decomposition $ (\ref{3.5}), (\ref{3.12}) $ , then we have
	\begin{align}
		&a=\left\{\begin{aligned}
			&b+O\bigg(b^{\frac{3}{2}}+\frac{\sqrt{E}}{b^{\frac{1}{4}}}\bigg)\quad k=0,\\
			&-\sum_{j=0}^{k}\frac{C_{j}b_{j}}{\sqrt{b}}+O\bigg(\sum_{j=0}^{k}|b_{j}|+\frac{\sqrt{E}}{b^{\frac{1}{4}}}\bigg)\quad k\geq 1,
		\end{aligned}\right.\label{3.27} \\ 
		&\varepsilon_{2}(1)=-a(a-b),\label{3.28} \\
		&\partial_{y}\varepsilon_{2}(1)=\left\{\begin{aligned}
			&-a_{s}-\lambda_{b,0}b^{2}-(a-b)(a^{2}-a)+O(b^{3})\quad k=0,\\
			&-a_{s}+\sqrt{b}\sum_{j=0}^{k}C_{j}\lambda_{b,j}b_{j}-(a-b)(a^{2}-a)+O\bigg(b\sum_{j=0}^{k}|b_{j}|\bigg)\quad k\geq{1}.
		\end{aligned}\right.\label{3.29}
	\end{align}
\end{lemma}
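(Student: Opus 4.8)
The plan is to read off all three relations from the renormalised equation $(\ref{3.3})$ and from its first $y$--derivative, evaluated at the fixed boundary $y=1$, and then to subtract the explicit contribution of the profile $Q$ (where $Q=-\frac{1}{C_{0}}b^{\frac{3}{2}}\eta_{b,0}$ when $k=0$ and $Q=Q_{\beta}=\sum_{j=0}^{k}b_{j}\eta_{b,j}$ when $k\ge1$). The only information about $Q$ that is needed is the spectral data $\mathcal{H}_{b}\eta_{b,j}=b\lambda_{b,j}\eta_{b,j}$, the boundary condition $\eta_{b,j}(1)=0$, and the boundary value of $\partial_{y}\eta_{b,j}$; the last of these I would first record from the expansion $(\ref{pr2.4:2})$: since $\partial_{y}\big[P_{j}(\sqrt b\,y)(\frac{1}{y}-1)\big]\big|_{y=1}=-P_{j}(\sqrt b)$, the estimates $(\ref{pr2.3:4})$, $(\ref{eq:10})$ and $(\ref{pr2.4:3})$ give
\begin{equation*}
	\partial_{y}\eta_{b,j}(1)=-\frac{C_{j}}{\sqrt b}+O(1),\qquad 0\le j\le k.
\end{equation*}

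For $(\ref{3.27})$, since $v(s,1)=0$ and $Q$ vanishes at $y=1$, the decomposition $(\ref{3.5})$, $(\ref{3.12})$ gives $\varepsilon(s,1)=0$ and $a=\partial_{y}v(s,1)=\partial_{y}Q(s,1)+\partial_{y}\varepsilon(s,1)$. Using the displayed boundary derivatives, $\partial_{y}Q_{\beta}(1)=-\sum_{j}\frac{C_{j}b_{j}}{\sqrt b}+O(\sum_{j}|b_{j}|)$ when $k\ge1$, and $\partial_{y}Q(1)=-\frac{1}{C_{0}}b^{\frac{3}{2}}\partial_{y}\eta_{b,0}(1)=b+O(b^{\frac{3}{2}})$ when $k=0$. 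It then remains to control the trace $\partial_{y}\varepsilon(1)$, which I would do by passing to the variable $z=\sqrt b\,y$: with $w(z):=\varepsilon(z/\sqrt b)$, the rescaling identities $(\ref{pr2.4:10})$ give $H_{b}w=\frac{1}{b}\varepsilon_{2}(\cdot/\sqrt b)$, $\partial_{y}\varepsilon(1)=\sqrt b\,\partial_{z}w(\sqrt b)$, $\|H_{b}w\|_{L^{2}_{\rho,b}}=b^{-\frac{1}{4}}\sqrt E$ and $\|w\|_{L^{2}_{\rho,b}}=b^{\frac{3}{4}}\|\varepsilon\|_{b}$; then the integration by parts identity behind $(\ref{le1.14})$, namely $e^{-\frac{b}{2}}\sqrt b\,\partial_{z}w(\sqrt b)=\langle H_{b}w,\frac{1}{z}\rangle_{b}+\langle w,\frac{1}{z}\rangle_{b}$, combined with $\|\frac{1}{z}\|_{L^{2}_{\rho,b}}\lesssim1$ and the coercivity of $E$ under the orthogonality conditions (Appendix A, which controls $\|\varepsilon\|_{b}$), yields $|\partial_{y}\varepsilon(1)|\lesssim b^{-\frac{1}{4}}\sqrt E$. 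Collecting the three contributions gives $(\ref{3.27})$. I expect this trace estimate to be the main obstacle: it is the only genuinely analytic ingredient, it relies on the coercivity of the energy and on careful tracking of the powers of $b$ under the rescaling $z=\sqrt b\,y$, and one must make sure the orthogonality $(\varepsilon,\eta_{b,j})_{b}=0$ enters only through that coercivity statement and not directly in the integration by parts.

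The identities $(\ref{3.28})$ and $(\ref{3.29})$ are by contrast purely algebraic consequences of Taylor--expanding the equation at the boundary. Since $v(s,1)\equiv0$ we have $\partial_{s}v(s,1)=0$, hence $\mathcal{H}_{a}v(s,1)=0$ by $(\ref{3.3})$; since $\partial_{y}v(s,1)\equiv a(s)$ we have $\Lambda v(1)=a$, $\partial_{s}\partial_{y}v(1)=a_{s}$, and differentiating $(\ref{3.3})$ once in $y$ gives $\partial_{y}(\mathcal{H}_{a}v)(1)=-a_{s}$; moreover $\mathcal{H}_{a}v(1)=-\Delta v(1)+a\Lambda v(1)=0$ forces $\Delta v(1)=a^{2}$, hence $\partial_{yy}v(1)=\Delta v(1)-2\partial_{y}v(1)=a^{2}-2a$. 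Now I would write $\mathcal{H}_{b}=\mathcal{H}_{a}+(b-a)\Lambda$: at $y=1$, $\mathcal{H}_{b}v(1)=(b-a)\Lambda v(1)=-a(a-b)$, while $\mathcal{H}_{b}Q(1)=\sum_{j}b_{j}b\lambda_{b,j}\eta_{b,j}(1)=0$ (and likewise for $k=0$), so $\varepsilon_{2}(1)=\mathcal{H}_{b}\varepsilon(1)=-a(a-b)$, which is $(\ref{3.28})$; and $\partial_{y}(\mathcal{H}_{b}v)(1)=\partial_{y}(\mathcal{H}_{a}v)(1)+(b-a)\big(\partial_{y}v(1)+\partial_{yy}v(1)\big)=-a_{s}-(a-b)(a^{2}-a)$, while $\partial_{y}(\mathcal{H}_{b}Q)(1)=\sum_{j}b_{j}b\lambda_{b,j}\partial_{y}\eta_{b,j}(1)$ equals $b^{2}\lambda_{b,0}+O(b^{3})$ when $k=0$ (using $\lambda_{b,0}=O(\sqrt b)$) and $-\sqrt b\sum_{j}C_{j}\lambda_{b,j}b_{j}+O(b\sum_{j}|b_{j}|)$ when $k\ge1$ (using $\lambda_{b,j}=O(1)$), so that subtracting gives $\partial_{y}\varepsilon_{2}(1)$ in the form $(\ref{3.29})$.
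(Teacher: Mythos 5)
Your proposal is correct and follows essentially the same route as the paper: the boundary derivative $\partial_{y}\eta_{b,j}(1)=-C_{j}/\sqrt{b}+O(1)$ from the eigenfunction expansion, the identity $\mathcal{H}_{a}v(1)=0$ rewritten through $\mathcal{H}_{b}=\mathcal{H}_{a}+(b-a)\Lambda$ for \eqref{3.28}, and one $y$-derivative of the equation at $y=1$ together with $\partial_{yy}v(1)=a^{2}-2a$ for \eqref{3.29}. The only cosmetic difference is that for the trace bound $|\partial_{y}\varepsilon(1)|\lesssim\sqrt{E}/b^{1/4}$ the paper simply quotes the coercivity estimate \eqref{leCH_{b}:14} (whose left-hand side already contains $\sqrt{b}(\partial_{y}\varepsilon(1))^{2}$), whereas you rederive that term from the integration-by-parts identity against $1/z$; both are valid and rely on the same Appendix A input.
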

\begin{remark}
	Note that from (\ref{3.26}) and (\ref{3.27}), for $ k=0 $, we arrive at  the further estimate
	\begin{equation}\label{3.30}
		|a-b|\lesssim b^{\frac{3}{2}}.
	\end{equation}
\end{remark}
\begin{proof}
	Firstly, using (\ref{pr2.4:2}), (\ref{pr2.4:3}) and (\ref{pr2.3:4}), one has
	\begin{equation}\label{3.31}
		\begin{aligned}
			\partial_{y}\eta_{b,j}(1)&=\frac{1}{\sqrt{b}}\bigg(-P_{j}(\sqrt{b})-\sum_{i=0}^{j-1}\mu_{b,ij}P_{i}(\sqrt{b})\bigg)+\partial_{y}\tilde{\eta}_{b,j}(1)\\
			&=-\frac{C_{j}}{\sqrt{b}}+O(1),
		\end{aligned}
	\end{equation}
where $ C_{j}=P_{j}(0) $. Then for $ k=0 $, by the decomposition (\ref{3.5}), the boundary condition $ \partial_{y}v(s,1)=a $ and the coercivity estimate (\ref{leCH_{b}:14}), we conclude that
\begin{equation*}
\begin{aligned}
	a=\partial_{y}v(s,1)&=\bigg(-\frac{1}{C_{0}}b^{\frac{3}{2}}(s)\partial_{y}\eta_{b(s),0}+\partial_{y}\varepsilon\bigg)(1)\\
	&=-\frac{1}{C_{0}}b^{\frac{3}{2}}\bigg(-\frac{C_{0}}{\sqrt{b}}+O(1)\bigg)+\partial_{y}\varepsilon(1)\\
	&=b+O\bigg(b^{\frac{3}{2}}+\frac{\sqrt{E}}{b^{\frac{1}{4}}}\bigg).
\end{aligned}	
\end{equation*}
Similar computations applied for the case $ k\geq 1 $ yield
\begin{equation*}
	a=\partial_{y}v(s,1)=\sum_{j=0}^{k}b_{j}\partial_{y}\eta_{b,j}(1)+\partial_{y}\varepsilon(1)=-\sum_{j=0}^{k}\frac{C_{j}b_{j}}{\sqrt{b}}+O\bigg(\sum_{j=0}^{k}|b_{j}|+\frac{\sqrt{E}}{b^{\frac{1}{4}}}\bigg),
	\end{equation*}
which is (\ref{3.27}).\\
Inserting $ v(s,1)=0 $ into the equation (\ref{3.3}) yields $ \mathcal{H}_{a}v(1)=0 $, hence by (\ref{pr2.4:1}) together with that $ \partial_{y}v(s,1)=a $, we get
\begin{equation*}
		0=\mathcal{H}_{a}v(1)=(\mathcal{H}_{b}+(a-b)y\partial_{y})v(1)=\varepsilon_{2}(1)+(a-b)a,
\end{equation*}
which is (\ref{3.28}).\\
From $ \partial_{y}v(s,1)=a $ again, 
\begin{equation*}
	\partial_{y}\partial_{s}v(s,1)=a_{s},
\end{equation*}
Next, differentiating with respect to $ y $ on both sides of (\ref{3.3}) and taking $ y=1 $, we obtain
\begin{equation}\label{3.32}
	\begin{aligned}
		0&=\partial_{y}\partial_{s}v(s,1)+\partial_{y}(\mathcal{H}_{b}v)(1)+(a-b)\partial_{y}(\Lambda v)(1)\\
		&=a_{s}+\partial_{y}\varepsilon_{2}(1)+b\sum_{j=0}^{k}b_{j}\lambda_{b,j}\partial_{y}\eta_{b,j}(1)+(a-b)(\partial_{y}v+y\partial_{yy}v)(1).
	\end{aligned}
\end{equation}
Note also that $ \mathcal{H}_{a}v(1)=0 $ and $ \partial_{y}v(s,1)=a $  which imply
\begin{equation*}
	\partial_{yy}v(1)=\bigg(-\frac{2}{y}\partial_{y}v\bigg)(1)+a(y\partial_{y}v)(1)=a^{2}-2a.
\end{equation*}
Then taking into (\ref{3.32}) and combining with (\ref{3.31}) yield
\begin{equation*}
 a_{s}+\partial_{y}\varepsilon_{2}(1)+\lambda_{b,0}b^{2}+(a-b)(a^{2}-a)=O(b^{3})\quad k=0,
\end{equation*}
and
\begin{equation*}
	a_{s}+\partial_{y}\varepsilon_{2}(1)-\sqrt{b}\sum_{j=0}^{k}C_{j}\lambda_{b,j}b_{j}+(a-b)(a^{2}-a)=O\bigg(b\sum_{j=0}^{k}|b_{j}|\bigg)\quad k\geq 1,
\end{equation*}
which is (\ref{3.29}). 
	\end{proof}
Before we move on to the derivation of modulation equations, let us firstly compute the equation for $\varepsilon$.
\begin{lemma}[Equations for $\varepsilon$]From the decomposition (\ref{3.5}) and 
 (\ref{3.12}), the remainder term $ \varepsilon $ satisfies
\begin{equation}\label{3.40}
	\partial_{s}\varepsilon+\mathcal{H}_{a}\varepsilon=\mathcal{F},
\end{equation}
where 
\begin{equation}\label{3.41}
	\mathcal{F}=-{\rm{Mod}}-\Psi,
\end{equation}
and
	\begin{itemize}
		\item for $ k=0 $:
	\begin{equation}\label{3.34}
		{\rm{Mod}}:=\bigg((b_{0})_{s}-\frac{1}{2}\frac{b_{0}b_{s}}{b}+b_{0}b\lambda_{b,0}\bigg)\eta_{b,0},
	\end{equation}
with the error term satisfying the bound
\begin{equation}\label{3.35}
	\lVert \Psi\rVert_{b}+\frac{1}{b}\lVert\mathcal{H}_{b}\Psi\rVert_{b}\lesssim b^{\frac{9}{4}}+\sqrt{b}\sqrt{E}+\frac{|b_{s}|}{b^{\frac{1}{4}}},
\end{equation}
 here for simplity we denote $ b_{0}(s)=-\frac{1}{C_{0}}b^{\frac{3}{2}}(s) $.
\item for $ k\geq{1} $:
\begin{equation}\label{3.37}
	\begin{aligned}
		{\rm{Mod}}:&=\bigg[(b_{k})_{s}+bb_{k}\lambda_{b,k}+(a-b)b_{k}+\frac{(2k-1)b_{k}}{2b}\Phi\bigg]\eta_{b,k}\\
		&+\sum_{j=0}^{k-1}\bigg[(b_{j})_{s}+bb_{j}\lambda_{b,j}+(a-b)b_{j}+\frac{(2j-1)b_{j}}{2b}\Phi-(2j+3)\frac{A_{j}}{A_{j+1}}\frac{\Phi}{2b}b_{j+1}\bigg]\eta_{b,j},
	\end{aligned}
\end{equation}
here
\begin{equation*}
	\Phi:=b_{s}+2(a-b)b
\end{equation*}
and the error term satisfies the bound
\begin{equation}\label{3.39}
	\lVert \Psi\rVert_{b}+\frac{1}{b}\lVert\mathcal{H}_{b}\Psi\rVert_{b}\lesssim b^{\frac{7}{4}}+\frac{|\Phi|}{b^{\frac{1}{4}}}.
\end{equation}
	\end{itemize}
\end{lemma}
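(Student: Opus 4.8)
The plan is to insert the geometric decomposition \eqref{3.5}, \eqref{3.12} into the renormalised equation \eqref{3.3} and reorganise the output with the spectral identities of Proposition~2.4 (the orthogonality conditions play no role here; they are used only later, to project \eqref{3.40}). Write $\mathcal H_a=\mathcal H_b+(a-b)\Lambda$. Each $\eta_{b,j}$ is $C^1$ in $b$ (Fr\'echet differentiability from the Lyapunov--Schmidt construction) and depends on $s$ only through $b=b(s)$, so $\partial_s\eta_{b,j}=b_s\,\partial_b\eta_{b,j}$; using $\mathcal H_b\eta_{b,j}=b\lambda_{b,j}\eta_{b,j}$ (see \eqref{pr2.4:1}), a direct substitution gives
\begin{equation*}
\partial_s\varepsilon+\mathcal H_a\varepsilon
=-\sum_{j}\bigl[(b_j)_s+bb_j\lambda_{b,j}\bigr]\eta_{b,j}
-b_s\sum_j b_j\,\partial_b\eta_{b,j}
-(a-b)\sum_j b_j\,\Lambda\eta_{b,j},
\end{equation*}
the index running over $j=0$ alone when $k=0$ (with $b_0=-\tfrac{1}{C_{0}}b^{3/2}$) and over $0\le j\le k$ when $k\ge1$.

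The core of the argument is to split the last two sums into a resonant part (to be put into $\mathrm{Mod}$) and a genuinely small remainder (to be put into $\Psi$). By \eqref{pr2.4:8} one writes $b_s\,\partial_b\eta_{b,j}=\tfrac{b_s}{2b}\bigl(\Lambda\eta_{b,j}-\eta_{b,j}\bigr)+\tfrac{b_s}{2b}R^{(1)}_j$ with $R^{(1)}_j:=2b\,\partial_b\eta_{b,j}-\Lambda\eta_{b,j}+\eta_{b,j}$, which satisfies $\|R^{(1)}_j\|_b+\tfrac1b\|\mathcal H_b R^{(1)}_j\|_b\lesssim b^{-3/4}$. Combining the emerging $\Lambda\eta_{b,j}$ with the $(a-b)\Lambda\eta_{b,j}$ term produces the coefficient $\tfrac{b_j}{2b}\Phi$, $\Phi=b_s+2(a-b)b$. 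When $k=0$ one simply treats $\Lambda\eta_{b,0}$ as an error (it carries a prefactor $\lesssim b^{3/2}$ and $\|\Lambda\eta_{b,0}\|_b\lesssim b^{-3/4}$ by \eqref{pr2.4:6}), and collecting the coefficient of $\eta_{b,0}$ reproduces \eqref{3.34}. When $k\ge1$ one further uses \eqref{pr2.4:9} (equivalently \eqref{eq:11}) to expand $\Lambda\eta_{b,j}=2j\eta_{b,j}-(2j+1)\tfrac{A_{j-1}}{A_j}\eta_{b,j-1}+R^{(2)}_j$ with $R^{(2)}_j$ obeying the same bound; after reindexing the off-diagonal $\eta_{b,j-1}$ contributions ($j\mapsto j-1$) and writing $b_s=\Phi-2(a-b)b$, the coefficient of each $\eta_{b,j}$ reassembles exactly into the bracket of \eqref{3.37}, and all the leftover terms — the multiples of $R^{(1)}_j$ and $R^{(2)}_j$ — form $\Psi$. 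The precise allotment of slow terms to $\mathrm{Mod}$ versus $\Psi$ is dictated by the leading-order ODE system, cf.\ \eqref{3.10}.

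It remains to bound $\Psi$. Every summand of $\Psi$ is of the form $\tfrac{b_j}{2b}$ times $b_s$ or $\Phi$ times $R^{(1)}_j$ or $R^{(2)}_j$; with the a priori size information $|b_j|\lesssim b^{3/2}$ from \eqref{3.26} and $|b_s|\lesssim b^2$ (for $k\ge1$, consistent with $b\sim\tfrac{1}{2ks}$ from \eqref{3.9}), together with $\|R^{(i)}_j\|_b\lesssim b^{-3/4}$ and $\|\mathcal H_b R^{(i)}_j\|_b\lesssim b^{1/4}$, this immediately yields $\|\Psi\|_b+\tfrac1b\|\mathcal H_b\Psi\|_b\lesssim b^{7/4}+|\Phi|\,b^{-1/4}$, which is \eqref{3.39}. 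The same bookkeeping for $k=0$ gives \eqref{3.35}: the $\sqrt b\sqrt E$ term appears because one must use the weaker bound $|a-b|\lesssim b^{3/2}+b^{-1/4}\sqrt E$ from \eqref{3.27} (the improved \eqref{3.30} is available only once the $b$-modulation equation is closed), and the $b^{-1/4}|b_s|$ term because $b_s$ is not yet controlled at this stage.

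The computation is elementary once the spectral identities \eqref{pr2.4:8} and \eqref{pr2.4:9} are in hand; the only real work — and the place where care is needed — is the bookkeeping: separating the resonant from the remainder contributions of $b_s\partial_b\eta_{b,j}$ and $\Lambda\eta_{b,j}$, and checking that the diagonal and off-diagonal coefficients match \eqref{3.34} and \eqref{3.37} after the shift $j\mapsto j-1$.
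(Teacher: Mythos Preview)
Your proposal is correct and follows essentially the same route as the paper: insert the decomposition into \eqref{3.3}, write $\mathcal H_a=\mathcal H_b+(a-b)\Lambda$, and split $b_s\partial_b\eta_{b,j}$ and $\Lambda\eta_{b,j}$ into resonant parts (feeding into $\mathrm{Mod}$) and remainders controlled by \eqref{pr2.4:8}, \eqref{pr2.4:9}. The only cosmetic differences are that the paper, for $k=0$, groups the error as $(a-b)b_0\Lambda\eta_{b,0}+\tfrac{b_0b_s}{b}(b\partial_b\eta_{b,0}+\tfrac12\eta_{b,0})$ rather than via your $R^{(1)}_0$ (algebraically the same, since $b\partial_b\eta_{b,0}+\tfrac12\eta_{b,0}=\tfrac12R^{(1)}_0+\tfrac12\Lambda\eta_{b,0}$), and for $k\ge1$ bounds the $R^{(1)}_j$ prefactor via $|b_s|\le|\Phi|+2b|a-b|$ instead of invoking $|b_s|\lesssim b^2$ directly; both lead to \eqref{3.35}, \eqref{3.39}.
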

\begin{proof}
 Injecting the decomposition  (\ref{3.5}) and (\ref{3.12}) into equation (\ref{3.3}) yields
 \begin{equation*}
 \partial_{s}\varepsilon+\mathcal{H}_{a}\varepsilon=-\left\{\begin{aligned}
			&\partial_{s}(b_{0}\eta_{b,0})+\mathcal{H}_{a}(b_{0}\eta_{b,0}),\quad k=0,\\
			&\partial_{s}Q_{\beta}+\mathcal{H}_{a}Q_{\beta},\quad k\geq 1.
		\end{aligned}\right.
 \end{equation*}
It remains to estimate the terms on the right side.\\
\textbf{Case {\boldmath{$ k=0 $}}}.
	From (\ref{pr2.4:1}) and by direct computation,
	\begin{align*}
			\partial_{s}(b_{0}\eta_{b,0})+\mathcal{H}_{a}(b_{0}\eta_{b,0})=&(b_{0})_{s}\eta_{b,0}+b_{0}b_{s}\partial_{b}\eta_{b,0}+\mathcal{H}_{b}(b_{0}\eta_{b,0})+(a-b)\Lambda(b_{0}\eta_{b,0})\\
			=&(b_{0})_{s}\eta_{b,0}+b_{0}b_{s}\partial_{b}\eta_{b,0}+b_{0}b\lambda_{b,0}\eta_{b,0}+(a-b)\Lambda(b_{0}\eta_{b,0})\\
			=&\bigg((b_{0})_{s}-\frac{1}{2}\frac{b_{0}b_{s}}{b}+b_{0}b\lambda_{b,0}\bigg)\eta_{b,0}\\
   &+(a-b)b_{0}\Lambda\eta_{b,0}+\frac{b_{0}b_{s}}{b}\bigg(b\partial_{b}\eta_{b,0}+\frac{1}{2}\eta_{b,0}\bigg).
			\end{align*}
Then from (\ref{pr2.4:6}), (\ref{pr2.4:8}), (\ref{3.26}), (\ref{3.27}) and the definition of $ \Psi $, we obtain
\begin{equation*}
	\lVert \Psi\rVert_{b}+\frac{1}{b}\lVert \mathcal{H}_{b}\Psi\rVert_{b}\lesssim|a-b|\cdot b^{\frac{3}{2}}\cdot \frac{1}{b^{\frac{3}{4}}}+\sqrt{b}\cdot|b_{s}|\cdot\frac{1}{b^{\frac{3}{4}}}\lesssim b^{\frac{9}{4}}+\sqrt{b}\sqrt{E}+\frac{|b_{s}|}{b^{\frac{1}{4}}},
\end{equation*}
which is (\ref{3.35}).\\
\textbf{Case {\boldmath{$ k\geq 1 $}}}. Carrying out the exact same calculations as Case  $ k=0 $ yields
\begin{equation*}
	\begin{aligned}
	\partial_{s}Q_{\beta}+\mathcal{H}_{a}Q_{\beta}=&\sum_{j=0}^{k}(b_{j})_{s}\eta_{b,j}
+\sum_{j=0}^{k}b_{j}b_{s}\partial_{b}\eta_{b,j}\\
&+\sum_{j=0}^{k}b_{j}\lambda_{b,j}b\eta_{b,j}+(a-b)\Lambda\bigg(\sum_{j=0}^{k}b_{j}\eta_{b,j}\bigg)\\
	=&\sum_{j=0}^{k}(b_{j})_{s}\eta_{b,j}+\sum_{j=0}^{k}b_{j}\lambda_{b,j}b\eta_{b,j}+\sum_{j=0}^{k}(a-b)b_{j}\eta_{b,j}\\
 &+\sum_{j=0}^{k}\bigg[\frac{\Phi}{2b}2j\bigg(\eta_{b,j}-\frac{2j+1}{2j}\frac{A_{j-1}}{A_{j}}\eta_{b,j-1}\bigg)-\frac{\Phi}{2b}\eta_{b,j}\bigg]b_{j}\\
	&+\sum_{j=0}^{k}b_{j}\bigg(\frac{\Phi-2b(a-b)}{2b}\bigg)(2b\partial_{b}\eta_{b,j}-\Lambda\eta_{b,j}+\eta_{b,j})\\
 &+\frac{b_{j}\Phi}{2b}\bigg(\Lambda\eta_{b,j}-2j\bigg(\eta_{b,j}-\frac{2j+1}{2j}\frac{A_{j-1}}{A_{j}}\eta_{b,j-1}\bigg)\bigg)\\
	=&{\rm{Mod}}+\Psi,
\end{aligned}
\end{equation*}
where $ {\rm{Mod}} $ is defined as (\ref{3.37}) and
\begin{align*}
	\Psi=&\sum_{j=0}^{k}b_{j}\bigg(\frac{\Phi-2b(a-b)}{2b}\bigg)(2b\partial_{b}\eta_{b,j}-\Lambda\eta_{b,j}+\eta_{b,j})\\
 &+\frac{b_{j}\Phi}{2b}\bigg(\Lambda\eta_{b,j}-2j\bigg(\eta_{b,j}-\frac{2j+1}{2j}\frac{A_{j-1}}{A_{j}}\eta_{b,j-1}\bigg)\bigg).
\end{align*}
Moreover, from (\ref{pr2.3:8}), (\ref{pr2.4:9}) and (\ref{pr2.4:10}), one has
\begin{equation*}
	\lVert \Psi\rVert_{b}+\frac{1}{b}\lVert\mathcal{H}_{b}\Psi\rVert_{b}\lesssim\sum_{j=0}^{k}\bigg(|b_{j}|\frac{|\Phi|}{b}+|b_{j}||a-b|\bigg)\frac{1}{b^{\frac{3}{4}}}\lesssim b^{\frac{7}{4}}+\frac{|\Phi|}{b^{\frac{1}{4}}},
\end{equation*}
which is (\ref{3.39}).
	\end{proof}
We now derive the modulation equations for $ (b,(\tilde{b}_{j})_{0\leq{j}\leq{k}}) $ which is a consequence of the orthogonality conditions $ (\ref{3.5}) $, $ (\ref{3.12}) $ and the equations for $ \varepsilon$.
\begin{lemma}[Modulation equations for $ (b,(\tilde{b}_{j})_{0\leq{j}\leq{k}}) $]There hold the bounds for the modulation parameters
	\begin{itemize}
		\item Case $ k=0 $:
		\begin{equation}\label{3.42}
			\bigg|b_{s}+\sqrt{\frac{2}{\pi}}b^{\frac{5}{2}}\bigg|\lesssim b^{3}.
		\end{equation}
	\item Case $ k\geq 1 $:
	\begin{equation}\label{3.43}
		\begin{aligned}
			\bigg|&(\tilde{b}_{k})_{s}+\frac{3}{2s}\tilde{b}_{k}+\frac{k+1}{C_{k}s}\sum_{j=0}^{k}C_{j}\tilde{b}_{j}\bigg|\\
   &+\bigg|(\tilde{b}_{k-1})_{s}+\bigg(\frac{3}{2s}-\frac{1}{ks}\bigg)\tilde{b}_{k-1}-\frac{(k+1)(2k+1)}{2kC_{k}s}\frac{A_{k-1}}{A_{k}}\sum_{j=0}^{k}C_{j}\tilde{b}_{j}\bigg|\\
			&+\bigg|(\tilde{b}_{j})_{s}+\bigg(\frac{j}{ks}+\frac{1}{2s}\bigg)\tilde{b}_{j}\bigg|
   \lesssim b^{\frac{5}{4}}\sqrt{E}+b^{\frac{5}{2}+2\alpha_{k}}.
		\end{aligned}
		\end{equation}
	\end{itemize}
\end{lemma}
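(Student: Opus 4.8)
The plan is to derive the modulation equations by taking the inner product of the $\varepsilon$-equation \eqref{3.40} with the eigenfunctions $\eta_{b,j}$ and exploiting the orthogonality conditions \eqref{3.5}, \eqref{3.12}. Concretely, I would start from $\partial_s\varepsilon + \mathcal{H}_a\varepsilon = -\mathrm{Mod} - \Psi$ and pair it against $\eta_{b,j}$ in the $(\cdot,\cdot)_b$ inner product. Since $(\varepsilon,\eta_{b,j})_b = 0$ identically in $s$, differentiating gives $(\partial_s\varepsilon,\eta_{b,j})_b = -(\varepsilon,\partial_s\eta_{b,j})_b = -b_s(\varepsilon,\partial_b\eta_{b,j})_b$, and using $\partial_s\eta_{b,j} = b_s\partial_b\eta_{b,j}$ together with the rescaling identities from the diagonalisation of $\mathcal{H}_b$ (Proposition for $\mathcal{H}_b$, estimates \eqref{pr2.4:4}--\eqref{pr2.4:9}) this term is controlled. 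The term $(\mathcal{H}_a\varepsilon,\eta_{b,j})_b$ splits as $(\mathcal{H}_b\varepsilon,\eta_{b,j})_b + (a-b)(\Lambda\varepsilon,\eta_{b,j})_b$; the first piece is $b\lambda_{b,j}(\varepsilon,\eta_{b,j})_b = 0$ up to boundary contributions coming from integration by parts at $y=1$, which are precisely where Lemma \ref{le4.1} (the boundary relations \eqref{3.27}--\eqref{3.29}) enters.

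The key steps, in order: (i) compute $(\mathrm{Mod},\eta_{b,i})_b$ using the explicit form \eqref{3.37} of $\mathrm{Mod}$ and the near-orthogonality \eqref{pr2.4:4}, which gives the diagonal term $\bigl[(b_i)_s + bb_i\lambda_{b,i} + (a-b)b_i + \cdots\bigr]\cdot b^{-5/2}(1+O(\sqrt b))$ plus small off-diagonal coupling; (ii) bound $(\Psi,\eta_{b,i})_b$ by \eqref{3.39} and $\|\eta_{b,i}\|_b\sim b^{-5/2}$, giving a contribution $\lesssim (b^{7/4} + |\Phi|b^{-1/4})\cdot b^{-5/2}$; (iii) collect the boundary terms from integrating $\mathcal{H}_b$ by parts — these involve $\varepsilon_2(1)$, $\partial_y\varepsilon_2(1)$, $\partial_y\eta_{b,i}(1)$ — and substitute \eqref{3.28}, \eqref{3.29}, \eqref{3.31}; (iv) invert the resulting nearly-diagonal linear system for $\bigl((b_j)_s\bigr)_{0\le j\le k}$ (the Gram-type matrix is $\mathrm{Id} + O(\sqrt b)$); (v) substitute the decomposition $b_j = b_j^e + \tilde b_j$ from \eqref{3.14} and use that $b_j^e$ solves the leading-order system \eqref{3.10}, so that the leading terms cancel and only the $\tilde b_j$-equations \eqref{3.43} remain; (vi) for $k=0$, the same computation with the single eigenfunction $\eta_{b,0}$, the value $\lambda_{b,0} = \sqrt{2/\pi}\sqrt b + O(b)$, and $b_0 = -\frac{1}{C_0}b^{3/2}$ gives $b_s + \sqrt{2/\pi}\,b^{5/2} = O(b^3)$, which is \eqref{3.42}. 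Throughout I would use the bootstrap bounds \eqref{3.21}--\eqref{3.25} and the rough bounds \eqref{3.26} to convert $\sqrt E$, $|\Phi|$, $|b_s|$ into powers of $b$.

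The main obstacle I anticipate is the careful bookkeeping of the boundary contributions and the coupling structure in step (iii)--(v) for $k\ge 1$. When one integrates $(\mathcal{H}_b\varepsilon,\eta_{b,i})_b$ by parts, the surface terms at $y=1$ do not vanish (unlike on $\RR^3$) because $\eta_{b,i}(1)=0$ but $\partial_y\eta_{b,i}(1)\ne 0$, so one gets terms like $e^{-b/2}\cdot 1\cdot \partial_y\varepsilon_2(1)\,\eta_{b,i}(1)$ — which vanishes — versus $\partial_y\eta_{b,i}(1)\,\varepsilon_2(1)$ — which does not, and equals $-\frac{C_i}{\sqrt b}(1+O(\sqrt b))\cdot(-a(a-b))$ by \eqref{3.31} and \eqref{3.28}. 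Tracking exactly which of these feed into the $\tilde b_k$ and $\tilde b_{k-1}$ equations (producing the coefficients $\frac{k+1}{C_k s}$ and $\frac{(k+1)(2k+1)}{2kC_k s}\frac{A_{k-1}}{A_k}$ in \eqref{3.43}) and verifying that the error is genuinely $\lesssim b^{5/4}\sqrt E + b^{5/2+2\alpha_k}$ requires matching the algebra against the matrix $B_k$ in \eqref{3.16}. A secondary subtlety is that $a-b$ itself is only controlled via \eqref{3.27}, which involves $\sum|b_j|$ and $\sqrt E/b^{1/4}$, so one must check this feeds back consistently; for $k=0$ the sharper bound \eqref{3.30} makes this cleaner, but for $k\ge 1$ one must be careful that the $(a-b)b_j$ terms in $\mathrm{Mod}$ do not spoil the claimed error size. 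Everything else — the rescaling identities, the contraction-type estimates — is available from Section 2.
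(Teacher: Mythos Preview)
Your overall scheme --- pair the $\varepsilon$-equation with $\eta_{b,j}$, differentiate the orthogonality relation, read off the modulation laws from the $\mathrm{Mod}$ term, then substitute $b_j=b_j^e+\tilde b_j$ and cancel against \eqref{3.10} --- is exactly what the paper does. But step (iii) rests on a misconception, and the ``main obstacle'' you anticipate is a phantom.

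When you move $\mathcal{H}_b$ across in $(\mathcal{H}_b\varepsilon,\eta_{b,j})_b$, \emph{both} surface terms at $y=1$ vanish: one carries the factor $\eta_{b,j}(1)=0$, the other carries $\varepsilon(1)=0$ (not $\varepsilon_2(1)$). Hence $(\mathcal{H}_b\varepsilon,\eta_{b,j})_b=(\varepsilon,\mathcal{H}_b\eta_{b,j})_b=b\lambda_{b,j}(\varepsilon,\eta_{b,j})_b=0$ exactly, with no boundary correction. The relations \eqref{3.28}--\eqref{3.29} therefore play no role in this lemma (they are used only later, in the energy identity of Proposition~\ref{Ee}). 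The single input you need from Lemma~\ref{le4.1} is \eqref{3.27}, to rewrite $a-b=a^e-b-\sum_j C_j\tilde b_j/\sqrt b+O(\cdot)$ and hence $\Phi=b_s+2b(a-b)=-2\sqrt b\sum_jC_j\tilde b_j+O(\cdot)$. The coupling coefficients $\frac{k+1}{C_ks}$, $\frac{(k+1)(2k+1)}{2kC_ks}\frac{A_{k-1}}{A_k}$ in \eqref{3.43} then emerge purely from expanding the bracketed coefficients in the definition \eqref{3.37} of $\mathrm{Mod}$ after these substitutions, together with \eqref{3.9}--\eqref{3.10}; no boundary algebra is involved.

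A second, smaller correction: when you differentiate $(\varepsilon,\eta_{b,j})_b=0$ in $s$, do not forget that the weight $\rho_b=e^{-by^2/2}$ also depends on $s$ through $b$. This produces an extra $-\tfrac{b_s}{2}(\varepsilon,y^2\eta_{b,j})_b$ beyond your $b_s(\varepsilon,\partial_b\eta_{b,j})_b$. In the paper's arrangement (for $k\ge1$) one also integrates the $(a-b)(\Lambda\varepsilon,\eta_{b,j})_b$ term by parts onto $\varepsilon$; the three resulting pieces then regroup as $-\tfrac{\Phi}{2}(\varepsilon,y^2\eta_{b,j})_b+\tfrac{\Phi}{2b}(\varepsilon,\Lambda\eta_{b,j})_b-\tfrac{b_s}{b}(\varepsilon,\tfrac12\Lambda\eta_{b,j}-b\partial_b\eta_{b,j})_b$, so that the last bracket is controlled by \eqref{pr2.4:8} and the first two by \eqref{pr2.4:6}--\eqref{pr2.4:7} together with the smallness of $\Phi$. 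This regrouping is what delivers the error $b^{5/4}\sqrt E$ rather than something coarser.
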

\begin{proof} 
		  From the orthogonality conditions (\ref{3.5}) and (\ref{3.12}),  we get for any $0\leq j\leq k$,
		\begin{align*}
			0=\frac{d}{ds}(\varepsilon,\eta_{b,j})_{b}=&\frac{d}{ds}\bigg(\int_{1}^{+\infty}\varepsilon\eta_{b,j}e^{-\frac{by^{2}}{2}}y^{2}dy\bigg)\\
   =&(\partial_{s}\varepsilon,\eta_{b,j})_{b}-\frac{b_{s}}{2}(\varepsilon,y^{2}\eta_{b,j})_{b}+(\varepsilon,b_{s}\partial_{b}\eta_{b,j})_{b}.
		\end{align*}
		Combining with (\ref{3.40}) and (\ref{3.41}) yields
		\begin{equation}\label{3.44}
		 ({\rm{Mod}},\eta_{b,j})_{b}=-(a-b)(\Lambda\varepsilon,\eta_{b,j})_{b}-\frac{b_{s}}{2}(\varepsilon,y^{2}\eta_{b,j})_{b}+(\varepsilon,b_{s}\partial_{b}\eta_{b,j})_{b}-(\Psi,\eta_{b,j})_{b}.
		\end{equation}
\textbf{ Case {\boldmath{$ k=0 $}}}. From (\ref{pr2.4:4}), (\ref{pr2.4:6}), (\ref{pr2.4:7}), (\ref{pr2.4:8}), (\ref{3.27}), (\ref{3.44}) and together with the coercivity estimates of $ \mathcal{H}_{b} $ (see Lemma \ref{leCH{b}}), we have
\begin{equation*}
	\begin{aligned}
		\bigg|\frac{({\rm{Mod}},\eta_{b,0})_{b}}{(\eta_{b,0},\eta_{b,0})_{b}}\bigg|\lesssim& b^{\frac{5}{2}}(|a-b|\lVert\Lambda\varepsilon\rVert_{b}\lVert\eta_{b,0}\rVert_{b}+|b_{s}|\lVert\varepsilon\rVert_{b}\lVert y^{2}\eta_{b,0}\rVert_{b}\\
  &+\lVert\varepsilon\rVert_{b}|b_{s}|\lVert \partial_{b}\eta_{b,0}\rVert_{b}+\lVert\Psi\rVert_{b}\lVert\eta_{b,0}\rVert_{b})\\
		\lesssim& b^{\frac{5}{2}}\bigg[\bigg(b^{\frac{3}{2}}+\frac{\sqrt{E}}{b^{\frac{1}{4}}}\bigg)\frac{\sqrt{E}}{b}\frac{1}{b^{\frac{5}{4}}}+|b_{s}|\frac{\sqrt{E}}{b}\frac{1}{b^{\frac{9}{4}}}\\
  &+\frac{\sqrt{E}}{b}\frac{|b_{s}|}{b}\frac{1}{b^{\frac{5}{4}}}+\bigg(b^{\frac{9}{4}}+\sqrt{b}\sqrt{E}+\frac{|b_{s}|}{b^{\frac{1}{4}}}\bigg)\frac{1}{b^{\frac{5}{4}}}\bigg]\\
		\lesssim& b^{\frac{7}{4}}\sqrt{E}+\frac{|b_{s}|}{b^{\frac{3}{4}}}\sqrt{E}+E+b^{\frac{7}{2}}+b|b_{s}|.			
	\end{aligned}
\end{equation*}
In view of  (\ref{3.34}) and together with the rough estimate (\ref{3.26}), we arrive at
\begin{equation*}
	|b_{s}+\lambda_{b,0}b^{2}|\lesssim b^{3}+\sqrt{b}|b_{s}|.
\end{equation*}
Moreover, for $ b>0 $ small enough,
\begin{equation*}
	|b_{s}|\lesssim b^{\frac{5}{2}}.
\end{equation*}
Hence we conclude that
\begin{equation}\label{3.45}
	|b_{s}+\lambda_{b,0}b^{2}|\lesssim b^{3}.
	\end{equation}
Note also that from (\ref{pr2.3:2}),
\begin{equation*}
	\lambda_{b,0}=C^{2}_{0}\sqrt{b}+O(b)=\frac{\Gamma(\frac{3}{2})}{\sqrt{2}\Gamma(\frac{3}{2})^{2}}\sqrt{b}+O(b)=\sqrt{\frac{2}{\pi}}\sqrt{b}+O(b).
\end{equation*}
Then injecting into (\ref{3.45}) yields (\ref{3.42}).\\
 \textbf{ Case {\boldmath{$ k\geq 1 $}}}. We perform integration by parts with respect to the first term on the right side of (\ref{3.44}) and obtain
\begin{equation*}
	\begin{aligned}
		({\rm{Mod}},\eta_{b,j})_{b}=&(a-b)(\varepsilon,\Lambda\eta_{b,j})_{b}-b(a-b)(\varepsilon,y^{2}\eta_{b,j})_{b}\\
  &-\frac{b_{s}}{2}(\varepsilon,y^{2}\eta_{b,j})_{b}+(\varepsilon,b_{s}\partial_{b}\eta_{b,j})_{b}-(\Psi,\eta_{b,j})_{b}\\
		=&-\frac{\Phi}{2}(\varepsilon,y^{2}\eta_{b,j})_{b}+\frac{\Phi}{2b}(\varepsilon,\Lambda\eta_{b,j})_{b}\\
  &-\frac{b_{s}}{b}\bigg(\varepsilon,\frac{1}{2}\Lambda\eta_{b,j}-b\partial_{b}\eta_{b,j}\bigg)_{b}-(\Psi,\eta_{b,j})_{b}.
	\end{aligned}
\end{equation*}
As a result, from (\ref{pr2.4:4}), (\ref{pr2.4:6}), (\ref{pr2.4:7}), (\ref{pr2.4:8}), (\ref{3.39}) and (\ref{leCH_{b}:14}),
\begin{equation}\label{le3.6:1.2}
	\begin{aligned}
		\bigg|\frac{({\rm{Mod}},\eta_{b,j})_{b}}{(\eta_{b,j},\eta_{b,j})_{b}}\bigg|\lesssim& b^{\frac{5}{2}}\bigg(|\Phi|\lVert\varepsilon\rVert_{b}\lVert y^{2}\eta_{b,j}\rVert_{b}+\frac{|\Phi|}{b}\lVert\varepsilon\rVert_{b}\lVert\Lambda\eta_{b,j}\rVert_{b}\\
  &+\frac{|b_{s}|}{b}\lVert\varepsilon\rVert_{b}\lVert\Lambda\eta_{b,j}-2b\partial_{b}\eta_{b,j}\rVert_{b}+\lVert\Psi\rVert_{b}\lVert\eta_{b,j}\rVert_{b}\bigg)\\
		\lesssim& b^{\frac{5}{2}}\bigg(|\Phi|\frac{\sqrt{E}}{b}\frac{1}{b^{\frac{9}{4}}}+\frac{|\Phi|}{b}\frac{\sqrt{E}}{b}\frac{1}{b^{\frac{5}{4}}}+b\frac{\sqrt{E}}{b}\frac{1}{b^{\frac{5}{4}}}+\frac{1}{b^{\frac{5}{4}}}\bigg(b^{\frac{7}{4}}+\frac{|\Phi|}{b^{\frac{1}{4}}}\bigg)\bigg)\\
		\lesssim& \frac{|\Phi|}{b^{\frac{3}{4}}}\sqrt{E}+b^{\frac{5}{4}}\sqrt{E}+b|\Phi|+b^{3}.
	\end{aligned}
\end{equation}
 It remains to estimate  $ \Phi $. From the boundary condition (\ref{3.27}) and together with (\ref{3.10}), (\ref{3.14}), we have
\begin{equation}\label{le3.6:1.3}
	\begin{aligned}
		a&=-\sum_{j=0}^{k}\frac{C_{j}b_{j}}{\sqrt{b}}+O\bigg(\sum_{j=0}^{k}|b_{j}|+\frac{\sqrt{E}}{b^{\frac{1}{4}}}\bigg)\\
		&=-\frac{C_{k}b^{e}_{k}}{\sqrt{b}}-\sum_{j=0}^{k}\frac{C_{j}\tilde{b}_{j}}{\sqrt{b}}+O\bigg(b^{\frac{3}{2}}+\frac{\sqrt{E}}{b^{\frac{1}{4}}}\bigg)\\
		&=a^{e}-\sum_{j=0}^{k}\frac{C_{j}\tilde{b}_{j}}{\sqrt{b}}+O\bigg(b^{\frac{3}{2}}+\frac{\sqrt{E}}{b^{\frac{1}{4}}}\bigg).
	\end{aligned}
\end{equation}
Hence, from (\ref{3.10}) again
\begin{equation}\label{le3.6:1.4}
	\begin{aligned}
		\Phi=b_{s}+2b(a-b)&=b_{s}+2b\bigg(a^{e}-\sum_{j=0}^{k}\frac{C_{j}\tilde{b}_{j}}{\sqrt{b}}+O\bigg(b^{\frac{3}{2}}+\frac{\sqrt{E}}{b^{\frac{1}{4}}}\bigg)\bigg)-2b^{2}\\
		&=b_{s}+2b(a^{e}-b)-2\sqrt{b}\sum_{j=0}^{k}C_{j}\tilde{b}_{j}+O(b^{\frac{5}{2}}+b^{\frac{3}{4}}\sqrt{E})\\
		&=-2\sqrt{b}\sum_{j=0}^{k}C_{j}\tilde{b}_{j}+O(b^{\frac{5}{2}}+b^{\frac{3}{4}}\sqrt{E}).
		\end{aligned}
\end{equation}
In particular, from (\ref{3.14}), (\ref{3.15}), (\ref{3.26}) and the bootstrap assumption, we obtain the rough bound
\begin{equation}\label{le3.6:1.5}
		|\Phi|\lesssim\left\{\begin{aligned}
    &\sqrt{b}\cdot b^{\frac{3}{2}+\alpha_{k}}+b^{\frac{5}{2}}+b^{\frac{3}{4}}\cdot b^{\frac{47}{32}}\lesssim b^{\frac{17}{8}}\quad 1\leq{k}\leq 4,\\
    &\sqrt{b}\cdot b^{\frac{3}{2}+\alpha_{k}}+b^{\frac{5}{2}}+b^{2}\cdot b^{\frac{7}{8k}}\lesssim b^{2+\frac{1}{2k}}\quad k\geq 5.
	\end{aligned}\right.
\end{equation}
Inserting into (\ref{le3.6:1.2}) yields
\begin{equation}\label{le3.6:1.6}
	\bigg|\frac{({\rm{Mod}},\eta_{b,j})_{b}}{(\eta_{b,j},\eta_{b,j})_{b}}\bigg|\lesssim b^{\frac{5}{4}}\sqrt{E}+b^{3}.
\end{equation}
Next using (\ref{3.10}), (\ref{3.14}), (\ref{le3.6:1.3}) and (\ref{le3.6:1.4}), we compute for 
 $ j=k $,
	\begin{align*}
		&(b_{k})_{s}+bb_{k}\lambda_{b,k}+(a-b)b_{k}+\frac{(2k-1)b_{k}}{2b}\Phi\notag\\
		=&(b^{e}_{k}+\tilde{b}_{k})_{s}+b(b^{e}_{k}+\tilde{b}_{k})(2k+\mu_{k})+(a-a^{e})(b^{e}_{k}+\tilde{b}_{k})+(a^{e}-b)(b^{e}_{k}+\tilde{b}_{k})\notag\\
		&+\frac{(2k-1)(b^{e}_{k}+\tilde{b}_{k})}{2b}\bigg(-2\sqrt{b}\sum_{j=0}^{k}C_{j}\tilde{b}_{j}+O(b^{\frac{5}{2}}+b^{\frac{3}{4}}\sqrt{E})\bigg)\notag\\
		=&(b^{e}_{k})_{s}+2kbb^{e}_{k}+(a^{e}-b)b^{e}_{k}+(\tilde{b}_{k})_{s}+2kb\tilde{b}_{k}-b^{e}_{k}\sum_{j=0}^{k}\frac{C_{j}\tilde{b}_{j}}{\sqrt{b}}+(a^{e}-b)\tilde{b}_{k}\notag\\
		&-\frac{(2k-1)b^{e}_{k}}{2b}\bigg(2\sqrt{b}\sum_{j=0}^{k}C_{j}\tilde{b}_{j}\bigg)+O\bigg(b^{3}+b^{\frac{5}{4}}\sqrt{E}+\frac{1}{\sqrt{b}}\sum_{j=0}^{k}|\tilde{b}_{j}||\tilde{b}_{k}|\bigg)\notag\\
		=&(\tilde{b}_{k})_{s}+(2kb+a^{e}-b)\tilde{b}_{k}+\bigg(-\frac{b^{e}_{k}}{\sqrt{b}}-\frac{(2k-1)b^{e}_{k}}{\sqrt{b}}\bigg)\sum_{j=0}^{k}C_{j}\tilde{b}_{j}\notag\\
  &+O\bigg(b^{3}+b^{\frac{5}{4}}\sqrt{E}+\frac{1}{\sqrt{b}}\sum_{j=0}^{k}|\tilde{b}_{j}||\tilde{b}_{k}|\bigg)\notag\\
		=&(\tilde{b}_{k})_{s}+\frac{3}{2s}\tilde{b}_{k}+\frac{k+1}{C_{k}s}\sum_{j=0}^{k}C_{j}\tilde{b}_{j}+O\bigg(b^{3}+b^{\frac{5}{4}}\sqrt{E}+\frac{1}{\sqrt{b}}\sum_{j=0}^{k}|\tilde{b}_{j}||\tilde{b}_{k}|\bigg),
	\end{align*}
for $ j=k-1 $,
	\begin{align*}
		&(b_{k-1})_{s}+bb_{k-1}\lambda_{b,k-1}+(a-b)b_{k-1}+\frac{(2k-3)b_{k-1}}{2b}\Phi-(2k+1)\frac{A_{k-1}}{A_{k}}\frac{\Phi}{2b}b_{k}\notag\\
	=&(\tilde{b}_{k-1})_{s}+b\tilde{b}_{k-1}(2k-2+\mu_{k-1})+(a-a^{e})\tilde{b}_{k-1}+(a^{e}-b)\tilde{b}_{k-1}\notag\\
	&+\frac{2k-3}{2b}\tilde{b}_{k-1}\bigg(-2\sqrt{b}\sum_{j=0}^{k}C_{j}\tilde{b}_{j}+O(b^{\frac{5}{2}}+b^{\frac{3}{4}}\sqrt{E})\bigg)\notag\\
 &-\bigg(\frac{1}{2}+k\bigg)\frac{A_{k-1}}{A_{k}}\frac{-2\sqrt{b}\sum_{j=0}^{k}C_{j}\tilde{b}_{j}+O(b^{\frac{5}{2}}+b^{\frac{3}{4}}\sqrt{E})}{b}(b^{e}_{k}+\tilde{b}_{k})\notag\\
	=&(\tilde{b}_{k-1})_{s}+(2k-2)b\tilde{b}_{k-1}+(a^{e}-b)\tilde{b}_{k-1}+(2k+1)\frac{A_{k-1}}{A_{k}}\frac{b^{e}_{k}}{\sqrt{b}}\sum_{j=0}^{k}C_{j}\tilde{b}_{j}\notag\\
 &+O\bigg(b^{3}+b^{\frac{5}{4}}\sqrt{E}+\sum_{i,j=0}^{k}\frac{|\tilde{b}_{i}||\tilde{b}_{j}|}{\sqrt{b}}\bigg)\notag\\
	=&(\tilde{b}_{k-1})_{s}+\bigg(\frac{3}{2s}-\frac{1}{ks}\bigg)\tilde{b}_{k-1}-\frac{(k+1)(2k+1)}{2kC_{k}s}\frac{A_{k-1}}{A_{k}}\sum_{j=0}^{k}C_{j}\tilde{b}_{j}\notag\\
 &+O\bigg(b^{3}+b^{\frac{5}{4}}\sqrt{E}+\sum_{i,j=0}^{k}\frac{|\tilde{b}_{i}||\tilde{b}_{j}|}{\sqrt{b}}\bigg),
	\end{align*}

and for $ 0\leq{j}\leq{k-2} $,
	\begin{align*}
		&(b_{j})_{s}+bb_{j}\lambda_{b,j}+(a-b)b_{j}+\frac{(2j-1)b_{j}}{2b}\Phi-(2j+3)\frac{A_{j}}{A_{j+1}}\frac{\Phi}{2b}b_{j+1}\notag\\
		=&(\tilde{b}_{j})_{s}+b\tilde{b}_{j}(2j+\mu_{j})+(a-a^{e})\tilde{b}_{j}\notag\\
  &+(a^{e}-b)\tilde{b}_{j}+\frac{(2j-1)\tilde{b}_{j}}{2b}\bigg(-2\sqrt{b}\sum_{j=0}^{k}C_{j}\tilde{b}_{j}+O(b^{\frac{5}{2}}+b^{\frac{3}{4}}\sqrt{E})\bigg)\notag\\
		&-(2j+3)\frac{A_{j}}{A_{j-1}}\frac{-2\sqrt{b}\sum_{j=0}^{k}C_{j}\tilde{b}_{j}+O(b^{\frac{5}{2}}+b^{\frac{3}{4}}\sqrt{E})}{2b}\tilde{b}_{j+1}\notag\\
		=&(\tilde{b}_{j})_{s}+\bigg(\frac{j}{ks}+\frac{1}{2s}\bigg)\tilde{b}_{j}+O\bigg(b^{3}+b^{\frac{5}{4}}\sqrt{E}+\sum_{i,j=0}^{k}\frac{|\tilde{b}_{i}||\tilde{b}_{j}|}{\sqrt{b}}\bigg).
	\end{align*}
Finally together with (\ref{3.14}), (\ref{3.15}), (\ref{le3.6:1.6}) and the bootstrap assumption (\ref{3.24}), (\ref{3.25}), we obtain
\begin{equation*}
	\begin{aligned}
		&\bigg|(\tilde{b}_{j})_{s}+\frac{3}{2s}\tilde{b}_{k}+\frac{k+1}{C_{k}s}\sum_{j=0}^{k}C_{j}\tilde{b}_{j}\bigg|\\
  &+\bigg|(\tilde{b}_{k-1})_{s}+\bigg(\frac{3}{2s}-\frac{1}{ks}\bigg)\tilde{b}_{k-1}-\frac{(k+1)(2k+1)}{2kC_{k}s}\frac{A_{k-1}}{A_{k}}\sum_{j=0}^{k}C_{j}\tilde{b}_{j}\bigg|\\
		&+\bigg|(\tilde{b}_{j})_{s}+\bigg(\frac{j}{ks}+\frac{1}{2s}\bigg)\tilde{b}_{j}\bigg|\lesssim b^{3}+b^{\frac{5}{4}}\sqrt{E}+\sum_{i,j=0}^{k}\frac{|\tilde{b}_{i}||\tilde{b}_{j}|}{\sqrt{b}}\lesssim b^{\frac{5}{2}+2\alpha_{k}}+b^{\frac{5}{4}}\sqrt{E},
	\end{aligned}
\end{equation*}
which is (\ref{3.43}).
	\end{proof}
\section{Energy estimates}
\ 
\newline
In this section, we derive the energy estimate which is crucial in closing the bootstrap.
\begin{proposition}[Energy estimates]\label{Ee}
	Under the bootstrap assumption, there hold the pointwise estimates:
	\begin{itemize}
		\item Case $ k=0 $:
		\begin{equation}\label{3.7:0}
			\frac{1}{2}\frac{d}{ds}(E+\varphi)+2bE\lesssim b^{5},
		\end{equation}
	and here
	\begin{equation}\label{3.7:1}
		|\varphi(s)|\lesssim b^{4}.
	\end{equation}
		\item Case $ k\geq 1 $:
		\begin{equation}\label{pr3.7:2}
			\frac{d}{ds}(E+\varphi)+(5k+4)bE\lesssim \left\{\begin{aligned}
& b^{4}\quad 1\leq{k}\leq 4,\\
& b^{\frac{7}{2}+\frac{9}{4k}}\quad {k}\geq 5.
			\end{aligned}\right.
		\end{equation}
	and here 
	\begin{equation}\label{pr3.7:3}
		|\varphi(s)|\lesssim \left\{\begin{aligned}
			& b^{\frac{103}{32}} \quad 1\leq{k}\leq 4,\\
			& b^{\frac{7}{2}+\frac{9}{4k}} \quad k\geq 5.
		\end{aligned}\right.
	\end{equation}
	\end{itemize}
\end{proposition}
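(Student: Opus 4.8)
The strategy is the standard weighted energy method adapted from \cite{MP}, \cite{MRR1}, \cite{RS1}: differentiate $E=\lVert\varepsilon_2\rVert_b^2$ in $s$, use the equation for $\varepsilon$ to produce a leading dissipative term together with boundary contributions and error terms, and then absorb all remainders into $b^{\alpha}E$ or the prescribed source by invoking the coercivity of $\mathcal H_b$ (Lemma \ref{leCH{b}}) and the modulation/boundary estimates already proved. First I would record the equation satisfied by $\varepsilon_2=\mathcal H_b\varepsilon$: applying $\mathcal H_b$ to \eqref{3.40} and commuting gives $\partial_s\varepsilon_2+\mathcal H_a\varepsilon_2=\mathcal H_b\mathcal F+[\partial_s,\mathcal H_b]\varepsilon+(\mathcal H_b-\mathcal H_a)\varepsilon_2$, where the commutator $[\partial_s,\mathcal H_b]=b_s\partial_b\mathcal H_b=b_s\Lambda$ and $\mathcal H_a-\mathcal H_b=(a-b)\Lambda$, both of which are lower order once multiplied by the smallness of $b_s$ and $a-b$ coming from \eqref{3.42}/\eqref{3.43}, \eqref{3.27}. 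I would then compute
\begin{equation*}
  \frac{1}{2}\frac{d}{ds}E=\frac12\frac{d}{ds}\int_1^{+\infty}\varepsilon_2^2\rho_b y^2\,dy
  =(\partial_s\varepsilon_2,\varepsilon_2)_b-\frac{b_s}{4}(y^2\varepsilon_2,\varepsilon_2)_b,
\end{equation*}
insert the equation for $\varepsilon_2$, and integrate by parts in the term $-(\mathcal H_a\varepsilon_2,\varepsilon_2)_b$. The integration by parts on $\Omega(1)$ produces the boundary term $\rho_a(1)\,\partial_y\varepsilon_2(1)\,\varepsilon_2(1)$ (up to the $e^{-a/2}$ factor), which is controlled by combining \eqref{3.28} and \eqref{3.29}, and the bulk term $\lVert\partial_y\varepsilon_2\rVert_b^2$. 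The coercivity Lemma \ref{leCH{b}} converts $\lVert\partial_y\varepsilon_2\rVert_b^2$ into a lower bound of the form $(2+O(\sqrt b))b\,E$ in the $k=0$ case (this is where the sharp constant $2$ in \eqref{3.7:0} originates, from the spectral gap $2k+2$ at $k=0$ after the $\sqrt b$-rescaling), and into $(5k+4+O(\sqrt b))b\,E$ in the excited case — here one must be careful that $\varepsilon$ is orthogonal to $\eta_{b,0},\dots,\eta_{b,k}$, so $\varepsilon_2=\mathcal H_b\varepsilon$ sees the next spectral gap and the constant is genuinely larger; the precise constant $5k+4$ should fall out of \eqref{pr2.3.7} applied after the rescaling $z=\sqrt b y$ together with the factor of $b$ from $\mathcal H_b=bH_b(\sqrt b\,\cdot)$ and the bookkeeping of $\langle H_b u,u\rangle$ versus $\lVert u\rVert^2$.

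Next I would collect the error terms $(\mathcal H_b\Psi,\varepsilon_2)_b$, $(\mathcal H_b\,{\rm Mod},\varepsilon_2)_b$, the commutator terms, and the term $-\tfrac{b_s}{4}(y^2\varepsilon_2,\varepsilon_2)_b$. For $\Psi$ we use \eqref{3.35} (resp. \eqref{3.39}), for $\rm Mod$ we use the modulation bounds \eqref{3.42}/\eqref{3.43}, and the weighted terms like $(y^2\varepsilon_2,\varepsilon_2)_b$ are handled by the weighted Poincaré-type estimates (Lemma \ref{le:2} rescaled, or the $\Arrowvert y^2\varepsilon\Arrowvert_b$-type control embedded in the coercivity lemma) at the cost of a power of $b$. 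The rough bootstrap bounds \eqref{3.26} are used freely to turn products of small quantities into the stated right-hand sides $b^5$ (resp. $b^4$ for $1\le k\le4$ and $b^{7/2+9/(4k)}$ for $k\ge5$); the correction term $\varphi$ is introduced precisely to swallow those error contributions that are not manifestly of the form $\le C b^\alpha E$ but are total derivatives — typically $\varphi$ is a finite linear combination of terms like $b_j(\varepsilon_2,\mathcal H_b\eta_{b,j})_b$ or $b\,(y^2$-weighted quadratic forms), whose $s$-derivative cancels the offending boundary/source terms up to acceptable remainders; the sizes \eqref{3.7:1} and \eqref{pr3.7:3} then follow from Cauchy–Schwarz plus \eqref{3.26} and the eigenfunction estimates \eqref{pr2.4:3}--\eqref{pr2.4:9}.

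The main obstacle will be the boundary term analysis: unlike in the whole-space energy methods, here $\partial_y\varepsilon_2(1)$ and $\varepsilon_2(1)$ are not small a priori and must be expressed through the boundary relations \eqref{3.28}, \eqref{3.29} and then shown — after substituting the modulation law for $a_s$ — to be either of the correct size or cancellable by $\partial_s\varphi$. In the excited case this is further complicated by the fact that $a-b$ is no longer $O(b^{3/2})$ (we only have \eqref{le3.6:1.3}), so the term $(a-b)(a^2-a)$ and the $\sqrt b\sum C_j\lambda_{b,j}b_j$ term in \eqref{3.29} must be tracked against the components $\tilde b_j$ and absorbed using \eqref{le3.6:1.5}; getting the constant in front of $bE$ sharp enough that $(5k+4)bE$ survives on the left after these manipulations, rather than being degraded, is the delicate point and the reason the admissible energy thresholds in \eqref{3.13}/\eqref{3.23} are $k$-dependent. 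I expect the verification that all error terms are genuinely below the threshold — in particular the interplay between the loss $b^{-1/4}$ or $b^{-3/4}$ appearing when passing $\mathcal H_b$ through $\Psi$ and the gain from $b_s\sim b^{5/2}$ and $\Phi\sim b^{17/8}$ — to require the most careful bookkeeping, but no new idea beyond what is already set up in Sections 2–4.
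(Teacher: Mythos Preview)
Your overall strategy is correct and matches the paper's approach. However, there are two specific points where the proposal would fail as written.

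First, and most importantly, the constant $5k+4$ in \eqref{pr3.7:2} does \emph{not} come from the spectral gap alone. The coercivity estimate \eqref{leCH_{b}:15} only gives $\lVert\partial_y\varepsilon_2\rVert_b^2 \ge (2k+2+O(\sqrt b))\,b\,\lVert\varepsilon_2\rVert_b^2 - c_k b^2|\varepsilon_2(1)|^2$, which produces a dissipation of $(2k+2)bE$ on the $\tfrac12\tfrac{d}{ds}$ side, i.e.\ only $(4k+4)bE$ on the $\tfrac{d}{ds}$ side. The missing piece is the term $-\tfrac12(a-b)\lVert\varepsilon_2\rVert_b^2$ that appears in the energy identity after integrating $-(\mathcal H_a\varepsilon_2,\varepsilon_2)_b$ by parts: since $a-b = a^e - b + O(b^{1+\alpha_k}) = \tfrac{1}{2s} + O(b^{1+\alpha_k})$ and $b=\tfrac{1}{2ks}$, one has $\tfrac12(a-b)=\tfrac{1}{4s}=\tfrac{kb}{2}$ to leading order, contributing an additional $-\tfrac{kb}{2}E$. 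The sum $(2k+2+\tfrac{k}{2})b=\tfrac{5k+4}{2}b$ is exactly what yields \eqref{pr3.7:2}. Without this extra $\tfrac{kb}{2}E$ the integrating factor in Section~6 is $s^{2+2/k}$ rather than $s^{5/2+2/k}$, and the initial-data contribution in the time integration does not decay fast enough to recover the bootstrap bound \eqref{3.23}.

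Second, the term $(\mathcal H_b\,\mathrm{Mod},\varepsilon_2)_b$ should not be estimated via the modulation bounds \eqref{3.42}/\eqref{3.43}: it vanishes identically. Since $\mathrm{Mod}$ is a linear combination of the $\eta_{b,j}$ and $\mathcal H_b\eta_{b,j}=b\lambda_{b,j}\eta_{b,j}$, self-adjointness of $\mathcal H_b$ (both $\eta_{b,j}$ and $\varepsilon$ vanish at $y=1$) gives $(\mathcal H_b\eta_{b,j},\varepsilon_2)_b = b\lambda_{b,j}(\eta_{b,j},\mathcal H_b\varepsilon)_b = (b\lambda_{b,j})^2(\eta_{b,j},\varepsilon)_b = 0$ by \eqref{3.12}. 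If you instead bound it by Cauchy--Schwarz using the size of the Mod coefficients from \eqref{le3.6:1.6} together with $\lVert\eta_{b,j}\rVert_b\sim b^{-5/4}$, you pick up a term of order $bE$, which is the same size as the dissipation and cannot be absorbed.

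A minor correction: $\varphi$ is not a bulk inner product of the type $b_j(\varepsilon_2,\mathcal H_b\eta_{b,j})_b$. It depends only on the modulation parameters and arises by rewriting the boundary combination $e^{-b/2}\varepsilon_2(1)\bigl[\tfrac12(a-b)\varepsilon_2(1)-\partial_y\varepsilon_2(1)\bigr]$ through \eqref{3.28}--\eqref{3.29} as $-e^{-b/2}a(a-b)\bigl[(a-\tilde a)_s+O(b^2)\bigr]$ (with $\tilde a=a^e-\sum_j C_j\tilde b_j/\sqrt b$ for $k\ge1$, $\tilde a=b$ for $k=0$) and then recognising the leading part as a total $s$-derivative of an explicit polynomial in $a-\tilde a$, $\tilde a$, $b$. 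Your final paragraph does anticipate this mechanism, so this is a correction of the ansatz rather than a structural gap.
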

\begin{remark}
 The function $\varphi(s)$  comes from the boundary term in the energy identity and depends only on the parameters $(a,b,(\tilde{b}_{j})_{0\leq{j}\leq{k}})$ as we will see in the computation below. 
\end{remark}
\begin{remark}
	The sharp coercivity constant $ 5k+4 $ in (\ref{pr3.7:2}) is essential to close the energy bounds below and also the main reason why the energy is divided as $ k $ changes.
\end{remark}
\begin{proof}
	We compute the energy identity firstly and then estimate all the terms.\\
\textbf{Step 1: Energy identity.} 
		  Recalling  (\ref{3.6}), it follows from (\ref{3.40}) that
		\begin{equation*}
			\partial_{s}\varepsilon_{2}+\mathcal{H}_{a}\varepsilon_{2}=[\partial_{s},\mathcal{H}_{b}]\varepsilon+[\mathcal{H}_{a},\mathcal{H}_{b}]\varepsilon+\mathcal{H}_{b}\mathcal{F}.
		\end{equation*}
	Using (\ref{pr2.3:53}) and the identity
	\begin{equation*}
		[\partial_{s},\Delta]=0,
	\end{equation*}
we obtain 
\begin{equation*}
	\begin{aligned}
		[\partial_{s},\mathcal{H}_{b}]\varepsilon+[\mathcal{H}_{a},\mathcal{H}_{b}]\varepsilon&=[\partial_{s},-\Delta+b\Lambda]\varepsilon+[\mathcal{H}_{b}+(a-b)\Lambda,\mathcal{H}_{b}]\varepsilon\\
		&=\partial_{s}(b\Lambda\varepsilon)-b\Lambda(\partial_{s}\varepsilon)+(a-b)[\Lambda,-\Delta]\varepsilon\\
		&=\partial_{s}b\Lambda\varepsilon+2(a-b)\Delta\varepsilon\\
  &=(b_{s}+2(a-b)b)\Lambda\varepsilon-2(a-b)(-\Delta\varepsilon+b\Lambda\varepsilon)\\
		&=\Phi\Lambda\varepsilon-2(a-b)\varepsilon_{2}.
		\end{aligned}
\end{equation*}
As a result, we get the equation for $ \varepsilon_{2} $
\begin{equation*}
	\partial_{s}\varepsilon_{2}+\mathcal{H}_{a}\varepsilon_{2}=\Phi\Lambda\varepsilon-2(a-b)\varepsilon_{2}+\mathcal{H}_{b}\mathcal{F}.
\end{equation*}
We now compute the energy identity for $ \varepsilon_{2} $:
\begin{equation}\label{pr3.7:5}
	\begin{aligned}
		\frac{1}{2}\frac{d}{ds}E&=\frac{1}{2}\frac{d}{ds}\bigg(\int_{1}^{+\infty}\varepsilon_{2}^{2}e^{-\frac{by^{2}}{2}}y^{2}dy\bigg)=-\frac{b_{s}}{4}\int_{1}^{+\infty}y^{4}\varepsilon_{2}^{2}e^{-\frac{by^{2}}{2}}dy+(\partial_{s}\varepsilon_{2},\varepsilon_{2})_{b}\\
		&=-\frac{b_{s}}{4}\lVert y\varepsilon_{2}\rVert^{2}_{b}+\Phi(\Lambda\varepsilon,\varepsilon_{2})_{b}-2(a-b)\lVert\varepsilon_{2}\rVert^{2}_{b}+(\mathcal{H}_{b}\mathcal{F},\varepsilon_{2})_{b}-(\mathcal{H}_{a}\varepsilon_{2},\varepsilon_{2})_{b}.
	\end{aligned}
	\end{equation}
The term $-(\mathcal{H}_{a}\varepsilon_{2},\varepsilon_{2})_{b}$ in (\ref{pr3.7:5}) needs to be dealt with further. Note that
\begin{equation}\label{pr3.7:6}
		-(\mathcal{H}_{a}\varepsilon_{2},\varepsilon_{2})_{b}=-\int_{1}^{+\infty}(\mathcal{H}_{b}\varepsilon_{2})\varepsilon_{2}e^{-\frac{by^{2}}{2}}y^{2}dy-(a-b)\int_{1}^{+\infty}(\Lambda\varepsilon_{2})\varepsilon_{2}e^{-\frac{by^{2}}{2}}y^{2}dy.
\end{equation}
On one hand,
\begin{equation}\label{pr3.7:7}
	\begin{aligned}
		\int_{1}^{+\infty}(\mathcal{H}_{b}\varepsilon_{2})\varepsilon_{2}e^{-\frac{by^{2}}{2}}y^{2}dy&=\int_{1}^{+\infty}-\frac{1}{\rho_{b}y^{2}}\partial_{y}(\rho_{b}y^{2}\partial_{y}\varepsilon_{2})\varepsilon_{2}\rho_{b}y^{2}dy\\
		&=-\int_{1}^{+\infty}\partial_{y}(\rho_{b}y^{2}\partial_{y}\varepsilon_{2})\varepsilon_{2}dy=-\int_{1}^{+\infty}\varepsilon_{2}d(\rho_{b}y^{2}\partial_{y}\varepsilon_{2})\\
		&=-\bigg(\varepsilon_{2}\rho_{b}y^{2}\partial_{y}\varepsilon_{2}\bigg|^{+\infty}_{1}-\int_{1}^{+\infty}\partial_{y}\varepsilon_{2}\rho_{b}y^{2}\partial_{y}\varepsilon_{2}dy\bigg)\\
		&=\varepsilon_{2}(1)e^{-\frac{b}{2}}\partial_{y}\varepsilon_{2}(1)+\lVert\partial_{y}\varepsilon_{2}\rVert^{2}_{b}.
	\end{aligned}
\end{equation}
On the other hand,
\begin{equation}\label{pr3.7:8}
	\begin{aligned}
		(\Lambda\varepsilon_{2},\varepsilon_{2})_{b}&=\int_{1}^{+\infty}(\Lambda\varepsilon_{2})\varepsilon_{2}e^{-\frac{by^{2}}{2}}y^{2}dy=\int_{1}^{+\infty}y^{3}\varepsilon_{2}e^{-\frac{by^{2}}{2}}d\varepsilon_{2}\\
		&=-\frac{1}{2}e^{-\frac{b}{2}}\varepsilon^{2}_{2}(1)-\frac{3}{2}\lVert\varepsilon_{2}\rVert^{2}_{b}+\frac{b}{2}\lVert y\varepsilon_{2}\rVert^{2}_{b}.
		\end{aligned}
\end{equation}
Inserting  (\ref{pr3.7:6}), (\ref{pr3.7:7}) and  (\ref{pr3.7:8}) into (\ref{pr3.7:5}) yields
\begin{equation}\label{pr3.7:9}
	\begin{aligned}
		\frac{1}{2}\frac{d}{ds}E=-\frac{\Phi}{4}\lVert y\varepsilon_{2}\rVert^{2}_{b}+&\Phi(\Lambda\varepsilon,\varepsilon_{2})_{b}-\frac{1}{2}(a-b)\lVert\varepsilon_{2}\rVert_{b}^{2}-\lVert\partial_{y}\varepsilon_{2}\rVert^{2}_{b}+(\mathcal{H}_{b}\mathcal{F},\varepsilon_{2})_{b}\\
		&+\frac{1}{2}(a-b)e^{-\frac{b}{2}}\varepsilon^{2}_{2}(1)-\varepsilon_{2}(1)\partial_{y}\varepsilon_{2}(1)e^{-\frac{b}{2}}.
	\end{aligned}
\end{equation}
We now estimate  the terms on the right side of (\ref{pr3.7:9}) respectively.\\
 \textbf{Step 2: Nonlinear estimates.} From (\ref{3.28}), (\ref{3.30}), (\ref{le3.6:1.5}) and the coercivity estimate (\ref{leCH_{b}:16}),
\begin{equation}\label{pr3.7:10}
	\begin{aligned}
		|\Phi|\lVert y\varepsilon_{2}\rVert^{2}_{b}&\lesssim|\Phi|\bigg(\frac{1}{b^{2}}\lVert\partial_{y}\varepsilon_{2}\rVert^{2}_{b}+\frac{1}{b}(\varepsilon_{2}(1))^{2}\bigg)\\
		&=\frac{|\Phi|}{b^{2}}\lVert\partial_{y}\varepsilon_{2}\rVert^{2}_{b}+\frac{|\Phi|}{b}a^{2}(a-b)^{2}\lesssim \left\{\begin{aligned}
			&\sqrt{b}\lVert\partial_{y}\varepsilon_{2}\rVert^{2}_{b}+b^{\frac{13}{2}}\quad k=0,\\
			& b^{\frac{1}{8}}\lVert\partial_{y}\varepsilon_{2}\rVert^{2}_{b}+b^{5}\quad 1\leq{k}\leq 4,\\
			& b^{\frac{1}{2k}}\lVert\partial_{y}\varepsilon_{2}\rVert^{2}_{b}+b^{5}\quad {k}\geq 5.
		\end{aligned}\right.
	\end{aligned}
\end{equation}
From (\ref{3.26}), (\ref{le3.6:1.5}) and the coercivity estimate (\ref{leCH_{b}:14}), 
\begin{equation}\label{pr3.7:11}
	\begin{aligned}
		|\Phi(\Lambda\varepsilon,\varepsilon_{2})_{b}|\lesssim|\Phi|\lVert\Lambda\varepsilon\rVert_{b}\lVert\varepsilon_{2}\rVert_{b}\lesssim|\Phi|\frac{\sqrt{E}}{b}\sqrt{E}\lesssim \left\{\begin{aligned}
			&b^{\frac{3}{2}}\cdot b^{\frac{7}{2}}=b^{5} \quad k=0,\\
			& b^{\frac{9}{8}}\cdot b^{\frac{47}{16}}=b^{\frac{65}{16}}\quad 1\leq{k}\leq 4,\\
			& b^{\frac{5}{2}+\frac{7}{4k}}\cdot b^{1+\frac{1}{2k}}=b^{\frac{7}{2}+\frac{9}{4k}}\quad {k}\geq 5.
		\end{aligned}\right.
	\end{aligned}
\end{equation}
Using (\ref{pr2.4:1}) and the orthogonality condition (\ref{3.12}), one has 
\begin{equation*}
(\mathcal{H}_{b}{\rm{Mod}},\varepsilon_{2})_{b}=0.
\end{equation*}  
Hence together with (\ref{3.35}), (\ref{3.39}) and (\ref{le3.6:1.5}), we obtain
\begin{equation}\label{pr3.7:12}
	\begin{aligned}
		&|(\mathcal{H}_{b}\mathcal{F},\varepsilon_{2})_{b}|=|(\mathcal{H}_{b}(\Psi+{\rm{Mod}}),\varepsilon_{2})_{b}|=|(\mathcal{H}_{b}\Psi,\varepsilon_{2})_{b}|\lesssim \lVert\mathcal{H}_{b}\Psi\rVert_{b}\lVert\varepsilon_{2}\rVert_{b}\\
		&\lesssim\left\{\begin{aligned}
		&b\cdot\bigg(b^{\frac{9}{4}}+\sqrt{b}\sqrt{E}+\frac{|b_{s}|}{b^{\frac{1}{4}}}\bigg)\sqrt{E}\lesssim b^{\frac{13}{4}}\sqrt{E} \quad k=0,\\ &b\cdot\bigg(b^{\frac{7}{4}}+\frac{|\Phi|}{b^{\frac{1}{4}}}\bigg)\sqrt{E}\lesssim b^{\frac{11}{4}}\sqrt{E} \quad k\geq{1}.
				\end{aligned}\right.
		\end{aligned}
\end{equation}
From (\ref{3.26}) and (\ref{3.30}), for $ k=0 $,
\begin{equation}\label{pr3.7:13}
	|a-b|\lVert \varepsilon_{2}\rVert_{b}^{2}\lesssim b^{\frac{3}{2}}\cdot b^{\frac{7}{2}}= b^{5}.
\end{equation}
From (\ref{3.9}) and (\ref{le3.6:1.3}), for $ k\geq 1 $,
\begin{equation}\label{pr3.7:14}
	\frac{1}{2}(a-b)\lVert \varepsilon_{2}\rVert^{2}_{b}=\frac{1}{2}\bigg(a^{e}-\sum_{j=0}^{k}\frac{C_{j}\tilde{b}_{j}}{\sqrt{b}}+O\bigg(b^{\frac{3}{2}}+\frac{\sqrt{E}}{b^{\frac{1}{4}}}\bigg)-b\bigg)E=\frac{1}{4s}E+O\bigg(b^{1+\alpha_{k}}E+\frac{E^{\frac{3}{2}}}{b^{\frac{1}{4}}}\bigg).
\end{equation}
\textbf{Step 3: Boundary terms.} 
It remains to estimate the boundary term of (\ref{pr3.7:9}) and we argue differently depending on $ k $.\\
 \textbf{Case {\boldmath{$ k=0 $}}:} Firstly, from (\ref{3.26}), (\ref{3.29}), (\ref{3.30}) and (\ref{3.42}), we have
\begin{equation*}
	\begin{aligned}
		\partial_{y}\varepsilon_{2}(1)&=-a_{s}-\lambda_{b,0}b^{2}-(a-b)(a^{2}-a)+O(b^{3})\\
		&=-(a-b)_{s}-b_{s}-\lambda_{b,0}b^{2}+O(b^{\frac{5}{2}}+b^{\frac{3}{4}}\sqrt{E}+b^{3})\\
		&=-(a-b)_{s}+O(b^{\frac{5}{2}}).
	\end{aligned}
\end{equation*}
Hence together with (\ref{3.28}), (\ref{3.30})
	\begin{align}
		&e^{-\frac{b}{2}}\varepsilon_{2}(1)\bigg[\frac{1}{2}(a-b)\varepsilon_{2}(1)-\partial_{y}\varepsilon_{2}(1)\bigg]\notag\\
		=&-e^{-\frac{b}{2}}a(a-b)\bigg[-\frac{1}{2}(a-b)a(a-b)+(a-b)_{s}+O(b^{\frac{5}{2}})\bigg]\notag\\
		=&-e^{-\frac{b}{2}}a(a-b)(a-b)_{s}+O(b^{5})\notag\\
		=&-\frac{d}{ds}\bigg[e^{-\frac{b}{2}}\bigg(\frac{(a-b)^{3}}{3}+\frac{b(a-b)^{2}}{2}\bigg)\bigg]\label{pr3.7.3:1}\\
  &-\frac{b_{s}}{2}e^{-\frac{b}{2}}\frac{(a-b)^{3}}{3}+e^{-\frac{b}{2}}b_{s}\frac{(a-b)^{2}}{2}-\frac{b_{s}}{2}e^{-\frac{b}{2}}b\frac{(a-b)^{2}}{2}+O(b^{5})\notag\\
		=&-\frac{d}{ds}\bigg[e^{-\frac{b}{2}}\bigg(\frac{(a-b)^{3}}{3}+\frac{b(a-b)^{2}}{2}\bigg)\bigg]+O(b^{5})\notag.
	\end{align}

Set 
\begin{equation*}
	\varphi(s)=2e^{-\frac{b}{2}}\bigg(\frac{(a-b)^{3}}{3}+\frac{b(a-b)^{2}}{2}\bigg),
	\end{equation*}
and from (\ref{3.30}),
\begin{equation*}
	|\varphi(s)|\lesssim b^{4}.
\end{equation*}
Then  injecting (\ref{pr3.7:10}), (\ref{pr3.7:11}), (\ref{pr3.7:12}), (\ref{pr3.7:13}), (\ref{pr3.7.3:1}) into (\ref{pr3.7:9}) and combining with the coercivity estimate (\ref{leCH_{b}:15}) with $k=0 $  yields
\begin{equation*}
	\begin{aligned}
	\frac{1}{2}\frac{d}{ds}(E+\varphi)&\leq (-1+O(\sqrt{b}))(2+O(\sqrt{b}))bE+O(b^{5})
	=-2bE+O(b^{5})
	\end{aligned}
\end{equation*}
\textbf{Case {\boldmath{$ k\geq 1 $}}:}
Firstly  set 
\begin{equation*}
	\tilde{a}=a^{e}-\sum_{j=0}^{k}\frac{C_{j}}{\sqrt{b}}\tilde{b}_{j},
\end{equation*}
which is the leading order term of $ a $ (see (\ref{le3.6:1.3})). By direct computation,
\begin{equation*}
	\begin{aligned}
		&(\tilde{a})_{s}-\sqrt{b}\sum_{j=0}^{k}C_{j}\lambda_{b,j}b_{j}\\
  =&(a^{e})_{s}-\sum_{j=0}^{k}\frac{C_{j}}{\sqrt{b}}(\tilde{b}_{j})_{s}+\frac{1}{2}\sum_{j=0}^{k}C_{j}b^{-\frac{3}{2}}\tilde{b}_{j}b_{s}-\sqrt{b}\sum_{j=0}^{k}C_{j}\lambda_{b,j}b_{j}\\
	=&(a^{e})_{s}-2kC_{k}\sqrt{b}b^{e}_{k}-\mu_{k}C_{k}\sqrt{b}b^{e}_{k}-\sqrt{b}\sum_{j=0}^{k}C_{j}\lambda_{b,j}\tilde{b}_{j}\\
 &-\sum_{j=0}^{k}\frac{C_{j}}{\sqrt{b}}(\tilde{b}_{j})_{s}+\frac{1}{2}\sum_{j=0}^{k}C_{j}b^{-\frac{3}{2}}\tilde{b}_{j}b_{s}.
	\end{aligned}
\end{equation*}
From (\ref{3.9}), (\ref{3.14}), (\ref{3.43}) and the bootstrap assumption (\ref{3.24}), (\ref{3.25})
\begin{equation*}
		|(\tilde{b}_{j})_{s}|\lesssim b\sum_{j=0}^{k}|\tilde{b}_{j}|+b^{\frac{5}{2}+2\alpha_{k}}+b^{\frac{5}{4}}\sqrt{E}\lesssim b^{\frac{5}{2}+\alpha_{k}}+b^{\frac{5}{4}}\sqrt{E},\quad \forall\ 0\leq{j}\leq{k}.
\end{equation*}
Combining with (\ref{3.9}) and that $ |\mu_{k}|\lesssim\sqrt{b} $, we get
\begin{equation*}
	\begin{aligned}
	&(\tilde{a})_{s}-\sqrt{b}\sum_{j=0}^{k}C_{j}\lambda_{b,j}b_{j}\\
	=&-\frac{k+1}{2ks^{2}}-2kC_{k}\frac{1}{\sqrt{2ks}}\left(-\frac{1}{C_{k}}\right)\frac{k+1}{(2ks)^{\frac{3}{2}}}+O\bigg(b^{\frac{5}{2}}+b^{2+\alpha_{k}}+\frac{1}{\sqrt{b}}\cdot(b^{\frac{5}{2}+\alpha_{k}}+b^{\frac{5}{4}}\sqrt{E})
	\bigg)\\
	=&O(b^{2+\alpha_{k}}+b^{\frac{3}{4}}\sqrt{E}).\quad  \bigg({\rm{note\ that\ }} 0<\alpha_{k}<\frac{1}{2}\bigg)
		\end{aligned}
\end{equation*}
Hence, from (\ref{3.28}) and (\ref{3.29}), the boundary term is 
\begin{align}\label{pr3.7.4:4}
		&e^{-\frac{b}{2}}\varepsilon_{2}(1)\bigg(\frac{1}{2}(a-b)\varepsilon_{2}(1)-\partial_{y}\varepsilon_{2}(1)\bigg)\notag\\
		=&-e^{-\frac{b}{2}}a(a-b)\bigg(-\frac{1}{2}a(a-b)^{2}+a_{s}\notag\\
  &-\sqrt{b}\sum_{j=0}^{k}C_{j}\lambda_{b,j}b_{j}+(a-b)(a^{2}-a)+O(b^{\frac{5}{2}})\bigg)\notag\\
		=&-e^{-\frac{b}{2}}a(a-b)\bigg((a-\tilde{a})_{s}+\tilde{a}_{s}-\sqrt{b}\sum_{j=0}^{k}C_{j}\lambda_{b,j}b_{j}\notag\\
  &-\frac{1}{2}a(a-b)^{2}+(a-b)(a^{2}-a)+O(b^{\frac{5}{2}})\bigg)\notag\\
		=&-e^{-\frac{b}{2}}a(a-b)\bigg((a-\tilde{a})_{s}+O(b^{2+\alpha_{k}}+b^{\frac{3}{4}}\sqrt{E})+O(b^{2})\bigg)\notag\\
		=&-e^{-\frac{b}{2}}a(a-b)(a-\tilde{a})_{s}+O(b^{2}+b^{\frac{11}{4}}\sqrt{E})\notag\\
		=&-e^{-\frac{b}{2}}(a-\tilde{a})_{s}[(a-\tilde{a})^{2}+(a-\tilde{a})(2\tilde{a}-b)+\tilde{a}(\tilde{a}-b)]+O(b^{4}+b^{\frac{11}{4}}\sqrt{E})\notag\\
		=&-\frac{d}{ds}\biggl\{e^{-\frac{b}{2}}\bigg[\frac{(a-\tilde{a})^{3}}{3}+\frac{(a-\tilde{a})^{2}}{2}(2\tilde{a}-b)\notag\\
  &+(a-\tilde{a})\tilde{a}(\tilde{a}-b)\bigg]\biggr\}+O(b^{4}+b^{\frac{11}{4}}\sqrt{E}).
	\end{align}
Notice that the last identity is implied by the following estimates:
\begin{equation*}
	\begin{aligned}
		&|b_{s}||a-\tilde{a}|^{3}+|b_{s}||2\tilde{a}-b||a-\tilde{a}|^{2}+(a-\tilde{a})^{2}|2\tilde{a}_{s}-b_{s}|+|b_{s}||a-\tilde{a}||\tilde{a}||\tilde{a}-b|+\\
		&|a-\tilde{a}||\tilde{a}_{s}||\tilde{a}-b|+|a-\tilde{a}||\tilde{a}||(\tilde{a}-b)_{s}|\lesssim b^{4}+b^{\frac{11}{4}}\sqrt{E},
	\end{aligned}
\end{equation*}
which is a consequence of (\ref{3.9}), (\ref{le3.6:1.3}) and (\ref{3.43}).\\
Set 
\begin{equation*}
	\varphi(s)=2e^{-\frac{b}{2}}\bigg[\frac{(a-\tilde{a})^{3}}{3}+\frac{(a-\tilde{a})^{2}}{2}(2\tilde{a}-b)+(a-\tilde{a})\tilde{a}(\tilde{a}-b)\bigg].
\end{equation*}
	
From (\ref{3.9}), (\ref{le3.6:1.3}) and (\ref{3.26}), we have
\begin{equation*}
	|\varphi(s)|\lesssim b^{2}\cdot\bigg(b^{\frac{3}{2}}+\frac{\sqrt{E}}{b^{\frac{1}{4}}}\bigg)\lesssim \left\{\begin{aligned}
		& b^{2}\cdot\frac{(b^{\frac{47}{16}})^{\frac{1}{2}}}{b^{\frac{1}{4}}}=b^{\frac{103}{32}}\quad 1\leq{k}\leq{4},\\
		&b^{2}\cdot\frac{b^{\frac{1}{2}(\frac{5}{2}+\frac{7}{4k})}}{b^{\frac{1}{4}}}=b^{3+\frac{7}{8k}}\quad k\geq 5.
	\end{aligned}\right.
\end{equation*}
To proceed, injecting (\ref{3.26}), (\ref{pr3.7:10}), (\ref{pr3.7:11}), (\ref{pr3.7:12}), (\ref{pr3.7:14}), (\ref{pr3.7.4:4}) into (\ref{pr3.7:9}),\\
when $ 1\leq{k}\leq 4 $,
\begin{equation*}
	\begin{aligned}
		\frac{1}{2}\frac{d}{ds}(E+\varphi)&=O(b^{\frac{1}{8}}\lVert\partial_{y}\varepsilon_{2}\rVert^{2}_{b}+b^{5})+O(b^{\frac{65}{16}})-\frac{1}{4s}E+O\bigg(b^{1+\alpha_{k}}E+\frac{E^{\frac{3}{2}}}{b^{\frac{1}{4}}}\bigg)\\
		&+O(b^{4}+b^{\frac{11}{4}}\sqrt{E})-\lVert\partial_{y}\varepsilon_{2}\rVert_{b}^{2}\\
		&=(-1+O(b^{\frac{1}{8}}))\lVert\partial_{y}\varepsilon_{2}\rVert^{2}_{b}-\frac{1}{4s}E+O\bigg(b^{4}+b^{1+\frac{1}{8}}\cdot b^{\frac{47}{16}}+\frac{b^{\frac{47}{16}\cdot\frac{3}{2}}}{b^{\frac{1}{4}}}\bigg)\\
		&=(-1+O(b^{\frac{1}{8}}))\lVert\partial_{y}\varepsilon_{2}\rVert^{2}_{b}-\frac{1}{4s}E+O(b^{4}),
	\end{aligned}
\end{equation*}
then together with the coercivity estimate (\ref{leCH_{b}:15}) and that $ b(s)=\frac{1}{2ks} $, we get
\begin{equation*}
\begin{aligned}
	\frac{1}{2}\frac{d}{ds}(E+\varphi)&\leq (-1+O(b^{\frac{1}{8}}))(2k+2+O(\sqrt{b}))bE-c_{k}b^{2}|\varepsilon_{2}(1)|^{2})-\frac{1}{4s}E+O(b^{4})\\
	&\leq-\bigg(\frac{5}{2}k+2\bigg)bE+O(b^{4}).
\end{aligned}
\end{equation*}
When $ k\geq 5 $, similar computations as before yield 
\begin{equation*}
	\frac{d}{ds}(E+\varphi)+(5k+4)bE\lesssim b^{\frac{7}{2}+\frac{9}{4k}},
\end{equation*}
which is (\ref{pr3.7:2}).
	\end{proof}
 \section{Closing the bootstrap}
We are now in a position to  close the bootstrap bounds of Proposition \ref{pr3.3} and conclude the proof of Theorem \ref{th1}.\\
\subsection{Proof of Proposition \ref{pr3.3}.} Our goal in this subsection is to improve the bounds (\ref{3.21}) and (\ref{3.22}) for $ k=0 $ and similarly for the bounds (\ref{3.23}), (\ref{3.24}) and (\ref{3.25}) in the case $ k\geq 1 $. The improvement of the energy bounds follows from Proposition \ref{Ee}, while the unstable modes $ W_{k-1} $ and $ (U_{j})_{0\leq j\leq k-2} $ will be controlled  by a topological argument. We will distinguish the case $ k=0 $ and $ k\geq 1 $ in the following discussion.\\
\textbf{Step 1:} 
	\textbf{Case {\boldmath{$ k=0 $}}.} We claim that  for any $s\in(s_{0},s^{*}) $,
	\begin{equation}\label{3.5:1}
	0<e^{-Cb^{*}(s-s_{0})}b(s_{0})<b(s)<b(s_{0})<b^{*},
	\end{equation}
\begin{equation}\label{3.5:2}
	E(s)\leq C b^{4}(s),
\end{equation}
\begin{equation}\label{3.5:3}
	\lambda(s)\geq e^{-Cb^{*}(s-s_{0})},
\end{equation}
for some universal constant $ C>0 $ and $ 0<b^{*}\ll 1 $  which improve the bootstrap estimate (\ref{3.21}) and (\ref{3.22}). Firstly, since 
\begin{equation*}
	\bigg|b_{s}+\sqrt{\frac{2}{\pi}}b^{\frac{5}{2}}\bigg|\lesssim b^{3},
\end{equation*}
then taking $ b^{*}>0 $ small enough and by the bootstrap bound (\ref{3.22}), we have 
\begin{equation*}
	b_{s}<0,\quad \forall\ s\in[s_{0},s^{*}).
\end{equation*}
Hence \begin{equation*}
	b(s)<b(s_{0})<b^{*},\quad \forall\ s\in\left(s_{0},s^{*}\right).
\end{equation*}
By the definition of $ a $ and (\ref{3.30}), one sees that  for any $s\in[s_{0},s^{*}) $,
\begin{equation*}
	|\log \lambda(s)|=\bigg|\int_{s_{0}}^{s}a d\sigma\bigg|=\bigg|\int_{s_{0}}^{s}(a-b+b)d\sigma\bigg|=\bigg|\int_{s_{0}}^{s}b(1+O(\sqrt{b}))d\sigma\bigg|\leq Cb^{*}(s-s_{0}),
	\end{equation*}
which is (\ref{3.5:3}). From (\ref{3.42}) again, we estimate the term
\begin{equation*}
	\begin{aligned}
	\frac{d}{ds}\bigg(\frac{b^{4}}{\lambda^{4}}\bigg)=4b_{s}b^{3}\frac{1}{\lambda^{4}}-4\lambda_{s}b^{4}\frac{1}{\lambda^{5}}&=4\frac{b^{4}}{\lambda^{4}}\bigg(\frac{b_{s}}{b}-\frac{\lambda_{s}}{\lambda}\bigg)\\
	&=4\frac{b^{4}}{\lambda^{4}}\left(\frac{-\sqrt{\frac{2}{\pi}}b^{\frac{5}{2}}+O(b^{3})}{b}+b+a-b\right)\\
	&=4\frac{b^{4}}{\lambda^{4}}(b+O(b^\frac{3}{2}))=4\frac{b^{5}}{\lambda^{4}}(1+O(\sqrt{b}))>0,
	\end{aligned}
	\end{equation*}
where the last inequality holds for any $ b^{*}>0 $ small enough and $s\in [s_{0},s^{*})$.
Hence
\begin{equation}\label{3.5:5}
	\frac{b^{4}(s)}{\lambda^{4}(s)}\geq\frac{b^{4}(s_{0})}{\lambda^{4}(s_{0})}=b^{4}(s_{0})>0,
\end{equation}
which together with (\ref{3.5:3}) yields (\ref{3.5:1}). Finally, we estimate the energy, as the consequence of (\ref{3.7:0}) and
\begin{equation*}
	|b-a||E+\varphi|\lesssim b^{\frac{3}{2}}(b^{\frac{7}{2}}+b^{4})\lesssim b^{5},
\end{equation*}
which follows from (\ref{3.26}), (\ref{3.30}) and (\ref{3.7:1}), we have
\begin{equation*}
	\frac{d}{ds}(E+\varphi)+4a(E+\varphi)\leq Cb^{5}.
\end{equation*}
We integrate this inequality in time using (\ref{3.3}), (\ref{3.5:5}) and the initial bound (\ref{3.8}) to derive	
\begin{equation}\label{3.5:6}
	\begin{aligned}
		(E+\varphi)(s)&\leq e^{-\int_{s_{0}}^{s}4a(\tau)d\tau}(E+\varphi)(s_{0})+Ce^{-\int_{s_{0}}^{s}4a(\tau)d\tau}\bigg(\int_{s_{0}}^{s}e^{\int_{s_{0}}^{t}4a(\tau)d\tau}b^{5}(t)dt\bigg)\\
		&\leq C\lambda^{4}(s)b^{4}(s_{0})+C\lambda^{4}(s)\int_{s_{0}}^{s}\frac{b^{5}(t)}{\lambda^{4}(t)}dt\\
		&\leq Cb^{4}(s)+C\lambda^{4}(s)\int_{s_{0}}^{s}\frac{b^{5}(t)}{\lambda^{4}(t)}dt.
	\end{aligned}
\end{equation}
In order to estimate the second term on the right side,  using (\ref{3.42}),
\begin{equation*}
	\begin{aligned}
		&\int_{s_{0}}^{s}\frac{b^{5}(\sigma)}{\lambda^{4}(\sigma)}d\sigma=\int_{s_{0}}^{s}\frac{b^{4}}{\lambda^{4}}(a+b-a)d\sigma\leq \int_{s_{0}}^{s}\frac{b^{4}}{\lambda^{4}}\bigg(-\frac{\lambda_{\sigma}}{\lambda}\bigg)d\sigma+C\int_{s_{0}}^{s}\frac{b^{\frac{11}{2}}}{\lambda^{4}}d\sigma\\
		=&\frac{1}{4}\frac{b^{4}}{\lambda^{4}}\bigg|^{s}_{s_{0}}-\int_{s_{0}}^{s}b^{3}b_{\sigma}\lambda^{-4}d\sigma+C\int_{s_{0}}^{s}\frac{b^{\frac{11}{2}}}{\lambda^{4}}d\sigma\leq\frac{1}{4}\frac{b^{4}(s)}{\lambda^{4}(s)}+C\int_{s_{0}}^{s}\frac{b^{\frac{11}{2}}(\sigma)}{\lambda^{4}(\sigma)}d\sigma,
	\end{aligned}
\end{equation*}
notice that $ 0<b<b^{*} $ is small enough, and thus furthermore
\begin{equation*}
	\int_{s_{0}}^{s}\frac{b^{5}(\sigma)}{\lambda^{4}(\sigma)}d\sigma\leq C \frac{b^{4}(s)}{\lambda^{4}(s)}.
\end{equation*}
Then injecting into (\ref{3.5:6}) and combining with (\ref{3.7:1}) yields
\begin{equation*}
	E(s)\leq Cb^{4}(s),
	\end{equation*}
which is (\ref{3.5:2}).\\
\textbf{Step 2:} \textbf{Case {\boldmath{$ k\geq 1 $}}.}  
In this case, we firstly improve the energy bounds (\ref{3.23}). Since the energy estimates in (\ref{pr3.7:2}) depend on $ k $,  we distinguish $ k $ in the following argument.\\
For $ 1\leq k\leq 3 $, from (\ref{pr3.7:2}) and  $ b(s)=\frac{1}{2ks} $, we have
\begin{equation*}
	\frac{d}{ds}(E+\varphi)+\bigg(\frac{5}{2}+\frac{2}{k}\bigg)\frac{1}{s}(E+\varphi)\lesssim s^{-4},
\end{equation*}
which together with the initial assumptation (\ref{3.13}) yieids
\begin{equation*}
	\begin{aligned}
		(E+\varphi)(s)&\leq\frac{s_{0}^{\frac{5}{2}+\frac{2}{k}}}{s^{\frac{5}{2}+\frac{2}{k}}}(E+\varphi)(s_{0})+\frac{C\int_{s_{0}}^{s}t^{-\frac{3}{2}+\frac{2}{k}}dt}{s^{\frac{5}{2}+\frac{2}{k}}}\\
		&\lesssim \frac{s_{0}^{\frac{5}{2}+\frac{2}{k}}\cdot s_{0}^{-3}}{s^{\frac{5}{2}+\frac{2}{k}}}+\frac{s^{-\frac{1}{2}+\frac{2}{k}}}{s^{\frac{5}{2}+\frac{2}{k}}}\\
		&\lesssim \frac{s^{\frac{5}{2}+\frac{2}{k}-3}}{s^{\frac{5}{2}+\frac{2}{k}}}+s^{-3}\lesssim s^{-3},
	\end{aligned}
	\end{equation*}
then combining with (\ref{pr3.7:3}) that $ |\varphi(s)|\lesssim s^{-\frac{103}{32}} $, we know 
\begin{equation*}
	E(s)\lesssim s^{-3}\lesssim b^{3}.
\end{equation*}
For $ k=4 $, similar computaion as before yields 
\begin{equation*}
	\begin{aligned}
		(E+\varphi)(s)&\leq \frac{s_{0}^{3}}{s^{3}}(E+\varphi)(s_{0})+\frac{C\int_{s_{0}}^{s}t^{-1}dt}{s^{3}}\\
		&\lesssim \frac{s_{0}^{3}s_{0}^{-3}\log s_{0}}{s^{3}}+\frac{C\log s}{s^{3}}\\
		&\lesssim \frac{\log s}{s^{3}},
	\end{aligned}
	\end{equation*}
then from (\ref{pr3.7:3}), we get
\begin{equation*}
	E(s)\lesssim\frac{\log s}{s^{3}}\lesssim b^{3}|\log b|.
\end{equation*}
For $ k\geq 5 $,
\begin{equation*}
	\begin{aligned}
	(E+\varphi)(s)&\leq \frac{s_{0}^{\frac{5}{2}+\frac{2}{k}}}{s^{\frac{5}{2}+\frac{2}{k}}}(E+\varphi)(s_{0})+\frac{C\int_{s_{0}}^{s}t^{-1-\frac{1}{4k}}dt}{s^{\frac{5}{2}+\frac{2}{k}}}\\
	&\lesssim \frac{1}{s^{\frac{5}{2}+\frac{2}{k}}},
	\end{aligned}
	\end{equation*}
which together with (\ref{pr3.7:3}) again yields
\begin{equation*}
	E(s)\lesssim \frac{1}{s^{\frac{5}{2}+\frac{2}{k}}}\lesssim b^{\frac{5}{2}+\frac{2}{k}}.
	\end{equation*}
To summarize, the above argument improve the bootstrap bound (\ref{3.23}) since here we can find a universal constant $ C $ to replace the large $ K $ to be determined. We now inject these estimates into (\ref{3.43}) and conclude
\begin{equation}\label{3.5:7}
	\begin{aligned}
		&\bigg|(\tilde{b}_{k})_{s}+\frac{3}{2s}\tilde{b}_{k}+\frac{k+1}{C_{k}s}\sum_{j=0}^{k}C_{j}\tilde{b}_{j}\bigg|\\
  &+\bigg|(\tilde{b}_{k-1})_{s}+\bigg(\frac{3}{2s}-\frac{1}{ks}\bigg)\tilde{b}_{k-1}-\frac{(k+1)(2k+1)}{2kC_{k}s}\frac{A_{k-1}}{A_{k}}\sum_{j=0}^{k}C_{j}\tilde{b}_{j}\bigg|\\
		&+\bigg|(\tilde{b}_{j})_{s}+\bigg(\frac{j}{ks}+\frac{1}{2s}\bigg)\tilde{b}_{j}\bigg|\\
  \lesssim& b^{\frac{5}{4}}\sqrt{E}+b^{\frac{5}{2}+2\alpha_{k}}\lesssim\left\{\begin{aligned}
			&b^{\frac{5}{4}}\cdot b^{\frac{3}{2}}+b^{\frac{5}{2}+2\alpha_{k}}\lesssim b^{\frac{11}{4}}\quad 1\leq k \leq 3,\\
			&b^{\frac{5}{4}}\cdot b^{\frac{3}{2}}\sqrt{|\log b|}+b^{\frac{5}{2}+2\alpha_{k}}\lesssim b^{\frac{11}{4}}\sqrt{|\log b|}\quad k=4,\\
			&b^{\frac{5}{4}}\cdot b^{\frac{5}{4}+\frac{1}{k}}++b^{\frac{5}{2}+2\alpha_{k}}\lesssim b^{\frac{5}{2}+\frac{1}{k}}\quad k\geq 5,
		\end{aligned}\right.\\
	\lesssim& b^{\frac{5}{2}+\beta_{k}},
	\end{aligned}
	\end{equation}
where 
\begin{equation*}
	\beta_{k}=\left\{\begin{aligned}
		&\frac{1}{4}\quad 1\leq k \leq 3,\\
		&\frac{3}{16}\quad k=4,\\
		&\frac{1}{k} \quad k\geq 5.
	\end{aligned}\right.
\end{equation*}
Next, injecting $ \tilde{b}_{j}=\frac{U_{j}}{s^{\frac{3}{2}+\alpha_{k}}} $ into (\ref{3.5:7}), 
we obtain
\begin{equation}\label{3.5:8}
	\begin{aligned}
		&\bigg|(U_{k})_{s}-\frac{\alpha_{k}}{s}U_{k}+\frac{k+1}{C_{k}s}\sum_{j=0}^{k}C_{j}U_{j}\bigg|\\
  &+\bigg|(U_{k-1})_{s}-\bigg(\frac{1}{k}+\alpha_{k}\bigg)\frac{1}{s}U_{k-1}-\frac{(k+1)(2k+1)}{2kC_{k}s}\frac{A_{k-1}}{A_{k}}\sum_{j=0}^{k}C_{j}U_{j}\bigg|\\
		&+\bigg|(U_{j})_{s}+\bigg(\frac{j}{k}-1-\alpha_{k}\bigg)\frac{1}{s}U_{j}\bigg|\\
  \lesssim& s^{\frac{3}{2}+\alpha_{k}}\cdot s^{-\frac{5}{2}-\beta_{k}}=s^{-1+\alpha_{k}-\beta_{k}}=s^{-1-\gamma_{k}},
	\end{aligned}
\end{equation}
where
\begin{equation*}
	\gamma_{k}=\left\{\begin{aligned}
&\frac{1}{8}\quad 1\leq k\leq 3,\\
&\frac{1}{16}\quad k=4,\\
&\frac{1}{2k} \quad k\geq 5.
	\end{aligned}\right.
\end{equation*}
From the bootstrap assumption (\ref{3.25})
\begin{equation*}
	|U_{j}(s)|\lesssim\delta\quad \forall 0\leq{j}\leq{k-2},
\end{equation*}
Taking this into (\ref{3.5:8}) yields
\begin{equation*}
	\begin{aligned}
		&\bigg|(U_{k})_{s}-\frac{\alpha_{k}}{s}U_{k}+\frac{k+1}{C_{k}s}C_{k}U_{k}+\frac{k+1}{C_{k}s}C_{k-1}U_{k-1}\bigg|+\\
		&\bigg|(U_{k-1})_{s}-\bigg(\frac{1}{k}+\alpha_{k}\bigg)\frac{1}{s}U_{k-1}-\frac{(k+1)(2k+1)}{2kC_{k}s}\frac{A_{k-1}}{A_{k}}C_{k-1}U_{k-1}\\
  &-\frac{(k+1)(2k+1)}{2kC_{k}}\frac{A_{k-1}}{A_{k}}C_{k}U_{k}\bigg|\lesssim\delta s^{-1}+s^{-1-\gamma_{k}}.
	\end{aligned}
\end{equation*}
Recall by the definition of $ B_{k} $ (see (\ref{3.16})), the above inequality is equivalent to
\begin{equation*}
	\partial_{s}\begin{pmatrix}
		U_{k}\\
		U_{k-1}
	\end{pmatrix}=\frac{1}{s}B_{k}\begin{pmatrix}
	U_{k}\\
	U_{k-1}
\end{pmatrix}+O(\delta s^{-1}+s^{-1-\gamma_{k}}).
	\end{equation*}
Then from (\ref{3.18}), we have
\begin{equation}\label{3.5:9}
	\bigg|(W_{k})_{s}-\frac{\mu^{k}_{2}}{s}W_{k}\bigg|+\bigg|(W_{k-1})_{s}-\frac{\mu^{k}_{1}}{s}W_{k-1}\bigg|\lesssim \delta s^{-1}+s^{-1-\gamma_{k}},
\end{equation}
where $ \mu^{k}_{2}<0<\mu^{k}_{1} $ are the eigenvalues of $ B_{k} $. The first term of the above inequality yields the control of the stable direction $ W_{k} $. In fact, we integrate the first bound of (\ref{3.5:9}) and use the initial assumption (\ref{3.19}) to obtain
\begin{equation*}
	\begin{aligned}
		|W_{k}(s)|&\leq \bigg(\frac{s_{0}}{s}\bigg)^{-\mu^{k}_{2}}|W_{k}(s_{0})|+\frac{C\delta}{s^{-\mu^{k}_{2}}}\int_{s_{0}}^{s}t^{-1}t^{-\mu^{k}_{2}}dt+\frac{C}{s^{-\mu^k_{2}}}\int_{s_{0}}^{s}t^{-1-\gamma_{k}-\mu^{k}_{2}}dt\\
		&\leq 1+C\delta+Cs^{-\gamma_{k}}.
	\end{aligned}
\end{equation*}
Hence taking $ \delta>0 $ small enough and $ s_{0}>0 $ large enough which only depend on $ C $, we conclude that
\begin{equation*}
	|W_{k}(s)|\leq\frac{3}{2},\quad \forall\ s\in[s_{0},s^{*}).
\end{equation*}
Finally, in order to close the bootstrap, we argue by contradiction and assume that 
\begin{equation*}
	{\rm{for}}\ {\rm{any}}\ {\rm{initial}}\ {\rm{data}}\ \bigg(W_{k-1}(s_{0}),\frac{U_{0}(s_{0})}{\delta},\cdot\cdot\cdot,\frac{U_{k-2}(s_{0})}{\delta}\bigg)\in B_{k}(2),\quad s^{*}<+\infty.
\end{equation*}
By the definition of $ s^{*} $, we arrive at
\begin{equation}\label{3.5:11}
	|W_{k-1}(s^{*})|^{2}+\sum_{j=0}^{k-2}\frac{|U_{j}(s^{*})|^{2}}{\delta^{2}}=4.
\end{equation}
We then compute from (\ref{3.5:8}), (\ref{3.5:9}) the strict outgoing condition defined by (\ref{3.5:11}) at the exit time $ s^{*} $:
\begin{equation*}
	\begin{aligned}
		\frac{1}{2}\frac{d}{ds}\bigg[W^{2}_{k-1}+\sum_{j=0}^{k-2}\frac{U^{2}_{j}}{\delta^{2}}\bigg]\bigg|_{s=s^{*}}=&W_{k-1}(W_{k-1})_{s}+\sum_{j=0}^{k-2}\frac{U_{j}(U_{j})_{s}}{\delta^{2}}\bigg|_{s=s^{*}}\\
		=&W_{k-1}\bigg(\frac{\mu^{k}_{1}}{s}W_{k-1}+O(\delta s^{-1}+s^{-1-\gamma_{k}})\bigg)\\
  &+\sum_{j=0}^{k-2}\frac{U_{j}\bigg(\bigg(\alpha_{k}+1-\frac{j}{k}\bigg)\frac{1}{s}U_{j}+O(s^{-1-\gamma_{k}})\bigg)}{\delta^{2}}\Bigg|_{s=s^{*}}\\
		\geq&\frac{C^{*}}{s^{*}}+O\bigg(\frac{\delta}{s^{*}}+\frac{1}{{s^{*}}^{(1+\gamma_{k})}}+\frac{1}{\delta}\frac{1}{{s^{*}}^{(1+\gamma_{k})}}
		\bigg),
	\end{aligned}
	\end{equation*}
here $ C^{*}>0 $  is a fixed constant. Now we firstly choose $ \delta>0 $ small enough and for such $ \delta>0 $ we choose $ s_{0}>0 $ large enough from which we  conclude that
\begin{equation*}
	\frac{1}{2}\frac{d}{ds}\bigg[W^{2}_{k-1}+\sum_{j=0}^{k-2}\frac{U^{2}_{j}}{\delta^{2}}\bigg]\bigg|_{s=s^{*}}>0.
\end{equation*}
This implies from a standard argument the continuity of the map
\begin{equation*}
	\bigg(W_{k-1}(s_{0}),\frac{U_{0}(s_{0})}{\delta},\cdot\cdot\cdot,\frac{U_{k-2}(s_{0})}{\delta}\bigg)\mapsto s^{*}\bigg(W_{k-1}(s_{0}),\frac{U_{0}(s_{0})}{\delta},\cdot\cdot\cdot,\frac{U_{k-2}(s_{0})}{\delta}\bigg)\in{\RR}
\end{equation*}
and hence the continuity of the map
\begin{equation*}
	\begin{aligned}
		B_{k}(2)&\rightarrow B_{k}(2)\\
		\bigg(W_{k-1},\frac{U_{0}}{\delta},\cdot\cdot\cdot,\frac{U_{k-2}}{\delta}\bigg)(s_{0})&\mapsto \bigg(W_{k-1},\frac{U_{0}}{\delta},\cdot\cdot\cdot,\frac{U_{k-2}}{\delta}\bigg)(s^{*}).
		\end{aligned}
\end{equation*}
Notice that this continuous map sends $B_{k}(2)$ onto its boundary and is the identity when restricted to the boundary $ S_{k-1}(2) $ which is a contradiction to the Brouwer's fixed point theorem. This concludes the proof for $ k\geq 1 $.
\begin{remark}
	Note that the initial data $ (W_{k-1}(s_{0}),U_{0}(s_{0}),\cdot\cdot\cdot,U_{k-2}(s_{0})) $ are by construction lying on a nonlinear manifold of codimension $k$ . The fact that such initial data forms a Lipschitz manifold in the $ H^{2} $ topology reduces to a local uniqueness of the flow $ \varepsilon(0)\mapsto (W_{k-1}(\varepsilon(0)),U_{0}(\varepsilon(0)),\cdot\cdot\cdot,U_{k-2}(\varepsilon(0))) $. Such a uniqueness problem has already been solved in other more complicated equations(see for example \cite{C} for the wave equation and \cite{JW},\cite{MRP} for the Sch${\rm{\ddot{o}}}$rdinger equation), and completely analogous approach can be applied here. We therefore omit the details.
\end{remark}
{\it\bf Step 3: Global $ H^{2} $ control.} From the proof of Step 1 and Step 2, one  sees that the solution remains in the bootstrap regime as long as it exists in $ H^{2} $, hence in order to show that $ s^{*}=+\infty $, we only need to prove that for any $  s\geq s_{0} $,
\begin{equation*}
	\lambda(s)>0,
\end{equation*}
and
\begin{equation*}
	\lVert u(s)\rVert_{L^{2}(|x|\geq\lambda(s))}+\lVert\nabla u(s)\rVert_{L^{2}(|x|\geq\lambda(s))}+\lVert \Delta u(s)\rVert_{L^{2}(|x|\geq\lambda(s))}<+\infty.
\end{equation*}
For the positivity of $ \lambda(s) $, the case $ k=0 $ follows from the bound (\ref{3.5:3}), while the case $ k\geq 1 $ follows from that $ a=-\frac{\lambda_{s}}{\lambda} $ and (\ref{le3.6:1.3}). The global $ L^{2} $-bound follows from the dissipation law satisfied by the solutions of (\ref{eq:1}):
\begin{equation*}
	\frac{1}{2}\frac{d}{dt}\lVert u\rVert^{2}_{L^{2}(|x|\geq\lambda(t))}+\lVert\nabla u\rVert^{2}_{L^{2}(|x|\geq\lambda(t))}=0,
\end{equation*}
which yields 
\begin{equation*}
 \lVert u(s)\rVert_{L^{2}(|x|\geq\lambda(s))}\leq\lVert u(s_{0})\rVert_{L^{2}(|x|\geq\lambda(s_{0}))}<+\infty, \quad \forall\ s\in[s_{0},+\infty).
\end{equation*} 
The global $ \dot{H}^{1} $-bound follows from the energy identity of (\ref{eq:2})
\begin{equation*}
	\frac{1}{2}\frac{d}{dt}\lVert \nabla u\rVert^{2}_{L^{2}(|x|\geq \lambda(t))}+\lVert\Delta u\rVert^{2}_{(L^{2}(|x|\geq\lambda(t)))}=2\pi(\lambda(t))^{2}(\dot{\lambda}(t))^{3},
\end{equation*}
which together with $ \dot{\lambda}<0 $ implies that
\begin{equation}\label{6.1}
	\lVert \nabla u(s)\rVert_{L^{2}(|x|\geq\lambda(s))}\leq\lVert \nabla u(s_{0})\rVert_{L^{2}(|x|\geq\lambda(s_{0}))}<+\infty, \quad \forall\ s\in[s_{0},+\infty).
\end{equation}
For the global $ \dot{H}^{2} $-bound, we take a cut-off function $ \chi\geq 0 $ with $ \chi\equiv 0 $ for $ r\leq 1 $ and $ \chi\equiv 1 $ for $ r\geq 2 $. Firstly, we claim that
\begin{equation*}
	\lVert \Delta u(s)\rVert_{L^{2}(\lambda(s)\leq r\leq2)}\leq C(s)<+\infty, \quad \forall\ s\geq s_{0}.
\end{equation*}
In fact, since 
\begin{equation*}
 u(t,r)=v(s,y),\ y=\frac{r}{\lambda(t)} ,
\end{equation*}
and the exponetial weight is uniformly bounded from below and above in $ \big\{1\leq y\leq\frac{2}{\lambda(t)}\big\} $, we have
\begin{equation*}
	\begin{aligned}
	\lVert \Delta u\rVert_{L^{2}\big(\lambda(t)\leq r\leq 2\big)}&=\frac{1}{\lambda^{\frac{1}{2}}(t)}\lVert\Delta v\rVert_{L^{2}\big(1\leq y\leq\frac{2}{\lambda(t)}\big)}\\
		&\lesssim \frac{1}{\lambda^{\frac{1}{2}}(t)}\bigg(\lVert\Delta Q_{\beta}\rVert_{L^{2}\big(1\leq y\leq\frac{2}{\lambda(t)}\big)}+\lVert\Delta\varepsilon\rVert_{L^{2}\big(1\leq y\leq\frac{2}{\lambda(t)}\big)}\bigg)\\
		&<C(s)<+\infty \quad\quad \forall\ s\geq s_{0}.
		\end{aligned}
\end{equation*}
Then for the bound in the regime $ r\geq2 $, we compute
\begin{equation}\label{6.2}
	\begin{aligned}
		\frac{1}{2}\frac{d}{ds}\int \chi|\Delta u|^{2}&=\bigg(\frac{1}{2}\frac{d}{dt}\int \chi|\Delta u|^{2}\bigg)\frac{dt}{ds}=\lambda^{2}\int \chi\Delta\partial_{t}u\Delta u=\lambda^{2}\int \chi\Delta^{2}u\Delta u\\
		&=\lambda^{2}\bigg(-\int \chi|\nabla\Delta u|^{2}+\frac{1}{2}\int \Delta\chi|\Delta u|^{2}\bigg)\leq C\lambda^{2}\lVert \Delta u\rVert^{2}_{L^{2}(\lambda(s)\leq|x|\leq 2)}.
	\end{aligned}
\end{equation}
Hence integrating in time the above inequality, we arrive at
\begin{equation*}
	\int \chi|\Delta u|^{2}(s)\leq \int \chi|\Delta u(s_{0})|^{2}+C\int_{s_{0}}^{s}\lambda^{2}\lVert\Delta u\rVert^{2}_{L^{2}(\lambda(\sigma)\leq |x|\leq 2)}d\sigma<+\infty, 
\end{equation*}
for each $s\in\left[s_{0},+\infty\right)$. This concludes the proof of Proposition \ref{pr3.3}.
\subsection{Proof of Theorem \ref{th1}}
We are now in a position to conclude the proof of Theorem \ref{th1}.\\ 
{\it\bf Step 1: Finite time melting.} We claim that the solution  constructed before melts in finite time and then compute the related melting rates. The proof relies on the reintegration of the modulation equations. We distinguish the case $ k=0 $ and $ k\geq 1 $ in the following discussion.\\
	\textbf{Case {\boldmath{$ k=0 $}}.} From (\ref{3.42}), we have
	\begin{equation}\label{3.6:1}
		b_{s}+\sqrt{\frac{2}{\pi}}b^{\frac{5}{2}}=O(b^{3}).
		\end{equation}
	Since from Proposition \ref{pr3.3} $ b>0 $, we multiply by $ b^{-\frac{5}{2}} $ on both sides of (\ref{3.6:1}) and obtain
	\begin{equation*}
		\frac{b_{s}}{b^{\frac{5}{2}}}=-\sqrt{\frac{2}{\pi}}+O(\sqrt{b}).
		\end{equation*}
	Integrating with respect to $ s $ on both sides yields
	\begin{equation}\label{3.6:2}
		b(s)=\bigg(b^{-\frac{3}{2}}(s_{0})+\frac{3}{2}\sqrt{\frac{2}{\pi}}(s-s_{0})+O\bigg(\int_{s_{0}}^{s}\sqrt{b}d\tau\bigg)\bigg)^{-\frac{2}{3}}.
	\end{equation}
From Proposition \ref{pr3.3} again, since $ 0<b(s)<b^{*} $ small enough, we get
\begin{equation}\label{6.6}
	b(s)\sim s^{-\frac{2}{3}}.
\end{equation}
Then injecting into (\ref{3.6:2}), one has
\begin{equation*}
	\bigg|b(s)-\bigg(\frac{3}{2}\sqrt{\frac{2}{\pi}}s\bigg)^{-\frac{2}{3}}\bigg|\lesssim \frac{b^{-\frac{3}{2}}(s_{0})+O\big(\int_{s_{0}}^{s}\sqrt{b}d\tau \big)}{s^{\frac{5}{3}}}\lesssim s^{-1}.
\end{equation*}
Recall also that
\begin{equation}\label{3.6:5}
	-\frac{\lambda_{s}}{\lambda}=a=b+O(b^{\frac{3}{2}})=\bigg(\frac{3}{2}\sqrt{\frac{2}{\pi}}s\bigg)^{-\frac{2}{3}}+O(s^{-1}),
\end{equation}
which implies
\begin{equation}\label{3.6:6}
	-\log \lambda(s)=3 \bigg(\frac{3}{2}\sqrt{\frac{2}{\pi}}\bigg)^{-\frac{2}{3}}s^{\frac{1}{3}}+O(\log s).
\end{equation}
Hence
\begin{equation*}
	\lambda(s)=e^{-3 (\frac{3}{2}\sqrt{\frac{2}{\pi}})^{-\frac{2}{3}}s^{\frac{1}{3}}+O(\log s)},
\end{equation*}
from which we deduce that
\begin{equation*}
	T=\int_{s_{0}}^{+\infty}\lambda^{2}(s)ds<+\infty.
\end{equation*}
We now compute the melting rate, from (\ref{3.6:6}) again, we have
\begin{equation}\label{6.7}
	\bigg|s^{-\frac{1}{3}}-3\frac{\big(\frac{3}{2}\sqrt{\frac{2}{\pi}}\big)^{-\frac{2}{3}}}{\log \frac{1}{\lambda}}\bigg|\lesssim \frac{\log s}{s^{\frac{2}{3}}}\lesssim \frac{\log |\log \lambda|}{|\log \lambda|^{2}}.
\end{equation}
    Next, injecting into (\ref{3.6:5}) yields
\begin{equation*}
	\begin{aligned}
		-\lambda\lambda_{t}=-\frac{\lambda_{s}}{\lambda}&=\bigg(\frac{3}{2}\sqrt{\frac{2}{\pi}}s\bigg)^{-\frac{2}{3}}+O(s^{-1})\\
		&=9\frac{\big(\frac{3}{2}\sqrt{\frac{2}{\pi}}\big)^{-2}}{(\log \lambda)^{2}}+O\bigg(\frac{1}{|\log \lambda|^{3}}+\frac{\log|\log\lambda|}{|\log\lambda|^{3}}\bigg),
	\end{aligned}
\end{equation*}
and thus
\begin{equation*}
	-(\lambda^{2})_{t}(\log \lambda^{2})^{2}=16\pi+O\bigg(\frac{\log|\log\lambda^{2}|}{|\log \lambda^{2}|}\bigg).
\end{equation*}
Integrating from $ t $ to $ T $ with $ \lambda(T)=0 $ yields
\begin{equation*}
	\lambda^{2}(t)=16\pi\frac{T-t}{|\log (T-t)|^{2}}[1+o_{t\rightarrow T}(1)].
\end{equation*}
\textbf{Case {\boldmath{$ k\geq 1 $}}.} From (\ref{3.9}), (\ref{3.15}), (\ref{le3.6:1.3})  and the bootstrap bounds (\ref{3.23}), (\ref{3.24}), (\ref{3.25}), we have
\begin{equation*}
	-\frac{\lambda_{s}}{\lambda}=a=a^{e}-\sum_{j=0}^{k}\frac{C_{j}}{\sqrt{b}}\tilde{b}_{j}+O\bigg(b^{\frac{3}{2}}+\frac{\sqrt{E}}{b^{\frac{1}{4}}}\bigg)=\frac{k+1}{2ks}+O\bigg(\frac{1}{s^{1+\alpha_{k}}}\bigg).
\end{equation*}
Integrating in  time  and there exists a constant $ c(u_{0}) $ such that
\begin{equation*}
	-\log \lambda(s)=\frac{k+1}{2k}\log s+c(u_{0})+o_{s\rightarrow+\infty}(1),
\end{equation*}
which is equivalent to 
\begin{equation}\label{3.6:7}
	\lambda(s)=c^{*}(u_{0})(1+o_{s\rightarrow+\infty}(1))s^{-\frac{k+1}{2k}}.
	\end{equation}
Hence
\begin{equation*}
	T=\int_{s_{0}}^{+\infty}\lambda^{2}(\sigma)d\sigma<+\infty.
\end{equation*}
Moreover, we have
\begin{equation*}
	\begin{aligned}
	T-t=\int_{s}^{+\infty}\lambda^{2}(\sigma)d\sigma&=\int_{s}^{+\infty}{c^{*}(u_{0})}^{2}(1+o_{\sigma\rightarrow+\infty}(1))^{2}\sigma^{-\frac{k+1}{k}}d\sigma\\
	&={c^{*}(u_{0})}^{2}ks^{-\frac{1}{k}}(1+o_{s\rightarrow+\infty}(1)),
	\end{aligned}
\end{equation*}
from which we deduce
\begin{equation}\label{6.11}
	\frac{1}{s}=\bigg[\frac{(1+o_{t\rightarrow T}(1))(T-t)}{{c^{*}(u_{0})}^{2}k}\bigg]^{k}.
\end{equation}
Then injecting into (\ref{3.6:7}), we obtain
\begin{equation*}
	\lambda(t)=c(u_{0},k)(1+o_{t\rightarrow T}(1))(T-t)^{\frac{k+1}{2}}.
	\end{equation*}
{\it\bf Step 2: Non-concentration of energy.} Take $R>0$ and a cut-off function
	\begin{equation*}
	\chi_{R}(x)=\chi\left(\frac{x}{R}\right)=
	\left\{\begin{aligned}
		&0 \qquad {\rm{for}}\ x\leq R,\\
		&1 \qquad {\rm{for}}\ x\geq 2R.
	\end{aligned}\right.
\end{equation*}
Then from the equation (\ref{eq:2}) and integrating by parts, we have
\begin{equation}\label{6.3}
\begin{aligned}
\frac{1}{2}\frac{d}{dt}\int \chi_{R}|\nabla u|^{2}dx&=\int \chi_{R}\nabla u\cdot\nabla u_{t}dx=\int \chi_{R}\nabla u\cdot\nabla\Delta u dx\\
&=-\int (\nabla \chi_{R}\cdot\nabla u)\Delta u dx-\int \chi_{R}|\Delta u|^{2}dx\\
&=-\int \chi_{R}|\Delta u|^{2}dx+\frac{1}{2}\int |\nabla u|^{2}r\frac{\partial}{\partial r}\left(\frac{\chi^{'}_{R}}{r}\right)dx,
\end{aligned}
\end{equation}
which combines with (\ref{6.1}) yields
\begin{equation}\label{6.4}
\int_{0}^{T}\left(\int \chi_{R}|\Delta u|^{2}dx\right)dt<+\infty,\quad \forall R>0.
\end{equation}
Hence for any $ 0<\tau<T-t$,
\begin{equation*}
\begin{aligned}
\int \chi_{R}|\nabla u(t+\tau)-\nabla u(t)|^{2}dx&=\int \chi_{R}\left|\int_{t}^{t+\tau}(\partial_{t}\nabla u)(\sigma,x)d\sigma\right|^{2}dx\\
&\leq \int \chi_{R}\left(\tau \int_{t}^{t+\tau}|\partial_{\sigma}\nabla u|^{2}d\sigma\right)dx\\
&\leq \tau \int_{0}^{T}\left(\int \chi_{R}|\partial_{t}\nabla u|^{2}dx\right)dt\\
&=\tau \int_{0}^{T}\left(\int \chi_{R}|\Delta\nabla u|^{2}dx\right)dt\\
&\leq C(R)\tau,
\end{aligned}
\end{equation*}
where the last line follows by integrating (\ref{6.2}) with $\chi=\chi_{R}$ and then using (\ref{6.4}). As a result, for any fixed $R>0$, $\nabla u(t,x)$ is a Cauchy sequence in $L^{2}(|x|\geq 2R)$ as $t\rightarrow T$ and there exists $u^{*}$ such that
\begin{equation}\label{6.5}
    \forall\ R>0,\quad \nabla u(t)\rightarrow\nabla u^{*}\quad {\rm{in}}\ L^{2}(|x|\geq 2R)\ {\rm{as}}\ t\rightarrow T.
\end{equation}
Moreover,  the uniform bound of $\lVert\nabla u(t)\rVert_{L^{2}}$ (see (\ref{6.1})) ensures that
\begin{equation}\label{6.8}
    u^{*}\in \dot{H}^{1}\quad {\rm{and}}\quad \nabla u(t)\rightharpoonup \nabla u^{*}\quad L^{2}(|x|\geq \lambda(t))\ {\rm{as}}\ t\rightarrow T.
\end{equation}
Pick now
\begin{equation}\label{6.9}
    R(t)=\lambda(t)B(t),
\end{equation}
where
\begin{equation}\label{6.10}
B(t)=\sqrt{\frac{|\log b|}{2b}}.
\end{equation}
Since $0<b\ll 1$, we have $B(t)\gg 1$. Then from (\ref{6.3}),
\begin{equation*}
    \left|\frac{d}{d\tau}\int \chi_{R(t)}|\nabla u(\tau)|^{2}dx\right|\lesssim\frac{1}{R^{2}(t)}\int |\nabla u(\tau)|^{2}dx+\int \chi_{R(t)}|\Delta u(\tau)|^{2}dx.
\end{equation*}
Integrating the above inequality over $[t,T)$ and combining with (\ref{6.1}), (\ref{6.5}) gives
\begin{equation}\label{6.12}
    \left|\int \chi_{R(t)}|\nabla u(t)|^{2}dx-\int \chi_{R(t)}|\nabla u^{*}|^{2}dx\right|\lesssim \frac{T-t}{R^{2}(t)}+\int_{t}^{T}\int_{|x|\geq \lambda(t)}|\Delta u(\tau)|^{2}dxd\tau.
\end{equation}
Next, we claim that
\begin{equation*}
    \frac{T-t}{R^{2}(t)}\rightarrow 0\quad {\rm{as}}\ t\rightarrow T.
\end{equation*}
For $k=0$, from (\ref{6.6}) and (\ref{6.7}), we get
\begin{equation*}
    b\sim\frac{1}{|\log \lambda|^{2}}.
    \end{equation*}
    Then combining with (\ref{th1.1}), (\ref{6.9}) and (\ref{6.10}), we arrive at
    \begin{equation*}
        \begin{aligned}
        \frac{T-t}{R^{2}(t)}=\frac{T-t}{\lambda^{2}(t)}\frac{2b}{|\log b|}&\sim \frac{|\log (T-t)|^{2}}{\log|\log \lambda|}\frac{2}{|\log \lambda|^{2}}\\
        &\sim\frac{|\log (T-t)|^{2}}{\log|\log \lambda|}\frac{1}{\left|\log \frac{T-t}{|\log (T-t)|^{2}}\right|^{2}}\\
        &\lesssim \frac{1}{\log|\log \lambda|}\rightarrow 0 \quad {\rm{as}}\ t\rightarrow T.
        \end{aligned}
    \end{equation*}
    For $k\geq 1$, from (\ref{th1.2}),  (\ref{3.9}), (\ref{3.6:7}), (\ref{6.11}), (\ref{6.9}) and (\ref{6.10}), one has
    \begin{equation*}
        \begin{aligned}
            \frac{T-t}{R^{2}(t)}=\frac{T-t}{\lambda^{2}(t)}\frac{2b}{|\log b|}\sim \frac{(T-t)(T-t)^{k}}{(T-t)^{k+1}|\log b|}\lesssim \frac{1}{|\log b|}\rightarrow 0\quad {\rm{as}}\ t\rightarrow T.
        \end{aligned}
    \end{equation*}
    In the above argument, one can also infer that
    \begin{equation*}
        R(t)\rightarrow 0\quad {\rm{as}}\ t\rightarrow T.
    \end{equation*}
    Hence, setting $t\rightarrow T$ on both sides of (\ref{6.12}), we conclude that
    \begin{equation}\label{6.13}
        \lim_{t\rightarrow T}\int \chi_{R(t)}|\nabla u(t)|^{2}dx=\int |\nabla u^{*}|^{2}.
    \end{equation}
    Finally, we claim that
    \begin{equation}\label{6.14}
        \lim_{t\rightarrow T}\int_{|x|\geq\lambda(t)}(1-\chi_{R(t)})|\nabla u(t)|^{2}dx=0,
    \end{equation}
    which together with (\ref{6.13}) and (\ref{6.8}) yields (\ref{th1.3}). For $k=0$, from (\ref{pr2.4:2}), (\ref{pr2.4:3}), (\ref{3.5}), (\ref{3.21}), (\ref{leCH_{b}:14}), (\ref{6.9}) and (\ref{6.10}), we obtain
    \begin{align*}
        \int_{|x|\geq \lambda(t)}(1-\chi_{R(t)})|\nabla u(t)|^{2}dx&\leq \int_{\lambda(t)\leq |x|\leq 2R(t)}|\nabla u(t)|^{2}dx\\
        &=\lambda(t)\int_{1\leq |y|\leq 2B(t)}|\nabla v(t,y)|^{2}dy\\
        &\lesssim \lambda(t)\int_{1\leq y\leq 2B(t)}(b^{3}|\partial_{y}\eta_{b,0}|^{2}y^{2}+|\partial_{y}\varepsilon|^{2}y^{2})dy\\
        &\lesssim \lambda(t)e^{2bB^{2}}\left(b^{3}\lVert\partial_{y}\eta_{b,0}\rVert_{b}^{2}+\lVert\partial_{y}\varepsilon\rVert_{b}^{2}\right)\\
        &\lesssim \lambda(t)\frac{1}{b(t)}\left(b(t)^{3}\frac{1}{b(t)}+b^{3}(t)\right)\\
        &\lesssim \lambda(t)b(t)\rightarrow 0\quad {\rm{as}}\ t\rightarrow T,
    \end{align*}
    and for $k\geq 1$, similar computation gives
    \begin{equation*}
        \int_{|x|\geq \lambda(t)}(1-\chi_{R(t)})|\nabla u(t)|^{2}dx\lesssim b^{\frac{1}{2}}(t)\lambda(t)\rightarrow 0\quad {\rm{as}}\ t\rightarrow T,
    \end{equation*}
then (\ref{6.14}) is proved. This concludes the proof of Theorem \ref{th1}.
\section*{Acknowledgments}
The author is grateful to Prof. Lifeng Zhao for the constant support and many helpful discussions.  C. Zhang is supported by NSFC Grant of China No. 12271497 and No. 12341102.
    \appendix
 \section{Coercivity estimates}
 We now state the main coercivity property at the heart of the energy estimate. The proof is a perturbation consequence of the harmonic oscillator estimates (\ref{eq:12}) and (\ref{pr2.3.7}) which is similar to that in (\cite{MP})
 \begin{lemma}[Coercivity of $H_{b}$]\label{leCH_{b}}
 	Let $ k\in\NN $ and $ 0<b<b^{*}(k) $ small enough. Suppose $ u\in H^{3}_{\rho,b} $ satisfy
 	\begin{equation*}
 		u(\sqrt{b})=0, \quad \langle u,\psi_{b,j}\rangle_{b}=0, \quad \forall\ 0\leq{j}\leq{k}
 	\end{equation*}
 Then
 \begin{equation}\label{leCH_{b}:1}
 	 \lVert H_{b}u\rVert^{2}_{L^{2}_{\rho,b}}\gtrsim \lVert \Delta u\rVert^{2}_{L^{2}_{\rho,b}}+\lVert (1+z)\partial_{z}u\rVert^{2}_{L^{2}_{\rho,b}}+\lVert(1+z)u\rVert^{2}_{L^{2}_{\rho,b}}+b|\partial_{z}u(\sqrt{b})|^{2}.
 \end{equation}
Moreover, there exists a constant $ c_{k}>0 $ such that
\begin{equation}\label{leCH_{b}:2}
	\lVert\partial_{z}H_{b}u(z)\rVert^{2}_{L^{2}_{\rho,b}}\geq(2k+2+O(\sqrt{b}))\lVert H_{b}u(z)\rVert^{2}_{L^{2}_{\rho,b}}-c_{k}b^{\frac{5}{2}}|H_{b}u(\sqrt{b})|^{2},
	\end{equation}
and
\begin{equation}\label{leCH_{b}:3}
	\lVert zH_{b}u(z)\rVert^{2}_{L^{2}_{\rho,b}}\lesssim\lVert\partial_{z}H_{b}u(z)\rVert^{2}_{L^{2}_{\rho,b}}+b^{\frac{3}{2}}|H_{b}u(\sqrt{b})|^{2}.
\end{equation}
 \end{lemma}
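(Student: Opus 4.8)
The plan is to derive all three estimates as perturbations of the harmonic oscillator bounds (\ref{eq:12}) and (\ref{pr2.3.7}), in the spirit of \cite{MP}, the whole point being to control the boundary contributions at $z=\sqrt b$. Throughout set $g:=H_{b}u$, which lies in $L^{2}_{\rho,b}$ with $\partial_{z}g\in L^{2}_{\rho,b}$ by the hypothesis $u\in H^{3}_{\rho,b}$, but which in general satisfies $g(\sqrt b)\neq0$. For (\ref{leCH_{b}:1}) I would first test $H_{b}u=g$ against $u$, against $\Lambda u$, and against $\tfrac1z$, integrating by parts and reusing verbatim the identities already established in the proof of Lemma \ref{le:1}: this yields $\lVert\partial_{z}u\rVert_{L^{2}_{\rho,b}}^{2}=\langle g,u\rangle_{b}$, an expression for $\langle g,\Lambda u\rangle_{b}$ in terms of $\lVert\partial_{z}u\rVert_{L^{2}_{\rho,b}}^{2}$, $\lVert\Lambda u\rVert_{L^{2}_{\rho,b}}^{2}$ and $b^{3/2}|\partial_{z}u(\sqrt b)|^{2}$, and the relation $\langle g,\tfrac1z\rangle_{b}=e^{-b/2}\sqrt b\,\partial_{z}u(\sqrt b)-\int_{\sqrt b}^{+\infty}\rho z u\,dz$. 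Combining the first identity with the spectral gap (\ref{pr2.3.7}) applied to $u$ (legitimate since $u\in H^{1}_{\rho,b}$, $u(\sqrt b)=0$, $u\perp\psi_{b,j}$) gives $\lVert u\rVert_{L^{2}_{\rho,b}}\lesssim\lVert g\rVert_{L^{2}_{\rho,b}}$, hence $\lVert\partial_{z}u\rVert_{L^{2}_{\rho,b}}\lesssim\lVert g\rVert_{L^{2}_{\rho,b}}$; the second identity then gives $\lVert\Lambda u\rVert_{L^{2}_{\rho,b}}\lesssim\lVert g\rVert_{L^{2}_{\rho,b}}$ and, via $\Delta u=\Lambda u-g$, also $\lVert\Delta u\rVert_{L^{2}_{\rho,b}}\lesssim\lVert g\rVert_{L^{2}_{\rho,b}}$; the third gives $\sqrt b\,|\partial_{z}u(\sqrt b)|\lesssim\lVert g\rVert_{L^{2}_{\rho,b}}$; and $\lVert zu\rVert_{L^{2}_{\rho,b}}\lesssim\lVert g\rVert_{L^{2}_{\rho,b}}$ follows from the weighted Poincaré inequality of Lemma \ref{le:2} applied to $u\,\textbf{1}_{z\geq\sqrt b}\in H^{1}_{\rho,0}$. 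Collecting these bounds yields (\ref{leCH_{b}:1}).

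The heart of the matter is (\ref{leCH_{b}:2}). Integrating by parts and using $u(\sqrt b)=\psi_{b,j}(\sqrt b)=0$ shows $\langle g,\psi_{b,j}\rangle_{b}=\lambda_{b,j}\langle u,\psi_{b,j}\rangle_{b}=0$ for $0\leq j\leq k$, so $g$ is orthogonal to the low eigenmodes; the only obstruction to invoking (\ref{pr2.3.7}) directly is that $g\notin H^{1}_{\rho,b}$. I would therefore extend $g$ by the \emph{constant} $g(\sqrt b)$ on $[0,\sqrt b]$, set $v:=g\,\textbf{1}_{\{z\geq\sqrt b\}}+g(\sqrt b)\,\textbf{1}_{\{0\leq z<\sqrt b\}}\in H^{1}_{\rho,0}$, remove its $P_{j}$-projections, $w:=v-\sum_{j=0}^{k}\frac{\langle v,P_{j}\rangle_{0}}{\langle P_{j},P_{j}\rangle_{0}}P_{j}$, and apply the harmonic oscillator gap (\ref{eq:12}) to $w$. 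Here $\langle v,P_{j}\rangle_{0}=\langle g,P_{j}\rangle_{b}+g(\sqrt b)\int_{0}^{\sqrt b}P_{j}\rho z^{2}\,dz=\langle g,P_{j}\rangle_{b}+O(b^{3/2})\,|g(\sqrt b)|$, while integration by parts together with $|\langle u,P_{j}\rangle_{b}|\lesssim\sqrt b\,\lVert u\rVert_{L^{2}_{\rho,b}}$ (a consequence of $u\perp\psi_{b,j}$, exactly as in (\ref{pr2.3:23})) and the bound $b|\partial_{z}u(\sqrt b)|\lesssim\sqrt b\,\lVert g\rVert_{L^{2}_{\rho,b}}$ from (\ref{leCH_{b}:1}) give $|\langle g,P_{j}\rangle_{b}|\lesssim\sqrt b\,\lVert g\rVert_{L^{2}_{\rho,b}}$. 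Expanding the inequality for $w$, using $\lVert v\rVert_{L^{2}_{\rho,0}}^{2}=\lVert g\rVert_{L^{2}_{\rho,b}}^{2}+O(b^{3/2})g(\sqrt b)^{2}$ and $\partial_{z}v=\partial_{z}g\,\textbf{1}_{\{z\geq\sqrt b\}}$, the leading terms reproduce $\lVert\partial_{z}g\rVert_{L^{2}_{\rho,b}}^{2}\geq(2k+2)\lVert g\rVert_{L^{2}_{\rho,b}}^{2}$, the cross terms are absorbed into an $O(\sqrt b)\lVert g\rVert_{L^{2}_{\rho,b}}^{2}$ error, and the residual error $O(b^{3/2})|g(\sqrt b)|\,\lVert g\rVert_{L^{2}_{\rho,b}}$ is split by Young's inequality \emph{with weight $b^{-1}$} as $O(\sqrt b)\lVert g\rVert_{L^{2}_{\rho,b}}^{2}+O(b^{5/2})g(\sqrt b)^{2}$, which is precisely (\ref{leCH_{b}:2}). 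The main obstacle is obtaining the sharp exponent $b^{5/2}$: this is what forces the constant extension on the thin interval $[0,\sqrt b]$ (whose measure contributes only $\int_{0}^{\sqrt b}\rho z^{2}\,dz\sim b^{3/2}$ and on which $\partial_{z}v$ vanishes), rather than a cruder correction such as subtracting $\sqrt b\,g(\sqrt b)/z$ which would lose a factor of order $b^{1/2}$; every intermediate error term must then be checked to be compatible with this gain.

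Finally (\ref{leCH_{b}:3}) follows from a weighted Poincaré identity applied directly to $g$ on $[\sqrt b,+\infty)$. Writing $z^{4}\rho=-z^{3}\partial_{z}\rho$ and integrating by parts,
\begin{equation*}
\lVert zg\rVert_{L^{2}_{\rho,b}}^{2}=b^{3/2}e^{-b/2}g(\sqrt b)^{2}+3\lVert g\rVert_{L^{2}_{\rho,b}}^{2}+2\int_{\sqrt b}^{+\infty}z^{3}g\,\partial_{z}g\,\rho\,dz,
\end{equation*}
and Young's inequality on the last term absorbs a fixed fraction of $\lVert zg\rVert_{L^{2}_{\rho,b}}^{2}$ while leaving behind a multiple of $\lVert\partial_{z}g\rVert_{L^{2}_{\rho,b}}^{2}$. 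This yields $\lVert zg\rVert_{L^{2}_{\rho,b}}^{2}\lesssim\lVert g\rVert_{L^{2}_{\rho,b}}^{2}+\lVert\partial_{z}g\rVert_{L^{2}_{\rho,b}}^{2}+b^{3/2}g(\sqrt b)^{2}$, and bounding $\lVert g\rVert_{L^{2}_{\rho,b}}^{2}\lesssim\lVert\partial_{z}g\rVert_{L^{2}_{\rho,b}}^{2}+b^{5/2}g(\sqrt b)^{2}$ by (\ref{leCH_{b}:2}), together with $b^{5/2}\lesssim b^{3/2}$, concludes (\ref{leCH_{b}:3}). I expect the only genuinely delicate step to be the bookkeeping of constants in the proof of (\ref{leCH_{b}:2}); (\ref{leCH_{b}:1}) and (\ref{leCH_{b}:3}) are routine integration by parts once (\ref{leCH_{b}:2}) is available.
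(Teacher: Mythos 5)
Your proposal is correct and follows essentially the same route as the paper: part one rests on the same integration-by-parts identities from Lemma \ref{le:1} combined with the spectral gap (\ref{pr2.3.7}) (the paper packages this via $\lVert(H_{b}-\delta)u\rVert^{2}$ rather than pairing directly with $u$, $\Lambda u$, $1/z$, but the content is identical), and the key estimate (\ref{leCH_{b}:2}) is obtained exactly as in the paper, by extending $H_{b}u$ by the constant $H_{b}u(\sqrt b)$ on $[0,\sqrt b]$, removing the $P_{j}$-projections, and invoking (\ref{eq:12}), with $\langle H_{b}u,P_{j}\rangle_{b}=O(\sqrt b\,\lVert H_{b}u\rVert_{L^{2}_{\rho,b}})$ controlled through the orthogonality $\langle H_{b}u,\psi_{b,j}\rangle_{b}=0$. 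For (\ref{leCH_{b}:3}) you integrate by parts directly on $[\sqrt b,+\infty)$ keeping the boundary term, while the paper applies Lemma \ref{le:2} to the extended function; this is only a cosmetic difference.
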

\begin{proof}
	\textbf{Step 1:} \textbf{Proof of (\ref{leCH_{b}:1}).}
	Pick a small constant $ \delta>0 $ to be determined later, we compute
	\begin{equation*}
		\lVert (H_{b}-\delta)u\rVert^{2}_{L^{2}_{\rho,b}}=\langle(H_{b}-\delta)u,(H_{b}-\delta)u\rangle_{b}=\lVert H_{b}u\rVert^{2}_{L^{2}_{\rho,b}}-2\delta\langle H_{b}u,u\rangle_{b}+\delta^{2}\lVert u\rVert^{2}_{L^{2}_{\rho,b}},
		\end{equation*}
	hence from $ u(\sqrt{b})=0 $ and integrating by parts,
	\begin{equation}\label{leCH_{b}:4}
		\begin{aligned}
			\lVert H_{b}u\rVert^{2}_{L^{2}_{\rho,b}}&=\lVert (H_{b}-\delta)u\rVert^{2}_{L^{2}_{\rho,b}}+2\delta\langle H_{b}u,u\rangle_{b}-\delta^{2}\lVert u\rVert^{2}_{L^{2}_{\rho,b}}\\
			&=\lVert (H_{b}-\delta)u\rVert^{2}_{L^{2}_{\rho,b}}+2\delta\lVert\partial_{z}u\rVert^{2}_{L^{2}_{\rho,b}}-\delta^{2}\lVert u\rVert^{2}_{L^{2}_{\rho,b}}.
			\end{aligned}
	\end{equation}
By spectral gap estimate (\ref{pr2.3.7}), we have 
\begin{equation*}
	\lVert\partial_{z}u\rVert^{2}_{L^{2}_{\rho,b}}\geq(2k+2+O(\sqrt{b}))\lVert u\rVert^{2}_{L^{2}_{\rho,b}}.
\end{equation*}
Taking $ \delta>0 $ small enough and inserting into (\ref{leCH_{b}:4}) yields
\begin{equation*}
	\lVert H_{b}u\rVert^{2}_{L^{2}_{\rho,b}}\gtrsim \lVert(H_{b}-\delta)u\rVert^{2}_{L^{2}_{\rho,b}}+\lVert\partial_{z}u\rVert^{2}_{L^{2}_{\rho,b}}+\lVert u\rVert^{2}_{L^{2}_{\rho,b}}.
\end{equation*}
Moreover, from Lemma \ref{le:2}
\begin{equation}\label{leCH_{b}:5}
	\lVert H_{b}u\rVert^{2}_{L^{2}_{\rho,b}}\gtrsim \lVert(H_{b}-\delta)u\rVert^{2}_{L^{2}_{\rho,b}}+\lVert\partial_{z}u\rVert^{2}_{L^{2}_{\rho,b}}+\lVert (1+z)u\rVert^{2}_{L^{2}_{\rho,b}}.
\end{equation}
By direct computation,
\begin{equation}\label{leCH_{b}:6}
	\lVert H_{b}u\rVert^{2}_{L^{2}_{\rho,b}}=\langle (-\Delta+\Lambda)u,(-\Delta+\Lambda)u\rangle_{b}=\lVert \Delta u\rVert^{2}_{L^{2}_{\rho,b}}+\lVert\Lambda u\rVert^{2}_{L^{2}_{\rho,b}}-2\langle\Lambda u,\Delta u\rangle_{b},
\end{equation}
and note that
\begin{equation*}
	\begin{aligned}
	\langle \Lambda u,\Delta u\rangle_{b}&=\int_{\sqrt{b}}^{+\infty}\Lambda u\Delta u\rho z^{2}dz=\int_{\sqrt{b}}^{+\infty}z\partial_{z}u\bigg(\partial_{zz}u+\frac{2}{z}\partial_{z}u\bigg)\rho z^{2}dz\\
	&=2\lVert\partial_{z}u\rVert^{2}_{L^{2}_{\rho,b}}+\int_{\sqrt{b}}^{+\infty}\frac{1}{2}\partial_{z}((\partial_{z}u)^{2})\rho z^{3}dz\\
	&=2\lVert\partial_{z}u\rVert^{2}_{L^{2}_{\rho,b}}+\frac{1}{2}(\partial_{z}u)^{2}\rho z^{3}\bigg|^{+\infty}_{\sqrt{b}}-\int_{\sqrt{b}}^{+\infty}\bigg(\frac{3}{2}\rho z^{2}-\frac{1}{2}\rho z^{4}\bigg)(\partial_{z}u)^{2}dz\\
	&=\frac{1}{2}\lVert \partial_{z}u\rVert^{2}_{L^{2}_{\rho,b}}+\frac{1}{2}\lVert \Lambda u\rVert^{2}_{L^{2}_{\rho,b}}-\frac{1}{2}e^{-\frac{b}{2}}b^{\frac{3}{2}}(\partial_{z}u(\sqrt{b}))^{2}.
\end{aligned}
\end{equation*}
Then inserting into (\ref{leCH_{b}:6}) yields
\begin{equation}\label{leCH_{b}:7}
	\lVert H_{b}u\rVert^{2}_{L^{2}_{\rho,b}}=\lVert\Delta u\rVert^{2}_{L^{2}_{\rho,b}}-\lVert \partial_{z}u\rVert^{2}_{L^{2}_{\rho,b}}+e^{-\frac{b}{2}}b^{\frac{3}{2}}(\partial_{z}u(\sqrt{b}))^{2}.
	\end{equation}
As a result, from (\ref{leCH_{b}:5}) and (\ref{leCH_{b}:7})
\begin{equation*}
	\lVert \Delta u\rVert^{2}_{L^{2}_{\rho,b}}\lesssim\lVert H_{b}u\rVert^{2}_{L^{2}_{\rho,b}},
\end{equation*}
and from (\ref{leCH_{b}:6}) together with Cauchy-Schwartz inequality, we can also infer that
\begin{equation*}
	\lVert\Lambda u\rVert^{2}_{L^{2}_{\rho,b}}\lesssim\lVert H_{b}u\rVert^{2}_{L^{2}_{\rho,b}}.
\end{equation*}
To estimate the term $ b|\partial_{z}u(\sqrt{b})|^{2} $,  firstly recall (\ref{le1.14}) that
\begin{equation*}
	\langle H_{b}u,\frac{1}{z}\rangle_{b}=e^{-\frac{b}{2}}\sqrt{b}\partial_{z}u(\sqrt{b})-\int_{\sqrt{b}}^{+\infty}\rho zudz.
\end{equation*}
Hence
\begin{equation*}
	\begin{aligned}
		\sqrt{b}|\partial_{z}u(\sqrt{b})|&\lesssim \lVert u\rVert_{L^{2}_{\rho,b}}+\bigg|\langle H_{b}u,\frac{1}{z}\rangle_{b}\bigg|\\
		&\lesssim\lVert u\rVert_{L^{2}_{\rho,b}}+\lVert H_{b}u\rVert_{L^{2}_{\rho,b}}\lesssim\lVert H_{b}u\rVert_{L^{2}_{\rho,b}}.
	\end{aligned}
\end{equation*}
where  (\ref{leCH_{b}:5}) has been used. Now (\ref{leCH_{b}:1}) follows.\\
\textbf{Step 2:} \textbf{Proof of (\ref{leCH_{b}:2}) and (\ref{leCH_{b}:3}).}
Firstly, define the radial function
\begin{equation}\label{leCH_{b}:9}
	v(z)=\left\{\begin{aligned}
	&H_{b}u(\sqrt{b})\quad 0\leq{z}<\sqrt{b},\\
	&H_{b}u(z)\quad z\geq\sqrt{b},
	\end{aligned}\right.
\end{equation}
and note that $ v\in H^{1}_{\rho,0} $. Consider the function
\begin{equation*}
	\omega=v-\sum_{j=0}^{k}\frac{\langle v,P_{j}\rangle_{0}}{\langle P_{j},P_{j}\rangle_{0}}P_{j}.
\end{equation*}
Then $ \langle\omega,P_{j}\rangle_{0}=0,\quad \forall\ 0\leq{j}\leq{k} $  and by spectral gap estimate (\ref{eq:12}), we deduce 
\begin{equation}\label{leCH_{b}:10}
	\lVert \partial_{z}w\rVert^{2}_{L^{2}_{\rho,0}}\geq(2k+2)\lVert \omega\rVert^{2}_{L^{2}_{\rho,0}}\geq(2k+2)\int_{\sqrt{b}}^{+\infty}|\omega|^{2}e^{-\frac{z^{2}}{2}}z^{2}dz,
\end{equation}
Note also that 
\begin{equation*}
	\begin{aligned}
		|\langle v,P_{j}\rangle_{0}|&=\bigg|\bigg(\int_{0}^{\sqrt{b}}P_{j}e^{-\frac{z^{2}}{2}}z^{2}dz\bigg)H_{b}u(\sqrt{b})+\langle H_{b}u,P_{j}\rangle_{b}\bigg|\\
		&\lesssim b^{\frac{3}{2}}|H_{b}u(\sqrt{b})|+|\langle H_{b}u, P_{j}+\sqrt{b}\psi_{b,j}-\sqrt{b}\psi_{b,j}\rangle_{b}|\\
		&=b^{\frac{3}{2}}|H_{b}u(\sqrt{b})|+|\langle H_{b}u, P_{j}+\sqrt{b}\psi_{b,j}\rangle_{b}|\\
		&\leq b^{\frac{3}{2}}|H_{b}u(\sqrt{b})|+\lVert H_{b}u\rVert_{L^{2}_{\rho,b}}\lVert P_{j}+\sqrt{b}\psi_{b,j}\rVert_{L^{2}_{\rho,b}}\\
		&\lesssim b^{\frac{3}{2}}|H_{b}u(\sqrt{b})|+\sqrt{b}\lVert H_{b}u\rVert_{L^{2}_{\rho,b}},
	\end{aligned}
\end{equation*}
where in the last line we have used (\ref{pr2.3:48}). Hence, by direct computation,

\begin{align*}
\lVert\partial_{z}\omega\rVert^{2}_{L^{2}_{\rho,0}}=&\int_{0}^{+\infty}\left|\partial_{z}v-\sum_{j=0}^{k}\frac{\langle v,P_{j}\rangle_{0}}{\langle P_{j},P_{j}\rangle_{0}}\partial_{z}P_{j}\right|^{2}\rho z^{2}dz\\
=&\int_{\sqrt{b}}^{+\infty}\left|\partial_{z}\left(H_{b}u\right)\right|^{2}+\sum_{j=0}^{k}\int_{0}^{+\infty}\left|\frac{\langle v,P_{j}\rangle_{0}}{\langle P_{j},P_{j}\rangle_{0}}\right|^{2}\left|\partial_{z}P_{j}\right|^{2}\rho z^{2}dz\\
&+\sum_{i\neq j}\frac{\langle v,P_{j}\rangle_{0}\langle v,P_{i}\rangle_{0}}{\langle P_{i},P_{i}\rangle_{0}\langle P_{j},P_{j}\rangle_{0}}\int_{0}^{+\infty}\partial_{z}P_{j}\partial_{z}P_{i}\rho z^{2}dz\\
&-2\sum_{j=0}^{k}\int_{\sqrt{b}}^{+\infty}\partial_{z}v\frac{\langle v,P_{j}\rangle_{0}}{\langle P_{j},P_{j}\rangle_{0}}\partial_{z}P_{j}\rho z^{2}dz\\
=&\lVert\partial_{z}H_{b}u\rVert^{2}_{L^{2}_{\rho,b}}+O\left(b^{\frac{5}{2}}|H_{b}u(\sqrt{b})|^{2}+\sqrt{b}\lVert H_{b}u\rVert^{2}_{L^{2}_{\rho,b}}\right).
\end{align*}
Similar computation gives
\begin{equation*}
    \int_{\sqrt{b}}^{+\infty}|\omega|^{2}e^{-\frac{z^{2}}{2}}z^{2}dz=\lVert H_{b}u\rVert^{2}_{L^{2}_{\rho,b}}+O\left(b^{\frac{5}{2}}|H_{b}u(\sqrt{b})|^{2}+\sqrt{b}\lVert H_{b}u\rVert^{2}_{L^{2}_{\rho,b}}\right).
\end{equation*}
Then injecting the above two equalities into (\ref{leCH_{b}:10}),  we obtain
\begin{equation*}
	\lVert \partial_{z}H_{b}u\rVert^{2}_{L^{2}_{\rho,b}}\geq(2k+2+O(\sqrt{b}))\lVert H_{b}u(z)\rVert^{2}_{L^{2}_{\rho,b}}-c_{k}b^{3}|H_{b}u(\sqrt{b})|^{2},
\end{equation*}
which is (\ref{leCH_{b}:2}).\\
From Lemma \ref{le:2} and (\ref{leCH_{b}:2}), 
\begin{equation*}
	\begin{aligned}
		\lVert zH_{b}u\rVert^{2}_{L^{2}_{\rho,b}}&\lesssim \lVert zv\rVert^{2}_{L^{2}_{\rho,0}}+\int_{0}^{\sqrt{b}}z^{2}|H_{b}u(\sqrt{b})|^{2}\rho z^{2}dz\\
		&\lesssim\lVert zv\rVert^{2}_{L^{2}_{\rho,0}}+b^{\frac{5}{2}}|H_{b}u(\sqrt{b})|^{2}\\
		&\lesssim\lVert\partial_{z}v\rVert^{2}_{L^{2}_{\rho,0}}+\lVert v\rVert^{2}_{L^{2}_{\rho,0}}+b^{\frac{5}{2}}|H_{b}u(\sqrt{b})|^{2}\\
		&\lesssim\lVert\partial_{z}v\rVert^{2}_{L^{2}_{\rho,b}}+\lVert v\rVert^{2}_{L^{2}_{\rho,b}}+b^{\frac{3}{2}}|H_{b}u(\sqrt{b})|^{2}\\
		&\lesssim\lVert\partial_{z}H_{b}u\rVert^{2}_{L^{2}_{\rho,b}}+b^{\frac{3}{2}}|H_{b}u(\sqrt{b})|^{2},	
	\end{aligned}
\end{equation*}
which is (\ref{leCH_{b}:3}).
	\end{proof}
We now renormalise Lemma  {\ref{leCH_{b}}} by setting $ \varepsilon(y)=u(\sqrt{b}y) $ and together with (\ref{pr2.4:10}) which yields:
\begin{lemma}[Coercivity of $ \mathcal{H}_{b} $]\label{leCH{b}}
	Let $ k\in\NN $ and $ 0<b<b^{*}(k) $ small enough. If $ \varepsilon\in H^{3}_{b} $ satisfy
	\begin{equation*}
		\varepsilon(1)=0,\quad (\varepsilon,\eta_{b,j})_{b}=0,\quad \forall\ 0\leq{j}\leq{k},
	\end{equation*}
then 
\begin{equation}\label{leCH_{b}:14}
	\lVert \mathcal{H}_{b}\varepsilon\rVert^{2}_{b}\gtrsim \lVert \Delta\varepsilon\rVert^{2}_{b}+b\lVert\partial_{y}\varepsilon\rVert^{2}_{b}+b^{2}\lVert\Lambda\varepsilon\rVert^{2}_{b}+b^{2}\lVert\varepsilon\rVert^{2}_{b}+b^{3}\lVert y\varepsilon\rVert^{2}_{b}+\sqrt{b}(\partial_{y}\varepsilon(1))^{2}.
\end{equation}
Moreover, there exists a constant $ c_{k}>0 $ satisfy
\begin{equation}\label{leCH_{b}:15}
	\lVert \partial_{y}\mathcal{H}_{b}\varepsilon(y)\rVert^{2}_{b}\geq(2k+2+O(\sqrt{b}))b\lVert\mathcal{H}_{b}\varepsilon\rVert^{2}_{b}-c_{k}b^{2}|\mathcal{H}_{b}\varepsilon(1)|^{2},
\end{equation}
and
\begin{equation}\label{leCH_{b}:16}
	b\lVert y\mathcal{H}_{b}\varepsilon\rVert^{2}_{b}\lesssim \frac{\lVert \partial_{y}\mathcal{H}_{b}\varepsilon\rVert^{2}_{b}}{b}+|\mathcal{H}_{b}\varepsilon(1)|^{2}.
\end{equation}
\end{lemma}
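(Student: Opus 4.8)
The plan is to deduce the lemma from Lemma~\ref{leCH_{b}} purely by the change of variables $z=\sqrt{b}y$ summarised in (\ref{pr2.4:10}); no new analysis is needed, only a careful accounting of the powers of $b$. First, given $\varepsilon\in H^{3}_{b}$ with $\varepsilon(1)=0$ and $(\varepsilon,\eta_{b,j})_{b}=0$ for $0\le j\le k$, I would set $u(z):=\varepsilon(z/\sqrt{b})$. Then $u$ is radial on $\Omega(\sqrt{b})$, belongs to $H^{3}_{\rho,b}$, and $u(\sqrt{b})=\varepsilon(1)=0$; moreover, since $\eta_{b,j}(y)=\psi_{b,j}(\sqrt{b}y)$ and the inner products $(\cdot,\cdot)_{b}$ and $\langle\cdot,\cdot\rangle_{b}$ differ only by a fixed power of $b$ under this substitution, one gets $\langle u,\psi_{b,j}\rangle_{b}=0$ for $0\le j\le k$. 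Hence $u$ meets the hypotheses of Lemma~\ref{leCH_{b}}, so (\ref{leCH_{b}:1}), (\ref{leCH_{b}:2}) and (\ref{leCH_{b}:3}) hold for $u$.

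Second, I would push these three inequalities forward to the $y$ variable via (\ref{pr2.4:10}), applied both to $\varepsilon$ and to the rescaled function $w(y):=(H_{b}u)(\sqrt{b}y)$. The key point is the list of scaling identities $\mathcal{H}_{b}\varepsilon(y)=b\,w(y)$, $\partial_{y}(\mathcal{H}_{b}\varepsilon)(y)=b^{3/2}(\partial_{z}H_{b}u)(\sqrt{b}y)$, $\mathcal{H}_{b}\varepsilon(1)=b\,H_{b}u(\sqrt{b})$, and, for the weighted norms, $\lVert\mathcal{H}_{b}\varepsilon\rVert_{b}^{2}=b^{1/2}\lVert H_{b}u\rVert_{L^{2}_{\rho,b}}^{2}$, $\lVert\Delta\varepsilon\rVert_{b}^{2}=b^{1/2}\lVert\Delta u\rVert_{L^{2}_{\rho,b}}^{2}$, $b\lVert\partial_{y}\varepsilon\rVert_{b}^{2}=b^{1/2}\lVert\partial_{z}u\rVert_{L^{2}_{\rho,b}}^{2}$, $b^{2}\lVert\Lambda\varepsilon\rVert_{b}^{2}=b^{1/2}\lVert\Lambda u\rVert_{L^{2}_{\rho,b}}^{2}$, $b^{2}\lVert\varepsilon\rVert_{b}^{2}=b^{1/2}\lVert u\rVert_{L^{2}_{\rho,b}}^{2}$, $b^{3}\lVert y\varepsilon\rVert_{b}^{2}=b^{1/2}\lVert zu\rVert_{L^{2}_{\rho,b}}^{2}$, $\sqrt{b}\,(\partial_{y}\varepsilon(1))^{2}=b^{3/2}(\partial_{z}u(\sqrt{b}))^{2}$, $\lVert\partial_{y}\mathcal{H}_{b}\varepsilon\rVert_{b}^{2}=b^{3/2}\lVert\partial_{z}H_{b}u\rVert_{L^{2}_{\rho,b}}^{2}$, $b\lVert\mathcal{H}_{b}\varepsilon\rVert_{b}^{2}=b^{3/2}\lVert H_{b}u\rVert_{L^{2}_{\rho,b}}^{2}$, $b^{2}\lvert\mathcal{H}_{b}\varepsilon(1)\rvert^{2}=b^{4}\lvert H_{b}u(\sqrt{b})\rvert^{2}$, $b\lVert y\mathcal{H}_{b}\varepsilon\rVert_{b}^{2}=b^{1/2}\lVert zH_{b}u\rVert_{L^{2}_{\rho,b}}^{2}$ and $b^{-1}\lVert\partial_{y}\mathcal{H}_{b}\varepsilon\rVert_{b}^{2}=b^{1/2}\lVert\partial_{z}H_{b}u\rVert_{L^{2}_{\rho,b}}^{2}$, each of which is immediate from (\ref{pr2.4:10}).

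With this dictionary in place, one simply multiplies (\ref{leCH_{b}:1}) by $b^{1/2}$ — using $\lVert\Lambda u\rVert_{L^{2}_{\rho,b}}\le\lVert(1+z)\partial_{z}u\rVert_{L^{2}_{\rho,b}}$ and $\lVert u\rVert_{L^{2}_{\rho,b}},\lVert zu\rVert_{L^{2}_{\rho,b}}\le\lVert(1+z)u\rVert_{L^{2}_{\rho,b}}$ — to obtain (\ref{leCH_{b}:14}) term by term; multiplies (\ref{leCH_{b}:2}) by $b^{3/2}$ to obtain (\ref{leCH_{b}:15}), the constant $c_{k}$ being modified only by a positive power of $b$, harmless for $b$ small; and multiplies (\ref{leCH_{b}:3}) by $b^{1/2}$ to obtain (\ref{leCH_{b}:16}), since $b^{1/2}\cdot b^{3/2}\lvert H_{b}u(\sqrt{b})\rvert^{2}=\lvert\mathcal{H}_{b}\varepsilon(1)\rvert^{2}$ handles the boundary term. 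I expect no genuine obstacle at this stage: all the substance — the spectral gap (\ref{pr2.3.7}), the Poincaré-type bound of Lemma~\ref{le:2}, and the integrations by parts whose boundary terms at $z=\sqrt{b}$ are controlled through (\ref{le1.3}) — is already carried out in the proof of Lemma~\ref{leCH_{b}}; the only point requiring attention here is to keep the $b$-exponents of (\ref{pr2.4:10}) straight and to confirm that the orthogonality and $H^{3}$ hypotheses survive the substitution $u(z)=\varepsilon(z/\sqrt{b})$, both of which follow directly from $\eta_{b,j}(y)=\psi_{b,j}(\sqrt{b}y)$ and the scaling relations.
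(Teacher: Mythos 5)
Your proposal is correct and is exactly the paper's argument: the paper states this lemma as the renormalisation of Lemma \ref{leCH_{b}} under $\varepsilon(y)=u(\sqrt{b}y)$ via the scaling identities (\ref{pr2.4:10}), and your dictionary of $b$-powers (multiplying (\ref{leCH_{b}:1}) by $b^{1/2}$, (\ref{leCH_{b}:2}) by $b^{3/2}$, and (\ref{leCH_{b}:3}) by $b^{1/2}$) checks out term by term, including the transfer of the orthogonality conditions through $\eta_{b,j}(y)=\psi_{b,j}(\sqrt{b}y)$.
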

\section{Cauchy theory for the radial Stefan problem}
In this section, we establish the Cauchy theory for the radial Stefan problem in 3D. Since the similar results have already been proved in (\cite{MP}) in 2D and the methods there used can also been applied here,  we  only briefly recall the main steps and will not give so many details in the following argument.\\
Taking into account that the problem  is a free boundary problem, we firstly  convert it to a fixed boundary problem. Hence we pull back the problem (\ref{eq:2}) onto the fixed domain $\Omega:=\{\vec{y}\in\RR^{3}|\ y=|\vec{y}|\geq 1\}$ and define the radial pull-back temperature function $\omega:\Omega\rightarrow\RR$ by
\begin{equation*}
    \omega(t,y)=u(t,\lambda(t)y)\quad {\rm{and}}\quad y=|\vec{y}|.
\end{equation*}
Inserting into (\ref{eq:2}) gives the equations for $\omega$:
\begin{equation}\label{B.2}
	\left\{\begin{aligned}
&\partial_{t}\omega-\frac{\dot{\lambda}}{\lambda}\Lambda\omega-\frac{1}{\lambda^{2}}\Delta\omega=0\quad {\rm{in}}\ \Omega,\\
&\partial_{y}\omega(t,1)=-\dot{\lambda(t)}\lambda(t),\\
&\omega(t,1)=0,\\
&\omega(0,\cdot)=\omega_{0},\quad \lambda(0)=\lambda_{0}.
	\end{aligned}\right.
\end{equation}
Next, direct computation together with the equations (\ref{B.2}) which is similar to the two dimensional case in \cite{MP} yields the following energy identities.
\begin{lemma}[Energy identities]
  Let $(\omega,\lambda)$ be a solution to the fixed boundary problem (\ref{B.2}) on the interval $[0,T]$ and assume that $\lambda(0)>0$, $\omega_{0}|_{y=1}=0$, $\omega(t,\cdot)\in H^{2}(\Omega)$ for all $t\in[0,T]$. Then the following energy identities hold on $[0,T]$:
  \begin{equation}\label{B.3}
      \frac{1}{2}\frac{d}{dt}\lVert\omega\rVert^{2}_{L^{2}(\Omega)}+\frac{1}{\lambda^{2}}\lVert \nabla \omega\rVert^{2}_{L^{2}(\Omega)}=-\frac{3}{2}\frac{\dot{\lambda}}{\lambda}\lVert \omega\rVert^{2}_{L^{2}(\Omega)},
  \end{equation}
  \begin{equation}\label{B.4}
      \frac{1}{2}\frac{d}{dt}\lVert \nabla\omega\rVert^{2}_{L^{2}(\Omega)}+\frac{1}{\lambda^{2}}\lVert\Delta\omega\rVert^{2}_{L^{2}(\Omega)}=-\frac{1}{2}\frac{\dot{\lambda}}{\lambda}\lVert \nabla\omega\rVert^{2}_{L^{2}(\Omega)}+2\pi(\dot{\lambda})^{3}\lambda,
  \end{equation}
  \begin{equation}\label{B.5}
      \frac{1}{2}\frac{d}{dt}\lVert\Delta\omega\rVert^{2}_{L^{2}(\Omega)}+\frac{1}{\lambda^{2}}\lVert\nabla\Delta\omega\rVert^{2}_{L^{2}(\Omega)}=\frac{4\pi}{3}\frac{d}{dt}(\dot{\lambda}\lambda)^{3}+\frac{1}{2}\frac{\dot{\lambda}}{\lambda}\lVert\Delta\omega\rVert^{2}_{L^{2}(\Omega)}+2\pi\dot{\lambda}^{5}\lambda^{3}+4\pi\dot{\lambda}^{4}\lambda^{2}.
  \end{equation}
\end{lemma}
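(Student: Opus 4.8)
The plan is to derive each of \eqref{B.3}, \eqref{B.4}, \eqref{B.5} by differentiating the corresponding $L^2(\Omega)$-norm in $t$, substituting $\partial_t\omega = \frac{\dot\lambda}{\lambda}\Lambda\omega + \frac{1}{\lambda^2}\Delta\omega$ from \eqref{B.2}, and carefully tracking the boundary contributions on the sphere $\{y=1\}$ produced by integrations by parts. All computations are carried out on radial functions, so that $\|f\|_{L^2(\Omega)}^2 = 4\pi\int_1^{+\infty} f(y)^2 y^2\ud y$ and every surface integral over $\{y=1\}$ carries a factor $4\pi$; the outward unit normal to $\Omega=\{y\ge 1\}$ is $-\,\hat{y}$, so $\partial_n = -\partial_y$ there. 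Two elementary identities will be used repeatedly: for a radial $f$ decaying suitably at infinity, integrating by parts in $y$ gives $\int_\Omega f\,\Lambda f\ud x = -2\pi f(1)^2 - \frac{3}{2}\|f\|_{L^2(\Omega)}^2$, and, writing $\Delta\omega = \partial_{yy}\omega + \frac{2}{y}\partial_y\omega$, one obtains $\int_\Omega \Delta\omega\,\Lambda\omega\ud x = -2\pi(\partial_y\omega(1))^2 + \frac{1}{2}\|\nabla\omega\|_{L^2(\Omega)}^2$. The commutator relation $[\Delta,\Lambda]=2\Delta$ from \eqref{pr2.3:53} will also be needed. A recurring simplification is that $\omega(t,1)\equiv 0$ forces $\partial_t\omega(t,1)=0$, which kills several boundary terms. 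As in \cite{MP}, one first proves these identities for smooth, sufficiently decaying solutions and then extends by density, the decay at infinity ensuring that the boundary terms at $y=+\infty$ vanish.

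For \eqref{B.3}, I would write $\frac{1}{2}\frac{d}{dt}\|\omega\|_{L^2(\Omega)}^2 = (\omega,\partial_t\omega)_{L^2(\Omega)}$, insert \eqref{B.2}, and use $(\omega,\Lambda\omega)_{L^2(\Omega)} = -\frac{3}{2}\|\omega\|_{L^2(\Omega)}^2$ (the boundary term vanishing since $\omega(t,1)=0$) together with $(\omega,\Delta\omega)_{L^2(\Omega)} = -\|\nabla\omega\|_{L^2(\Omega)}^2$ (no boundary term, again by $\omega(t,1)=0$). For \eqref{B.4}, I would use $\frac{1}{2}\frac{d}{dt}\|\nabla\omega\|_{L^2(\Omega)}^2 = -(\Delta\omega,\partial_t\omega)_{L^2(\Omega)}$, where the boundary term from integration by parts vanishes because $\partial_t\omega(t,1)=0$; substituting \eqref{B.2} and applying the second identity above with the Stefan condition $\partial_y\omega(t,1)=-\dot\lambda\lambda$ turns the boundary term $-2\pi(\partial_y\omega(1))^2$ into $-2\pi\dot\lambda^2\lambda^2$, and regrouping yields \eqref{B.4} with its $2\pi\dot\lambda^3\lambda$ term.

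The identity \eqref{B.5} is the delicate one, and I expect the \emph{bookkeeping of the boundary terms} to be the main obstacle. Here one computes $\frac{1}{2}\frac{d}{dt}\|\Delta\omega\|_{L^2(\Omega)}^2 = (\Delta\omega,\Delta\partial_t\omega)_{L^2(\Omega)}$ and, using $[\Delta,\Lambda]=2\Delta$, writes $\Delta\partial_t\omega = \frac{\dot\lambda}{\lambda}(\Lambda\Delta\omega + 2\Delta\omega) + \frac{1}{\lambda^2}\Delta^2\omega$. The piece $(\Delta\omega,\Lambda\Delta\omega + 2\Delta\omega)_{L^2(\Omega)}$ is handled by the first identity with $f=\Delta\omega$, producing $-2\pi(\Delta\omega(t,1))^2 + \frac{1}{2}\|\Delta\omega\|_{L^2(\Omega)}^2$, while $(\Delta\omega,\Delta^2\omega)_{L^2(\Omega)}$ integrates by parts to $-\|\nabla\Delta\omega\|_{L^2(\Omega)}^2 - 4\pi\,\Delta\omega(t,1)\,\partial_y\Delta\omega(t,1)$. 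The two boundary values must then be read off from \eqref{B.2}: evaluating the equation at $y=1$ with $\partial_t\omega(t,1)=0$ and $\Lambda\omega(t,1)=\partial_y\omega(t,1)=-\dot\lambda\lambda$ gives $\Delta\omega(t,1)=\dot\lambda^2\lambda^2$; differentiating the equation in $y$, evaluating at $y=1$, and using $\partial_y\partial_t\omega(t,1) = -\frac{d}{dt}(\dot\lambda\lambda)$ together with $\partial_{yy}\omega(t,1) = \Delta\omega(t,1) - 2\partial_y\omega(t,1) = \dot\lambda^2\lambda^2 + 2\dot\lambda\lambda$ gives $\partial_y\Delta\omega(t,1) = -\lambda^3\ddot\lambda - 2\lambda^2\dot\lambda^2 - \lambda^3\dot\lambda^3$. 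Substituting everything back, dividing by $\lambda^2$ where appropriate, and recognizing that the term involving $\ddot\lambda$ and one copy of the $\dot\lambda^4\lambda^2$ contribution assemble into $\frac{4\pi}{3}\frac{d}{dt}(\dot\lambda\lambda)^3 = 4\pi\dot\lambda^2\lambda^3\ddot\lambda + 4\pi\dot\lambda^4\lambda^2$, one collects the remaining terms into exactly the right-hand side of \eqref{B.5}; the only real difficulty is keeping the numerous powers of $\lambda$ and $\dot\lambda$ straight and, for rigor, justifying the fourth-order integration by parts under the natural strengthening of the regularity hypothesis needed to make sense of $\|\nabla\Delta\omega\|_{L^2(\Omega)}$.
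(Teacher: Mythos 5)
Your proposal is correct and follows exactly the route the paper intends: the paper gives no written proof beyond invoking "direct computation together with the equations (\ref{B.2})... similar to the two dimensional case in \cite{MP}", and your computation is that direct computation carried out in full. I checked the two radial integration-by-parts identities, the boundary values $\Delta\omega(t,1)=\dot\lambda^{2}\lambda^{2}$ and $\partial_{y}\Delta\omega(t,1)=-\lambda^{3}\ddot\lambda-2\lambda^{2}\dot\lambda^{2}-\lambda^{3}\dot\lambda^{3}$, and the final regrouping into $\frac{4\pi}{3}\frac{d}{dt}(\dot\lambda\lambda)^{3}+2\pi\dot\lambda^{5}\lambda^{3}+4\pi\dot\lambda^{4}\lambda^{2}$; all are correct.
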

By the energy identities above, we can obtain the following prior estimates. Before stating the main results, let us define the energy quantities:
\begin{equation}\label{B.6}
    E(t)=\sup_{0\leq s\leq t} \bigg\{\frac{1}{2}\lVert\omega(s,\cdot)\rVert^{2}_{L^{2}(\Omega)}+\frac{1}{2}\lVert\nabla\omega(s,\cdot)\rVert^{2}_{L^{2}(\Omega)}+\frac{1}{2}\lVert\Delta\omega(s,\cdot)\rVert^{2}_{L^{2}(\Omega)}\bigg\},
    \end{equation}
\begin{equation*}
    D(t)=\frac{1}{\lambda^{2}(t)}\lVert\nabla\omega(t,\cdot)\rVert^{2}_{L^{2}(\Omega)}+\frac{1}{\lambda^{2}(t)}\lVert\Delta\omega(t,\cdot)\rVert^{2}_{L^{2}(\Omega)}+\frac{1}{\lambda^{2}(t)}\lVert\nabla\Delta\omega(t,\cdot)\rVert^{2}_{L^{2}(\Omega)}.
\end{equation*}
\begin{lemma}[Prior estimates]
    Assume that $(\omega,\lambda)$ is a smooth solution of (\ref{B.2}) on some interval $[0,T^{*}]$. Assume that $\lambda_{0}>0$, $\omega_{0}|_{y=1}=0$, and $\omega(t,\cdot)\in H^{2}(\Omega)$ for $t\in[0,T^{*}]$. Then there exists a $T=T(E(0),\lambda_{0})>0$ with $T\leq T^{*}$ such that for any $t\in[0,T]$ the following prior estimates hold:
    \begin{align}
        &E(t)\leq 4 C(E(0),\lambda_{0}),\\
        &\lambda(t)>\frac{\lambda_{0}}{2}.
    \end{align}
\end{lemma}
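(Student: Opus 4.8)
The plan is to run a continuity (bootstrap) argument on $[0,T^*]$ organised around the energy identities (\ref{B.3})--(\ref{B.5}), exactly following the two‑dimensional scheme of \cite{MP}. The decisive preliminary observation is that the free boundary speed is a \emph{trace}: from the boundary condition in (\ref{B.2}) one has $\dot\lambda(t)\lambda(t)=-\partial_y\omega(t,1)$, and differentiating $t\mapsto\omega(t,1)\equiv 0$ together with the equation gives in addition the algebraic relation $\Delta\omega(t,1)=(\partial_y\omega(t,1))^2$. A trace inequality on the exterior domain $\Omega$, combined with the radial elliptic estimate $\|D^2\omega\|_{L^2(\Omega)}\lesssim\|\Delta\omega\|_{L^2(\Omega)}$ valid for $\omega$ vanishing on $\partial\Omega$, then yields
\begin{equation*}
|\partial_y\omega(t,1)|^{2}\lesssim\|\nabla\omega(t)\|_{L^{2}(\Omega)}\bigl(\|\nabla\omega(t)\|_{L^{2}(\Omega)}+\|\Delta\omega(t)\|_{L^{2}(\Omega)}\bigr)\lesssim E(t),
\end{equation*}
hence $|\dot\lambda(t)|\lesssim\sqrt{E(t)}/\lambda(t)$. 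This single bound is what allows both conclusions of the lemma to be closed at once.

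First I would fix a constant $C=C(E(0),\lambda_0)$ (a large multiple of $E(0)$ and of the cubic correction $|\partial_y\omega_0(1)|^{3}$, its value pinned down only at the very end) and let $T_{\mathrm b}$ be the supremum of times $t\le T^{*}$ on which $\lambda(s)\ge\lambda_0/2$ and $E(s)\le 4C$ for all $s\le t$; continuity and $\lambda(0)=\lambda_0$, $E(0)\le C$ give $T_{\mathrm b}>0$. On $[0,T_{\mathrm b})$ the trace bound forces $|\dot\lambda|\le M$ with $M=M(C,\lambda_0)$, so integrating $\dot\lambda$ gives $|\lambda(t)-\lambda_0|\le Mt$, and in particular $\lambda(t)\ge 3\lambda_0/4$ for $t\le\lambda_0/(4M)$, which strictly improves the lower bound on $\lambda$.

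The energy bound I would carry out in two tiers. At the lower‑order level, summing (\ref{B.3}) and (\ref{B.4}) produces \emph{no} total‑time‑derivative term; the only cubic contribution $2\pi\dot\lambda^{3}\lambda=-2\pi(\partial_y\omega(1))^{3}/\lambda^{2}$ is split by Young's inequality into a small multiple of the dissipation $\lambda^{-2}\|\Delta\omega\|_{L^{2}}^{2}$ plus $\lambda^{-2}\|\nabla\omega\|_{L^{2}}^{6}$, which leaves an autonomous differential inequality for $G(t):=\tfrac12\|\omega(t)\|_{L^{2}}^{2}+\tfrac12\|\nabla\omega(t)\|_{L^{2}}^{2}$ keeping $\|\nabla\omega(t)\|_{L^{2}}\lesssim\sqrt{E(0)}$ on a data‑dependent interval, \emph{independently of} $\|\Delta\omega\|$. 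At the top‑order level I would work with the modified energy $\mathcal E_{\mathrm{mod}}(t):=\tfrac12\bigl(\|\omega\|_{L^{2}}^{2}+\|\nabla\omega\|_{L^{2}}^{2}+\|\Delta\omega\|_{L^{2}}^{2}\bigr)-\tfrac{4\pi}{3}(\dot\lambda\lambda)^{3}$, which absorbs the total derivative in (\ref{B.5}). In the resulting identity the curvature terms $\tfrac{\dot\lambda}{\lambda}\|\cdot\|_{L^{2}}^{2}$ are $O(\sqrt C/\lambda_0^{2})$ times the energy, the algebraic terms $\dot\lambda^{3}\lambda,\dot\lambda^{4}\lambda^{2},\dot\lambda^{5}\lambda^{3}$ are bounded via the trace estimate and the $G$‑bound, and a fraction of $\dot\lambda^{3}\lambda$ is once more absorbed into $\lambda^{-2}\|\nabla\Delta\omega\|_{L^{2}}^{2}$; a Grönwall step then controls $\mathcal E_{\mathrm{mod}}(t)$, and the comparison $\mathcal E(t)\le 2\mathcal E_{\mathrm{mod}}(t)+C_{1}(E(0))$ — obtained by reinserting $|\partial_y\omega(t,1)|^{3}\lesssim\|\nabla\omega\|_{L^{2}}^{6}+\|\nabla\omega\|_{L^{2}}^{3}+\epsilon\|\Delta\omega\|_{L^{2}}^{2}$ and using the $G$‑bound — turns this into a bound for $E(t)=\sup_{[0,t]}\mathcal E$. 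Choosing $C$ large relative to the data and $T=T(E(0),\lambda_0)$ small then gives $E(t)\le 2C<4C$ on $[0,T\wedge T_{\mathrm b}]$, a strict improvement, so $T_{\mathrm b}>T$ and both assertions hold.

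The main obstacle, precisely as in \cite{MP}, is the boundary/curvature part of (\ref{B.4})--(\ref{B.5}), and in particular the cubic term $(\dot\lambda\lambda)^{3}=-(\partial_y\omega(1))^{3}$: since it is a priori controlled only by $E(t)^{3/2}$, treating it crudely would restrict the argument to small data. The way around it is structural — decoupling the $\|\nabla\omega\|_{L^{2}}$ estimate from the $H^{2}$ estimate so that the cubic can be read as ``a cubic in $\|\nabla\omega\|_{L^{2}}$ plus a small multiple of the second‑order dissipation'', together with the fact that the total‑derivative part of the cubic sits naturally inside $\mathcal E_{\mathrm{mod}}$. The remaining inputs — radial elliptic regularity, the trace inequalities on $\Omega$, and the short‑time Grönwall bookkeeping — are routine and would be quoted from the two‑dimensional treatment.
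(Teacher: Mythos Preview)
Your proposal is correct and follows essentially the same approach as the paper: the trace bound $|\dot\lambda|\lesssim\sqrt{E}/\lambda$, the decoupling of the $\|\nabla\omega\|_{L^2}$ estimate from the full $H^2$ energy so that the cubic $(\dot\lambda\lambda)^3$ can be controlled by lower‑order quantities, and the final bootstrap/continuity step are all exactly the ingredients in the paper's proof. The only differences are cosmetic: the paper integrates (\ref{B.3})--(\ref{B.5}) directly and keeps the $\sup_{[0,t]}(\dot\lambda\lambda)^3$ as a separate term bounded via (\ref{B.14}) and the $\|\nabla\omega\|_{L^2}$ bound from (\ref{B.4}) alone, whereas you package the same steps as a modified energy $\mathcal E_{\mathrm{mod}}$ and an autonomous ODE for $G$; both routes lead to an inequality of the shape $E(t)\le C(E(0),\lambda_0)+r(t)\,p(E^{1/2}(t))\,q(\sup\lambda^{-2})$ with $r(0)=0$, which closes the bootstrap for small $T$.
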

\begin{proof}
    We briefly recall the proof which has already been done in \cite{MP} in 2D. From the energy identities (\ref{B.3}), (\ref{B.4}) and (\ref{B.5}), we have
    \begin{equation}\label{B.10}
        \begin{aligned}
            E(t)+\int_{0}^{t}D(s)ds\leq& E(0)+\frac{4\pi}{3}\sup_{0\leq s\leq  t}(\lambda\dot{\lambda})^{3}-\frac{4\pi}{3}(\lambda_{0}\dot{\lambda}(0))^{3}+3\left(\int_{0}^{t}\frac{|\dot{\lambda}(s)|}{\lambda(s)}ds\right)E(t)\\
&+4\pi\left(\int_{0}^{t}\dot{\lambda}^{4}\lambda^{2}+|\dot{\lambda}|^{5}\lambda^{3}+|\dot{\lambda}|^{3}\lambda ds\right).
        \end{aligned}
    \end{equation}
    By the radial sobolev inequality,
    \begin{equation}\label{B.11}
        \omega^{2}_{y}(1)\lesssim\left(\lVert\Delta\omega\rVert_{L^{2}(\Omega)}+\lVert\nabla\omega\rVert_{L^{2}(\Omega)}\right)\lVert\nabla\omega\rVert_{L^{2}(\Omega)},
    \end{equation}
    and the boundary condition of the equation (\ref{B.2}), we obtain
    \begin{equation}\label{B.12}
        |\dot{\lambda}(t)|\lesssim\frac{\sqrt{E(t)}}{\lambda(t)}.
    \end{equation}
    Hence,
    \begin{equation}\label{B.13}
    \begin{aligned}
        &3\left(\int_{0}^{t}\frac{|\dot{\lambda}(s)|}{\lambda(s)}ds\right)E(t)+4\pi\int_{0}^{t}\left(\dot{\lambda}^{4}\lambda^{2}+|\dot{\lambda}|^{5}\lambda^{3}+|\dot{\lambda}|^{3}\lambda\right)ds\\
        \lesssim&t\left(E^{\frac{3}{2}}(t)+E^{2}(t)+E^{\frac{5}{2}}(t)\right)\sup_{0\leq s\leq t}\frac{1}{\lambda^{2}(s)}.
        \end{aligned}
    \end{equation}
    For the boundary term, from (\ref{B.11}) and the H\"{o}lder inequality,
    \begin{equation}\label{B.14}
        \begin{aligned}
        |\lambda\dot{\lambda}|^{3}&\lesssim\left(\lVert\Delta\omega\rVert^{\frac{3}{2}}_{L^{2}(\Omega)}+\lVert\nabla\omega\rVert^{\frac{3}{2}}_{L^{2}(\Omega)}\right)\lVert\nabla\omega\rVert^{\frac{3}{2}}_{L^{2}(\Omega)}\\
        &\leq \delta E+C_{\delta}\lVert\nabla\omega\rVert^{6}_{L^{2}(\Omega)}.
        \end{aligned}
    \end{equation}
   It remains to estimate the term $\lVert\nabla\omega\rVert_{L^{2}(\Omega)}$. Indeed, integrating the identity (\ref{B.4}) yields
   \begin{equation*}
       \begin{aligned}
           \frac{1}{2}\sup_{0\leq s \leq t}\lVert\nabla\omega\rVert^{2}_{L^{2}(\Omega)}&\leq\frac{1}{2}\lVert\nabla\omega_{0}\rVert^{2}_{L^{2}(\Omega)}+\frac{1}{2}\int_{0}^{t}\bigg|\frac{\dot{\lambda}}{\lambda}\bigg|\lVert\nabla\omega\rVert^{2}_{L^{2}(\Omega)}+2\pi\int_{0}^{t}|\dot{\lambda}|^{3}\lambda ds\\
           &\lesssim E(0)+CtE^{\frac{3}{2}}(t)\sup_{0\leq s\leq t}\frac{1}{\lambda^{2}(s)}.
       \end{aligned}
   \end{equation*}
   Then injecting into (\ref{B.14}) gives
   \begin{equation*}
       |\lambda\dot{\lambda}|^{3}\leq \delta E+C_{\delta}\left(CE(0)+CtE^{\frac{3}{2}}(t)\sup_{0\leq s\leq t}\frac{1}{\lambda^{2}(s)}\right)^{3}
   \end{equation*}
   Plugging this bound back to (\ref{B.10}) and together with (\ref{B.13}), we conclude that
   \begin{equation*}
       E(t)+\int_{0}^{t}D(s)ds\leq C(E(0),\lambda_{0})+r(t)p\left(E^{\frac{1}{2}}(t)\right)q\left(\sup_{0\leq s\leq t}\frac{1}{\lambda^{2}(s)}\right),
   \end{equation*}
   where $r(t),p(t),q(t)$ are all increasing polynomial functions on $[0,+\infty)$ with $r(0)=0$. Since $\lambda(t)=\lambda_{0}+\int_{0}^{t}\dot{\lambda}(s)ds$, it follows from (\ref{B.12}) that
   \begin{equation*}
       \lambda(t)\geq \lambda_{0}-t\sup_{0\leq s\leq t}|\dot{\lambda}(s)|\geq\lambda_{0}-Ct\left(\sup_{0\leq s\leq t}\frac{1}{\lambda(s)}\right)E^{\frac{1}{2}}(t).
   \end{equation*}
   Set
\begin{equation*}
    T^{'}=\sup \bigg\{t\geq 0\ \bigg|\ E(t) \leq 4C(E(0),\lambda_{0}),\lambda(t)>\frac{\lambda_{0}}{2}\bigg\}.
\end{equation*}
By the continuity of $E(t)$ and $\lambda(t)$, it follows that $T^{'}>0$. Then by standard bootstrap argument, there exists a $T=T(E(0),\lambda_{0})$ which satisfies
\begin{equation*}
    T\leq \frac{3\lambda^{2}_{0}}{16C\sqrt{C(E(0),\lambda_{0})}}\quad {\rm{and}}\quad r(T)p\left(2\sqrt{C(E(0),\lambda_{0}}\right)q\left(\frac{4}{\lambda^{2}_{0}}\right)\leq C(E(0),\lambda_{0}).
\end{equation*}
such that for any $t\in[0,T]$,
\begin{align*}
    E(t)\leq 2C&(E(0),\lambda_{0})\leq 4C(E(0),\lambda_{0}),\\
    &\lambda(t)\geq\frac{3}{4}\lambda_{0}>\frac{1}{2}\lambda_{0},
\end{align*}
and this concludes the proof of lemma.
\end{proof}
\begin{theorem}[Local well-posedness]\label{thB.4}
    Let $\omega_{0}\in H^{2}(\Omega)$, $\lambda_{0}>0$, and $\omega_{0}|_{y=1}=0$. Then there exists a time $T=T(\lVert\omega_{0}\rVert_{H^{2}(\Omega)},\lambda_{0})>0$ and a solution $(\omega,\lambda)$ to the equation (\ref{B.2}) on the time interval $[0,T]$ such that
    \begin{align}
&\omega\in C([0,T],H^{2}(\Omega))\cap L^{2}([0,T],H^{3}(\Omega)),\notag\\
&\omega_{t}\in C([0,T],L^{2}(\Omega))\cap L^{2}([0,T],H^{1}(\Omega)),\label{B.15}\\
&\lambda\in C^{1}([0,T],\RR),\notag
\end{align}
and for any $t\in[0,T]$, the following bounds hold:
\begin{equation*}
    E(t)\leq C(E(0),\lambda_{0}),\quad \lambda(t)>\frac{\lambda_{0}}{2}.
\end{equation*}
where the energy $E(t)$ is defined by (\ref{B.6}). Moreover, if $T_{\rm{max}}$ is the maximal time of existence of a solution $(\omega,\lambda)$ satisfying (\ref{B.15}), then
\begin{equation*}
    {\rm{either}}\quad \lim_{t\rightarrow {T_{\rm{max}}}}\lVert \omega(t,\cdot)\rVert_{H^{2}(\Omega)}=+\infty\quad {\rm{or}}\quad \lim_{t\rightarrow {T_{\rm{max}}}}\lambda(t)=0.
\end{equation*}
\end{theorem}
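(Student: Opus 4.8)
The plan is to prove Theorem~\ref{thB.4} by an iteration scheme that decouples the temperature $\omega$ from the free boundary $\lambda$, to close \emph{uniform} bounds on the iterates using the energy identities \eqref{B.3}--\eqref{B.5} together with the a priori estimates of the previous lemma, and then to pass to the limit; uniqueness will come from an energy estimate on differences and the continuation criterion from a restarting argument. All estimates are carried out at the $H^2(\Omega)$ level, so no regularisation of $\omega_0$ is needed. Concretely, starting from $\lambda^{(0)}\equiv\lambda_0$, given $\lambda^{(n)}\in C^1([0,T])$ with $\lambda^{(n)}(0)=\lambda_0$ and $\lambda^{(n)}>\lambda_0/2$, one solves the linear parabolic Dirichlet problem on the fixed domain $\Omega$
\begin{equation*}
\partial_t\omega^{(n+1)}-\frac{\dot\lambda^{(n)}}{\lambda^{(n)}}\Lambda\omega^{(n+1)}-\frac{1}{(\lambda^{(n)})^2}\Delta\omega^{(n+1)}=0,\qquad \omega^{(n+1)}(t,1)=0,\quad \omega^{(n+1)}(0,\cdot)=\omega_0,
\end{equation*}
whose well-posedness in the class \eqref{B.15} is classical (Galerkin approximation plus the energy method), the coefficient $1/(\lambda^{(n)})^2$ being bounded above and below and $\dot\lambda^{(n)}/\lambda^{(n)}$ being bounded; one then updates the radius by integrating the free-boundary ODE $\dot\lambda^{(n+1)}\lambda^{(n+1)}=-\partial_y\omega^{(n+1)}(t,1)$, $\lambda^{(n+1)}(0)=\lambda_0$, which is solvable for a short time because the radial trace bound \eqref{B.11} controls $\partial_y\omega^{(n+1)}(\cdot,1)$.

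Next I would run the computations leading to \eqref{B.10}--\eqref{B.14} on each pair $(\omega^{(n+1)},\lambda^{(n)})$ in place of a genuine solution; since those estimates only use the structure of \eqref{B.2} and the trace inequality, they yield, exactly as in the proof of the previous lemma, a time $T=T(\|\omega_0\|_{H^2(\Omega)},\lambda_0)>0$ \emph{independent of $n$} such that $E^{(n)}(t)\le C(E(0),\lambda_0)$ and $\lambda^{(n)}(t)>\lambda_0/2$ on $[0,T]$, together with a uniform dissipation bound $\int_0^T D^{(n)}(s)\,ds\lesssim C(E(0),\lambda_0)$. From these uniform bounds, the equation (which gives $\partial_t\omega^{(n+1)}\in L^2([0,T],H^1(\Omega))$ uniformly), and the Aubin--Lions lemma, one extracts a subsequence converging strongly in $C([0,T],H^{2-\sigma}(\Omega))$ for $\omega^{(n)}$ and in $C^1([0,T],\RR)$ for $\lambda^{(n)}$, $\sigma>0$ small; the coefficients $\dot\lambda^{(n)}/\lambda^{(n)}$, $1/(\lambda^{(n)})^2$ and the boundary data then converge, so the limit $(\omega,\lambda)$ solves \eqref{B.2}, lies in the class \eqref{B.15} by weak lower semicontinuity, and satisfies $E(t)\le C(E(0),\lambda_0)$, $\lambda(t)>\lambda_0/2$.

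For uniqueness, given two solutions $(\omega_1,\lambda_1)$, $(\omega_2,\lambda_2)$ with the same data I would subtract the equations, run the $L^2(\Omega)$ energy estimate on $W=\omega_1-\omega_2$ and a Gronwall estimate on $|\lambda_1-\lambda_2|+|\dot\lambda_1-\dot\lambda_2|$, using \eqref{B.11} to handle the boundary contribution of $\partial_yW(\cdot,1)$ and the lower bound $\lambda_i>\lambda_0/2$ to control the coefficient differences; this forces $W\equiv0$ and $\lambda_1\equiv\lambda_2$ on $[0,T]$. For the blow-up alternative, suppose $T_{\max}<+\infty$ but there were $M,\delta_*>0$ and $t_*<T_{\max}$ with $\|\omega(t,\cdot)\|_{H^2(\Omega)}\le M$ and $\lambda(t)\ge\delta_*$ for all $t\in[t_*,T_{\max})$; then restarting the local existence statement from $(\omega(t_0,\cdot),\lambda(t_0))$ at any $t_0\in[t_*,T_{\max})$ produces a solution on $[t_0,t_0+\tau]$ with $\tau=\tau(M,\delta_*)>0$ independent of $t_0$, and for $t_0$ close enough to $T_{\max}$ this continues the solution past $T_{\max}$, contradicting maximality; hence the dichotomy.

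The hard part will be the closure of the uniform energy hierarchy in the second step: the top-order identity \eqref{B.5} carries the boundary term $\tfrac{4\pi}{3}\tfrac{d}{dt}(\dot\lambda\lambda)^3$ along with the lower-order terms $\dot\lambda^5\lambda^3$, $\dot\lambda^4\lambda^2$, $\dot\lambda^3\lambda$, and controlling $(\dot\lambda\lambda)^3=(\partial_y\omega(\cdot,1))^3$ forces the use of \eqref{B.11} in the form $(\partial_y\omega(\cdot,1))^2\lesssim(\|\Delta\omega\|_{L^2(\Omega)}+\|\nabla\omega\|_{L^2(\Omega)})\|\nabla\omega\|_{L^2(\Omega)}$ followed by a Young-inequality absorption of the resulting $\delta E$ term; this is only legitimate once $\|\nabla\omega\|_{L^2(\Omega)}$ has independently been bounded by integrating \eqref{B.4}, so the three energies must be closed in a compatible cascade and, crucially, in a way that the existence time $T$ does not degenerate along the iteration. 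Arranging this, and checking that the short-time solvability of the $\lambda$-ODE does not shrink $T$ either, is where essentially all the work lies; the remaining compactness, uniqueness and continuation arguments are routine and parallel the two-dimensional treatment in \cite{MP}.
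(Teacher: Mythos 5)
Your proposal is correct and follows essentially the same route as the paper, which itself only sketches this step: the paper invokes the a priori estimates of the preceding lemma together with ``a standard iteration argument'' and the bound (\ref{B.12}) for the continuation criterion, deferring details to the 2D treatment in \cite{MP}, and your decoupled iteration, uniform energy bootstrap, compactness, and restarting scheme is precisely that argument. The one point requiring extra care (which you essentially flag) is that the identities (\ref{B.3})--(\ref{B.5}) applied to the iterates $(\omega^{(n+1)},\lambda^{(n)})$ produce boundary terms mixing $\lambda^{(n)}$ from the coefficients with $\partial_{y}\omega^{(n+1)}(t,1)=-\dot{\lambda}^{(n+1)}\lambda^{(n+1)}$, so they must be closed through the trace bound (\ref{B.11}) rather than verbatim, and on the unbounded domain $\Omega$ the Aubin--Lions step should be run locally (or replaced by a contraction estimate in a lower norm), neither of which affects the validity of the scheme.
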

The proof of the existence of such solution follows from a standard iteration argument and the blow-up criterion is a consequence of (\ref{B.12}). Details have already been given in \cite{MP} in 2D and the 3D case is just the same, so we omit the details here.\\
Finally, we establish the Cauchy theory for the radial Stefan problem (\ref{eq:2}). It follows from the change of variables $u(t,r)=\omega(t,r/ \lambda(t))$ and Theorem \ref{thB.4}.
\begin{theorem}[Well-posedness in $H^{2}$]
Let $u_{0}\in H^{2}(\Omega(\lambda_{0}))$, $\lambda_{0}>0$, $u_{0}(\lambda_{0})=0$. Then there exists a time $T=T(\lVert u_{0}\rVert_{ H^{2}(\Omega(\lambda_{0}))},\lambda_{0})>0$ and a solution $(u,\lambda)$ to the Stefan problem (\ref{eq:2}) on the time interval $[0,T]$ such that
\begin{align}
&u\in C([0,T],H^{2}(\Omega(t)))\cap L^{2}([0,T],H^{3}(\Omega(t))),\notag\\
&u_{t}\in C([0,T],L^{2}(\Omega(t)))\cap L^{2}([0,T],H^{1}(\Omega(t))),\label{B.1}\\
&\lambda\in C^{1}([0,T],\RR),\notag
\end{align}
and the following bounds hold:
\begin{equation*}
    \lVert u\rVert_{H^{2}(\Omega(t))}\leq C=C(\lVert u_{0}\rVert_{ H^{2}(\Omega(\lambda_{0}))},\lambda_{0}),\quad \lambda(t)>\frac{\lambda_{0}}{2}.
\end{equation*}
Moreover, if $T_{\rm{max}}$ is the maximal time of existence of a solution $(u,\lambda)$ satisfying (\ref{B.1}), then
\begin{equation*}
    {\rm{either}}\quad \lim_{t\rightarrow {T_{\rm{max}}}}\lVert u(t,\cdot)\rVert_{H^{2}(\Omega(t))}=+\infty\quad {\rm{or}}\quad \lim_{t\rightarrow {T_{\rm{max}}}}\lambda(t)=0.
\end{equation*}
\end{theorem}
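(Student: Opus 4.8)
The final statement is the well-posedness theorem for the radial Stefan problem \eqref{eq:2} in $H^2$, which the author reduces to Theorem \ref{thB.4} via the change of variables $u(t,r)=\omega(t,r/\lambda(t))$. The plan is therefore to execute this reduction carefully and then transport every conclusion of Theorem \ref{thB.4} back to the original variables.

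First I would set up the diffeomorphism between the fixed domain $\Omega=\{|\vec y|\ge 1\}$ and the moving domain $\Omega(t)=\{|x|\ge\lambda(t)\}$ given by $x=\lambda(t)y$, and record that $u_0\in H^2(\Omega(\lambda_0))$ with $u_0(\lambda_0)=0$ corresponds to $\omega_0(y)=u_0(\lambda_0 y)\in H^2(\Omega)$ with $\omega_0|_{y=1}=0$; since $\lambda_0>0$ is fixed, the map $u_0\mapsto\omega_0$ is a bounded linear isomorphism between the relevant spaces, so $\lVert\omega_0\rVert_{H^2(\Omega)}$ is controlled by $\lVert u_0\rVert_{H^2(\Omega(\lambda_0))}$ and $\lambda_0$. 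Then I would apply Theorem \ref{thB.4} to obtain $T=T(\lVert\omega_0\rVert_{H^2(\Omega)},\lambda_0)>0$ and a solution $(\omega,\lambda)$ with the regularity \eqref{B.15}, and define $u(t,r)=\omega(t,r/\lambda(t))$. A direct computation — exactly the one that produced \eqref{B.2} from \eqref{eq:2}, run in reverse — shows $(u,\lambda)$ solves \eqref{eq:2}.

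Next I would check that the regularity \eqref{B.15} for $\omega$ translates into \eqref{B.1} for $u$. This is where a little care is needed: the time derivative $u_t$ picks up an extra term $-\dot\lambda\lambda^{-1}\Lambda u$ from differentiating the time-dependent change of variables, but since $\lambda\in C^1$ with $\lambda(t)>\lambda_0/2>0$ and $\dot\lambda$ is controlled by $\sqrt{E(t)}/\lambda(t)$ via \eqref{B.12}, this term lies in the same spaces as the leading parabolic terms; the spatial norms transform with harmless powers of $\lambda(t)$ that are bounded above and below on $[0,T]$. One also must verify $\lVert u(t)\rVert_{H^2(\Omega(t))}\le C(\lVert u_0\rVert_{H^2(\Omega(\lambda_0))},\lambda_0)$ and $\lambda(t)>\lambda_0/2$, both of which follow by transporting the energy bound $E(t)\le C(E(0),\lambda_0)$ of Theorem \ref{thB.4} through the change of variables and noting $E(0)$ is itself controlled by $\lVert u_0\rVert_{H^2(\Omega(\lambda_0))}$ and $\lambda_0$.

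Finally I would address uniqueness and the blow-up criterion. Uniqueness of $(u,\lambda)$ in the class \eqref{B.1} is equivalent, under the diffeomorphism, to uniqueness of $(\omega,\lambda)$ in the class \eqref{B.15}, which is part of Theorem \ref{thB.4}. For the dichotomy at $T_{\rm max}$, I would argue that as long as $\liminf_{t\to T_{\rm max}}\lambda(t)>0$ and $\limsup_{t\to T_{\rm max}}\lVert u(t)\rVert_{H^2(\Omega(t))}<\infty$, the corresponding $(\omega,\lambda)$ stays in a regime where the a priori estimates of the previous lemma give a uniform lower bound on the existence time, so the solution extends past $T_{\rm max}$, a contradiction; the blow-up criterion for $\omega$ from Theorem \ref{thB.4} then yields the stated alternative for $u$. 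The main obstacle is bookkeeping rather than conceptual: one must make sure every $\lambda(t)$-weight introduced by the moving-domain change of variables is uniformly two-sided bounded on compact time intervals (which relies on $\lambda(t)>\lambda_0/2$) and that the extra $\Lambda u$ term in $u_t$ does not degrade the claimed regularity. Since all of this has been carried out in \cite{MP} in 2D and the 3D computation is identical up to the dimensional constants appearing in the energy identities \eqref{B.3}--\eqref{B.5}, I would simply indicate the steps and refer to \cite{MP} for the routine details.
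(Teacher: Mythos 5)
Your proposal is correct and takes essentially the same route as the paper: the paper itself disposes of this theorem in one line, stating that it follows from the change of variables $u(t,r)=\omega(t,r/\lambda(t))$ together with Theorem \ref{thB.4}, and deferring the routine bookkeeping to the 2D treatment in \cite{MP}. Your write-up simply makes explicit the points the paper leaves implicit (the two-sided bounds on $\lambda(t)$, the extra $\Lambda u$ term in $u_t$, and the transport of the blow-up alternative), all of which are handled correctly.
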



\begin{thebibliography}{80}
\bibitem{AHV}Andreucci, D.; Herrero, M. A.; Velázquez, J. J. L. The classical one-phase Stefan problem: A catalog of interface behaviors. Surveys in Industrial Mathematics 9 (2001), 247--337.
\bibitem{ACS1}Athanasopoulos, I.; Caffarelli, L.; Salsa, S. Regularity of the free boundary in parabolic phase-transition problems. Acta Math. 176 (1996), no. 2, 245--282. 
\bibitem{ACS2}Athanasopoulos, I.; Caffarelli, L. A.; Salsa, S. Phase transition problems of parabolic type: flat free boundaries are smooth. Comm. Pure Appl. Math. 51 (1998), no. 1, 77--112.
\bibitem{BW}Bourgain, J.; Wang, W. Construction of blowup solutions for the nonlinear Schrödinger equation with critical nonlinearity. Ann. Scuola Norm. Sup. Pisa Cl. Sci. (4) 25 (1997), no. 1-2, 197–215.
\bibitem{CL1}Caffarelli, L. A. The regularity of free boundaries in higher dimensions. Acta Math. 139 (1977), no. 3--4, 155--184.
\bibitem{CL2}Caffarelli, L. A. Some aspects of the one-phase Stefan problem. Indiana Univ. Math. J. 27 (1978), no. 1, 73--77.
\bibitem{CL3}Caffarelli, L. A.; Evans, L. C. Continuity of the temperature in the two-phase Stefan problem. Arch. Rational Mech. Anal. 81 (1983), no. 3, 199--220.
\bibitem{CL4}Caffarelli, L. A.; Friedman, A. Continuity of the temperature in the Stefan problem. Indiana Univ. Math. J. 28 (1979), no. 1, 53--70.
\bibitem{CS}Caffarelli, L. A.; Salsa, S. A geometric approach to free boundary problems. Graduate Studies in Mathematics, 68. American Mathematical Society, Providence, R.I., (2005).
\bibitem{C1}Collot, C. Nonradial type II blow up for the energy-supercritical semilinear heat equation, Anal. PDE 10 (2017), no. 1, 127--252.
\bibitem{C}Collot, C. Type II blow up manifolds for the energy supercritical semilinear wave equation, Mem. Amer. Math. Soc. 252 (2018) v+163.
\bibitem{CFP}Collot, C.; Merle, F.; Raphaël, P. Strongly anisotropic type II blow up at an isolated point, J. Amer. Math. Soc. 33 (2020), no. 2, 527--607. 
\bibitem{CTNV}Collot, C.; Ghoul, T.-E.; Masmoudi, N.; Nguyen, V. T. Refined description and stability for singular solutions of the 2D Keller--Segel system, Commun. Pure Appl. Math. 75 (2022) 1419--1516.
\bibitem{CK}Choi, S; Kim, I. C. Regularity of one-phase Stefan problem near Lipschitz initial data. Amer. J. Math. 132 (2010), no. 6, 1693--1727.
\bibitem{F}Friedman, A. The Stefan problem in several space variables. Trans. Amer. Math. Soc. 133 (1968), 51--87.
\bibitem{FK}Friedman, A.; Kinderlehrer, D. A one phase Stefan problem. Indiana Univ. Math. J. 24 (1974/75), no. 11, 1005--1035.
\bibitem{MP}Hadžić, M.; Raphaël, P. On melting and freezing for the 2D radial Stefan problem, J. Eur. Math. Soc. 21  (2019), no. 11, 3259--3341.
\bibitem{HS2}Hadžić, M.; Shkoller, S. Stability and decay in the classical Stefan problem. Comm. Pure Appl. Math. 68 (2015), no. 5, 689–757.
\bibitem{HS1}Hadžić, M.; Shkoller, S. Well-posedness for the classical Stefan problem and the vanishing surface tension limit. Arch. Ration. Mech. Anal. 223 (2017), no. 1, 213–264.
\bibitem{H} Hanzawa, E. I. Classical solutions of the Stefan problem. Tôhoku Math. J. (2) 33 (1981), no. 3, 297--335. 
\bibitem{HHV}Herraiz, L. A.; Herrero, M. A.; Velázquez, J. J. L. A note on the dissolution of spherical crystals. Proc. Roy. Soc. Edinburgh Sect. A 131 (2001), no. 2, 371–389.
\bibitem{HV}Herrero, M. A.; Velázquez, J. J. L. On the melting of ice balls. SIAM J. Math. Anal. 28 (1997), no. 1, 1–32.
\bibitem{HR}Hillairet, M.; Raphaël, P. Smooth type II blow-up solutions to the four-dimensional energy-critical wave equation. Anal PDE.  5 (2012), no. 4, 777–829. 
\bibitem{J}Jendrej, J. Construction of type II blow-up solutions for the energy-critical wave equation in dimension 5. J. Funct. Anal. 272 (2017),  no. 3, 866–917.
\bibitem{K}Kamenomostskaja, S. L. On Stefan's problem.  Mat. Sb. (N.S.) 53(95) (1961), 489–514.
\bibitem{Kim}Kim, I. C. Uniqueness and existence results on the Hele-Shaw and the Stefan problems. Arch. Ration. Mech. Anal. 168 (2003), no. 4, 299--328.
\bibitem{KN}Kinderlehrer, D.; Nirenberg, L. Regularity in free boundary problems. Ann. Scuola Norm. Sup. Pisa Cl. Sci. (4) 4 (1977), no. 2, 373--391.
\bibitem{KN1}Kinderlehrer, D.; Nirenberg, L. The smoothness of the free boundary in the one phase Stefan problem. Comm. Pure Appl. Math. 31 (1978), no. 3, 257--282.
\bibitem{KH}Koch, H. Classical solutions to phase transition problems are smooth. Comm. Partial Differential Equations. 23 (1998), no. 3--4, 389--437.
\bibitem{JW}Krieger, J.; Schlag, W. Non-generic blow-up solutions for the critical focusing NLS in 1-D. J. Eur. Math. Soc. 11 (2009), no. 1, 1–125.
\bibitem{KST}Krieger, J.; Schlag, W.; Tataru, D. Renormalization and blow up for charge one equivariant critical wave maps. Invent. Math. 171 (2008), no. 3, 543–615.
\bibitem{KST2}Krieger, J.; Schlag, W.; Tataru, D. Slow blow-up solutions for the $H^{1}(\RR^{3})$ critical focusing semilinear wave equation. Duke Math. J. 147 (2009), no. 1, 1–53.
\bibitem{LSU}Ladyženskaja, O. A.; Solonnikov, V. A.; Uralceva, N. N. Linear and quasilinear equations of parabolic type. Translations of Mathematical Monographs, 23. American Mathematical Society, Providence, R. I., 1968.
\bibitem{L}Lebedev, N. N. Special Functions and Their Applications, Dover, New York, 1972.
\bibitem{M} Meirmanov, A. M. The Stefan problem. de Gruyter Expositions in Mathematics, 3. Walter de Gruyter, Berlin, 1992.
\bibitem{MR3}Merle, F.; Raphaël, P. Sharp upper bound on the blow-up rate for the critical nonlinear Schrödinger equation. Geom. Funct. Anal. 13 (2003), no. 3, 591–642.
\bibitem{MR1}Merle, F.; Raphaël, P. The blow-up dynamic and upper bound on the blow-up rate for critical nonlinear Schrödinger equation. Ann. of Math. Ann. of Math. (2) 161 (2005), no. 1, 157–222.
\bibitem{MR4}Merle, F.; Raphaël, P. Profiles and quantization of the blow up mass for critical nonlinear Schrödinger equation, Comm. Math. Phys. 253 (2005), no. 3, 675--704.
\bibitem{MR2}Merle, F.; Raphaël, P. On a sharp lower bound on the blow-up rate for the $ L^{2} $ critical nonlinear Schrödinger equation. J. Amer. Math. Soc. 19 (2006), no. 1, 37–90.
\bibitem{MR5}Merle, F.; Raphaël, P. On universality of blow-up profile for $L^{2}$ critical nonlinear Schrödinger equation, Invent. Math. 156 (2004), no. 3, 565--672. 
\bibitem{MRR1}Merle, F.; Raphaël, P.; Rodnianski, I. Blow up dynamics for smooth data equivariant solutions to the critical Schrödinger map problem. Invent. Math. 193 (2013), no. 2, 249–365.
\bibitem{MRP} Merle, F.; Raphaël, P.; Rodnianski, I. Type II blow up for the energy supercritical NLS. Camb. J. Math. 3 (2015), no. 4, 439–617.
\bibitem{FPIJ}Merle, F.; Raphael, P.; Rodnianski, I.; Szeftel, J. On blow up for the energy super critical defocusing nonlinear Schrödinger equations. Invent. Math. 227 (2022), no. 1, 247–413.
\bibitem{MF}Morse, P. M.; Feshbach, H. Methods of Theoretical Physics, Vol. I, McGraw--Hill, New York, 1953.
\bibitem{P}Perelman; G. On the blow-up phenomenon for the critical nonlinear Schrödinger equation in 1D. In: Séminaire: Équations aux Dérivées Partielles, 1999--2000, École Polytech., Palaiseau, exp. III, 16 pp. (2000). 
\bibitem{PSS} Prüss, J.; Saal, J.; Simonett, G. Existence of analytic solutions for the classical Stefan problem. Math. Ann. 338 (2007), no. 3, 703--755. 
\bibitem{R}Raphaël, P. Stability of the log-log bound for blow up solutions to the critical nonlinear Schrödinger equation, Math. Ann. 331 (2005), no. 3, 577--609. 
\bibitem{PR}Raphaël, P.; Rodnianski, I. Stable blow up dynamics for the critical co-rotational wave maps and equivariant Yang-Mills problems. Publ. Math. Inst. Hautes Études Sci. 115 (2012), 1–122.
\bibitem{RS}Raphaël, P.; Schweyer, R. Quantized slow blow up dynamics for the corotational energy critical harmonic heat flow. Anal. PDE. 7 (2014), no. 8, 1713–1805.
\bibitem{RS1}Raphaël, P.; Schweyer, R. Stable blow up dynamics for the 1-corotational energy critical harmonic heat flow. Comm. Pure Appl. Math. 66 (2013), no. 3, 414–480.
\bibitem{RS2}Raphaël, P.; Schweyer, R. On the stability of critical chemotactic aggregation. Math. Ann. 359 (2014), no. 1-2, 267–377.
\bibitem{IJ}Rodnianski, I.; Sterbenz, J. On the formation of singularities in the critical O(3) $\sigma$-model. Ann. of Math. (2) 172 (2010), no. 1, 187–242.
\bibitem{Ru}Rubenstein, L. I. The Stefan Problem, Transl. Math. Monographs 27, AMS, Providence, RI, 1971.
\bibitem{RS3}Schweyer, R. Type II blow-up for the four dimensional energy critical semi linear heat equation. J. Funct. Anal. 263 (2012), no. 12, 3922–3983.
\bibitem{SF} Solonnikov, V. A.; Frolova, E. V. Lp-theory for the Stefan problem. J. Math. Sci. (New York) 99 (2000), no. 1, 989--1006.
\bibitem{SG}Szegö, G. Orthogonal Polynomials. Amer. Math. Soc., Providence, RI (1939).
\bibitem{V}Velázquez, J. J. L. Singular solutions of partial differential equations modelling chemotactic aggregation. In: Proc. ICM 2006 (Madrid), Vol. III, Eur. Math. Soc. (2006), 321--338. 
\end{thebibliography}
\end{document}